\newtheorem{theorem}{Theorem}
\newtheorem{lemma}[theorem]{Lemma}
\newtheorem{definition}[theorem]{Definition}
\newtheorem{proposition}[theorem]{Proposition}
\newtheorem{remark}[theorem]{Remark}
\newtheorem{example}[theorem]{Example}
\newcommand{\cf}{\emph{cf.}\xspace}
\newcommand{\bdmath}{\begin{dmath}}
\newcommand{\edmath}{\end{dmath}}
\newcommand{\beq}{\begin{equation}}
\newcommand{\eeq}{\end{equation}}
\newcommand{\bdm}{\begin{displaymath}}
\newcommand{\edm}{\end{displaymath}}
\newcommand{\bea}{\begin{eqnarray}}
\newcommand{\eea}{\end{eqnarray}}
\newcommand{\beal}{\beq \begin{array}{ll}}
\newcommand{\eeal}{\end{array} \eeq}
\newcommand{\beas}{\begin{eqnarray*}}
\newcommand{\eeas}{\end{eqnarray*}}
\newcommand{\ba}{\begin{array}}
\newcommand{\ea}{\end{array}}
\newcommand{\bit}{\begin{itemize}}
\newcommand{\eit}{\end{itemize}}
\newcommand{\ben}{\begin{enumerate}}
\newcommand{\een}{\end{enumerate}}
\newcommand{\calA}{{\cal A}}
\newcommand{\calB}{{\cal B}}
\newcommand{\calC}{{\cal C}}
\newcommand{\calF}{{\cal F}}
\newcommand{\calK}{{\cal K}}
\newcommand{\calN}{{\cal N}}
\newcommand{\calP}{{\cal P}}
\newcommand{\calX}{{\cal X}}
\newcommand{\etal}{\emph{et~al.}\xspace}
\newcommand{\M}[1]{{\bm #1}} 
\renewcommand{\boldsymbol}[1]{{\bm #1}}
\newcommand{\wrt}{w.r.t.\xspace}
\newcommand{\hiddenText}{{\color{gray} hidden text.}}
\newcommand{\hideWithText}[1]{\hiddenText}
\newcommand{\kron}{\otimes}
\newcommand{\subject}{\text{ subject to }}
\DeclareMathOperator*{\argmin}{arg\,min}
\newcommand{\tran}{^{\mathsf{T}}}
\newcommand{\diag}[1]{\mathrm{diag}\left(#1\right)}
\newcommand{\trace}[1]{\mathrm{tr}\left(#1\right)}
\newcommand{\rank}[1]{\mathrm{rank}\left(#1\right)}
\newcommand{\inv}{^{-1}}
\newcommand{\zero}{{\mathbf 0}}
\newcommand{\eye}{{\mathbf I}}
\newcommand{\Real}[1]{ { {\mathbb R}^{#1} } }
\newcommand{\SOtwo}{\ensuremath{\mathrm{SO}(2)}\xspace}
\newcommand{\SOthree}{\ensuremath{\mathrm{SO}(3)}\xspace}
\newcommand{\Othree}{\ensuremath{\mathrm{O}(3)}\xspace}
\newcommand{\MA}{\M{A}}
\newcommand{\MB}{\M{B}}
\newcommand{\MD}{\M{D}}
\newcommand{\MM}{\M{M}}
\newcommand{\MQ}{\M{Q}}
\newcommand{\MU}{\M{U}}
\newcommand{\MR}{\M{R}}
\newcommand{\MS}{\M{S}}
\newcommand{\MV}{\M{V}}
\newcommand{\MW}{\M{W}}
\newcommand{\va}{\boldsymbol{a}} 
\newcommand{\vh}{\boldsymbol{h}} 
\newcommand{\vb}{\boldsymbol{b}}
\newcommand{\vd}{\boldsymbol{d}}
\newcommand{\vg}{\boldsymbol{g}}
\newcommand{\vp}{\boldsymbol{p}}
\newcommand{\vr}{\boldsymbol{r}}
\newcommand{\vu}{\boldsymbol{u}}
\newcommand{\vv}{\boldsymbol{v}}
\newcommand{\vt}{\boldsymbol{t}}
\newcommand{\vxx}{\boldsymbol{x}} 
\newcommand{\vy}{\boldsymbol{y}}
\newcommand{\vw}{\boldsymbol{w}}
\newcommand{\vlambda}{\boldsymbol{\lambda}}
\newcommand{\vtheta}{\boldsymbol{\theta}}
\newcommand{\valpha}{\boldsymbol{\alpha}}
\newcommand{\vbeta}{\boldsymbol{\beta}}
\newcommand{\vepsilon}{\boldsymbol{\epsilon}}
\newcommand{\scenario}[1]{{\smaller \sf#1}\xspace}
\newcommand{\blue}[1]{{\color{blue}#1}}
\newcommand{\green}[1]{{\color{green}#1}}
\newcommand{\red}[1]{{\color{red}#1}}
\newcommand{\linkToPdf}[1]{\href{#1}{\blue{(pdf)}}}
\newcommand{\linkToPpt}[1]{\href{#1}{\blue{(ppt)}}}
\newcommand{\linkToCode}[1]{\href{#1}{\blue{(code)}}}
\newcommand{\linkToWeb}[1]{\href{#1}{\blue{(web)}}}
\newcommand{\linkToVideo}[1]{\href{#1}{\blue{(video)}}}
\newcommand{\linkToMedia}[1]{\href{#1}{\blue{(media)}}}
\newcommand{\award}[1]{\xspace} 
\newcommand{\vz}{\boldsymbol{z}}
\newcommand{\final}[1]{#1\xspace}
\newcommand{\finalLC}[1]{#1\xspace}
\renewcommand{\subject}{\text{s.t.}} 
\newcommand{\barc}{\bar{c}}
\newcommand{\barcsq}{\barc^2}
\newcommand{\eg}{\emph{e.g.,}\xspace}
\newcommand{\ie}{\emph{i.e.,}\xspace}
\newcommand{\rorder}{\kappa}
\newcommand{\nchoosek}[2]{\left( \substack{#1 \\ #2}\right)}
\newcommand{\TLS}{\scenario{TLS}}
\newcommand{\GNC}{\scenario{GNC}}
\newcommand{\gnc}{\scenario{GNC}}
\newcommand{\ransac}{\scenario{RANSAC}}
\newcommand{\SPEED}{\scenario{SPEED}}
\newcommand{\Rgt}{\MR^\circ}
\newcommand{\sgt}{s^\circ}
\newcommand{\tgt}{\vt^\circ}
\newcommand{\inlier}{_{\scenario{in}}}
\newcommand{\outlier}{_{\scenario{out}}}
\newcommand{\vPhi}{\boldsymbol{\Phi}}
\newcommand{\vPsi}{\boldsymbol{\Psi}}
\newcommand{\chiinv}{\scenario{chi2inv}}
\newcommand{\singlerotation}{\scenario{SRA}}
\newcommand{\shapealign}{\scenario{SA}}
\newcommand{\pointcloud}{\scenario{PCR}}
\newcommand{\mesh}{\scenario{MR}}
\newcommand{\relaxtime}{t_{\scenario{relax}}}
\newcommand{\certifytime}{t_{\scenario{certify}}}
\newcommand{\vectorize}[1]{\text{vec}\left(#1 \right)}
\newcommand{\hatvtheta}{\hat{\vtheta}}
\newcommand{\hatvt}{\hat{\vt}}
\newcommand{\sumallpoints}{\sum_{i=1}^N}
\newcommand{\DRS}{\scenario{DRS}}
\newcommand{\PRS}{\scenario{PRS}}
\newcommand{\polyring}{\Real{}[\vxx]}
\newcommand{\polyringin}[1]{\Real{}[#1]}
\newcommand{\nnint}{\mathbb{Z}_+}
\newcommand{\monod}[1]{\left( #1 \right)_d}
\newcommand{\monoindeg}[2]{\left( #1 \right)_{#2}}
\newcommand{\monoleqd}[1]{\left[ #1 \right]_d}
\newcommand{\monoleq}[2]{\left[ #1 \right]_{#2}}
\newcommand{\sym}{\mathcal{S}}
\newcommand{\psd}{\sym_{+}}
\newcommand{\mineig}[1]{\lambda_1\left(#1 \right)}
\newcommand{\sub}{\bar{s}}
\newcommand{\xextend}{\vp}
\newcommand{\calxextend}{\calP}
\newcommand{\dimxextend}{\tilde{n}}
\newcommand{\hatxextend}{\hat{\xextend}}
\newcommand{\hatf}{\hat{f}}
\newcommand{\dimbasis}[2]{m_{#1}(#2)}
\newcommand{\sosin}[1]{\Sigma\left[#1\right]}
\newcommand{\sosindeg}[2]{\sosin{#1}_{#2}}
\newcommand{\rbasisset}{\calB}
\newcommand{\rbasis}[1]{\left[ #1 \right]_{\rbasisset}}
\newcommand{\dualVar}{\vd}
\newcommand{\measured}{\vy}
\newcommand{\moments}{\vz}
\newcommand{\moment}{z}
\newcommand{\setmeasured}{\mathcal{Y}}
\newcommand{\proj}{\text{proj}}
\newcommand{\bracket}[1]{\left[ #1 \right]}
\newcommand{\parentheses}[1]{\left( #1 \right)}
\newcommand{\cbrace}[1]{\left\{ #1 \right\}}
\newcommand{\svec}[1]{\text{svec}\left( #1 \right)}
\newcommand{\tldvz}{\tilde{\vz}}
\newcommand{\tldMB}{\tilde{\MB}}
\newcommand{\nrIneq}{l_g}
\newcommand{\nrEq}{l_h}
\newcommand{\indicator}{\mathbbm{1}}
\newcommand{\subopt}{\varepsilon}
\newcommand{\suboptbound}{\bar{\subopt}}
\newcommand{\twonorm}[1]{\left\| #1 \right\|}
\newcommand{\optional}[2]{#2\xspace}
\newcommand{\perception}{geometric perception\xspace}
\newcommand{\Perception}{Geometric perception\xspace}
\newcommand{\PERCEPTION}{Geometric Perception\xspace}
\newcommand{\supp}{Supplementary Material\xspace}
\newcommand{\NLS}{NLS\xspace}
\renewcommand{\deg}[1]{\text{deg}\left( #1 \right)}
\newcommand{\shrink}{\vspace{-2mm}}
\newcommand{\SAR}{\bar{\MR}}
\newcommand{\SAr}{\bar{\vr}}
\newcommand{\SAlr}{\bar{r}}
\newcommand{\bmat}{\left[ \begin{array}}
\newcommand{\emat}{\end{array}\right]}
\newcommand{\MRR}{\tilde{\MR}}
\newcommand{\MRr}{\tilde{\vr}}
\newcommand{\MRt}{\tilde{\vt}}
\newcommand{\MRlr}{\tilde{r}}
\newcommand{\MRlt}{\tilde{t}}
\newcommand{\barMS}{\bar{\MS}}
\newcommand{\monoindicator}{\MW_{\valpha}}
\newcommand{\monoindicatorr}{\MW}
\newcommand{\monoindicatorVec}{\vw_{\valpha}}
\newcommand{\inner}[2]{\left\langle #1, #2 \right\rangle}
\newcommand{\probin}[1]{\Omega\parentheses{#1}}
\newcommand{\prox}{\text{prox}}
\newcommand{\edit}[1]{#1\xspace}
\renewcommand{\natural}{\mathbb{N}}
\newcommand{\expect}[2]{\mathbb{E}_{#1}\bracket{#2}}
\newcommand{\myversus}{\emph{vs.}\xspace}
\newcommand{\TIMb}{\bar{\vb}}
\newcommand{\TIMB}{\bar{\MB}}
\newcommand{\TIMeps}{\bar{\vepsilon}}
\newcommand{\hatMR}{\hat{\MR}}
\newcommand{\hats}{\hat{s}}
\newcommand{\sgn}[1]{\text{sgn}\parentheses{#1}}
\newcommand{\Otwo}{\text{O}(2)}
\newcommand{\codelink}{\url{https://github.com/MIT-SPARK/CertifiablyRobustPerception}}
\title{One Ring to Rule Them All: Certifiably Robust Geometric Perception with Outliers}
\author{%
  Heng Yang and Luca Carlone\\
  Laboratory for Information and Decision Systems (LIDS) \\
  Massachusetts Institute of Technology \\
  \texttt{ \{hankyang,lcarlone\}@mit.edu }
}
\begin{document}
\maketitle

\vspace{-6mm}
\begin{abstract}
 We propose \final{the first} general and practical framework to design \emph{certifiable algorithms} for robust \perception in the presence of a large amount of outliers. 
 We investigate the use of a \emph{truncated least squares} (\TLS) cost function, which is known to be robust to outliers, but leads to hard, {nonconvex}, and {nonsmooth} optimization problems. 
 Our first contribution is to show that\edit{~--for a broad class of \perception problems--} \TLS estimation can be reformulated as an optimization over the {ring of polynomials} and \emph{Lasserre's hierarchy of convex moment relaxations} is empirically tight at the \emph{minimum relaxation order} (\ie~{certifiably} obtains the \emph{global minimum} of the nonconvex \TLS problem). Our second contribution is to exploit the structural sparsity of the objective and constraint polynomials and leverage \emph{basis reduction} to significantly reduce the size of the {semidefinite program} (SDP) resulting from the moment relaxation, without compromising its tightness. Our third contribution is to develop scalable \emph{dual optimality certifiers} from the lens of \emph{sums-of-squares} (SOS) relaxation, that can 
compute the suboptimality gap and possibly certify global optimality
 of any candidate solution 
 (\eg~returned by {fast heuristics} such as \ransac or {graduated non-convexity}). 
 Our dual certifiers leverage \emph{Douglas-Rachford Splitting} 
 to solve a convex feasibility \optional{SDP and their convergence is further accelerated by solving a \emph{chordal sparse} SOS program to provide a high-quality initialization.}{SDP.} Numerical experiments across different perception problems, including \final{single rotation averaging, shape alignment, 3D point cloud and mesh registration, and} high-integrity satellite pose estimation, demonstrate the tightness of \edit{our} relaxations, the correctness of the certification, and the scalability of the proposed dual certifiers to large problems, beyond the reach of current SDP solvers.\footnote{Code available at \codelink.}
\end{abstract}
\shrink
\section{Introduction}
\label{sec:introduction}
\shrink
\emph{\Perception}, estimating unknown geometric models (\eg~rotations, poses, 3D structure) from visual measurements (\eg~images and point clouds), is a fundamental problem in computer vision, robotics, and graphics. 
It finds extensive applications to object detection and localization~\cite{Yang19rss-teaser,Yang20arxiv-teaser}, motion estimation and 3D reconstruction~\cite{Choi15cvpr-robustReconstruction,Zhang15icra-vloam}, simultaneous localization and mapping~\cite{Cadena16tro-SLAMsurvey,Rosen18ijrr-sesync}, shape analysis~\cite{Maron16tog-PMSDP,Ovsjanikov12TOG-functionalMaps}, virtual and augmented reality~\cite{Klein07ismar-PTAM}, and medical imaging~\cite{Audette00mia-surveyMedical}.

A common formulation for \perception resorts to optimization to perform estimation:
\bea
\min_{\vxx \in \calX}  \;\; \textstyle  \sumallpoints \rho \left( r\left( \vxx, \measured_i \right) \right), \label{eq:generalPerception}
\eea
where $\measured_i \in \setmeasured,i=1,\dots,N,$ are the visual measurements, 
$\vxx \in \calX \subseteq \Real{n}$ is the to-be-estimated geometric model, $r: \calX \times \setmeasured \rightarrow \Real{}_+$ is the \emph{residual  function} that quantifies the disagreement between each measurement $\measured_i$ and the geometric model $\vxx$, 
and $\rho: \Real{}_+ \rightarrow \Real{}_+$ is the \emph{cost function} that determines how residuals are penalized. When the distribution of the 
measurement noise is known, maximum likelihood estimation provides a systematic way to design $\rho$; for instance, assuming Gaussian noise leads to the popular \emph{least squares} cost function $\rho(r) = r^2$~\cite{Horn87josa,Poggio85nature-computationalVision,Hartley04}. 
However, in practice, \emph{a large amount of} measurements, called \emph{outliers}, depart from the assumed noise distribution (\eg~due to sensor failure or incorrect data association). Therefore, a \emph{robust} cost function, such as the $\ell_1$-norm~\cite{Wang13ima}, Huber~\cite{Huber81}, Geman-McClure~\cite{Yang20ral-GNC}, and truncated least squares~\cite{Yang19iccv-QUASAR}, is necessary to prevent the outliers from corrupting the estimate. Both the constraints --defining the domain $\calX$-- 
and the objective function in~\eqref{eq:generalPerception}  are typically nonconvex in \perception problems.


Solving \perception with \emph{optimality guarantees} is of paramount importance for {safety-critical} and {high-integrity} applications such as autonomous driving and space robotics. Indeed, suboptimal solutions of~\eqref{eq:generalPerception} typically correspond to poor or outlier-contaminated estimates~\cite{Yang20ral-GNC}. 
However, obtaining globally optimal solutions, particularly in the presence of outliers, remains a challenging task. Related work is divided into (i) \emph{fast heuristics},~\eg~\ransac~\cite{Fischler81} and graduated non-convexity (\GNC)~\cite{Yang20ral-GNC}, that are efficient but brittle against high outlier rates and offer no optimality guarantees, and (ii) \emph{global solvers},~\eg~{Branch and Bound}~\cite{Izatt17isrr-MIPregistration,Yang16pami-goicp}, that guarantee optimality but run in worst-case exponential time. Recently, \emph{certifiable algorithms}~\cite{Bandeira16crm,Yang20arxiv-teaser,Boumal16nips,Carlone16tro-duality2D} are rising as a new paradigm for solving \perception with both \emph{a posteriori} optimality guarantees 
and polynomial-time complexity.
A popular framework for constructing a certifiable algorithm requires (i) a \emph{tight} \emph{convex relaxation} of problem~\eqref{eq:generalPerception}; (ii) a \emph{fast heuristics} that computes a candidate solution to problem~\eqref{eq:generalPerception} with high probability of success; and (iii) a fast \emph{duality-based} \emph{certifier} that verifies if the candidate solution is globally optimal for the relaxation.\footnote{Global optimality of the relaxation implies global optimality of problem~\eqref{eq:generalPerception} when the relaxation is tight.} However, although a growing body of tight convex relaxations
have been discovered for various instances of \perception \emph{without} outliers~\cite{Kahl07IJCV-GlobalOptGeometricReconstruction,Briales17cvpr-registration,Briales18cvpr-global2view,Rosen18ijrr-sesync,Eriksson18cvpr-strongDuality,Zhao20cvpr-certifiablyEssential,Yang20cvpr-shapeStar,Probst19ICCV-convexRelaxationNonminimal,Maron16tog-PMSDP,Chaudhury15Jopt-multiplePointCloudRegistration,Aholt12ECCV-qcqpTriangulation,Fredriksson12accv-rotationaveragingLagrangian,Agostinho2019arXiv-cvxpnpl,Giamou19ral-SDPExtrinsicCalibration,Heller14icra-handeyePOP,Wise20arXiv-certifiablyHandeye}, only a few (problem-specific) tight relaxations exist for \emph{outlier-robust} \perception~\cite{Yang19rss-teaser,Yang19iccv-QUASAR,Wang13ima,Lajoie19ral-DCGM,Carlone18ral-robustPGO2D}\optional{. Moreover, fast certifiers exist only when the dual variables can be computed in closed-form from KKT optimality conditions~\cite{Carlone16tro-duality2D,Eriksson18cvpr-strongDuality,Iglesias20cvpr-PSRGlobalOptimality,Garcia20arXiv-certifiableRelativePose}, except Yang~\etal~\cite{Yang20arxiv-teaser} proposed a certifier to numerically compute dual certificates in the case of an under-constrained KKT system.}{.} 

\begin{figure}
\centering
\includegraphics[width=\columnwidth]{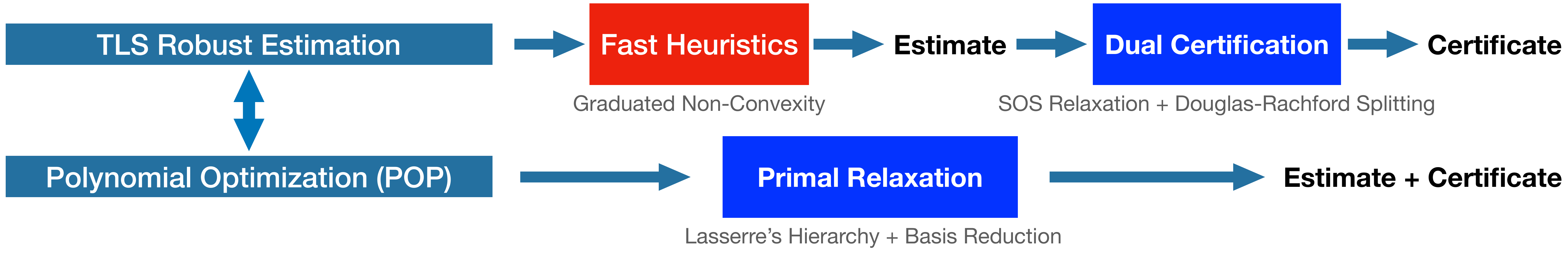}
\vspace{-6mm}
\caption{ A general and practical framework for certifiably robust geometric perception with outliers.}
\label{fig:summary}
\vspace{-6mm}
\end{figure}

{\bf Contributions.} We contribute \final{the first} general and practical framework for designing certifiable algorithms for robust \perception with outliers \final{(Fig.~\ref{fig:summary})}. Our first contribution is to show that common \perception problems with the truncated least squares (\TLS) cost function can be reformulated as an optimization over the ring of polynomials, and \emph{Lasserre's hierarchy of moment relaxations}~\cite{Lasserre01siopt-LasserreHierarchy,lasserre10book-momentsOpt} is tight at the \emph{minimum relaxation order}, \finalLC{despite the strong non-convexity and non-smoothness of the problem.} Our second contribution is to propose a \emph{basis reduction} technique, that exploits the structural \emph{sparsity} of the polynomials and significantly reduces the size of the semidefinite programs (SDP) resulting from moment relaxation, \finalLC{while surprisingly maintaining tightness of the relaxation.} These two contributions lead to the {first} set of \emph{certifiably robust} solvers for a broad class of \perception problems. 
While scaling better than the standard moment relaxation, these solvers still rely on existing SDP solvers, whose runtime restricts 
their use to small-scale problems (\eg~$N=20$). 
Therefore, our third contribution is to study the \emph{dual} sums-of-squares (SOS) relaxation and design fast \emph{dual optimality certifiers} that scale to realistic problem sizes (\eg~$N=100$). 
Our certifiers leverage \emph{Douglas–Rachford Splitting} (\DRS), \final{initialized by solving an SOS program with \emph{correlative sparsity}~\cite{Waki06jopt-SOSSparsity,Wang20arXiv-cs-tssos},} to compute a \emph{suboptimality} gap for any candidate solution, 
and possibly certify \emph{global optimality}
when the suboptimality is zero. 
\finalLC{Dual certifiers enhance existing heuristics (\eg~\ransac and \gnc) with a fast certification that asserts the quality of their estimates and rejects failure cases, thus enhancing trustworthiness in  safety-critical applications.} We demonstrate our tight relaxations and fast certifiers on several perception problems including single rotation averaging~\cite{Hartley13ijcv,Lee20arXiv-robustSRA}, image-based pose estimation (also called shape alignment)~\cite{Yang20ral-GNC}, point cloud registration~\cite{Yang19rss-teaser}, mesh registration~\cite{Briales17cvpr-registration}, and in a satellite pose estimation application~\cite{Chen19ICCVW-satellitePoseEstimation}.

{\bf Notation.} Let $\polyring$ be the ring of real-valued multivariate polynomials in $\{x_i\}_{i=1}^n$. 
Using standard notation~\cite{lasserre10book-momentsOpt}, we denote every $f \in \polyring$ as $f = \sum_{\valpha \in \calF} c(\valpha) \vxx^{\valpha}$, where $\calF \subseteq \nnint^n$ is a finite set of nonnegative integer exponents, $c(\valpha)$ are real coefficients, and $\vxx^\valpha \doteq x_1^{\alpha_1}x_2^{\alpha_2}\cdots x_n^{\alpha_n}$ are standard monomials. The degree of a monomial $\vxx^{\valpha}$ is $\deg{\vxx^{\valpha}} \doteq \sum_{i=1}^n \alpha_i$, and the degree of a polynomial $f$ is $\deg{f} = \max\{\deg{\vxx^{\valpha}}: \valpha \in \calF\}$. 
We use $\monod{\vxx}$ (resp. $\monoleqd{\vxx}$) to denote the set of monomials with degree $d$ (resp. with degree up to $d$). We use $\dimbasis{n}{d} \doteq \nchoosek{n+d}{d}$ to denote the dimension of $\monoleqd{\vxx}$. Similarly, we use $\monoleq{\vxx}{\calF} \doteq \{\vxx^\valpha: \valpha \in \calF \}$ to denote the set of monomials with exponents in $\calF$, and we use $\dimbasis{}{\calF}$ to denote its dimension. 
We use $\sym^n$ to denote the set of $n \times n$ symmetric matrices, and $\psd^n$ for the set of symmetric positive semidefinite (PSD) matrices. We also write $\MA \succeq 0$ to indicate $\MA \in \psd^n$. 
For $\MA \in \sym^n$ we use $\svec{\MA}$ to denote its symmetric vectorization~\cite{Toh12handbook-SDPT3Implementation}.
A polynomial $q \in \polyring$ is a sums-of-squares (SOS) polynomial if and only if $q$ can be written as $q = \monoleq{\vxx}{\calF}\tran \MQ \monoleq{\vxx}{\calF}$ for some monomial basis $\monoleq{\vxx}{\calF}$ and PSD matrix $\MQ \succeq 0$, in which case $q \geq 0, \forall \vxx \in \Real{n}$.


\shrink
\section{Related Work}
\label{sec:relatedWork}
\shrink


{\bf Outlier-free \PERCEPTION} algorithms can be divided into \emph{minimal solvers} and \emph{non-minimal solvers}. Minimal solvers assume \emph{noiseless} measurements (\ie~$r(\vxx,\measured_i)=0,\forall \; i$ in~\eqref{eq:generalPerception}) and use the minimum number of measurements necessary to estimate $\vxx$, which leads to solving a system of polynomial equations~\cite{PajdlaXXwebsite-minimalProblemsInVision,Kukelova2008ECCV-automaticGeneratorofMinimalProblemSolvers,Gao03PAMI-P3P,Nister04pami}. 
Non-minimal solvers account for measurement noise and estimate $\vxx$ via nonlinear least squares (\NLS), \ie~$\rho(r) = r^2$ in~\eqref{eq:generalPerception}.
While in rare cases NLS can be solved in closed form~\cite{markley1988jas-svdAttitudeDeter,Markley14book-fundamentalsAttitudeDetermine,Horn87josa,Arun87pami} or by solving the polynomial equations arising from the first-order optimality conditions~\cite{Wientapper18cviu-absolutePose,Kneip2014ECCV-UPnP,Zhou20ICRA-GRegAlgebraicSolver}, in general 
they lead to
nonconvex problems and are attacked using local solvers~\cite{Kuemmerle11icra,Agarwal10eccv} 
or exponential-time methods (\eg \emph{Branch and Bound}~\cite{Olsson09pami-bnbRegistration,Hartley09ijcv-globalRotationRegistration}).

\emph{Certifiable algorithms} for outlier-free perception have recently emerged as an approach to compute globally optimal 
\NLS
solutions in polynomial time.
These algorithms
relax
the \NLS minimization
into a convex optimization, using Shor's semidefinite relaxation for \emph{quadratically constrained quadratic programs}~\cite{Goemans95JACM-maxCutBound,Luo2010SP-sdpRelaxationQuadratic} or Lasserre's hierarchy of moment relaxations for \emph{polynomial optimizations}~\cite{lasserre10book-momentsOpt}. By solving the SDP resulting from the convex relaxations, 
certifiable algorithms compute global solutions to \NLS problems and provide a certificate of optimality, 
which usually depends on the rank of the SDP solution or the duality gap.
Empirically tight convex relaxations have been discovered in pose graph optimization~\cite{Carlone16tro-duality2D,Rosen18ijrr-sesync}, rotation averaging~\cite{Eriksson18cvpr-strongDuality,Fredriksson12accv}, triangulation~\cite{Aholt12ECCV-qcqpTriangulation}, 3D registration~\cite{Briales17cvpr-registration,Maron16tog-PMSDP,Chaudhury15Jopt-multiplePointCloudRegistration}, absolute pose estimation~\cite{Agostinho2019arXiv-cvxpnpl}, relative pose estimation~\cite{Briales18cvpr-global2view,Zhao20cvpr-certifiablyEssential}, hand-eye calibration~\cite{Heller14icra-handeyePOP} and 3D shape reconstruction from 2D landmarks~\cite{Yang20cvpr-shapeStar}. More recently, theoretical analysis of when and why the relaxations are tight is also emerging~\cite{Aholt12ECCV-qcqpTriangulation,Eriksson18cvpr-strongDuality,Rosen18ijrr-sesync,Cifuentes17arxiv,Zhao19arxiv-efficientTwoView,Chaudhury15Jopt-multiplePointCloudRegistration,Dym17Jopt-exactPMSDP,Iglesias20cvpr-PSRGlobalOptimality}.
Tight relaxations also enable
optimality certification (\ie checking if a given solution is optimal), which --in outlier-free perception-- can be \edit{sometimes} performed in closed form~\cite{Carlone16tro-duality2D,Eriksson18cvpr-strongDuality,Garcia20arXiv-certifiableRelativePose,Boumal16nips,Burer03mp,Rosen20wafr-scalableLowRankSDP,Cifuentes19arXiv-BMguarantees,Iglesias20cvpr-PSRGlobalOptimality}. \finalLC{Despite being certifiably optimal, these solvers assume all measurements are inliers (\ie~have small noise), which rarely occurs in practice, and hence give poor estimates even in the presence of a single outlier. In stark contrast, this paper develops certifiable algorithms in the presence of large amounts of outliers.}

{\bf Robust \PERCEPTION} algorithms can be divided into \emph{fast heuristics} and \emph{globally optimal solvers}. Two general frameworks for designing fast heuristics are \ransac~\cite{Fischler81} and \emph{graduated non-convexity} (\GNC)~\cite{Yang20ral-GNC,Black96ijcv-unification,Antonante20arxiv-outlierRobustEstimation}. \ransac robustifies minimal solvers and acts as a fast heuristics to solve \emph{consensus maximization}~\cite{Chin17slcv-maximumConsensusAdvances,Tzoumas19iros-outliers}, while \GNC robustifies non-minimal solvers and acts as a fast heuristics to solve \emph{M-estimation} (\ie~using a robust cost function $\rho$ in~\eqref{eq:generalPerception})~\cite{Bosse17fnt}. Local optimization 
is also a popular and fast heuristics~\cite{Chatterjee13iccv,Hartley11cvpr-l1rotationaveraging,Schonberger16cvpr-SfMRevisited,Bouaziz13acmsig-sparseICP,Agarwal13icra,Crandall11cvpr} for the case where an initial guess is available.
On the other hand, globally optimal solvers are typically designed using Branch and Bound~\cite{Bazin12accv-globalRotSearch,Bustos18pami-GORE,Izatt17isrr-MIPregistration,Jiao20arXiv-VIOpointline,Yang2014ECCV-optimalEssentialEstimationBnBConsensusMax}, or boost 
robustness via a preliminary outlier-pruning scheme~\cite{Yang19rss-teaser,Bustos18pami-GORE}.

\emph{Certifiably robust algorithms} relax problem~\eqref{eq:generalPerception} with a robust cost into a tight convex optimization. 
While certain robust costs, such as the $\ell_1$-norm~\cite{Wang13ima} and Huber~\cite{Carlone18ral-robustPGO2D}, are already convex, they have low breakdown points (\ie 
they can be compromised by a single outlier)~\cite{Yu12NIPS-robustRegression,Maronna19book-robustStats}. A few problem-specific certifiably robust algorithms have been proposed 
to deal with high-breakdown-point formulations, such as the \TLS cost~\cite{Yang19iccv-QUASAR,Yang19rss-teaser,Bohorquez20arXiv-exactRelaxationRobustRegistration,Lajoie19ral-DCGM}. 
Even optimality certification becomes harder and problem-specific in the presence of outliers, due to the lack of a closed-form characterization of the 
dual variables~\cite{Yang20arxiv-teaser}.
In this paper, we introduce a \emph{general} framework \edit{to design} certifiably robust algorithms and \edit{optimality certifiers} for a broad class of \perception problems with 
\TLS cost.


\optional{{\bf Optimality Certification},~\ie~certifying the global optimality or declaring suboptimality of any candidate solution returned by NLS and fast heuristics, becomes tractable when the convex relaxation is tight. More importantly, it allows designing duality-based optimality certifiers without solving large-scale SDPs. In outlier-free perception, optimality certification can sometimes be performed in closed form~\cite{Carlone16tro-duality2D,Eriksson18cvpr-strongDuality,Garcia20arXiv-certifiableRelativePose}, including the seminal Burer-Monteiro approach~\cite{Boumal16nips,Burer03mp,Rosen20wafr-scalableLowRankSDP,Cifuentes19arXiv-BMguarantees}. While in robust perception, optimality certification relies on problem-specific analysis~\cite{Yang20arxiv-teaser}. In this paper, we introduce a general approach to design \emph{scalable} dual optimality certifiers by attacking the dual SOS relaxation with Douglas-Rachford Splitting.}{}

\shrink
\section{Robust \PERCEPTION as Polynomial Optimization}
\label{sec:pop}
\shrink

In this paper we develop certifiable algorithms to solve~\eqref{eq:generalPerception} 
 for the case when the cost $\rho$ is a  \emph{truncated least squares} (\TLS) cost:
\begin{equation} \label{eq:generalTLS}
f^\star = \min_{\vxx \in \calX} \; \; \sumallpoints \min \left\{ \frac{r^2(\vxx,\measured_i)}{\beta_i^2}, \barcsq \right\}, \tag{\TLS}
\end{equation}
where $\min \{\cdot,\cdot\}$ denotes the minimum between two scalars, $\beta_i$ is a known constant that can be 
used to model the inlier standard deviation (potentially different for each measurement $i$), 
and $\barc$ is the maximum admissible residual for a measurement to be considered an inlier. 
Intuitively, problem~\eqref{eq:generalTLS} implements a nonlinear least squares where measurements with large residuals
 (\ie outliers) do not influence the estimate (\ie lead to a constant cost of $\barcsq$).
Problem~\eqref{eq:generalTLS}
is known to be robust to large amounts of outliers~\cite{Yang10NIPS-relaxedClipping,Maronna19book-robustStats}.
However, its global minimum
$f^\star$ is hard to compute due to the non-convexity and non-smoothness of the cost (which adds to the typical non-convexity of the 
domain $\calX$).
In the following, we briefly review a few instantiations of robust \perception.

\begin{example}[Single Rotation Averaging~\cite{Hartley13ijcv}] \label{eg:singleRotationAveraging}
Given $N$ measurements of an unknown 3D rotation: $\MR_i, i=1,\dots,N$, single rotation averaging seeks to find the best \emph{average} rotation $\MR$. In this case, $\vxx = \MR \in \SOthree$, $\measured_i = \MR_i$, and the residual function can be chosen as $r(\vxx,\measured_i) = \left\| \MR - \MR_i \right\|_F$ (the {chordal distance} between two rotations~\cite{Hartley13ijcv}), where $\| \cdot \|_F$ denotes the Frobenius norm.
\end{example}


\begin{example}[Shape Alignment~\cite{Yang20ral-GNC}] \label{eg:shapeAlignment}
Given a set of 3D points $\MB_i \in \Real{3}$ and a set of 2D pixels $\vb_i \in \Real{2}$ ($i=1,\dots,N$), with putative correspondences $\vb_i \leftrightarrow \MB_i$, shape alignment seeks to find the best scale $s \in [0,\sub]$ (where $\sub$ is a given upper bound for the scale)
and 3D rotation $\MR \in \SOthree$ of the point set, 
such that the 3D points project onto the corresponding pixels.
In this case, $\vxx = (\MR,s)$, $\measured_i = (\MB_i,\vb_i)$ and the residual function is the reprojection error under the weak perspective camera model: $r(\vxx,\measured_i) = \| \vb_i - s\Pi\MR\MB_i \|$, where $\Pi = [1,0,0;0,1,0] \in \Real{2\times 3}$. 
\end{example}

\begin{example}[Point Cloud Registration~\cite{Yang19rss-teaser}] \label{eg:pointCloudRegistration}
Given two sets of 3D points $\va_i,\vb_i \in \Real{3}, i=1,\dots,N$, with putative correspondences $\va_i \leftrightarrow \vb_i$, point cloud registration seeks the best 3D rotation $\MR \in \SOthree$ and translation $\vt \in \Real{3}$ to align them.\footnote{\label{footnote1}For mathematical convenience, we assume the translation is bounded by a known value $T$, \ie $\| \vt \| \leq T$.} 
In this case, $\vxx = (\MR, \vt)$, $\measured_i = (\va_i,\vb_i)$ and the residual function is the Euclidean distance between registered pairs of points: $r(\vxx, \measured_i) = \|  \vb_i - \MR\va_i - \vt  \|$. 
\end{example}

\begin{example}[Mesh Registration~\cite{Briales17cvpr-registration}] \label{eg:meshRegistration}
Consider a 3D mesh $\{\va_i,\vu_i\}_{i=1}^N$ and a 3D point cloud with estimated normals $\{\vb_i,\vv_i\}_{i=1}^N$, where $\va_i \in \Real{3}$ is an arbitrary point on a face of the mesh, and $\vu_i$ is the unit normal of the same face, while $\vb_i \in \Real{3}$ is a 3D point and $\vv_i$ is the estimated unit normal at \edit{$\vb_i$}. 
Given putative correspondences $(\va_i,\vu_i) \leftrightarrow (\vb_i,\vv_i)$,
mesh registration seeks the best 3D rotation $\MR \in \SOthree$ and translation $\vt \in \Real{3}$ to align the mesh with the point cloud.\footref{footnote1}
 In this case, $\vxx = (\MR, \vt)$, $\measured_i = (\va_i,\vu_i,\vb_i,\vv_i)$, and the residual function is the weighted sum of the point-to-plane distance and normal-to-normal distance: $r^2(\vxx,\measured_i) = \| (\MR\vu_i)\tran(\vb_i - \MR\va_i - \vt) \|^2 + w_i\| \vv_i - \MR \vu_i \|^2$, where $w_i>0$ is the relative weight between normal-to-normal distance and point-to-plane distance.
\end{example}

The following proposition states that all the four examples above lead \edit{to~\eqref{eq:generalTLS} problems} that can be cast as polynomial optimization problems (POPs).

\begin{proposition}[\PERCEPTION as POP]\label{prop:pop}
Robust \perception~\eqref{eq:generalTLS}, with residual functions as in Examples~\ref{eg:singleRotationAveraging}-\ref{eg:meshRegistration}, is equivalent to the following polynomial optimization (POP):
\bea
f^\star = \min_{\xextend \in \Real{\dimxextend}} & f(\vp)  \label{eq:pop}\\
\subject & \quad \quad h_j(\vp) = 0, j = 1,\dots,\nrEq,  \nonumber \\
& 1 \geq g_k(\vp) \geq 0, k = 1,\dots,\nrIneq, \nonumber
\eea
with $\dimxextend \doteq n + N,$ and $\xextend \doteq [\vxx\tran,\vtheta\tran]\tran \in \Real{\dimxextend}$, 
where $\vxx \in \calX$ contains the to-be-estimated geometric model, and the vector of binary variables $\vtheta \in \{\pm 1 \}^N$ 
is such that $\theta_i = +1$ (resp. $\theta_i = -1$) when the $i$-th measurement $\measured_i$ is estimated to be an inlier (resp. outlier). In this POP, $f$ is a polynomial in $\xextend$ with $\deg{f} \leq 3$, while $h_j,g_k$ are quadratic (degree-2) polynomials 
 in $\xextend$ that are used to define the domains $\calX$ and $\{\pm 1 \}^N$.
 The polynomials $f,h_j,g_k$ possess the following structural properties:
\begin{enumerate}[label=(\roman*)]
\item \label{prop:pop-objective} (objective function sparsity) $f$ can be written as a sum of $N$ polynomials $f_i,i=1,\dots,N$, and each $f_i$ is a polynomial in $\vxx$ and $\theta_i$ of degree lower or equal to 3,~\ie~$f = \sumallpoints f_i, f_i \in \polyringin{\vxx,\theta_i},\deg{f_i} \leq 3$;

\item \label{prop:pop-constraint} (constraints sparsity) let $\vh \doteq \{h_j\}_{j=1}^{l_h}$ and $\vg = \{ g_k \}_{k=1}^{l_g}$. 
Then, $\vg \subset \polyringin{\vxx}$ are polynomials in $\vxx$ (\ie do not depend on $\vtheta$).
Moreover, $\vh$ can be partitioned into $N+1$ disjoint subsets: $\vh = \vh^{\theta}  \cup \vh^{x}$, with $\vh^{\theta} = \cup_{i=1}^N \vh^{\theta_i}$, where $\vh^{\theta_i} \subset \polyringin{\theta_i}$ are polynomials in $\theta_i$ (\ie do not depend on $\vxx$ and $\theta_j,\forall j \neq i$), $\vh^x \subset \polyringin{\vxx}$ are polynomials in $\vxx$ (\ie do not depend on $\vtheta$);

\item \label{prop:pop-archimedean} (Archimedeanness) the feasible set $\calxextend$ of the POP~\eqref{eq:pop} is \emph{Archimedean}.\footnote{Archimedeanness is a stronger condition than compactness, see~\cite[Definition 3.137, p.~115]{Blekherman12Book-sdpandConvexAlgebraicGeometry}.}


\end{enumerate} 
\end{proposition}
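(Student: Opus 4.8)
The plan is to prove the proposition on top of a single common reformulation that turns the truncated cost into a polynomial, and then to discharge properties \ref{prop:pop-objective}--\ref{prop:pop-archimedean} example by example. The crux is the standard \emph{binary cloning} of each TLS term: one observes that
\[
\min\left\{\frac{r^2(\vxx,\measured_i)}{\beta_i^2}, \barcsq\right\} = \min_{\theta_i \in \{\pm 1\}} \left( \frac{1+\theta_i}{2}\cdot\frac{r^2(\vxx,\measured_i)}{\beta_i^2} + \frac{1-\theta_i}{2}\barcsq \right),
\]
because $\theta_i = +1$ selects the inlier residual while $\theta_i = -1$ selects the constant $\barcsq$. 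Pulling the $N$ independent inner minimizations outside the sum turns~\eqref{eq:generalTLS} into a joint minimization over $\vxx$ and $\vtheta \in \{\pm 1\}^N$ of $f = \sum_i f_i$, where $f_i := \tfrac{1+\theta_i}{2}\,\tfrac{r^2(\vxx,\measured_i)}{\beta_i^2} + \tfrac{1-\theta_i}{2}\barcsq$. By construction each $f_i$ involves only $\vxx$ and the single variable $\theta_i$, which is exactly the objective sparsity of~\ref{prop:pop-objective}; encoding the binary domain by the scalar equalities $\theta_i^2 - 1 = 0$ places each in $\polyringin{\theta_i}$, supplying the $\vh^{\theta_i}$ blocks of the partition in~\ref{prop:pop-constraint}.

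The main technical step is the degree bound $\deg{f_i} \le 3$, which amounts to showing that each $r^2(\vxx,\measured_i)$ reduces to a \emph{quadratic} in $\vxx$ once the geometric constraints are used. The key lever is the orthonormality ideal $\MR\tran\MR = I$: it forces $\|\MR\vv\|^2 = \|\vv\|^2$ to be constant for any fixed $\vv$, collapsing the otherwise-quartic leading terms. For single rotation averaging this immediately gives $r^2 = \|\MR-\MR_i\|_F^2 = 6 - 2\langle\MR,\MR_i\rangle$, affine in $\MR$; for point cloud registration the term $\|\MR\va_i\|^2$ collapses to $\|\va_i\|^2$, leaving $r^2$ quadratic in $(\MR,\vt)$; and the normal-to-normal term of mesh registration collapses to an affine function of $\MR$ by the same identity. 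For shape alignment (and the point-to-plane term of mesh registration), however, the naive expansion retains a genuinely quartic piece---such as $s^2$ times a squared linear form in $\MR$, or $((\MR\vu_i)\tran\vt)^2$---so there I would either introduce a scaled/rotated auxiliary variable or reduce the quartic modulo the orthonormality ideal down to a quadratic. \emph{This degree reduction for the coupled scale--rotation and point-to-plane residuals is the step I expect to be the main obstacle}, since it is precisely where the bound $\le 3$ is tight rather than automatic. Once $r^2$ is quadratic, multiplying by the degree-one factor $\tfrac{1+\theta_i}{2}$ yields $\deg{f_i} \le 3$.

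It then remains to assemble the constraint set and verify~\ref{prop:pop-constraint} and~\ref{prop:pop-archimedean}. The geometric equalities (the orthonormality relations defining \SOthree) form $\vh^x \subset \polyringin{\vxx}$, and the inequalities $\vg$ are the scale bound $s \in [0,\sub]$ and the translation bound $\|\vt\| \le T$, rescaled into the stated normalized form $1 \ge g_k \ge 0$; since none of these involve $\vtheta$, we obtain $\vg \subset \polyringin{\vxx}$ and the disjoint partition $\vh = \vh^\theta \cup \vh^x$ claimed in~\ref{prop:pop-constraint}. For Archimedeanness~\ref{prop:pop-archimedean} I would exhibit an explicit bound $C - \|\xextend\|^2$ inside the quadratic module generated by $\{h_j,g_k\}$: the equalities $\theta_i^2 = 1$ pin $\sum_i \theta_i^2 = N$, the trace of $\MR\tran\MR = I$ gives $\|\MR\|_F^2 = 3$, and the scale and translation inequalities bound the remaining coordinates by $\sub^2$ and $T^2$. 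Summing these certificates shows $\|\xextend\|^2 \le C := 3 + \sub^2 + T^2 + N$ (with the scale/translation terms dropped in the examples lacking them), so the single ball constraint $C - \|\xextend\|^2$ lies in the quadratic module and the feasible set $\calxextend$ is Archimedean, which is all that is needed for the downstream tightness of Lasserre's hierarchy.
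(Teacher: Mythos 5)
Your overall architecture matches the paper's: the binary-cloning identity for $\min\{a,b\}$, the resulting objective and constraint sparsity, and an explicit ball certificate $C-\|\xextend\|^2$ in the quadratic module for Archimedeanness are all exactly what the paper does. The gap is precisely at the step you flag as the main obstacle, and one of the two escape routes you propose there does not work. Reducing the quartic pieces \emph{modulo the orthonormality ideal} fails: $s^2\|\Pi\MR\MB_i\|^2 = s^2\,\MB_i\tran\MR\tran\Pi\tran\Pi\MR\MB_i$ does not collapse because $\Pi\tran\Pi\neq \eye$, so it remains a genuine quartic in $(s,\MR)$; likewise $\bigl((\MR\vu_i)\tran\vt\bigr)^2$ is irreducibly quartic in $(\MR,\vt)$ and no combination of the constraints $\MR\tran\MR=\eye$ lowers its degree. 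The only viable route is your other suggestion, the auxiliary-variable substitution, which is what the paper carries out: for shape alignment it sets $\SAR\doteq s\Pi\MR$ and optimizes over $\SAr=[\SAr_1\tran,\SAr_2\tran]\tran$, and for mesh registration it sets $\MRR\doteq\MR\tran$, $\MRt\doteq\MR\tran\vt$ so that the point-to-plane term becomes quadratic in $(\MRR,\MRt)$.

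What is missing from your proposal, and is not merely bookkeeping, is the verification that the substitution preserves the claimed structure. You must show that the image of the original feasible set under the change of variables is itself cut out by \emph{quadratic} polynomials in the new variables with the sparsity pattern of item (ii), and that $(s,\MR)$ (resp.\ $(\MR,\vt)$) is recoverable. Concretely, $\{s\Pi\MR : s\in[0,\sub],\,\MR\in\SOthree\}$ equals the set of $\SAr$ with $\|\SAr_1\|^2=\|\SAr_2\|^2$, $\SAr_1\tran\SAr_2=0$, and $\|\SAr_1\|^2+\|\SAr_2\|^2\le 2\sub^2$; and $\|\MRt\|\le T$ iff $\|\vt\|\le T$ since rotations preserve norms. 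The Archimedean certificate must then be rewritten in the new variables (e.g.\ for shape alignment it is $2\sub^2\cdot g^{\SAlr}+\sum_i h^{\theta_i} = 2\sub^2+N-\|\xextend\|^2\ge 0$, not a trace identity on $\MR$, since $\MR$ is no longer a decision variable); your single constant $C=3+\sub^2+T^2+N$ conflates variables that never coexist in any one example. None of these steps is deep, but together they constitute the actual content of the degree-3 claim for Examples~\ref{eg:shapeAlignment} and~\ref{eg:meshRegistration}, and your proposal leaves them unexecuted.
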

\vspace{-2mm}
The \supp provides a proof of Proposition~\ref{prop:pop} and the expressions of $f,h_j,g_k$ for Examples~\ref{eg:singleRotationAveraging}-\ref{eg:meshRegistration}. Proposition~\ref{prop:pop} is based on three insights. \edit{First, each inner minimization $\min \{a,b\}$ ($a,b \in \Real{}$) 
can be written as $\substack{\min \\ \theta \in \{\pm 1\}} \frac{1+\theta}{2} a + \frac{1-\theta}{2} b$, 
which gives rise to
the
binary variables and leads to the objective sparsity in~\ref{prop:pop-objective}. 
Second, the constraint sets of $\vxx$ and each $\theta_i$ are mutually independent, and can be described by quadratic equality and inequality constraints, leading to the constraints sparsity in~\ref{prop:pop-constraint}. Third, the unknown variables, including $\MR \in \SOthree, s \in [0,\sub], \| \vt \| \leq T$, and $\theta_i \in \{\pm 1\}$, live in compact domains described by polynomials, leading to the Archimedeanness property~\ref{prop:pop-archimedean}.}


\shrink
\section{The Primal View: Tight Moment Relaxation}
\label{sec:primalMoment}
\shrink
In this section, we develop dense (Section~\ref{sec:lasserreHierarchy}) and sparse (Section~\ref{sec:basisReduction}) convex moment relaxations to the POP~\eqref{eq:pop}. The dense relaxation is a standard application of Lasserre's hierarchy~\cite{Lasserre01siopt-LasserreHierarchy,lasserre10book-momentsOpt}, while the sparse relaxation is based on a basis reduction that leverages the structural properties in 
Proposition~\ref{prop:pop}.

\subsection{Lasserre's Hierarchy}
\label{sec:lasserreHierarchy}

\optional{We first describe some definitions for stating Lasserre's hierarchy. Let $\mu_{\xextend}$ be any probability distribution supported on the feasible set $\calxextend$ of the POP~\eqref{eq:pop}, and let $\moments_\rorder = \{ \moment_{\valpha} \} \in \Real{\dimbasis{\dimxextend}{\rorder}}$ be the vector of \emph{moments} up to degree $\rorder$, where $\moment_{\valpha} \doteq \int \xextend^{\valpha} d\mu_{\xextend}$ for any $\xextend^{\valpha} \in [\xextend]_\rorder$. Clearly, $\moment_\zero = 1$. The \emph{moment matrix} $\MM_{\rorder}(\moments_{2\rorder}) \in \sym^{\dimbasis{\dimxextend}{\rorder}}$ is the assembly of $\moments_{2\rorder} \in \Real{\dimbasis{\dimxextend}{2\rorder}}$ into a symmetric matrix whose rows and columns are indexed by $\monoleq{\xextend}{\rorder}$, with the $(i,j)$-th entry being:
\bea
\left[  \MM_{\rorder}(\moments_{2\rorder}) \right]_{ij} = \moment_{\valpha_i + \valpha_j},
\eea  
where $\valpha_i$ and $\valpha_j$ are the exponents of the $i$-th and the $j$-th monomials in $\monoleq{\xextend}{\rorder}$. Given a moment matrix $\MM_{\rorder}(\moments_{2\rorder})$ and a polynomial $q = \sum_{\vbeta} c(\vbeta) \xextend^{\vbeta} \in \polyringin{\xextend}$, define the \emph{localizing matrix} to be $\MM_\rorder(q \moments_{2\rorder}) \in \sym^{\dimbasis{\dimxextend}{\rorder}}$, whose $(i,j)$-th entry is:
\bea
\left[ \MM_\rorder(q \moments_{2\rorder}) \right]_{ij} = \sum_{\vbeta} c(\vbeta) \moment_{\valpha_i + \valpha_j + \vbeta}.
\eea}

The following theorem describes  
Lasserre's hierarchy of dense moment relaxations for the POP~\eqref{eq:pop}.
\begin{theorem}[Dense Moment Relaxation~\cite{lasserre10book-momentsOpt}]\label{thm:denseMoment}
The dense moment relaxation at order $\rorder \; (\geq 2)$ for the POP~\eqref{eq:pop} is the following SDP:
\bea
p^\star_\rorder	 = \min_{\moments_{2\rorder} \in \Real{\dimbasis{\dimxextend}{2\rorder}} }& \sum_{\valpha \in \calF} c(\valpha) \moment_{\valpha} \label{eq:denseMoment} \\
\subject & \moment_{\zero} = 1, \MM_{\rorder}(\moments_{2\rorder}) \succeq 0, \nonumber \\
& \MM_{\rorder - 1}(h_j \moments_{2\rorder-2}) = \zero, j=1,\dots,\nrEq, \nonumber \\
& \MM_{\rorder - 1}(g_k \moments_{2\rorder-2}) \succeq 0, k = 1,\dots,\nrIneq. \nonumber
\eea
where $\moments_{2\rorder} = \{ \moment_{\valpha} \} \in \Real{\dimbasis{\dimxextend}{2\rorder}}$ is the vector of \emph{moments} up to degree $2\rorder$,
$c(\valpha)$ are the real coefficients of the objective function $f(\xextend)$ corresponding to monomials $\xextend^{\valpha}$ in~\eqref{eq:pop},
$\MM_{\rorder}(\moments_{2\rorder}) \in \sym^{\dimbasis{\dimxextend}{\rorder}}$ is the \emph{moment matrix}, and $\MM_{\rorder - 1}(h_j \moments_{2\rorder-2}),\MM_{\rorder - 1}(g_k \moments_{2\rorder-2}) \in \sym^{\dimbasis{\dimxextend}{\rorder-1}}$ are the \emph{localizing matrices}.\footnote{We refer the non-expert reader to~\cite{lasserre10book-momentsOpt} for a
comprehensive introduction to moment relaxations, and provide extra definitions and accessible examples in the \supp.\label{footnote2} }
 Let $\moments_{2\rorder}^\star$ be the optimal solution of~\eqref{eq:denseMoment}, then the following holds true: 
\begin{enumerate}[label=(\roman*)]
\item (lower bound) $p_\rorder^\star$ is a lower bound for $f^\star$,~\ie~$p_\rorder^\star \leq f^\star,\forall \rorder \geq 2$;
\item (finite convergence) $p_{\rorder_1}^\star \leq p_{\rorder_2}^\star $ for any $\rorder_1 \leq \rorder_2$, and $p_\rorder^\star = f^\star$ at some finite $\rorder$;
\item \label{prop:dense-certificate} (optimality certificate) if $\rank{\MM_{\rorder}(\moments_{2\rorder}^\star)} = 1$, then $\moments_{\rorder}^\star = \monoleq{\xextend^\star}{\rorder}$, where $\xextend^\star$ is the unique global minimizer of the POP~\eqref{eq:pop}, and the relaxation is said to be \emph{tight};
\item \label{prop:dense-gap} (rounding and duality gap) if $\rank{\MM_{\rorder}(\moments_{2\rorder}^\star)} > 1$, 
let $\hatxextend$ be a rounded estimate computed from a rank-1 approximation of $\MM_{\rorder}(\moments_{2\rorder}^\star)$,\footref{footnote2} 
and 
\optional{let $\hatxextend = {\proj_{\calxextend} (\vv_{\xextend})}$ where $\vv \in \Real{m(\rorder)}$ is the eigenvector of $\MM_{\rorder}(\moments_{2\rorder}^\star)$ corresponding to the largest eigenvalue, with the first entry normalized to 1, $\vv_{\xextend}$ extracts the sub-vector of $\vv$ at locations corresponding to $\xextend$ in $\monoleq{\xextend}{\rorder}$, and $\proj_{\calxextend}$ projects $\vv_{\xextend}$ to $\calxextend$ 
denote $\hatf = f(\hatxextend)$. }{denote $\hatf = f(\hatxextend)$.}
Then, $p_\rorder^\star \leq f^\star \leq \hatf$ and we say that the relative duality gap is $\eta_\rorder = (\hatf - p^\star_\rorder)/ \hatf$. 
\end{enumerate}
\end{theorem}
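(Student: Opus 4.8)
The plan is to derive all four claims from the standard moment--sums-of-squares (SOS) duality underlying Lasserre's hierarchy, proving the elementary parts directly and isolating the one genuinely deep ingredient. First, for the \emph{lower bound} in~(i), I would exhibit an explicit feasible point of~\eqref{eq:denseMoment} for every feasible $\vp$ of the POP~\eqref{eq:pop}: take the moment vector of the Dirac measure at $\vp$, i.e. $\moment_{\valpha}=\vp^{\valpha}$. Then $\MM_{\rorder}(\vz)=\monoleq{\vp}{\rorder}\monoleq{\vp}{\rorder}\tran\succeq 0$ is rank one, the equality localizing matrices vanish because $h_j(\vp)=0$, and the inequality localizing matrices equal $g_k(\vp)\,\monoleq{\vp}{\rorder-1}\monoleq{\vp}{\rorder-1}\tran\succeq 0$ since $g_k(\vp)\ge 0$; the objective evaluates to $\sum_{\valpha}c(\valpha)\vp^{\valpha}=f(\vp)$. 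Hence~\eqref{eq:denseMoment} minimizes over a feasible set containing the Dirac moment vector of every feasible point, so $p^\star_\rorder\le\min_{\vp}f(\vp)=f^\star$.

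For the \emph{monotonicity} half of~(ii), I would observe that truncating any order-$\rorder_2$ feasible $\vz$ to its entries of degree $\le 2\rorder_1$ yields an order-$\rorder_1$ feasible point: the order-$\rorder_1$ moment and localizing matrices are principal submatrices of the order-$\rorder_2$ ones, so positive semidefiniteness and the equality constraints are inherited, and the objective is unchanged since $\deg{f}\le 3\le 2\rorder_1$ for $\rorder_1\ge 2$. This gives $p^\star_{\rorder_1}\le p^\star_{\rorder_2}$, and combined with~(i) the sequence $\{p^\star_\rorder\}$ is nondecreasing and bounded above by $f^\star$.

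The hard part is the \emph{finite convergence} in~(ii). I would first establish asymptotic convergence through the dual SOS program: for any $\epsilon>0$ the polynomial $f-(f^\star-\epsilon)$ is strictly positive on the feasible set $\calP$, which is Archimedean by Proposition~\ref{prop:pop}\ref{prop:pop-archimedean}; Putinar's Positivstellensatz then furnishes a representation $f-(f^\star-\epsilon)=\sigma_0+\sum_k\sigma_k g_k+\sum_j\lambda_j h_j$ with SOS $\sigma_0,\sigma_k$ and polynomials $\lambda_j$ of some finite degree. This certificate is feasible for the degree-$\rorder$ SOS relaxation once $2\rorder$ exceeds the degrees involved, so $f^\star-\epsilon\le d^\star_\rorder\le p^\star_\rorder$; letting $\epsilon\to 0$ and invoking~(i) yields $p^\star_\rorder\to f^\star$ (strong duality $d^\star_\rorder=p^\star_\rorder$ following from an Archimedean Slater argument). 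Upgrading this to \emph{finite} exactness is the real obstacle, since asymptotic convergence alone does not guarantee a finite truncation order; here I would rely on the extra structure of~\eqref{eq:pop} --- the binary constraints $\theta_i^2=1$ together with a genericity/flat-extension (Curto--Fialkow) criterion --- to conclude $p^\star_\rorder=f^\star$ at some finite $\rorder$, while noting that in practice this already occurs at the minimum order $\rorder=2$, a phenomenon this paper verifies empirically rather than proves.

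Finally, the certificate~(iii) and the gap~(iv) are short once the Dirac embedding is in hand. For~(iii), if $\rank{\MM_{\rorder}(\vz^\star)}=1$ then, using $\moment_{\zero}=1$, I can factor $\MM_{\rorder}(\vz^\star)=\vv\vv\tran$ with the leading entry of $\vv$ normalized to one; reading $\vp^\star$ off the degree-one block forces $\moment^\star_{\valpha}=(\vp^\star)^{\valpha}$, so $\vz^\star$ is the Dirac moment vector at $\vp^\star$. The $(1,1)$ entries of the localizing-matrix constraints give $h_j(\vp^\star)=0$ and $g_k(\vp^\star)\ge 0$, so $\vp^\star\in\calP$, and the objective equals $f(\vp^\star)$; the sandwich $f^\star\le f(\vp^\star)=p^\star_\rorder\le f^\star$ (the last inequality by~(i)) shows $\vp^\star$ is a global minimizer and the relaxation is tight, with uniqueness following from the single-atom measure. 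For~(iv), since the rounded $\hatxextend$ is projected onto $\calP$ it is feasible, hence $\hatf=f(\hatxextend)\ge f^\star$; together with~(i) this gives $p^\star_\rorder\le f^\star\le\hatf$ and makes $\eta_\rorder=(\hatf-p^\star_\rorder)/\hatf$ a valid relative suboptimality bound.
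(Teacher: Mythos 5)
Your proposal is correct in substance and follows the same standard Lasserre machinery the paper relies on, but it packages several steps more elementarily, so a comparison is worth making. Where the paper reformulates the POP as a generalized moment problem over probability measures and then argues that the SDP constraints are merely \emph{necessary} conditions for a truncated sequence to admit a representing measure (hence a lower bound), you exhibit the Dirac moment vector of each feasible point as an explicit feasible point of~\eqref{eq:denseMoment}; this is a leaner route to (i) and also powers your proofs of (iii) and (iv). For the rank-one certificate (iii), the paper invokes the truncated $\mathbb{K}$-moment / flat-extension theorem of Curto--Fialkow, whereas you argue directly from the Hankel structure of a rank-one moment matrix with $\moment_{\zero}=1$ that all moments are powers of the degree-one block and that the $(1,1)$ entries of the localizing matrices force feasibility of the extracted point; that is a correct and more self-contained argument. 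Your monotonicity-by-truncation argument for the first half of (ii) is also fine and is only asserted, not argued, in the paper.

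Two caveats. First, for \emph{finite} convergence you gesture at ``binary constraints plus a Curto--Fialkow criterion,'' but that is not the right tool: flat extension is a certificate for a \emph{given} truncated sequence, not a proof that the hierarchy closes at a finite order, and the continuous $\SOthree$ block prevents any finiteness-of-the-variety argument. The correct import --- and the one the paper uses --- is the finite-convergence theorem of Nie~\cite{Nie14mp-finiteConvergenceLassere}, which applies under the Archimedean property established in Proposition~\ref{prop:pop} together with (generic) optimality conditions at the minimizers; like you, the paper defers this step to the literature rather than proving it. Second, in (iii) your claim that uniqueness of the global minimizer ``follows from the single-atom measure'' is too quick: a rank-one \emph{optimal} solution certifies that the extracted point is \emph{a} global minimizer, but uniqueness requires the additional fact (noted in the paper's supplementary footnote) that interior-point solvers return a maximum-rank optimal solution, so multiple minimizers would force $\rank{\MM_{\rorder}(\moments_{2\rorder}^\star)}>1$. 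Neither issue undermines the overall correctness of your plan, but both should be stated precisely.
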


Theorem~\ref{thm:denseMoment} is a standard application of Lasserre's hierarchy~\cite{Lasserre01siopt-LasserreHierarchy} and the finite convergence result~\cite{Nie14mp-finiteConvergenceLassere} to problem~\eqref{eq:pop}. 
\optional{We remark that computing the rank of $\MM_{\rorder}(\moments_{2\rorder}^\star)$ is subject to numerical inaccuracy and instead we use the relative duality gap $\eta_\rorder$ as a metric for evaluating tightness. In Section~\ref{sec:experiments}, we show that the relaxation is 
empirically tight at the minimum relaxation order $\rorder = 2$.}{\finalLC{Although Lasserre's hierarchy is guaranteed to be tight at some finite $\rorder$, the relaxation becomes computationally impractical for large $\rorder$. Therefore, it is desirable to obtain tight relaxations with small $\rorder$.} In the \supp, we show that the dense moment relaxation is 
empirically tight at the \emph{minimum} relaxation order $\rorder = 2$ for Examples~\ref{eg:singleRotationAveraging}-\ref{eg:meshRegistration}, \finalLC{despite the fact that the POPs have both binary variables (a notoriously challenging setup~\cite{lasserre01ipco-lasserrehierarchybinary}) and non-convex constraints $\MR \in \SOthree$.} } 

\subsection{Basis Reduction}
\label{sec:basisReduction}
Although the dense relaxation is tight at $\rorder = 2$, the size of the SDP~\eqref{eq:denseMoment} (\ie~the size of the moment matrix $\MM_{\rorder}(\moments_{2\rorder})$ for $\rorder=2$) is $\nchoosek{n+N+2}{2}$, which grows \emph{quadratically} in the number of measurements $N$ and quickly becomes intractable even for small $N$ (\eg~$N=20$). In this section, we exploit the \emph{monomial sparsity} of the POP~\eqref{eq:pop} and use basis reduction to construct a sparse moment relaxation whose size grows linearly with $N$.

\begin{theorem}[Sparse Moment Relaxation]\label{thm:sparseMoment}
\edit{Define $\rbasis{\xextend} \doteq [1,\vxx\tran,\vtheta\tran,\monoindeg{\vxx}{2}\tran,\vtheta\tran \kron \vxx\tran]\tran$ to be a reduced set of monomials, with $\rbasisset$ being the set of monomial exponents in $\rbasis{\xextend}$,~\ie~$\rbasisset \doteq \{\valpha \in \nnint^{\dimxextend}: \xextend^{\valpha} \in  \rbasis{\xextend} \}$. Similarly, define $\monoleq{\xextend}{\rbasisset_x} \doteq [1,\vxx\tran]\tran$ and let $\rbasisset_x$ be its set of exponents. 
Let $2\rbasisset \doteq \{ \valpha \in \nnint^{\dimxextend}: \valpha = \valpha_1 + \valpha_2, \valpha_1,\valpha_2 \in \rbasisset \}$ (resp. $2\rbasisset_x$) be the Minkowski sum of $\rbasisset$ (resp. $\rbasisset_x$) with itself.
Define $\moments_{2\rbasisset} \in \Real{\dimbasis{}{2\rbasisset}}$ (resp. $\moments_{2\rbasisset_x} \in \Real{\dimbasis{}{2\rbasisset_x}} $) to be the vector of moments for all monomials in $\left[\xextend\right]_{2\rbasisset}$ (resp. $\monoleq{\xextend}{2\rbasisset_x}$), and $\MM_{\rbasisset}(\moments_{2\rbasisset}) \in \sym^{\dimbasis{}{\rbasisset}}$ 
(resp. $\MM_{\rbasisset_x}(\moments_{2\rbasisset_x}) \in \sym^{\dimbasis{}{\rbasisset_x}}$) to be the moment matrix that assembles $\moments_{2\rbasisset}$ (resp. $\moments_{2\rbasisset_x}$) in rows and columns indexed by $\rbasis{\xextend}$ (resp. $\monoleq{\xextend}{\rbasisset_x}$). }
Then, the sparse moment relaxation is:
\bea
p_{\rbasisset}^\star = \min_{\moments_{2\rbasisset} \in \Real{\dimbasis{}{2\rbasisset}}} & \sum_{\valpha \in \calF} c(\valpha) \moment_{\valpha} \label{eq:sparseMoment} \\
\subject & \moment_\zero = 1, \MM_{\rbasisset}(\moments_{2\rbasisset}) \succeq 0, \nonumber \\
& \MM_1(h \moments_2) = \zero, \forall h \in \vh^{x}; \quad \MM_{\rbasisset_x}(h \moments_{2\rbasisset_x}) = \zero, \forall h \in \vh^{\theta}, \nonumber \\
& \MM_1(g \moments_2) \succeq 0, \forall g \in \vg, \nonumber
\eea
where $\vh^x,\vh^{\theta},\vg$ are defined as in Proposition~\ref{prop:pop}. Moreover, we have $p^\star_{\rbasisset} \leq p^\star_2 \leq f^\star$ and properties~\ref{prop:dense-certificate}-\ref{prop:dense-gap} in Theorem~\ref{thm:denseMoment}
hold for the sparse relaxation~\eqref{eq:sparseMoment}.
\end{theorem}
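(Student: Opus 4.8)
The plan is to split the statement into two independent parts: the inequality chain $p^\star_{\rbasisset} \leq p^\star_2 \leq f^\star$ on the primal side, and the optimality-certificate and duality-gap properties \ref{prop:dense-certificate}--\ref{prop:dense-gap}, which I would establish by transporting the arguments behind Theorem~\ref{thm:denseMoment} to the \emph{reduced} moment matrix $\MM_{\rbasisset}(\moments_{2\rbasisset})$. The right inequality $p^\star_2 \leq f^\star$ requires no new work: it is the lower-bound property (i) of Theorem~\ref{thm:denseMoment} at order $\rorder=2$. Hence the only genuinely new content is the left inequality $p^\star_{\rbasisset} \leq p^\star_2$ and the re-derivation of the two properties for the smaller SDP.

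To prove $p^\star_{\rbasisset} \leq p^\star_2$, I would show that~\eqref{eq:sparseMoment} is a \emph{further} relaxation of the dense order-$2$ relaxation~\eqref{eq:denseMoment}, obtained by discarding moments and shrinking constraints. Let $\moments_4^\star$ be an optimizer of~\eqref{eq:denseMoment} with value $p^\star_2$, and let $\moments_{2\rbasisset}$ be its restriction to the coordinates indexed by $2\rbasisset$. I claim this restriction is feasible for~\eqref{eq:sparseMoment} with identical objective. The objective is preserved because, by the objective sparsity in Proposition~\ref{prop:pop}, property~\ref{prop:pop-objective}, every exponent $\valpha \in \calF$ lies in $2\rbasisset$: each monomial of $f_i \in \polyringin{\vxx,\theta_i}$ has degree at most $3$ and therefore factors as a product of two elements of $\rbasis{\xextend}$, \eg~$\theta_i \vxx^{\valpha'} = (\theta_i x_j)\cdot x_k$ and $\vxx^{\valpha''} = \monoindeg{\vxx}{2}\cdot x_l$. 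Feasibility then follows constraint-by-constraint: $\MM_{\rbasisset}(\moments_{2\rbasisset})$ is a principal submatrix of $\MM_2(\moments_4)$ since $\rbasis{\xextend} \subseteq \monoleq{\xextend}{2}$, so it inherits positive semidefiniteness; the equalities $\MM_1(h\moments_2)=\zero$ for $h \in \vh^x$ and inequalities $\MM_1(g\moments_2)\succeq 0$ for $g \in \vg$ are a subset of the dense localizing constraints; and for $h \in \vh^{\theta}$ the sparse constraint $\MM_{\rbasisset_x}(h\moments_{2\rbasisset_x})=\zero$ is a principal submatrix of the dense $\MM_1(h\moments_2)=\zero$, because $\monoleq{\xextend}{\rbasisset_x}=[1,\vxx\tran]\tran$ is contained in the order-$1$ basis. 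The constraint sparsity in Proposition~\ref{prop:pop}, property~\ref{prop:pop-constraint}, guarantees that all moments entering these reduced localizing matrices again live in $2\rbasisset$, so the restriction is well defined. Minimizing over the larger sparse feasible set yields $p^\star_{\rbasisset} \leq p^\star_2$.

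For the optimality properties, suppose $\rank{\MM_{\rbasisset}(\moments_{2\rbasisset}^\star)}=1$. Using $\moment_\zero=1$, the matrix factors as $\rbasis{\xextend^\star}\rbasis{\xextend^\star}\tran$, and rank-$1$ forces every degree-$2$ and product coordinate to equal the corresponding product of degree-$1$ coordinates, so the geometric model $\vxx^\star$ and the binary block $\vtheta^\star$ can be read off unambiguously from the first-order entries to assemble $\xextend^\star$. Inspecting the $(1,1)$ entry of each localizing constraint gives $h(\xextend^\star)=0$ for all $h \in \vh^x \cup \vh^{\theta}$ and $g(\xextend^\star)\geq 0$ for all $g \in \vg$, so $\xextend^\star$ is feasible for the POP~\eqref{eq:pop} and $f(\xextend^\star)\geq f^\star$. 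Since the objective at this rank-$1$ point equals $p^\star_{\rbasisset}$, the sandwich $p^\star_{\rbasisset} \leq f^\star \leq f(\xextend^\star) = p^\star_{\rbasisset}$ collapses to equalities, certifying that $\xextend^\star$ is a global minimizer and that the relaxation is tight, \ie~property~\ref{prop:dense-certificate}. When the rank exceeds $1$, the same rounding-and-projection step as in property~\ref{prop:dense-gap} produces a feasible $\hatxextend$ with $p^\star_{\rbasisset} \leq f^\star \leq \hatf$ and relative gap $\eta = (\hatf - p^\star_{\rbasisset})/\hatf$.

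The step I expect to be the main obstacle is the bookkeeping in the second paragraph: verifying that $2\rbasisset$ is rich enough to contain \emph{every} monomial appearing in the objective and in all reduced localizing matrices, and that each sparse constraint is genuinely a principal submatrix (or subset) of a dense one. This hinges entirely on the separability encoded in Proposition~\ref{prop:pop}, property~\ref{prop:pop-constraint}---that $\vg$ and $\vh^x$ involve only $\vxx$ while each $\vh^{\theta_i}$ involves only $\theta_i$---together with the deliberate design of $\rbasis{\xextend}$ so that the outer product $\rbasis{\xextend}\rbasis{\xextend}\tran$ already covers those monomials. Without this structural sparsity the reduced basis would omit a moment needed to express some constraint, the submatrix relationship would break, and the restriction argument (hence the inequality $p^\star_{\rbasisset}\leq p^\star_2$) would fail.
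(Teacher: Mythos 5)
Your proof is correct, and the first half (the chain $p^\star_{\rbasisset}\leq p^\star_2\leq f^\star$ via restriction of a dense-feasible moment vector, checking that the objective's exponents lie in $2\rbasisset$ and that each sparse constraint is a principal submatrix or subset of a dense one) is essentially identical to the paper's argument. Where you genuinely diverge is the optimality certificate. The paper treats this as the delicate step precisely because the rank-one-implies-measure theorem for dense moment matrices does not apply verbatim to a moment matrix indexed by a sparse basis; it resolves this by invoking the generalized flat extension theorem of Laurent and Mourrain: since $\monoleq{\xextend}{\rbasisset_x}=[1,\vxx\tran]\tran$ is connected to $1$ and $\rbasis{\xextend}$ is its closure, a rank-one $\MM_{\rbasisset}(\moments_{2\rbasisset}^\star)$ is a flat extension of $\MM_{\rbasisset_x}$ and hence extends uniquely to a rank-one dense $\MM_2(\moments_4)$, after which the dense certificate (and a separate check that the extended moments satisfy the $\vh^\theta$ localizing constraints) applies. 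You instead argue directly: rank one plus the generalized Hankel structure forces $\MM_{\rbasisset}(\moments_{2\rbasisset}^\star)=\rbasis{\xextend^\star}\rbasis{\xextend^\star}\tran$ with every moment in $2\rbasisset$ an evaluation at $\xextend^\star$, feasibility of $\xextend^\star$ is read off the $(1,1)$ entries of the localizing matrices, and the sandwich $p^\star_{\rbasisset}\leq f^\star\leq f(\xextend^\star)=p^\star_{\rbasisset}$ closes the argument. Both are valid; your route is shorter and self-contained for the rank-one case actually needed, while the paper's route is the one that would generalize to rank $r>1$ (flat extensions yield $r$-atomic representing measures). One small imprecision worth fixing: "degree at most $3$" alone does not guarantee membership in $2\rbasisset$ (e.g.\ $\theta_i\theta_j\theta_k$ fails); what you actually need, and what your examples implicitly use, is the specific form of $f_i\in\polyringin{\vxx,\theta_i}$ with at most one $\theta$ factor and degree at most two in $\vxx$.
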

The key idea behind Theorem~\ref{thm:sparseMoment} is to reduce the size of the SDP by only considering the reduced monomial basis $\rbasis{\xextend}$, which essentially removes all the monomials of the form $\theta_i\theta_j$ that do not appear in $f$ as per property \ref{prop:pop-objective} in Proposition~\ref{prop:pop}.
The size of the SDP~\eqref{eq:sparseMoment} (\ie~the size of $\MM_{\rbasisset}(\moment_{2\rbasisset})$) is $\dimbasis{}{\rbasisset} = \frac{(n+1)(n+2)}{2} + (1+n)N$, which grows linearly in $N$. In Section~\ref{sec:experiments}, we show that the sparse moment relaxation~\eqref{eq:sparseMoment} is also tight, even in the presence of a large amount of outliers. \finalLC{Although there exist  other efficient sparse variants~\cite{Waki06jopt-SOSSparsity,Weisser18mpc-SBSOS} of Lasserre's hierarchy that exploit correlative sparsity, in the \supp we show they break the tightness at the minimum relaxation order and produce poor estimates. Nevertheless, they can be used to bootstrap our dual certifiers (Section~\ref{sec:drs}).}

\shrink
\section{The Dual View: Fast Optimality Certification}
\label{sec:dualCertification}
\shrink

Despite scaling linearly in $N$, the sparse relaxation~\eqref{eq:sparseMoment} is still too large to be solved efficiently using current interior point methods (IPM)~\cite{Nesterov18book-convexOptimization} when $N > 20$. 
On the other hand, fast heuristics such as graduated non-convexity~\cite{Yang20ral-GNC} can compute globally optimal solutions to the POP~\eqref{eq:pop} with high probability of success. In this section, we show that, by taking the dual perspective of sums-of-squares (SOS) relaxations, we can develop efficient \emph{certifiers} to verify the optimality of a candidate solution $(\hatxextend,\hatf)$ for large $N$ (\eg~$N=100$), for which the SDP relaxation~\eqref{eq:sparseMoment} is not even implementable. 

\subsection{Sums-of-Squares Relaxation}
A candidate solution $(\hatxextend,\hatf)$ is globally optimal for the POP~\eqref{eq:pop} if and only if $f(\xextend) - \hatf \geq 0, \forall \xextend \in \calxextend$. 
However, testing \emph{nonnegativity} of a polynomial on a constraint set is NP-hard~\cite{Blekherman12Book-sdpandConvexAlgebraicGeometry}, so instead we test if the polynomial is SOS on the constraint set and provide a sufficient condition for global optimality.
\begin{theorem}[Sufficient Condition for Global Optimality] \label{thm:sufficientOptimalityCondition}
Given any candidate solution $(\hatxextend,\hatf)$ to the POP~\eqref{eq:pop}, if the following optimization is \emph{feasible} (\ie has at least one solution):
\bea
\text{find} & \vlambda_j^x \in \Real{\dimbasis{\dimxextend}{2}}, \vlambda_j^\theta \in \Real{\dimbasis{n}{2}}, \MS_0 \in \psd^{\dimbasis{}{\rbasisset}}, \MS_k \in \psd^{\dimbasis{\dimxextend}{1}}   \label{eq:sosfeasiblity}\\
 \subject &  \hspace{-3mm} \displaystyle f(\xextend)\! - \! \hatf \! - \!\!\!\! \sum_{h_j \in \vh^x} \!\! h_j \! \left(\monoleq{\xextend}{2}\tran\vlambda_j^x \right) \! - \!\!\!\! \sum_{h_j \in \vh^{\theta} } \!\! h_j \! \left(\monoleq{\vxx}{2}\tran \vlambda_j^\theta \right) \! = \! \monoleq{\xextend}{\rbasisset}\tran \MS_0 \monoleq{\xextend}{\rbasisset} \! + \!\! \sum_{k=1}^{\nrIneq} g_k \! \left(\monoleq{\xextend}{1}\tran \MS_k \monoleq{\xextend}{1} \right) \! ,\edit{ \forall \xextend}, \label{eq:matchingcoefficients}
\eea
then $\hatf$ (resp. $\hatxextend$) is the global minimum (resp. global minimizer) of the POP~\eqref{eq:pop}. Moreover, problem~\eqref{eq:sosfeasiblity} can be written compactly as a feasibility SDP:
\bea
\text{find} \quad \dualVar, \quad   
\subject \quad  \dualVar \in \calK \cap \calA, \label{eq:sdpfeasibility}
\eea 
where $\dualVar = [(\vlambda_1^x)\tran,\dots,(\vlambda^x_{|\vh^x|})\tran,(\vlambda_1^\theta)\tran,\dots,(\vlambda^\theta_{|\vh^\theta|})\tran,\svec{\MS_1}\tran,\dots,\svec{\MS_{\nrIneq}}\tran,\svec{\MS_0}\tran]\tran$ concatenates all variables in~\eqref{eq:sosfeasiblity}, $\calK$ defines a convex cone, and $\calA \doteq \left\{ \dualVar: \MA\dualVar = \vb \right\}$ defines an affine subspace, \edit{where $\vb$ is a vector and $\MA$ is a matrix} satisfying the \emph{partial orthogonality} property~\cite{Zheng18TAC-partialOrthogonality,Bertsimas13OMS-acceleratedSOSRelaxation}.
\end{theorem}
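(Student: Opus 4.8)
The plan is to treat the two claims separately: first the sufficiency of feasibility of~\eqref{eq:sosfeasiblity} for global optimality, then the reformulation as the conic feasibility problem~\eqref{eq:sdpfeasibility}.

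For the sufficiency claim I would argue directly by evaluating the polynomial identity~\eqref{eq:matchingcoefficients} on the feasible set $\calxextend$. Assume a tuple $(\vlambda_j^x,\vlambda_j^\theta,\MS_0,\MS_k)$ satisfying~\eqref{eq:sosfeasiblity} exists, take any $\xextend \in \calxextend$, and recall that a candidate solution carries $\hatf = f(\hatxextend)$ with $\hatxextend$ feasible. Since $h_j(\xextend)=0$ for every equality constraint, both summations carrying the $\vlambda$ multipliers on the left-hand side vanish, leaving exactly $f(\xextend)-\hatf$. On the right-hand side, $\MS_0 \succeq 0$ forces the quadratic form $\monoleq{\xextend}{\rbasisset}\tran \MS_0 \monoleq{\xextend}{\rbasisset}$ to be nonnegative; and for each $k$ the factor $\monoleq{\xextend}{1}\tran \MS_k \monoleq{\xextend}{1}$ is nonnegative because $\MS_k \succeq 0$, while $g_k(\xextend)\ge 0$ by feasibility, so each product is nonnegative. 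Hence $f(\xextend)-\hatf \ge 0$ on all of $\calxextend$, giving the lower bound $\hatf \le f^\star$. Combined with $\hatf=f(\hatxextend)\ge f^\star$, this yields $\hatf=f^\star$, so $\hatf$ is the global minimum and $\hatxextend$ the global minimizer. Only the sign structure of the constraints and the PSD property enter here, so the particular sparse choice of multiplier bases (the reduced set $\rbasisset$ for $\MS_0$ and degree-one monomials for the $\MS_k$) is irrelevant to sufficiency—it governs only whether such a certificate \emph{exists}.

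For the reformulation claim the strategy is to recognize that~\eqref{eq:matchingcoefficients} is a polynomial identity in $\xextend$, so matching the coefficient of each monomial $\xextend^\valpha$ on both sides produces a finite linear system in the unknown entries of the $\vlambda$ vectors and of $\MS_0,\MS_k$. Concatenating these unknowns into $\dualVar$ as in the statement, the requirements $\MS_0\succeq 0$, $\MS_k\succeq 0$ (together with the unconstrained $\vlambda$ blocks) describe the product cone $\calK$, while the coefficient-matching equations are precisely $\MA\dualVar=\vb$ defining $\calA$; feasibility of~\eqref{eq:sosfeasiblity} is therefore identical to $\dualVar\in\calK\cap\calA$. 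To establish partial orthogonality of $(\MA,\vb)$ I would examine how each monomial-matching row acts on the vectorized matrix variables: the coefficient of $\xextend^\valpha$ coming from $\monoleq{\xextend}{\rbasisset}\tran \MS_0 \monoleq{\xextend}{\rbasisset}$ (resp. from the $g_k$ term) collects exactly those entries $[\MS_0]_{ij}$ (resp. $[\MS_k]_{ij}$) whose row/column exponents satisfy $\valpha_i+\valpha_j=\valpha$. Because $(i,j)\mapsto \valpha_i+\valpha_j$ is a function, each matrix entry feeds into a \emph{single} monomial, so distinct monomials pick out disjoint groups of $\svec{\cdot}$ coordinates. After the standard rescaling of off-diagonal symmetric-vectorization entries, this disjointness makes the corresponding rows of $\MA$ orthogonal, yielding the block-diagonal $\MA\MA\tran$ of~\cite{Zheng18TAC-partialOrthogonality,Bertsimas13OMS-acceleratedSOSRelaxation}.

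The main obstacle is verifying partial orthogonality rigorously rather than the optimality argument, which is a routine Positivstellensatz evaluation. The delicate point is the bookkeeping that maps each monomial to its contributing matrix entries and confirms that the index sets $\{(i,j):\valpha_i+\valpha_j=\valpha\}$ attached to different $\valpha$ never share a coordinate of $\svec{\MS_0}$ or $\svec{\MS_k}$, so that no PSD-block variable couples two coefficient-matching rows; the $\vlambda$ columns, which do hit several monomials through the fixed supports of the $h_j$, are handled as the non-orthogonal part that the term \emph{partial} refers to. Once this disjointness is established, orthogonality of the relevant rows of $\MA$—equivalently block-diagonality of $\MA\MA\tran$—follows immediately, and this is exactly the structure exploited later to make the projection onto $\calA$ in Douglas--Rachford Splitting closed-form and cheap.
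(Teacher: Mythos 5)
Your sufficiency argument coincides with the paper's: evaluate the identity~\eqref{eq:matchingcoefficients} at any $\xextend \in \calxextend$, note that the $h_j$-multiplier terms vanish on the feasible set while $\MS_0,\MS_k \succeq 0$ and $g_k(\xextend)\geq 0$ make the right-hand side nonnegative, and conclude $f(\xextend) \geq \hatf$ on $\calxextend$, hence $\hatf = f^\star$ since $\hatxextend$ is feasible. The passage to the conic feasibility problem by matching the coefficient of each monomial $\xextend^{\valpha}$, with $\calK$ the product of a free block and PSD cones and $\calA$ the resulting affine subspace, is also exactly the paper's route.

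The genuine flaw is in your partial-orthogonality bookkeeping. You assert that the coefficient of $\xextend^{\valpha}$ arising from the $g_k$ term collects exactly those entries $[\MS_k]_{ij}$ with $\valpha_i+\valpha_j=\valpha$, and hence that \emph{no} PSD-block variable couples two coefficient-matching rows. This is false for the $\MS_k$ blocks: the relevant expression is $g_k\left(\monoleq{\xextend}{1}\tran \MS_k \monoleq{\xextend}{1}\right)$, so the entry $[\MS_k]_{ij}$ contributes $c_k(\vbeta)\,[\MS_k]_{ij}$ to the coefficient of $\xextend^{\valpha_i+\valpha_j+\vbeta}$ for \emph{every} $\vbeta$ in the support of $g_k$. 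Since each $g_k$ is a non-constant (degree-2) polynomial, each entry of $\MS_k$ feeds several monomial-matching rows, and the corresponding columns of $\MA$ are not row-orthogonal. The paper restricts the claim to the columns indexed by $\svec{\MS_0}$, where the quadratic form is multiplied by nothing, the map $(i,j)\mapsto \valpha_i+\valpha_j$ sends each entry to a single row (via a $0/1$ indicator matrix), and the resulting Gram block $\MA_3\MA_3\tran$ is diagonal and invertible; both the $\vlambda$ columns \emph{and} the $\svec{\MS_k}$ columns belong to the non-orthogonal remainder, which is then handled by the matrix inversion lemma. Relatedly, your closing claim that $\MA\MA\tran$ is block-diagonal overstates the property: what holds, and what makes the projection onto $\calA$ cheap, is that $\MA\MA\tran$ equals an invertible diagonal matrix plus a correction of much smaller rank coming from the non-orthogonal columns. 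Correcting which blocks enjoy the disjointness would make your argument match the paper's.
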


In the \supp, we provide a proof of Theorem~\ref{thm:sufficientOptimalityCondition}. 
Intuitively, 
 if problem~\eqref{eq:sosfeasiblity} is feasible, then for any $\xextend \in \calxextend$, the left-hand side of~\eqref{eq:matchingcoefficients} reduces to $f(\xextend) - \hatf$ (due to $h_j=0$) and the right-hand side of~\eqref{eq:matchingcoefficients} is nonnegative (due to $g_k \geq 0,\MS_0,\MS_k\succeq 0$), producing a certificate that \edit{$f(\xextend) \geq \hatf$}.
 The SOS relaxation~\eqref{eq:sosfeasiblity} also uses basis reduction and it is the dual of the sparse moment relaxation~\eqref{eq:sparseMoment}~\cite{lasserre10book-momentsOpt} with the constraint that $\hatf$ is the global optimum. In SDP~\eqref{eq:sdpfeasibility}, the convex cone $\calK$ corresponds to the PSD constraints in~\eqref{eq:sosfeasiblity} and the affine subspace $\calA$ corresponds to matching coefficients in the equality constraint~\eqref{eq:matchingcoefficients}.
 The partial orthogonality of $\MA$ is a property for SDPs resulting from SOS relaxations and allows efficient \emph{projection} onto the affine subspace $\calA$~\cite{Zheng18TAC-partialOrthogonality,Bertsimas13OMS-acceleratedSOSRelaxation}.

\subsection{Douglas-Rachford Splitting}
\label{sec:drs}
In this section, we propose a first-order method based on \emph{Douglas-Rachford Splitting} (\DRS)~\cite{Combettes11book-proximalSplitting,Jegelka13NIPS-DRSreflection} to solve~\eqref{eq:sdpfeasibility} at scale. 
\DRS iteratively solves~\eqref{eq:sdpfeasibility} by starting at an arbitrary initial point $\dualVar_0$, 
and performing the following  three-step updates (at each iteration $\tau \geq 0$):
\bea
(i)\  \dualVar_\tau^{\calK} = \proj_{\calK} \left(\dualVar_{\tau} \right) ,\quad  
(ii)\ \dualVar^{\calA}_{\tau} = \proj_{\calA} \left(2 \dualVar_\tau^{\calK} - \dualVar_{\tau} \right), \quad 
(iii)\ \dualVar_{\tau+1} = \dualVar_{\tau} + \gamma_{\tau} \left( \dualVar^{\calA}_{\tau} - \dualVar_\tau^{\calK} \right), \label{eq:DRSIterates}
\eea 
where $\proj_{\calK}$ (resp. $\proj_{\calA}$) denotes the orthogonal projection onto $\calK$ (resp. $\calA$) 
and $\gamma_{\tau}$ is a parameter of the algorithm.
\optional{
Towards this goal, we first rewrite problem~\eqref{eq:sdpfeasibility} equivalently as $\min_{\dualVar} \indicator_{\calA}(\dualVar) + \indicator_{\calK} (\dualVar)$, where $\indicator_{\calA}(\dualVar)$ is the indicator function for convex set $\calA$ (\ie~$\indicator_{\calA}(\dualVar) = 0$ if $\dualVar \in \calA$ and $\indicator_{\calA}(\dualVar) = \infty$ if $\dualVar \not\in \calA$). Then we apply \DRS and obtain the following result.
}{}
The rationale behind the use of \DRS to solve the feasibility SDP~\eqref{eq:sdpfeasibility} is that, although finding $\dualVar \in \calK \cap \calA$ is expensive (requires solving a large-scale SDP), finding $\dualVar \in \calK$ and $\dualVar \in \calA$ separately (\ie~projecting onto $\calK$ and $\calA$ separately) is computationally inexpensive~\cite{Henrion12handbook-conicProjection,Bauschke96SIAM-projectionFeasibility,Higham88LA-nearestSPD}.
The following 
result shows how to certify optimality using the \DRS iterations~\eqref{eq:DRSIterates}. 
\begin{theorem}[\DRS for Optimality Certification] \label{thm:DRSOptimalityCertification} 
Consider the \DRS iterations~\eqref{eq:DRSIterates}. Then the following properties hold true: 
(i) If the SDP~\eqref{eq:sdpfeasibility} is feasible, then the sequence $\{ \dualVar_\tau \}_{\tau \geq 0}$ in~\eqref{eq:DRSIterates} converges to a solution of~\eqref{eq:sdpfeasibility} when $0 < \gamma_\tau < 2$;
(ii) Let $\subopt = (\hatf - f^\star)/\hatf$ be the relative suboptimality between $\hatf$ and the global minimum $f^\star$ of the POP~\eqref{eq:pop}, then each \DRS iteration~\eqref{eq:DRSIterates} gives a valid suboptimality upper bound $\suboptbound_\tau$, \ie  $\subopt \leq \suboptbound_{\tau}$, and $\suboptbound_{\tau}$ can be efficiently computed from $\dualVar_\tau^\calA$.
\end{theorem}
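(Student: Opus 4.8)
The plan is to dispatch part (i) by invoking the classical convergence theory for Douglas--Rachford splitting, and to spend the bulk of the effort on constructing the explicit, certifiable suboptimality bound in part (ii). For part (i), I would first recast~\eqref{eq:sdpfeasibility} as the unconstrained problem $\min_{\dualVar} \indicator_{\calA}(\dualVar) + \indicator_{\calK}(\dualVar)$, where both indicator functions are proper, closed, and convex because $\calA$ and $\calK$ are nonempty closed convex sets. I would then observe that the three-step iteration~\eqref{eq:DRSIterates} is \emph{exactly} \DRS applied to this sum, since the proximal operator of the indicator of a set coincides with the orthogonal projection onto that set. Feasibility of~\eqref{eq:sdpfeasibility} guarantees $\calA \cap \calK \neq \emptyset$, so the minimizer set (equivalently the fixed-point set of the \DRS operator) is nonempty, and the standard averaged-operator / Krasnoselskii--Mann convergence result yields convergence of $\{\dualVar_\tau\}$ to a fixed point whose projection onto $\calK$ lies in $\calA \cap \calK$ and hence solves~\eqref{eq:sdpfeasibility}, provided $0 < \gamma_\tau < 2$.

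For part (ii), the key observation is that $\dualVar_\tau^{\calA} = \proj_{\calA}(\cdot) \in \calA$ \emph{by construction at every iteration}, so the multipliers $\{\vlambda_j^x,\vlambda_j^\theta\}$ and matrices $\{\MS_0,\MS_k\}$ read off from $\dualVar_\tau^{\calA}$ satisfy the polynomial identity~\eqref{eq:matchingcoefficients} exactly, even though the $\MS$ matrices need not be PSD. Restricting this identity to any feasible $\xextend \in \calP$ annihilates the equality-constraint terms (since $h_j(\xextend)=0$), leaving $f(\xextend) - \hatf = \monoleq{\xextend}{\rbasisset}\tran \MS_0 \monoleq{\xextend}{\rbasisset} + \sum_{k=1}^{\nrIneq} g_k(\xextend)\big(\monoleq{\xextend}{1}\tran \MS_k \monoleq{\xextend}{1}\big)$. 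I would then lower-bound each quadratic form by its smallest eigenvalue times a squared monomial norm, using $\monoleq{\xextend}{\rbasisset}\tran \MS_0 \monoleq{\xextend}{\rbasisset} \geq \mineig{\MS_0}\,\twonorm{\monoleq{\xextend}{\rbasisset}}^2$, and combining $0 \leq g_k(\xextend) \leq 1$ (the upper bound coming from the $1 \geq g_k$ constraint in~\eqref{eq:pop}) with the analogous estimate on $\MS_k$ to obtain the sign-correct bound $g_k(\xextend)\big(\monoleq{\xextend}{1}\tran \MS_k \monoleq{\xextend}{1}\big) \geq \min\{0,\mineig{\MS_k}\}\,B_1$, where $B_1 \doteq \max_{\xextend \in \calP} \twonorm{\monoleq{\xextend}{1}}^2$ and likewise $B_0 \doteq \max_{\xextend \in \calP} \twonorm{\monoleq{\xextend}{\rbasisset}}^2$.

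These maxima are finite and explicitly computable because the Archimedeanness property~\ref{prop:pop-archimedean} of Proposition~\ref{prop:pop} makes $\calP$ compact, with $\MR \in \SOthree$, $s \in [0,\sub]$, $\twonorm{\vt} \leq T$, and $\theta_i \in \{\pm 1\}$ all bounded. Collecting terms gives, uniformly over $\xextend \in \calP$, the inequality $f(\xextend) - \hatf \geq \min\{0,\mineig{\MS_0}\}\,B_0 + \sum_{k=1}^{\nrIneq} \min\{0,\mineig{\MS_k}\}\,B_1 =: -\delta_\tau \leq 0$; minimizing the left-hand side over $\calP$ yields $f^\star - \hatf \geq -\delta_\tau$, i.e. $\hatf - f^\star \leq \delta_\tau$, and dividing by $\hatf$ produces the certified bound $\subopt \leq \suboptbound_\tau \doteq \delta_\tau / \hatf$. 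Because $\delta_\tau$ involves only the smallest eigenvalues of the small matrices $\MS_0,\MS_1,\dots,\MS_{\nrIneq}$ extracted from $\dualVar_\tau^{\calA}$ together with the precomputed constants $B_0,B_1,\hatf$, it is cheap to evaluate at every iteration; moreover, as $\dualVar_\tau \to \dualVar^\star \in \calK \cap \calA$ the matrices become PSD and $\suboptbound_\tau \to 0$, consistent with part (i).

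I expect the main obstacle to be the careful handling of the localizing terms in the bounding step: since each $\monoleq{\xextend}{1}\tran \MS_k \monoleq{\xextend}{1}$ may be negative, one must use \emph{both} $g_k \geq 0$ and $g_k \leq 1$ to justify $g_k(\cdot) \geq \min\{0,\mineig{\MS_k}\}B_1$, and the compactness constants $B_0,B_1$ have to be derived explicitly from the domain description in each example rather than merely asserted to exist. The remaining steps are routine once this uniform lower bound is established.
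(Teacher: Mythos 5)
Your proposal is correct and follows essentially the same route as the paper: part (i) via the indicator-function reformulation and standard \DRS convergence theory, and part (ii) by exploiting $\dualVar_\tau^{\calA}\in\calA$ to restrict the coefficient-matching identity to feasible points, lower-bounding the quadratic forms by minimum eigenvalues times compactness constants on the monomial norms, and evaluating at $\xextend^\star$. Your use of $\min\{0,\mineig{\MS_0}\}B_0$ for the $\MS_0$ term is in fact slightly more careful than the paper's $\mineig{\MS_0^\tau}M_0^2$ (which is only a valid lower bound when $\mineig{\MS_0^\tau}\leq 0$ or when the monomial norm is exactly $M_0$), but otherwise the arguments coincide.
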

A complete proof of Theorem~\ref{thm:DRSOptimalityCertification} 
is given in the \supp.
The intuition behind Theorem~\ref{thm:DRSOptimalityCertification}(i) is that, 
by using the two projections alternatively (thus, the name ``\emph{splitting}''), the \DRS iterates~\eqref{eq:DRSIterates} \edit{converge} to a solution in $\calK \cap \calA$ if the intersection is nonempty. Moreover, even if the intersection is empty (\eg~when $\hatf$ is not the global minimum), Theorem~\ref{thm:DRSOptimalityCertification}(ii) states that each \DRS iteration is still able to \emph{assess} the suboptimality of $\hatf$, 
which enables the \emph{dual certifiers} to detect wrong candidate solutions (\cf~Section~\ref{sec:experiments:certification}). 
\DRS converges faster than the vanilla \emph{alternating projections to convex sets} used in~\cite{Yang20arxiv-teaser} (\cf~\cite{Bauschke04JAT-averagedAlternatingReflections}). 
Moreover, we further boost convergence speed by initializing \DRS with an initial point $\vd_0$ computed by solving an
inexpensive SOS program with \emph{chordal sparsity}~\cite{Waki06jopt-SOSSparsity,lasserre10book-momentsOpt} (see the \supp for 
implementation details).

\optional{
In theory, one can start \DRS~\eqref{eq:DRSIterates} at any initial condition $\dualVar_0$. However, to speed up \DRS, we compute the initial point $\vd_0$ by solving a cheap SOS program with \emph{chordal sparsity}~\cite{Waki06jopt-SOSSparsity,lasserre10book-momentsOpt}.

\subsection{Chordal Sparse Initialization}
In theory, one can start \DRS~\eqref{eq:DRSIterates} at any initial condition $\dualVar_0$. However, to speed up \DRS, we compute the initial point $\vd_0$ by solving a cheap SOS program with \emph{chordal sparsity}~\cite{Waki06jopt-SOSSparsity,lasserre10book-momentsOpt}.
\begin{proposition}[Chordal Sparse Initialization] \label{prop:chordalSparseInitialization}
Define $\monoleq{\xextend}{\rbasisset_i} \doteq [1,\vxx\tran,\theta_i, \theta_i \vxx\tran]\tran \in \Real{2n+2}$, $\monoleq{\xextend}{1i} \doteq [1,\vxx\tran,\theta_i]\tran \in \Real{n+2}$, as the sparse monomial bases only in $\vxx$ and $\theta_i,i=1,\dots,N$, and define $\sosindeg{\xextend}{2\rbasisset_i},\sosindeg{\xextend}{2i}$ as the set of SOS polynomials parametrized by the sparse bases $\monoleq{\xextend}{\rbasisset_i},\monoleq{\xextend}{1i}$,~\ie~$q \in \sosindeg{\xextend}{2\rbasisset_i}$ if and only if $q= \monoleq{\xextend}{\rbasisset_i}\tran \MQ \monoleq{\xextend}{\rbasisset_i}$ for some $\MQ \in \psd^{2n+2}$ and $q \in \sosindeg{\xextend}{2i}$ if and only if $q = \monoleq{\xextend}{1i}\tran \MQ \monoleq{\xextend}{1i}$ for some $\MQ \in \psd^{n+2}$. Then $\dualVar_0$ is the solution of the SOS program:
\bea
\max & \zeta \label{eq:soschordalsparse}\\
\subject &  f(\xextend) - \zeta - \sum_{h_j \in \vh^{\vxx}} \lambda_j^{\vxx} h_j  - \sum_{h_j \in \vh^{\vtheta} }  \lambda_j^{\vtheta} h_j = s_0 + \sum_{k=1}^l s_k g_k, \\
& \lambda_j^\vxx, \lambda_j^\vtheta \text{ satisfy eq.~}\eqref{eq:lambdaSparsity}, \\
& s_0 = \sum_{i=1}^N s_{0i}, s_{0i} \in \sosindeg{\xextend}{2\rbasisset_i}, s_k = \sum_{i=1}^N s_{ki}, s_{ki} \in \sosindeg{\xextend}{2i},k=1\dots,l. \label{eq:soschordalsparseCon}
\eea
\end{proposition}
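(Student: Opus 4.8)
The plan is to prove the proposition in two stages: (i) show that the program \eqref{eq:soschordalsparse}--\eqref{eq:soschordalsparseCon} is a \emph{sound} (but not necessarily tight) correlative-sparse relaxation of the POP~\eqref{eq:pop}, so that its optimal value $\zeta^\star$ satisfies $\zeta^\star \le f^\star$; and (ii) show that any optimal tuple $(\lambda_j^{\vxx},\lambda_j^{\vtheta},\{s_{0i}\},\{s_{ki}\})$ can be assembled into a vector $\dualVar_0$ of exactly the same shape as $\dualVar$ in~\eqref{eq:sdpfeasibility} with $\dualVar_0 \in \calK$, so that solving~\eqref{eq:soschordalsparse} legitimately produces the warm start used by the \DRS iterations~\eqref{eq:DRSIterates}.

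First I would make the correlative sparsity structure explicit from Proposition~\ref{prop:pop}. The variables split into the $N$ overlapping cliques $C_i \doteq \{\vxx,\theta_i\}$; by property~\ref{prop:pop-objective} we have $f = \sumallpoints f_i$ with $f_i \in \polyringin{\vxx,\theta_i}$ of degree $\le 3$, by property~\ref{prop:pop-constraint} every $g_k \in \polyringin{\vxx}$ and the equalities partition as $\vh^x \subset \polyringin{\vxx}$ and $\vh^{\theta_i} \subset \polyringin{\theta_i}$. Ordering the cliques $C_1,\dots,C_N$ gives $C_i \cap (C_1 \cup \cdots \cup C_{i-1}) = \{\vxx\} \subseteq C_1$, so the \emph{running intersection property} holds. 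Hence, once the multiplier constraint~\eqref{eq:lambdaSparsity} forces $\lambda_j^{\vxx} \in \polyringin{\vxx}$ and $\lambda_j^{\vtheta} \in \polyringin{\theta_i}$, every summand of the identity in~\eqref{eq:soschordalsparse} is supported on a single clique ($f_i$, the clique-$i$ SOS terms $s_{0i}$, and $s_{ki}g_k$ all live in $\polyringin{\vxx,\theta_i}$, while the constant $\zeta$ and the $\vxx$-only $g_k$ may be assigned to any clique because $\vxx$ is shared), so matching coefficients clique-by-clique is consistent. Soundness is then immediate: on the feasible set $\calxextend$ all $h_j$ vanish and the right-hand side $s_0 + \sum_k s_k g_k$ is nonnegative because each $s_{0i} \in \sosindeg{\xextend}{2\rbasisset_i}$ and $s_{ki} \in \sosindeg{\xextend}{2i}$ is SOS and each $g_k \ge 0$, whence $f(\xextend) - \zeta \ge 0$ for all feasible $\xextend$, i.e. $\zeta \le f^\star$, and maximizing gives $\zeta^\star \le f^\star$. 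Attainment of the maximum follows from Archimedeanness (Proposition~\ref{prop:pop}\ref{prop:pop-archimedean}) together with the sparse Positivstellensatz of~\cite{Waki06jopt-SOSSparsity,lasserre10book-momentsOpt}, which also justifies the clique-wise decomposition. I would stress that the clique bases $\monoleq{\xextend}{\rbasisset_i} = [1,\vxx\tran,\theta_i,\theta_i\vxx\tran]\tran$ and $\monoleq{\xextend}{1i} = [1,\vxx\tran,\theta_i]\tran$ are the restrictions of the reduced basis $\rbasis{\xextend}$ of Theorem~\ref{thm:sparseMoment} to clique $i$ (with the pure degree-$2$ $\vxx$ monomials dropped for cheapness), which already reproduces every monomial of $f_i$ and of $s_{ki}g_k$; this is exactly why the program yields only a cheap lower bound rather than a tight one.

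For stage (ii) I would assemble $\dualVar_0$ explicitly. Writing each SOS piece through its Gram matrix, $s_{0i} = \monoleq{\xextend}{\rbasisset_i}\tran \MQ_i \monoleq{\xextend}{\rbasisset_i}$ with $\MQ_i \succeq 0$ and $s_{ki} = \monoleq{\xextend}{1i}\tran \MQ_{ki} \monoleq{\xextend}{1i}$ with $\MQ_{ki} \succeq 0$, let $E_i$ (resp. $F_i$) be the $0/1$ matrix embedding the clique-$i$ basis into the full basis $\rbasis{\xextend}$ (resp. into $\monoleq{\xextend}{1}$). Then $\MS_0 = \sum_i E_i \MQ_i E_i\tran$ and $\MS_k = \sum_i F_i \MQ_{ki} F_i\tran$ are sums of embedded PSD blocks and hence PSD, so stacking the multipliers and $\svec{\MS_0},\svec{\MS_k}$ yields a vector $\dualVar_0 \in \calK$ in precisely the format of~\eqref{eq:sdpfeasibility}. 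Because the coefficient-matching identity in~\eqref{eq:soschordalsparse} holds exactly, $\dualVar_0$ satisfies~\eqref{eq:matchingcoefficients} with $\hatf$ replaced by $\zeta^\star$; it therefore already lies in $\calK$ and differs from the affine space $\calA$ only by the scalar shift $\hatf - \zeta^\star$, so it is close to $\calK \cap \calA$ whenever the candidate is near optimal ($\hatf \approx f^\star \approx \zeta^\star$) --- exactly the high-quality initialization the \DRS of Section~\ref{sec:drs} needs.

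The main obstacle is stage (i): carefully verifying the running-intersection property and invoking the correlative-sparse Positivstellensatz so that the clique-wise SOS decomposition~\eqref{eq:soschordalsparse} is guaranteed to represent $f(\xextend) - \zeta$ without introducing cross-clique monomials. The delicate bookkeeping is the treatment of the shared variable $\vxx$ and the constant $\zeta$, which may be distributed arbitrarily among the cliques, and the check that the reduced clique bases remain closed under the products $s_{ki}g_k$ that enter the identity. Once the clique block structure is fixed, the PSD-embedding argument of stage (ii) is routine.
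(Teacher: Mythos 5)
Your proposal is correct, and it is in fact more rigorous than what the paper itself provides: the paper treats Proposition~\ref{prop:chordalSparseInitialization} essentially as a construction, and its only justification is the informal remark that the clique-wise program~\eqref{eq:soschordalsparse} is more restrictive than the feasibility SOS program~\eqref{eq:sosfeasiblity} (hence $\zeta^\star < p^\star_{\rbasisset} \leq f^\star \leq \hatf$), together with the assembly rules~\eqref{eq:dualVarzero}--\eqref{eq:chordalSparse2}, which leave the positive semidefiniteness of the assembled $\barMS_0^\star,\barMS_k^\star$ implicit. Your stage (i) --- identifying the cliques $\{\vxx,\theta_i\}$, verifying the running intersection property from Proposition~\ref{prop:pop}, and arguing nonnegativity of the right-hand side on $\calxextend$ --- is an actual proof of the lower-bound property that the paper only asserts; your stage (ii) realization $\barMS_0^\star = \sum_i E_i \MQ_i E_i\tran$ (sum of embedded PSD blocks) is exactly the right way to satisfy~\eqref{eq:chordalSparse2} with a certified PSD Gram matrix, and it clarifies a point the supplement's footnote muddles (a sum of embedded PSD blocks is always PSD; the nontrivial direction of chordal decomposition is the converse, which is precisely why~\eqref{eq:soschordalsparse} is only a restriction of~\eqref{eq:sosfeasiblity}). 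Two small caveats. First, attainment of the supremum in~\eqref{eq:soschordalsparse} does not follow from the sparse Positivstellensatz, which governs asymptotic exactness as the relaxation order grows rather than attainment at the fixed reduced degrees used here; the paper sidesteps this by simply taking whatever the interior-point solver returns. Second, the multiplier-sparsity constraint you infer from the undefined reference is stated differently in the paper's supplementary version: the multipliers attached to $\vh^x$ range over full degree-2 polynomials in $\xextend$ (basis $\monoleq{\xextend}{2}$), not only over $\polyringin{\vxx}$; this changes nothing in your soundness argument, since every $h_j$ vanishes on $\calxextend$ regardless of its multiplier, but it does mean the clique-by-clique bookkeeping of the coefficient-matching system is slightly looser than your description suggests.
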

The SOS program~\eqref{eq:soschordalsparse} is the same as the SOS program~\eqref{eq:sosfeasiblity}, except that we have replaced constraint~\eqref{eq:sosSparsity} with constraint~\eqref{eq:soschordalsparseCon}, and we maximize $\zeta$ instead of requiring $\zeta = \hatf$. The advantage of SOS program~\eqref{eq:soschordalsparse} is that it breaks the large PSD constraint of size $\dimbasis{}{\rbasisset}$ (\ie~$s_0 \in \sosindeg{\xextend}{2\rbasisset}$) into $N$ small PSD constraints of fixed size $2n+2$ (\ie~$s_{0i} \in \sosindeg{\xextend}{2\rbasisset_i}$), and can scale to much larger $N$. However, the trade-off is program~\eqref{eq:soschordalsparse} is more restrictive and in general its optimum $\zeta^\star$ cannot certify the global optimality of $\hatf$ (\ie~$\zeta^\star < p^\star_{\rbasisset} \leq f^\star \leq \hatf$). Therefore, we compute $\dualVar_0$ by solving this cheap SOS program~\eqref{eq:soschordalsparse} using IPM-based SDP solvers and refine $\dualVar_0$ by running \DRS for the more powerful SOS program~\eqref{eq:sosfeasiblity}.
}{}

\shrink
\section{Experiments}
\label{sec:experiments}
\shrink

This section shows that 
(i) the sparse moment relaxation~\eqref{eq:sparseMoment} is tight and can be used to solve small problems (\eg~$N=20$);
(ii)  
our \emph{dual optimality certifiers} are effective and scale to larger problems (\eg~$N=100$); 
(iii) our algorithms allow solving realistic satellite pose estimation problems. 


{\bf Implementation.} We model the sparse moment relaxation~\eqref{eq:sparseMoment} 
using YALMIP~\cite{Lofberg04cacsd-yalmip} in Matlab and solve the resulting SDPs using MOSEK~\cite{mosek}. \DRS is implemented in Matlab using $\gamma_{\tau} = 2$.\footnote{The limiting case of $\gamma_{\tau} = 2$ for \DRS is commonly referred to as the \emph{Peaceman-Rachford Splitting} (\PRS)~\cite{Combettes11book-proximalSplitting}. Although theoretically \PRS could diverge, we found it worked well for all our applications.} 

{\bf Setup.} We test primal relaxation and dual certification on {random} problem instances of Examples~\ref{eg:singleRotationAveraging}-\ref{eg:meshRegistration}: single rotation averaging (\singlerotation), 
shape alignment (\shapealign),
point cloud registration (\pointcloud),
and mesh registration (\mesh).
At each Monte Carlo run, we randomly sample a ground truth model $\vxx$ and
generate inliers by perturbing the measurements with Gaussian noise with standard deviation $\sigma$.
We choose $\sigma = 3^\circ$ in \singlerotation, and 
$\sigma = 0.01$ in \shapealign, \pointcloud, and \mesh.
Outliers are generated as arbitrary rotations or vectors (independent on $\vxx$).
 The relative weight between point-to-plane distance and normal-to-normal distance in \mesh is set to $w_i = 1,i=1,\dots,N$. 
 The threshold in problem~\eqref{eq:generalTLS} is set to $\barc=1$ for all applications, and $\beta_i,i=1,\dots,N$, is set to 
 be proportional to the inlier noise. \edit{The interested reader can find more details about the setup in the \supp.}


\newcommand{\mpwfour}{3.5cm}
\newcommand{\myhspace}{\hspace{-4mm}}
\newcommand{\mygap}{\hspace{-3.5mm}}
\newcommand{\mygapcon}{\hspace{0.5mm}}
\newcommand{\mygapsate}{\hspace{1mm}}
\begin{figure}[t]
	\begin{center}
	\begin{minipage}{\textwidth}
	\begin{tabular}{ccccc}%
		\myhspace \mygap \mygap \rotatebox{90}{ {\smaller \singlerotation} } \hspace{-3mm}
	    & \myhspace \mygap
			\begin{minipage}{\mpwfour}%
			\centering%
			\includegraphics[width=\columnwidth]{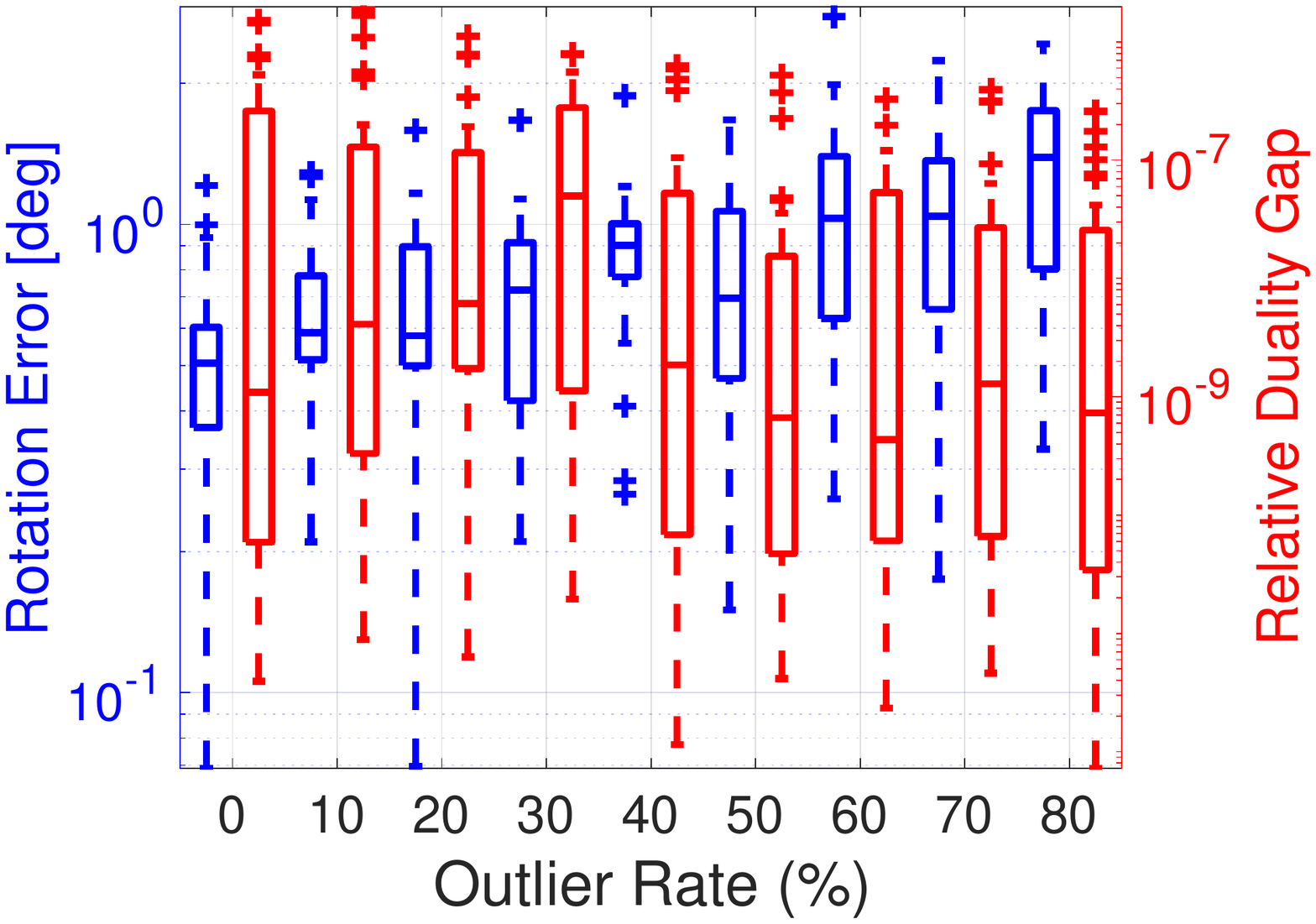}
			\end{minipage}
		&  \myhspace
			\begin{minipage}{\mpwfour}%
			\centering%
			\includegraphics[width=\columnwidth]{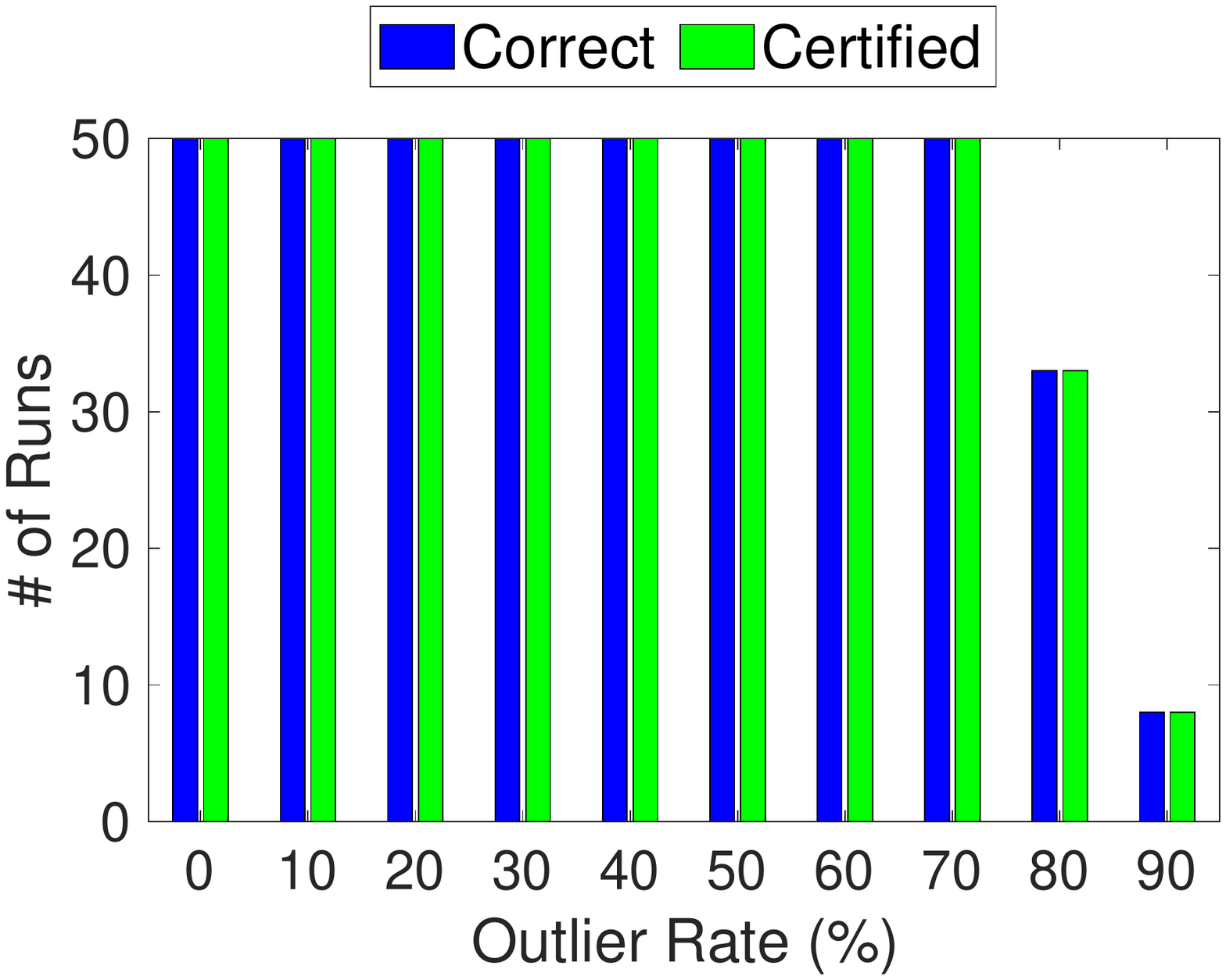}
			\end{minipage}
		&  \myhspace \mygapcon
			\begin{minipage}{\mpwfour}%
			\centering%
			\includegraphics[width=\columnwidth]{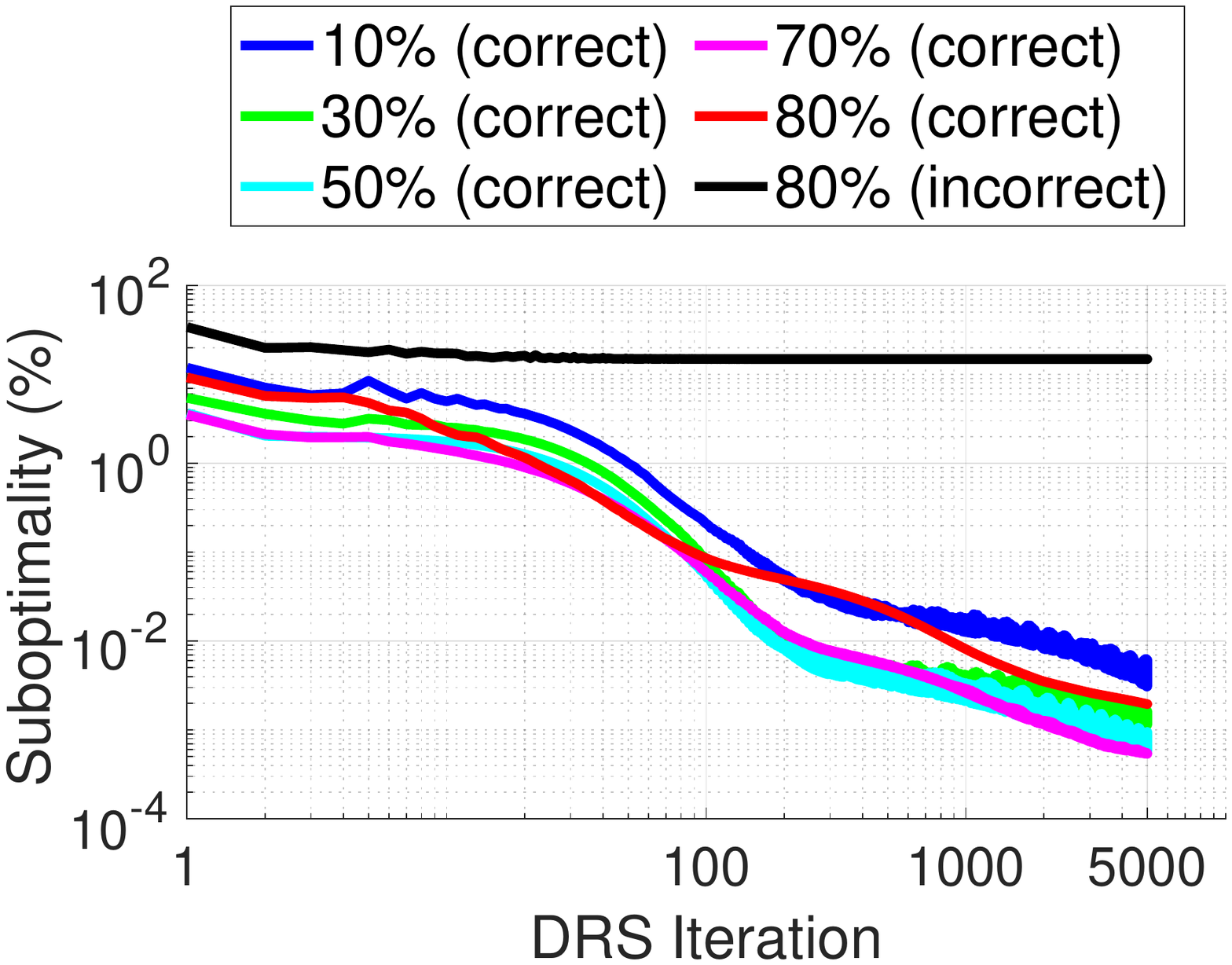}
			\end{minipage}
		&  \myhspace \mygapsate
			\begin{minipage}{\mpwfour}%
			\centering%
			\includegraphics[width=\columnwidth]{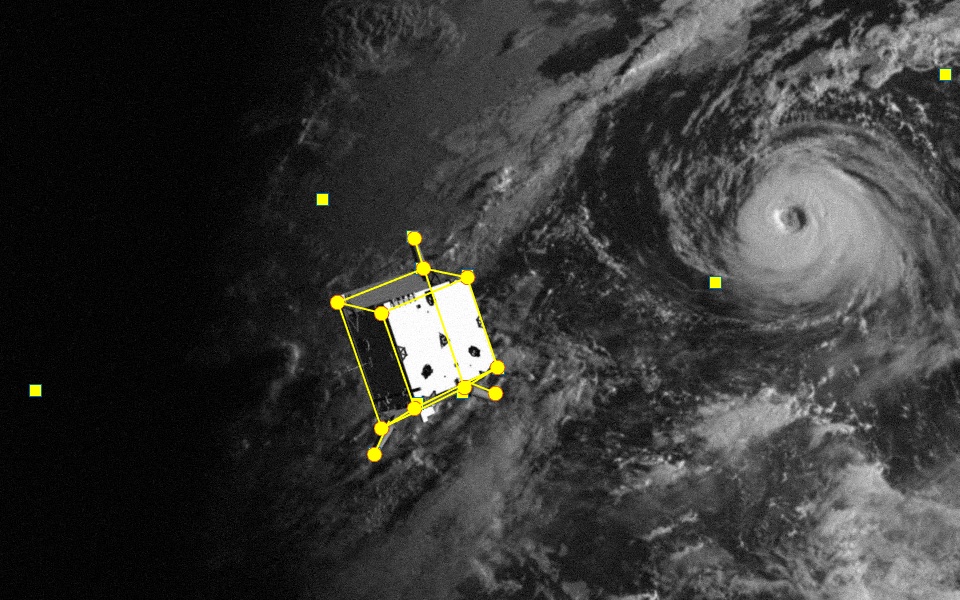} \\
			\vspace{-1mm}
			{\tiny $62\%$ outliers, correct, certified}
			\end{minipage}
		\\
		\myhspace \mygap \mygap \rotatebox{90}{ {\smaller \shapealign} } \hspace{-3mm}
		& \myhspace \mygap
			\begin{minipage}{\mpwfour}%
			\centering%
			\includegraphics[width=\columnwidth]{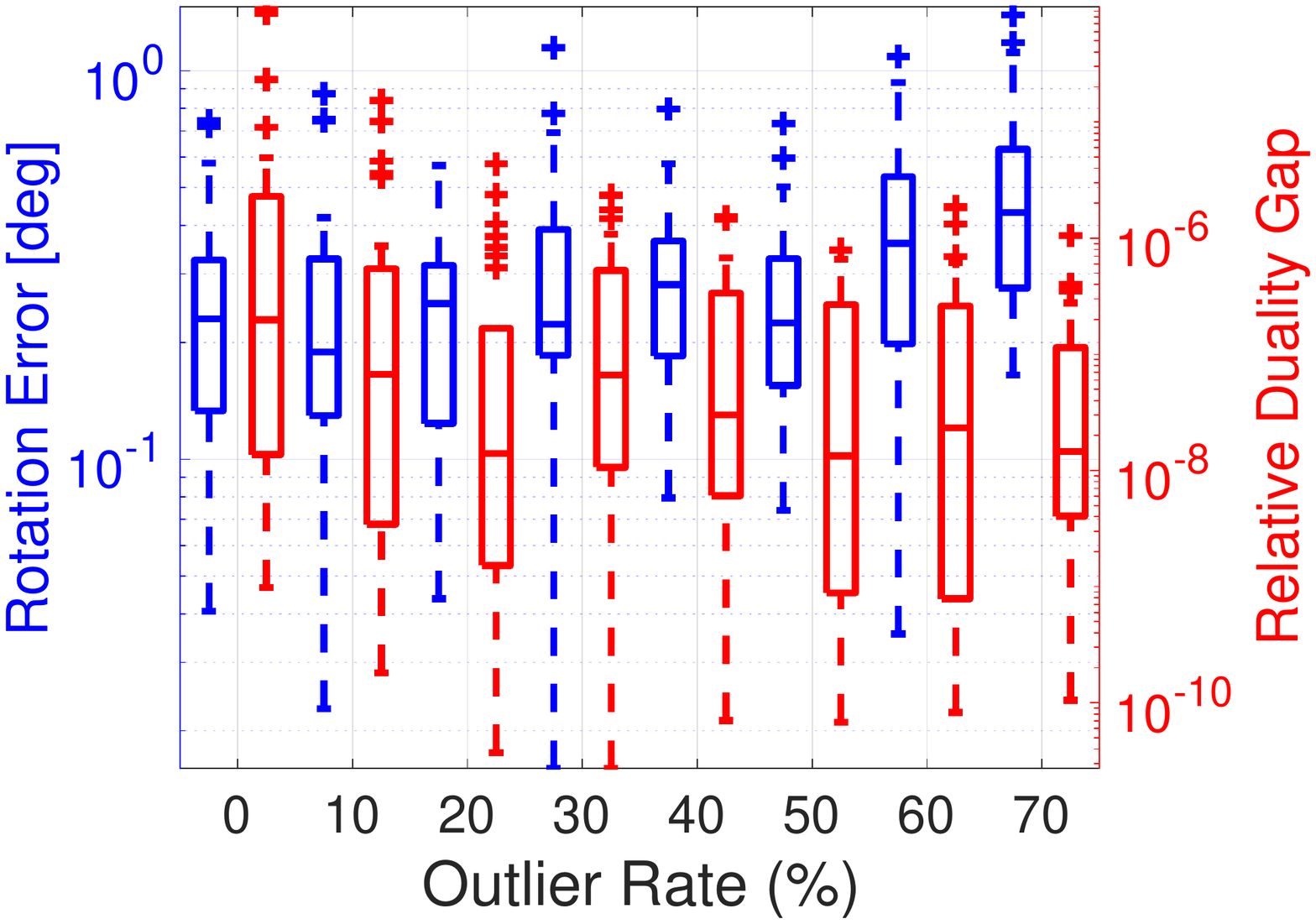}
			\end{minipage}
		&  \myhspace
			\begin{minipage}{\mpwfour}%
			\centering%
			\includegraphics[width=\columnwidth]{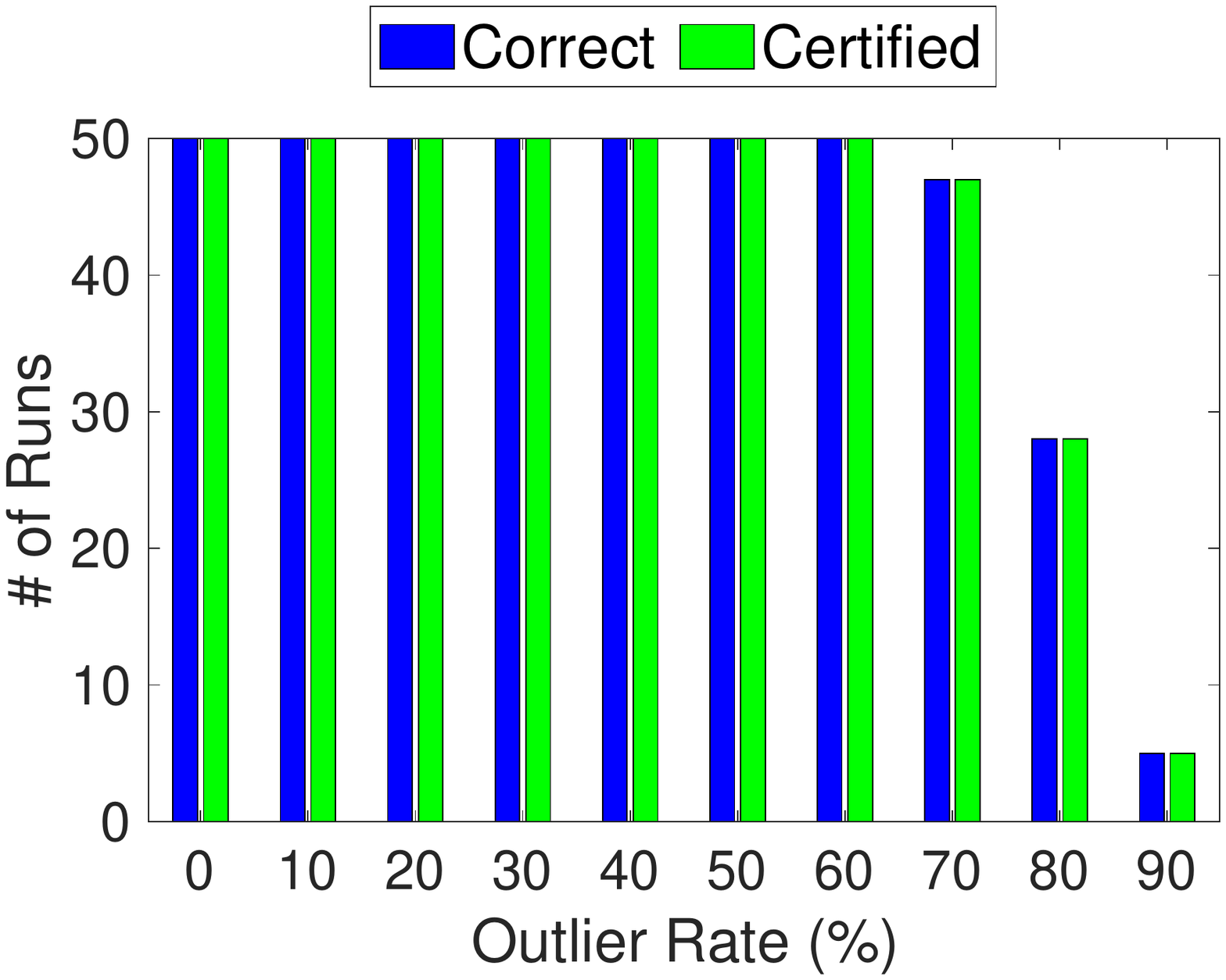}
			\end{minipage}
		&  \myhspace \mygapcon
			\begin{minipage}{\mpwfour}%
			\centering%
			\includegraphics[width=\columnwidth]{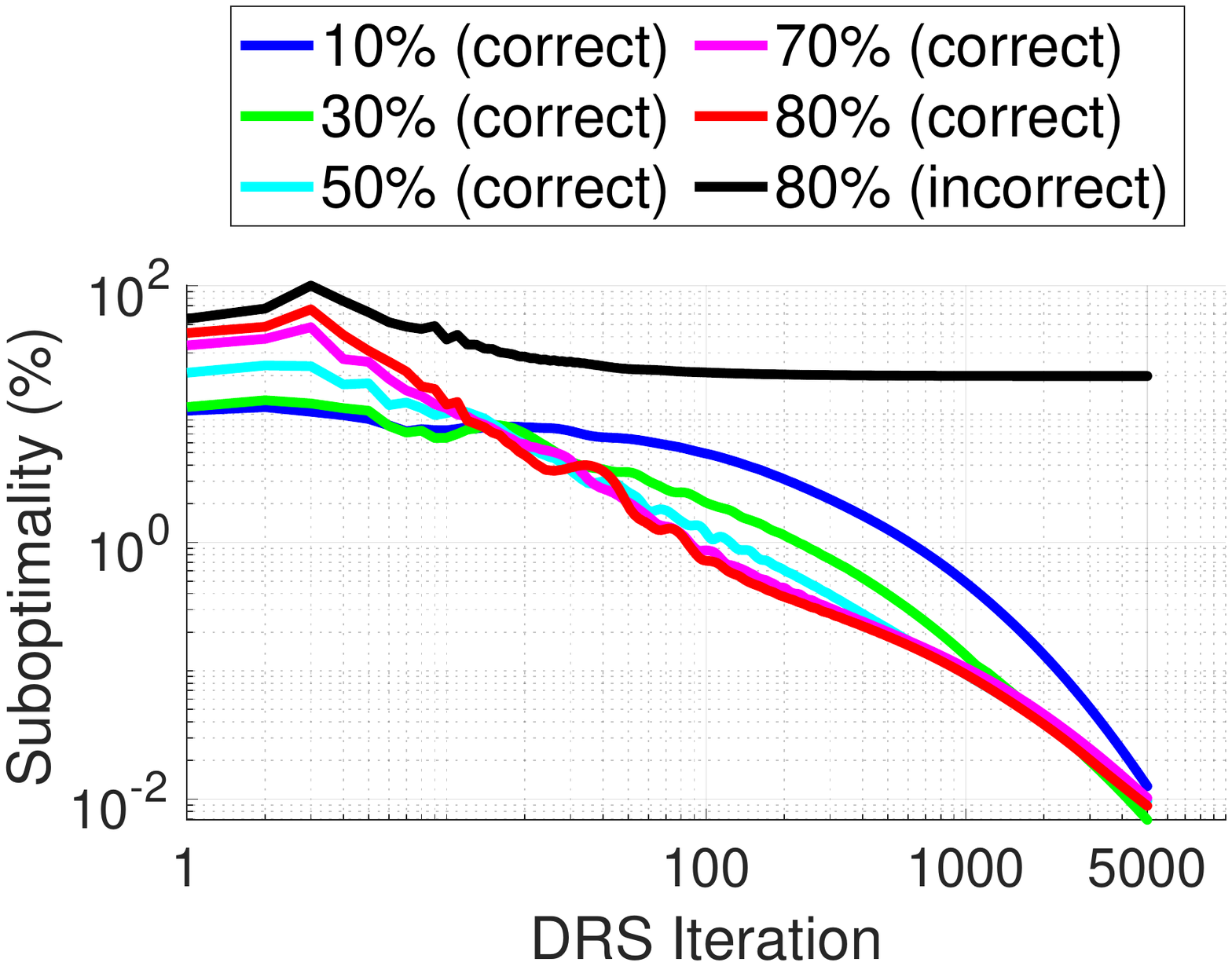}
			\end{minipage}
		&  \myhspace \mygapsate
			\begin{minipage}{\mpwfour}%
			\centering%
			\includegraphics[width=\columnwidth]{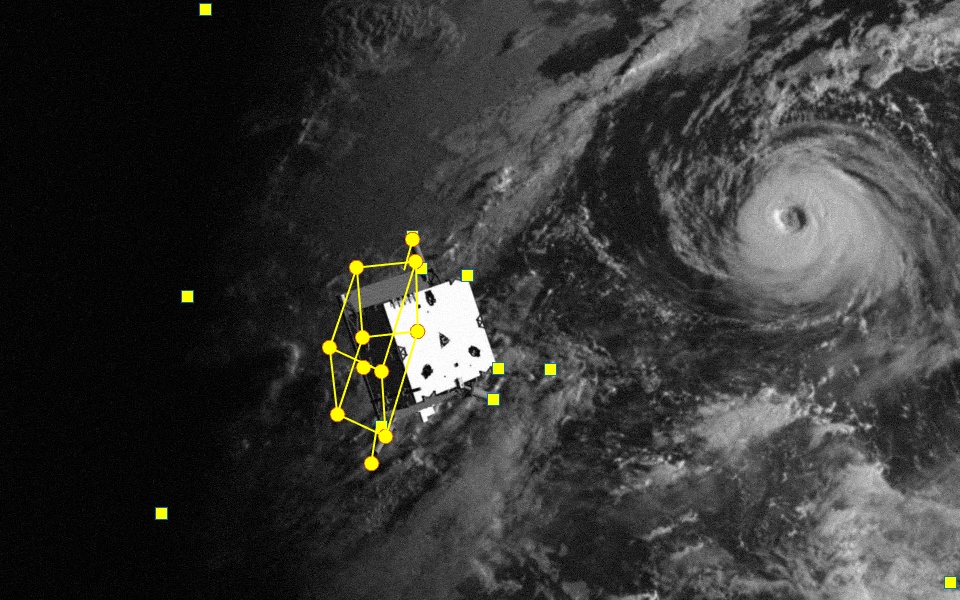} \\
			\vspace{-1mm}
			{\tiny $73\%$ outliers, incorrect, non-certified}
			\end{minipage}
		\\
		\myhspace \mygap \mygap \rotatebox{90}{ {\smaller \mesh} } \hspace{-3mm}
		& \myhspace \mygap
			\begin{minipage}{\mpwfour}%
			\centering%
			\includegraphics[width=\columnwidth]{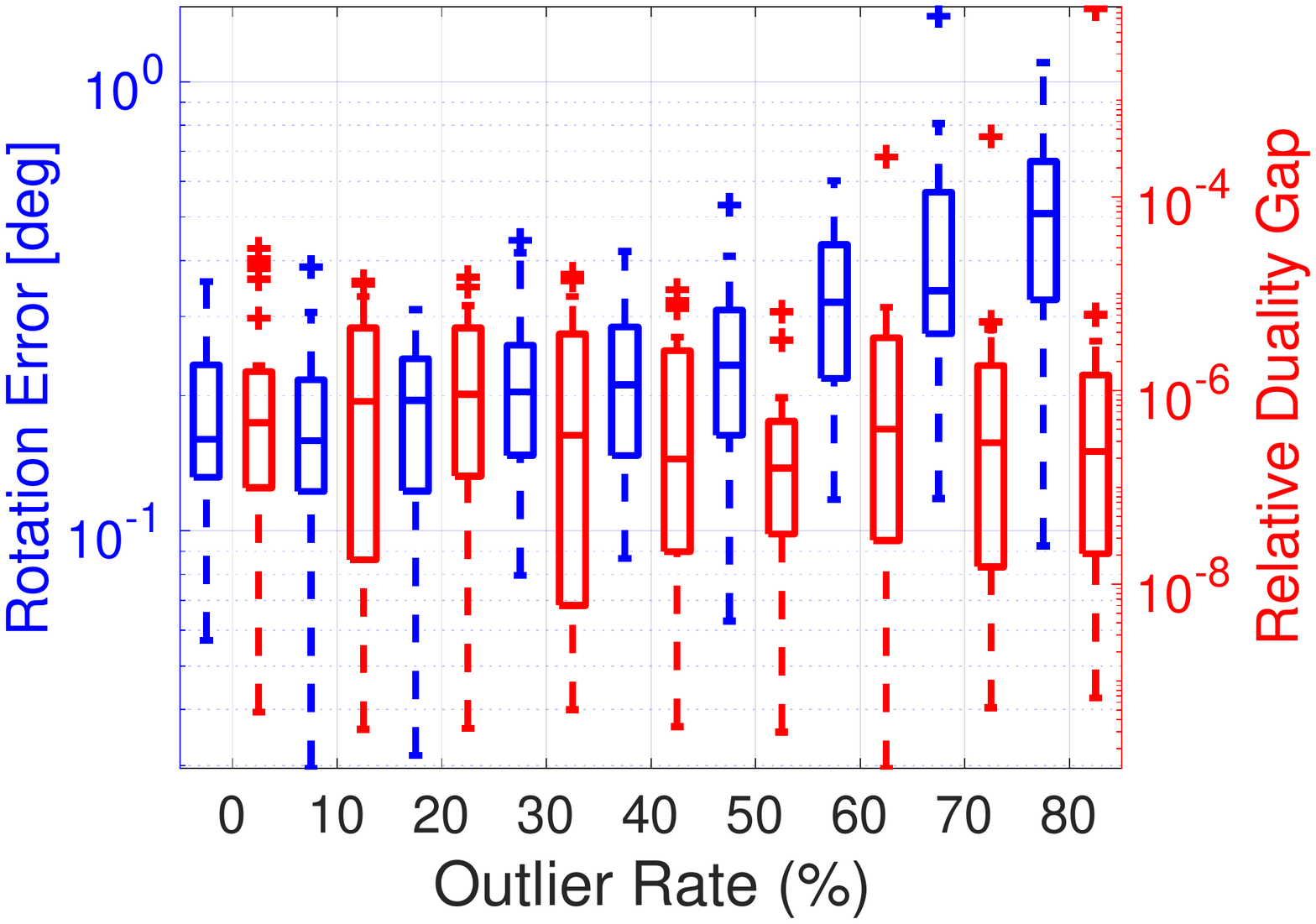}
			\end{minipage}
		&  \myhspace
			\begin{minipage}{\mpwfour}%
			\centering%
			\includegraphics[width=\columnwidth]{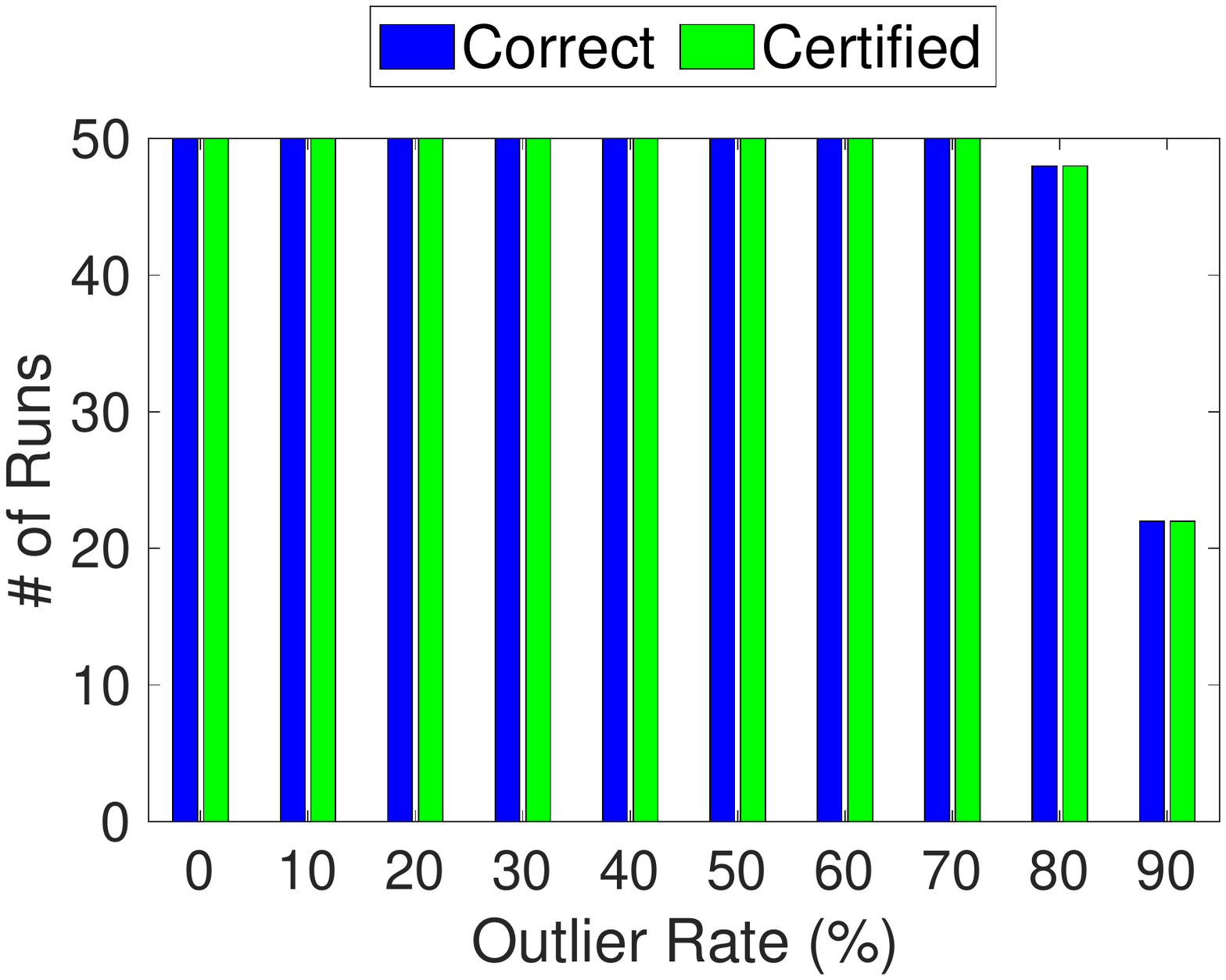}
			\end{minipage}
		&  \myhspace \mygapcon
			\begin{minipage}{\mpwfour}%
			\centering%
			\includegraphics[width=\columnwidth]{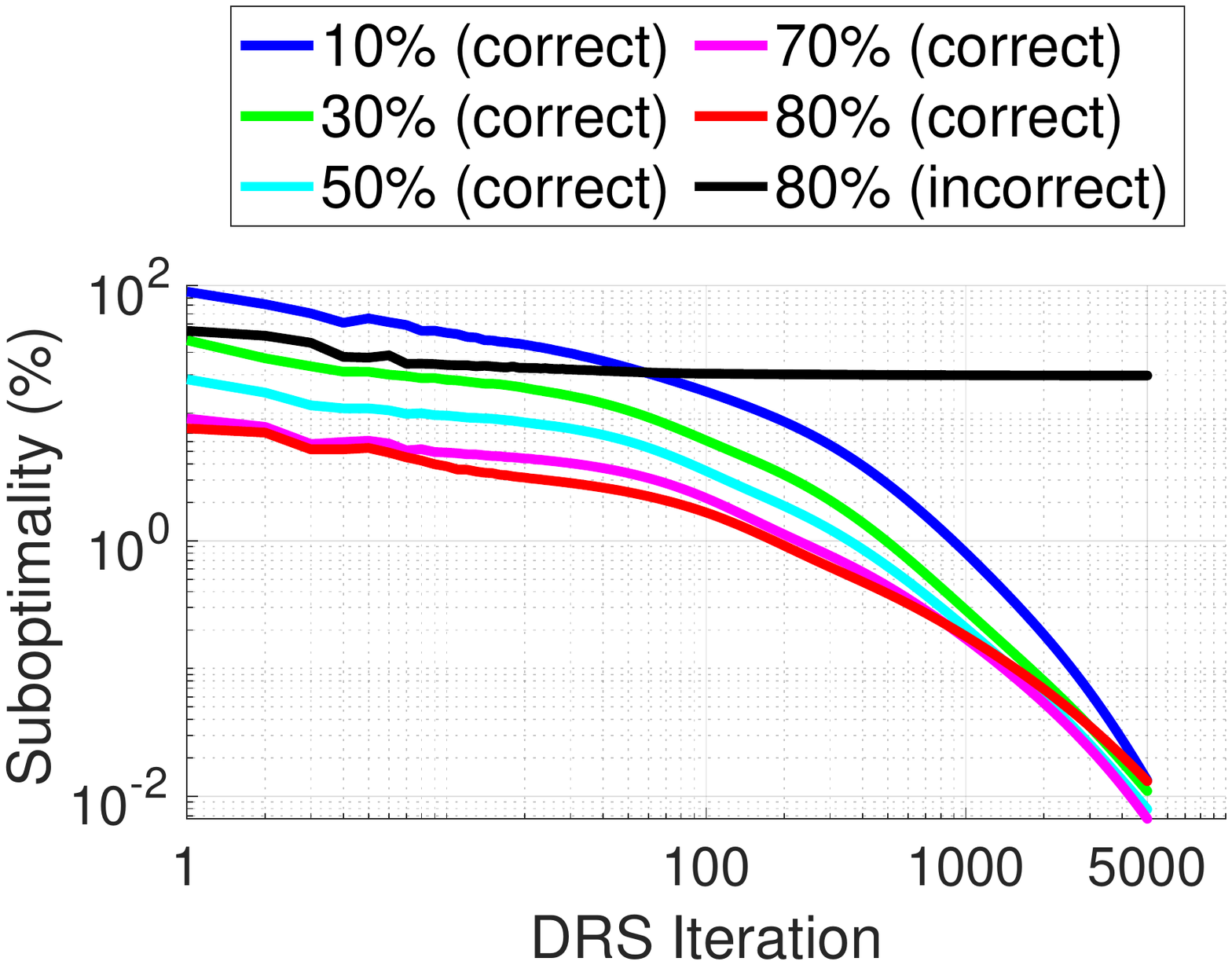}
			\end{minipage}
		&  \myhspace \mygapsate
			\begin{minipage}{\mpwfour}%
			\centering%
			\includegraphics[width=\columnwidth]{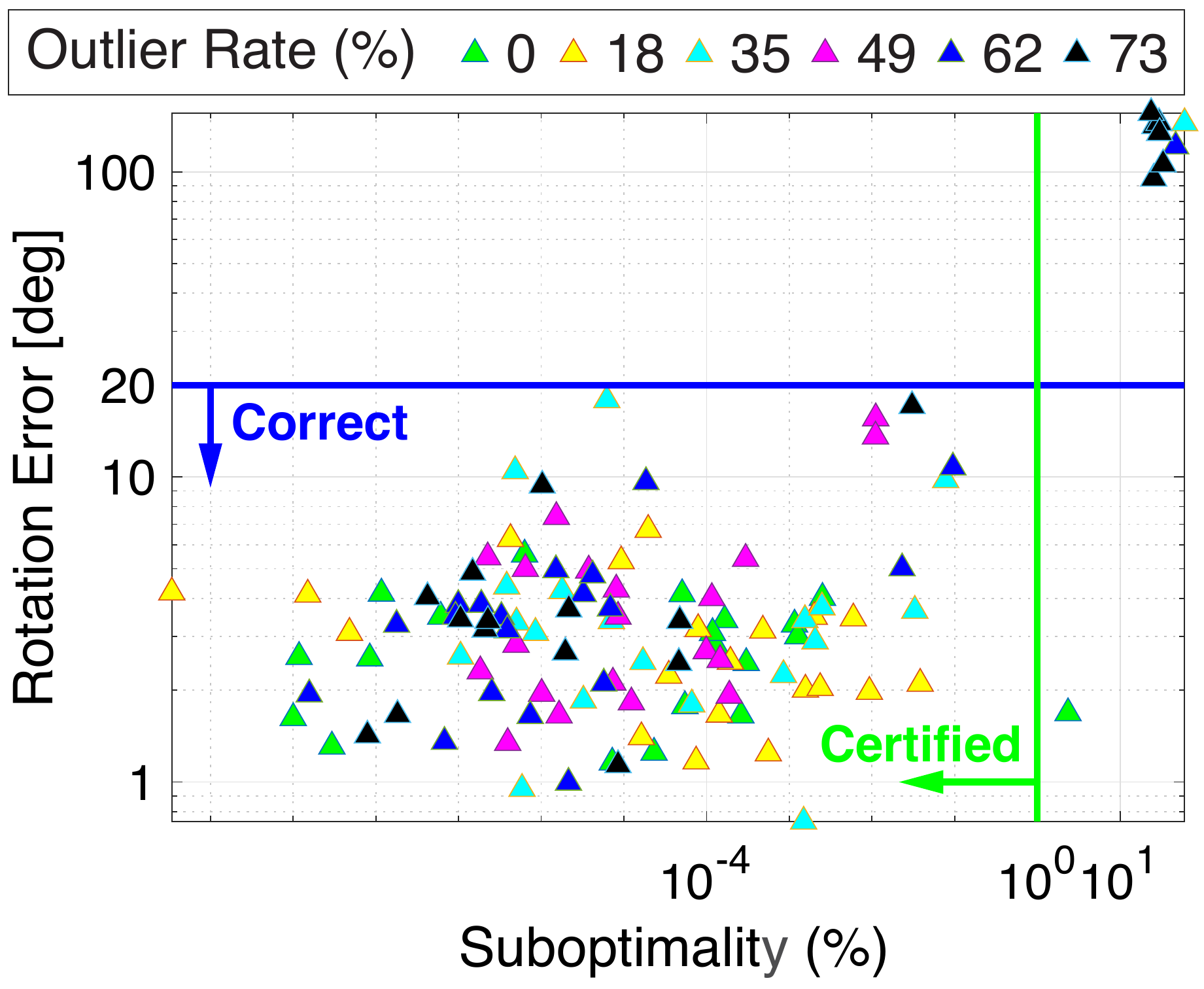} \\
			\end{minipage}
		\\
		&
		   \myhspace \mygap {\scriptsize (a) Sparse moment relaxation} 
		&  \myhspace \hspace{0mm} {\scriptsize (b) Dual optimality certification}
		&  \myhspace \hspace{0mm} {\scriptsize (c) Convergence of suboptimality}
		&  \myhspace \hspace{0mm} {\scriptsize (d) Satellite pose estimation}
	\end{tabular}
	\end{minipage} 
	\caption{\footnotesize Performance of certifiable algorithms. (a) Rotation estimation error (\blue{left axis}) and relative duality gap (\red{right axis}) of the sparse moment relaxation~\eqref{eq:sparseMoment}. (b) Number of runs when the solution of \GNC is \blue{correct} (\ie~rotation error less than $5^\circ$) and number of runs when the solution of \GNC is \green{certified} (\ie~suboptimality below $1\%$). (c) 
	Suboptimality gap versus~\DRS iterations, averaged over correct and incorrect runs. \edit{Top row: single rotation averaging (\singlerotation), middle row: shape alignment (\shapealign), bottom row: mesh registration (\mesh).} (d) Qualitative and quantitative results for satellite pose estimation on the \SPEED dataset~\cite{Sharma19arXiv-SPEED}.
	\label{fig:relax_certify_converge_satellite}} 
	\vspace{-4mm} 
	\end{center}
\end{figure}

{\bf Primal Relaxation.}
\label{sec:experiments:relaxation}
We first evaluate the performance of the sparse moment relaxation~\eqref{eq:sparseMoment} under increasing outlier rates, with 
 $N=20$ measurements. Fig.~\ref{fig:relax_certify_converge_satellite}(a) shows the boxplots of rotation estimation errors and relative duality gap for \singlerotation (top), \shapealign (middle), and \mesh (bottom) averaged over 30 Monte Carlo runs (results for \pointcloud are qualitatively \edit{similar to \mesh} and hence postponed to the \supp). The sparse moment relaxation is numerically tight (relative gap smaller than $10^{-3}$), 
 with a single instance exhibiting a large gap (mesh registration, $80\%$ outliers). 
 The figure also shows that the relaxation produces an accurate estimate in all tested instances. \final{In the \supp, we show our primal relaxation is tight even under \emph{adversarial outliers}.}


{\bf Dual Certification.}
\label{sec:experiments:certification}
We test our dual optimality certifiers under increasing outlier rates, with 
 $N=100$ measurements. 
In each Monte Carlo run, we first use \GNC~\cite{Yang20ral-GNC} as a fast heuristics to compute a candidate solution to the POP~\eqref{eq:pop}, and then run the proposed dual certifiers (Theorem~\ref{thm:DRSOptimalityCertification}) to compute a suboptimality gap. Fig.~\ref{fig:relax_certify_converge_satellite}(b) plots the number of runs when \GNC returns the \emph{correct} solutions (\ie~with rotation error less than $5^\circ$), and the number of runs when the solutions are \emph{certified} (\ie~have suboptimality below $1\%$). We can see that our dual certifiers can certify all correct solutions and reject all incorrect estimates (the blue and green bars always have same height, 
meaning that there are no false positives nor false negatives). Fig.~\ref{fig:relax_certify_converge_satellite}(c) plots the average convergence history of the suboptimality gap versus the number of \DRS iterations (in log-log scale). \DRS drives the suboptimality below $1\%$ within 1000 iterations (within 100 iterations for \singlerotation) if the solution is correct, while it reports a suboptimality larger than $10\%$ if the solution is incorrect. \final{In the \supp, we show our certification outperforms the statistical Kolmogorov–Smirnov test~\cite{Massey51JASA-KStest}.}

{\bf Which One is More Scalable?}
\label{sec:experiments:scalability}
Table~\ref{table:timing} compares the scalability of the sparse relaxation and the dual certification for increasing number of measurements. 
Solving the large-scale SDP quickly becomes intractable for moderate $N$, while certification using \DRS can scale to large number of measurements.

\vspace{-3mm}
\begin{table}[h]
\hspace{-3.5mm}
\begin{minipage}{\textwidth}
\begin{tabular}{c|ccc|ccc|ccc|ccc}
\hline
 \multirow{2}{*}{ \scriptsize $N$ } & \multicolumn{3}{c|}{ {\scriptsize \singlerotation} } & \multicolumn{3}{c|}{\scriptsize \shapealign} &\multicolumn{3}{c|}{ {\scriptsize \pointcloud} } & \multicolumn{3}{c}{ \scriptsize \mesh} \\
 & {\scriptsize $\dimbasis{}{\rbasisset}$}  & {\scriptsize $\relaxtime$} & \scriptsize $\certifytime$ & \scriptsize $\dimbasis{}{\rbasisset}$ & \scriptsize $\relaxtime$ & \scriptsize $\certifytime$ & \scriptsize $\dimbasis{}{\rbasisset}$ & \scriptsize $\relaxtime$ & \scriptsize $\certifytime$ & \scriptsize $\dimbasis{}{\rbasisset}$ & \scriptsize $\relaxtime$ & \scriptsize $\certifytime$ \\
 \hline 
 \scriptsize $20$ & \scriptsize $255$  & \scriptsize $151.65$ & \scriptsize  $0.73$ & \scriptsize $168$ & \scriptsize $35.52$ & \scriptsize $2.00$ & \scriptsize $351$ & \scriptsize $763.59$ & \scriptsize $16.34$ & \scriptsize $351$ & \scriptsize $750.67$ & \scriptsize $8.43$ \\
 \hline 
 \scriptsize $50$ & \scriptsize $555$ &  \scriptsize $38866$ & \scriptsize $2.67$ & \scriptsize $378$ & \scriptsize $3287$ & \scriptsize $7.88$ & \scriptsize $741$ & \scriptsize $\mathrm{>}10^5$ & \scriptsize $84.86$ & \scriptsize $741$ & \scriptsize $\mathrm{>}10^5$ & \scriptsize $60.76$ \\
 \hline 
 \scriptsize $100$ & \scriptsize $1055$ & \scriptsize $**$ & \scriptsize $8.35$ & \scriptsize $728$ & \scriptsize $\mathrm{>}10^5$ & \scriptsize $37.44$ & \scriptsize $1391$ & \scriptsize $**$ & \scriptsize $357.48$ & \scriptsize $1391$ & \scriptsize $**$ &  \scriptsize $165.87$ \\
 \hline
\end{tabular}
\end{minipage}
\vspace{1mm}
\caption{\footnotesize Relaxation and certification time (in seconds) for increasing $N$. $\relaxtime$ is the time for solving the sparse moment relaxation~\eqref{eq:sparseMoment}. $\certifytime$ includes the time for computing the chordal initial guess and the time for \DRS to drive the suboptimality below $1\%$. ``$**$'' denotes instances where MOSEK ran out of memory.
\label{table:timing}
}
\vspace{-6mm}
\end{table}

\vspace{2mm}
{\bf Satellite Pose Estimation.}
\label{sec:experiments:satellite}
Satellite pose estimation using monocular vision is a crucial technology for many space operations~\cite{Sharma19arXiv-SPEED,Chen19ICCVW-satellitePoseEstimation}.
We use ``Shape Alignment (Example~\ref{eg:shapeAlignment})'' to perform 6D pose estimation from satellite images in the \SPEED dataset~\cite{Sharma19arXiv-SPEED} (see Fig.~\ref{fig:relax_certify_converge_satellite}(d)). Towards this goal, we first use the pre-trained network from~\cite{Chen19ICCVW-satellitePoseEstimation} to detect 11 pixel measurements corresponding to 3D keypoints of the Tango satellite model.
Because the network outputs fairly accurate detections (all inliers), we also replace $0\%,18\%,35\%,49\%,62\%$, and $73\%$ \emph{pairwise} inliers (see \supp) with random outliers to test more challenging instances. We show a correct and certified estimation with $62\%$ outliers in Fig.~\ref{fig:relax_certify_converge_satellite}(d) top panel, and an incorrect and non-certified estimation with $73\%$ outliers in Fig.~\ref{fig:relax_certify_converge_satellite}(d) middle panel. Fig.~\ref{fig:relax_certify_converge_satellite}(d) bottom panel plots the statistics of the rotation error over 20 satellite images \edit{(showing the relation between suboptimality and estimation errors)}. 
We refer the reader to the \supp for a more comprehensive description of the tests and the results.
\optional{Then we form \emph{pairwise} measurements: $\vz_{ij} = \tldvz_i - \tldvz_j$ and $\MB_{ij} = \tldMB_i - \tldMB_j$ for all $i \neq j$, which gives rise to $N=\nchoosek{11}{2} = 55$ measurements that satisfy the \shapealign model in example~\ref{eg:shapeAlignment} (the translation cancels due to the subtraction). We then use \GNC to estimate $s$ and $\MR$, followed by using the adaptive voting algorithm in~\cite{Yang19rss-teaser} to estimate a 2D translation ($s$ is approximately the inverse of the depth in weak perspective camera model).}{}

\shrink
\section{Conclusions}
\label{sec:conclusions}
\shrink
We have proposed a general framework for designing certifiable algorithms for a broad class of robust geometric perception problems. From the primal perspective, we apply Lasserre's hierarchy of moment relaxations, together with basis reduction, to construct tight semidefinite relaxations to nonconvex robust estimation problems.
From the dual perspective, we use SOS relaxation to convert the certification of a given candidate solution to a convex feasibility SDP, and then we leverage Douglas-Rachford Splitting to solve the feasibility SDP and compute a suboptimality for the candidate solution. 
Our primal relaxation is tight, and our dual certification is correct and scalable. 

\shrink
\section*{Broader Impact}
\label{sec:impact}
\shrink

\Perception plays an essential role in robotics applications ranging from autonomous driving, robotic manipulation, autonomous flight, to robotic search and rescue. Occasional perception failures are almost inevitable while operating in the wild (\eg~due to sensor 
malfunction, or incorrect data association resulting from neural networks or hand-crafted feature matching techniques). These failures, if not detected and handled properly, have detrimental effects, especially in safety-critical and high-integrity applications (\eg~they may put passengers at risk in autonomous driving or damage a satellite in space applications). Existing perception algorithms (\eg~fast heuristics) can fail without notice.
On the contrary, the certifiable algorithms presented in this work perform geometric perception with optimality guarantees and act as safeguards to detect perception failures. 
The development of certifiable algorithms has the potential to enhance the robustness of perception systems, inform system certification and monitoring, and increase the 
trustworthiness of autonomous systems. 

On the negative side, the use of certifiable algorithms as an enabler for robots and autonomous systems inherits the shortcomings connected to the misuse of these technologies. The use of autonomous systems in military applications as well as the impact of robotics and automation on the (human) workforce must be carefully pondered to ensure a positive societal impact.

\shrink
\section*{Acknowledgments}
\shrink
The authors would like to thank Jie Wang, Victor Magron, and Jean B. Lasserre for the discussion about Lasserre's hierarchy of moment and SOS relaxations; Alp Yurtsever, Suvrit Sra, and Yang Zheng for the discussion about using first-order methods to solve large-scale SDPs; Bo Chen and Tat-Jun Chin for kindly sharing the data for satellite pose estimation; and the anonymous reviews.

\final{This work was partially funded by ARL DCIST CRA W911NF-17-2-0181, ONR RAIDER N00014-18-1-2828, and the Lincoln Laboratory “Resilient Perception in Degraded Environments” program.}

\clearpage

\begin{center}
{\Large\bf Supplementary Material}
\end{center}
\renewcommand{\thesection}{A\arabic{section}}
\renewcommand{\theequation}{A\arabic{equation}}
\renewcommand{\thetheorem}{A\arabic{theorem}}
\renewcommand{\thefigure}{A\arabic{figure}}
\renewcommand{\thetable}{A\arabic{table}}

\setcounter{equation}{0}
\setcounter{section}{0}
\setcounter{theorem}{0}
\setcounter{figure}{0}


{\bf Notation.} Besides the notation already defined in Section~\ref{sec:introduction} of the main text, we define the following extra notation. A polynomial $q \in \polyring$ is a sums-of-squares (SOS) polynomial if and only if $q$ can be written as $q = \monoleq{\vxx}{\calF}\tran \MQ \monoleq{\vxx}{\calF}$ for some monomial basis $\monoleq{\vxx}{\calF}$ and PSD matrix $\MQ \succeq 0$, in which case $q \geq 0, \forall \vxx \in \Real{n}$. We use $\sosindeg{\vxx}{2\calF}$ to denote the set of SOS polynomials parametrized by the monomial basis $\monoleq{\vxx}{\calF}$. In particular, when $\monoleq{\vxx}{\calF} = \monoleq{\vxx}{d}$ is the full standard monomial basis of degree up to $d$, we use $\sosindeg{\vxx}{2d}$ to denote the set of SOS polynomials with degree up to $2d$. Moreover, $\sosin{\vxx} \subset \polyring$ is the set of all SOS polynomials (with arbitrary degrees). For a constraint set $\calX$ defined by polynomial equality and inequality constraints $\calX \doteq \{ \vxx: h_j(\vxx) = 0,j=1,\dots,\nrEq;g_k(\vxx) \geq 0,k=1,\dots,\nrIneq\}$, the set $\calX$ is said to be Archimedean if there exist $M > 0$, $\lambda_j \in \polyring,j=1,\dots,\nrEq$, and $s_k \in \sosin{\vxx},k=1,\dots,\nrIneq$, such that $M - \twonorm{\vxx}^2 = \sum_{j=1}^\nrEq \lambda_j h_j + \sum_{k=1}^\nrIneq s_k g_k$, which immediately implies that $\twonorm{\vxx}^2 \leq M$ and the set $\calX$ is compact~\cite[Definition 3.137, p.~115]{Blekherman12Book-sdpandConvexAlgebraicGeometry}.
\section{Proof of Proposition~\ref{prop:pop} (Geometric Perception as POP)}
\label{sec:supp-proof-pop}
\begin{proof}
To tackle the non-smoothness of the inner minimization ``$\min\{\cdot,\cdot\}$'' in problem~\eqref{eq:generalTLS}, we first reformulate problem~\eqref{eq:generalTLS} as:
\bea
f^\star = \min_{\substack{\vxx \in \calX \\ \theta_i \in \{ \pm 1 \},i=1,\dots,N} } \sumallpoints \frac{1+\theta_i}{2\beta_i^2} r^2(\vxx,\measured_i) + \frac{1-\theta_i}{2}\barcsq, \label{eq:TLSBinary}
\eea
where we have used the fact that ``$\min\{a,b\}$'' is equivalent to an optimization over a binary variable: $\min\{a,b\} = \min_{\theta \in \{ \pm 1\}} \frac{1+\theta}{2} a + \frac{1-\theta}{2} b$ (where $\theta = + 1$ when $a < b$ and $\theta = -1$ when $a>b$). Intuitively, if the $i$-th measurement $\measured_i$ is an inlier (\ie~$r^2 \leq \barcsq \beta_i^2$), then $\theta_i = +1$ and the corresponding term in~\eqref{eq:TLSBinary} reduces to least squares; if $\measured_i$ is an outlier (\ie~$r^2 > \barcsq \beta_i^2$), then $\theta_i = -1$ and the corresponding term in~\eqref{eq:TLSBinary} becomes a constant $\barcsq$, whence the outlier is irrelevant to the optimization. Since we have introduced $N$ binary variables to the optimization~\eqref{eq:TLSBinary}, we denote $\xextend = \bracket{\vxx\tran, \vtheta\tran}\tran \in \Real{\dimxextend}$ as the new set of variables, where $\vtheta \doteq \bracket{\theta_1,\dots,\theta_N}\tran \in \{ \pm 1 \}^N$ is the vector of binary variables and $\dimxextend \doteq n + N$ is the number of variables. Then we make two immediate observations: (1) denote $f_i(\xextend) = \frac{1+\theta_i}{2\beta_i^2} r^2(\vxx,\measured_i) + \frac{1-\theta_i}{2}\barcsq$, then $f_i(\xextend) \in \polyringin{\vxx,\theta_i}$ is only a polynomial of $\vxx$ and $\theta_i$ and the objective function of~\eqref{eq:TLSBinary} can be written as the finite sum of $f_i$'s: $f(\xextend) = \sumallpoints f_i(\xextend)$ (\ie~claim~\ref{prop:pop-objective} in Proposition~\ref{prop:pop}). (2) The binary constraints $\theta_i \in \{\pm 1\},i=1,\dots,N$ are equivalent to quadratic polynomial equality constraints $h^{\theta_i} \doteq 1 - \theta_i^2 = 0,i=1,\dots,N$, and obviously each $h^{\theta_i} \in \polyringin{\theta_i}$ is only a polynomial in $\theta_i$. For simplicity, we denote $\vh^\theta = \{ h^{\theta_i} \}_{i=1}^N$ (\ie~claim~\ref{prop:pop-constraint} in Proposition~\ref{prop:pop}).

Next we will show that -- for Examples~\ref{eg:singleRotationAveraging}-\ref{eg:meshRegistration}-- (1) $r^2(\vxx,\measured_i)$ is a polynomial in $\vxx$ with $\deg{r^2(\vxx,\measured_i)} \leq 2$ (and hence, $\deg{f_i(\xextend)} \leq 3$), (2) the constraint set $\vxx \in \calX$ can be written as quadratic polynomial inequality and equality constraints, and (3) the feasible set is Archimedean (\ie~claim~\ref{prop:pop-archimedean} in Proposition~\ref{prop:pop}).


{\bf Example~\ref{eg:singleRotationAveraging} (Single Rotation Averaging)}. We develop the residual function:
\bea
\hspace{-6mm} r^2(\vxx,\measured_i) = \twonorm{\MR - \MR_i}_F^2 = \trace{(\MR - \MR_i)\tran(\MR - \MR_i)} = \trace{2\eye_3} - 2\trace{\MR_i\tran \MR\tran} = 6 - 2 \measured_i\tran \vr, \label{eq:residualSRA}
\eea
where we have denoted $\vr \doteq \vectorize{\MR} \in \Real{9}$ as the vectorization of the unknown rotation matrix $\MR$, and $\measured_i \doteq \vectorize{\MR_i} \in \Real{9}$ as the vectorization of the measurements $\MR_i$. From eq.~\eqref{eq:residualSRA} it is clear that $\deg{r^2(\vxx,\measured_i)} = 1$. The constraint set for single rotation averaging is $\MR \in \SOthree$, which is known to be equivalent to a set of (redundant) quadratic polynomial equality constraints~\cite{Yang20cvpr-shapeStar}.
\begin{lemma}[\bf Quadratic Constraints for $\SOthree$~\cite{Tron15rssws3D-dualityPGO3D,Yang20cvpr-shapeStar}] 
\label{lemma:quadraticConSOthree}
For any matrix $\MR \in \Real{3 \times 3}$, $\MR \in \SOthree$ is equivalent to the following set of 15 quadratic polynomial equality constraints $\vh^r = \{ h^r_i \}_{i=1}^{15}$:
\bea
\begin{cases}
\text{\small Orthonormality: } \scriptstyle h^r_1 = 1-\| \vr_1 \|^2,\ h^r_2 = 1-\| \vr_2 \|^2,\ h^r_3 = 1-\| \vr_3 \|^2,\ h^r_4 = \vr_1\tran \vr_2,\ h^r_5 = \vr_2\tran \vr_3,\ h^r_6 = \vr_3\tran \vr_1 \\
\text{\small Right-handedness: } \scriptstyle h^r_{7,8,9} = \vr_1 \times \vr_2 - \vr_3,\ h^r_{10,11,12} = \vr_2 \times \vr_3 - \vr_1,\ h^r_{13,14,15} = \vr_3 \times \vr_1 - \vr_2
\end{cases}
\eea
where $\vr_i \in \Real{3},i=1,2,3$ denotes the $i$-th column\footnote{The same set of quadratic constraints hold when $\vr_i\tran \in \Real{3},i=1,\dots,3$ denotes the $i$-th row of $\MR$.} of $\MR$ and ``$\times$'' represents vector cross product.
\end{lemma}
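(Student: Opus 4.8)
The plan is to establish the stated equivalence by proving both implications separately, exploiting the clean division of labour between the two constraint families. The guiding observation is that the six orthonormality equalities $h^r_1,\dots,h^r_6$ are exactly a componentwise transcription of $\MR\tran\MR = \eye_3$, while the nine right-handedness equalities $h^r_7,\dots,h^r_{15}$ serve only to select the $\det = +1$ component of $\Othree$. Throughout I will write $\ve_1,\ve_2,\ve_3$ for the standard basis of $\Real{3}$, so that $\vr_i = \MR\ve_i$.

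For the forward direction I would assume $\MR \in \SOthree$. Then $\MR\tran\MR = \eye_3$ reads entrywise as $\vr_i\tran\vr_j = \delta_{ij}$, which is precisely $h^r_1 = \dots = h^r_6 = 0$. For the right-handedness block I would invoke the standard fact that rotations preserve the cross product, i.e. $(\MR\va)\times(\MR\vb) = \MR(\va\times\vb)$ for all $\va,\vb\in\Real{3}$ when $\MR\in\SOthree$ (which follows from $(\MR\va)\times(\MR\vb) = \det(\MR)\,\MR\traninv(\va\times\vb)$ together with $\det\MR=1$ and $\MR\traninv=\MR$). Applying this to $\ve_1\times\ve_2 = \ve_3$ and its cyclic shifts yields $\vr_1\times\vr_2 = \MR(\ve_1\times\ve_2) = \MR\ve_3 = \vr_3$, and likewise $\vr_2\times\vr_3 = \vr_1$ and $\vr_3\times\vr_1 = \vr_2$, which are exactly $h^r_7 = \dots = h^r_{15} = 0$.

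For the converse I would assume all 15 constraints vanish. The six orthonormality constraints again give $\MR\tran\MR = \eye_3$, so $\MR\in\Othree$ and $\det\MR\in\{+1,-1\}$; it remains only to rule out $\det\MR = -1$. Here a single right-handedness group suffices: from $\vr_1\times\vr_2 = \vr_3$ I take the inner product with $\vr_3$ and combine the scalar triple product identity $\det\MR = \vr_3\tran(\vr_1\times\vr_2)$ with $\twonorm{\vr_3}^2 = 1$ to obtain $\det\MR = \vr_3\tran\vr_3 = 1$, whence $\MR\in\SOthree$.

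There is no substantive obstacle beyond careful orientation and sign bookkeeping when relating the cross product to the determinant — the result is classical. This same computation also explains the parenthetical ``redundant'' in the statement: given orthonormality, a single right-handedness group already forces $\det\MR = +1$, and by the vector triple product identity $\vr_2\times\vr_3 = \vr_2\times(\vr_1\times\vr_2) = \vr_1$ (again using orthonormality) it even implies the remaining two groups. The full list of 15 constraints is retained deliberately, since supplying the moment and SOS relaxations with redundant constraints typically tightens them.
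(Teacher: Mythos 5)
Your proof is correct. Note, however, that the paper itself does not prove this lemma: it is stated in the supplement as an imported result, cited from prior work (Tron \etal and Yang \etal), so there is no in-paper argument to compare yours against. Your two-directional argument is the standard one and is complete: the forward direction correctly reduces the orthonormality block to $\MR\tran\MR = \eye_3$ and the right-handedness block to the cross-product covariance identity $(\MR\va)\times(\MR\vb) = \det(\MR)\,\MR\traninv(\va\times\vb)$ specialized to $\det\MR = 1$ and $\MR\traninv = \MR$; the converse correctly isolates the only remaining issue (excluding $\det\MR = -1$) and settles it via the scalar triple product $\det\MR = \vr_3\tran(\vr_1\times\vr_2) = \twonorm{\vr_3}^2 = 1$. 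Your closing observation — that under orthonormality a single right-handedness triple already implies the other two via the BAC--CAB identity, so the 15 constraints are deliberately redundant to tighten the relaxation — accurately reflects the paper's framing (the surrounding text calls them ``(redundant)'' constraints) and is the right explanation for why all 15 are kept. The only cosmetic remark is that the lemma's footnote also asserts the row version, which follows immediately from $\MR \in \SOthree \Leftrightarrow \MR\tran \in \SOthree$; this is too trivial to count as a gap.
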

Therefore, we have $\vh = \vh^x \cup \vh^\theta$ with $\vh^x \doteq \vh^r$, and $\vg = \varnothing$ for single rotation averaging. To show the Archimedeanness of the feasible set $\calxextend \doteq \{ \xextend: h(\xextend)=0,\forall h \in \vh, 1\geq g(\xextend) \geq 0, \forall g \in \vg \}$, we note that:
\bea
3 + N - \twonorm{\xextend}^2 = \sum_{i=1}^3 1 \cdot h_i^r + \sum_{i=1}^N 1 \cdot h^{\theta_i} = 0,
\eea 
which implies that $\twonorm{\xextend}^2 \leq N+3$ and the feasible set $\calxextend$ is equipped with a polynomial certificate for compactness. 


{\bf Example~\ref{eg:shapeAlignment} (Shape Alignment)}. Directly developing the residual function $r^2(\vxx,\measured_i) = \twonorm{\vb_i - s \Pi \MR \MB_i}^2$ leads to a quartic polynomial (degree 4) in $s$ and $\MR$, which is not suitable for moment relaxation because it would increase the minimum relaxation order $\rorder$~\cite{lasserre10book-momentsOpt}. Therefore, we perform a change of variables and let $\SAR = s\Pi\MR$:
\bea
\SAR = s \bmat{ccc}
1 & 0 & 0 \\
0 & 1 & 0
\emat
\bmat{c}
\vr_1\tran \\
\vr_2\tran \\
\vr_3\tran 
\emat
= 
\bmat{c}
s \vr_1\tran \\
s \vr_2\tran
\emat
\doteq
\bmat{c}
\SAr_1\tran \\
\SAr_2\tran
\emat,
\eea
where $\vr_i\tran \in \Real{3}$ denotes the $i$-th row of the rotation matrix $\MR$ and we have denoted $\SAr_i = s \vr_i,i=1,2$ as the product of $s$ and $\vr_i$. Now using Lemma~\ref{lemma:quadraticConSOthree}, we can see that $s \in [0,\sub]$ and $\MR \in \SOthree$ is equivalent to the following constraints on $\SAr \doteq \vectorize{\SAR\tran} = \bracket{\SAr_1\tran,\SAr_2\tran}\tran$:
\bea
\vh^{\SAlr} = \left \{ h^r_1 = \twonorm{\SAr_1}^2 - \twonorm{\SAr_2}^2, h^r_2 = \SAr_1\tran \SAr_2 \right\},\quad \vg^{\SAlr} = \left\{ 1 - \frac{\twonorm{\SAr_1}^2 + \twonorm{\SAr_2}^2}{2 \sub^2} \right\}. \label{eq:SAQuadraticCon}
\eea
Therefore, we have $\vh = \vh^x \cup \vh^\theta$ with $\vh^x \doteq \vh^\SAlr$, and $\vg = \vg^\SAlr$ for shape alignment.\footnote{Note that due to the division by $2 \sub^2$ in eq.~\eqref{eq:SAQuadraticCon}, $0 \leq g^\SAlr \leq 1$ is satisfied.} To prove the feasible set is Archimedean, we write the following polynomial certificate for compactness:
\bea
2\sub^2 + N - \twonorm{\xextend}^2 = 2\sub^2 \cdot g^\SAlr + \sumallpoints 1 \cdot h^{\theta_i} \geq 0.
\eea


{\bf Example~\ref{eg:pointCloudRegistration} (Point Cloud Registration)}. We develop the residual function:
\bea
r^2(\vxx,\measured_i) = \twonorm{\vb_i - \MR\va_i - \vt}^2 = \twonorm{\vt}^2 - 2\vb_i\tran\vt - 2\vb_i\tran\MR\va_i + 2\vt\tran \MR\va_i + \twonorm{\va_i}^2 + \twonorm{\vb_i}^2 \nonumber \\
= \twonorm{\vt}^2 - 2\vb_i\tran\vt - 2\parentheses{\va_i\tran \kron \vb_i\tran} \vr + 2 \parentheses{\va_i\tran \kron \vt\tran} \vr + \twonorm{\va_i}^2 + \twonorm{\vb_i}^2, \label{eq:residualPointCloud}
\eea
where $\vr \doteq \vectorize{\MR} \in \Real{9}$ is the vectorization of $\MR$ and ``$\kron$'' denotes the Kronecker product. Clearly, $\deg{r^2(\vxx,\measured_i)} = 2$ from eq.~\eqref{eq:residualPointCloud}. For the constraint set of $\parentheses{\MR,\vt}$, we have the 15 quadratic equality constraints from Lemma~\ref{lemma:quadraticConSOthree} for $\MR \in \SOthree$, and we have $g^t = 1 - \frac{\twonorm{\vt}^2}{T^2}$ for $\vt$ (the translation is bounded by a known value $T$). Therefore, for point cloud registration, we have $\vh = \vh^x \cup \vh^\theta$ with $\vh^x \doteq \vh^r$ and $\vg = \{ g^t \}$. The Archimedeanness of the constraint set can be seen from the following inequality:
\bea
T^2 + N - \twonorm{\xextend}^2 = T^2 \cdot g^t + \sumallpoints 1 \cdot h^{\theta_i} \geq 0. \label{eq:archimedeanPCR}
\eea


{\bf Example~\ref{eg:meshRegistration} (Mesh Registration)}. To make the residual function $r^2(\vxx,\measured_i)$ a quadratic polynomial, we perform the following change of variables and develop the residual function:
\bea
r^2(\vxx,\measured_i) = \twonorm{\parentheses{\MR\vu_i}\tran\parentheses{\vb_i - \MR\va_i - \vt}}^2 + w_i \twonorm{\vv_i - \MR\vu_i}^2 \\
 = \twonorm{\vu_i\tran \parentheses{\MR\tran\vb_i - \va_i - \MR\tran\vt}}^2 + w_i \twonorm{\vv_i - \MR\vu_i}^2 \\
 \overset{\MRR \doteq \MR\tran, \MRt \doteq \MR\tran\vt}{=} \twonorm{\vu_i\tran \MRR \vb_i - \vu_i\tran\va_i - \vu_i\tran\MRt}^2 + w_i \twonorm{\vv_i - \MRR\tran \vu_i}^2 \\
 = \MRt\tran \parentheses{\vu_i \kron \vu_i\tran} \MRt + \MRr\tran \parentheses{\vb_i\vb_i\tran \kron \vu_i\vu_i\tran} \MRr - 2 \vectorize{\vu_i\va_i\tran\vu_i\vb_i\tran}\tran \MRr + 2 \vu_i\tran\va_i\vu_i\tran \MRt \nonumber  \\
 - 2 \MRr\tran \parentheses{\vb_i \kron \vu_i\vu_i\tran}\MRt  - 2w_i\vectorize{\vu_i\vv_i\tran}\tran\MRr  + \parentheses{\vu_i\tran\va_i}^2 + w_i \parentheses{\twonorm{\vv_i}^2 + \twonorm{\vu_i}^2 },
\eea
where $\MRR \doteq \MR\tran \in \SOthree$ and $\MRt \doteq \MR\tran \vt \in \Real{3}$ is the new set of unknown rotation and translation. In addition, $\twonorm{\vt} \leq T$ if and only if $\twonorm{\MRt} \leq T$ because the rotation matrix preserves the norm of $\vt$. The original $\parentheses{\MR,\vt}$ can be recovered from $\parentheses{\MRR,\MRt}$ by:
\bea
\MR = \MRR\tran, \quad \vt = \MRR\tran \MRt.
\eea
The constraints for $\parentheses{\MRR,\MRt}$ is the same as what we developed for point cloud registration: $\vh = \vh^x \cup \vh^\theta$ with $\vh^x \doteq \vh^\MRlr \doteq \vh^r$, and $\vg \doteq \vg^\MRlt \doteq \{g^t\}$. Therefore, the Archimedeanness of the feasible set follows from eq.~\eqref{eq:archimedeanPCR}. This concludes the proof for Proposition~\ref{prop:pop}.
\end{proof}

\newcommand{\matIdColor}[1]{\red{#1}}
\section{Explanation and Example for Theorem~\ref{thm:denseMoment} (Dense Moment Relaxation)}
\label{sec:supp-proof-denseRelaxation}
In this section, we provide a brief but self-contained explanation to shed light on Lasserre's hierarchy of dense moment relaxations in Theorem~\ref{thm:denseMoment} (Section~\ref{sec:explanation}). We also give an accessible example to demonstrate the application of the hierarchy to a simple but illustrative problem, namely 2D single rotation averaging (Section~\ref{sec:2DSRA}). 

\subsection{Explanation}
\label{sec:explanation}
Our explanation of Lasserre's hierarchy is adapted from~\cite{lasserre10book-momentsOpt,Lasserre01siopt-LasserreHierarchy}. Let $\mu\parentheses{\xextend}$ be a probability measure supported on the feasible set $\calxextend$ of the POP~\eqref{eq:pop}, and let $\probin{\calxextend}$ be the set of all possible probability measures on $\calxextend$. Then the POP~\eqref{eq:pop} can be rewritten as a generalized moment problem.
\begin{theorem}[\bf POP as the Moment Problem~{\cite[Proposition 2.1]{Lasserre01siopt-LasserreHierarchy}}]
\label{thm:popMoment}
Let the feasible set of the POP~\eqref{eq:pop} be $\calxextend$, then the POP is equivalent to the following optimization:
\bea \label{eq:momentProblem}
f^\star_\mu = \min_{\mu \in \probin{\calxextend}} \int f(\xextend) d\mu,
\eea
in the sense that:
\begin{enumerate}[label=(\roman*)]
\item $f^\star_\mu = f^\star$;
\item if $\xextend^\star$ is a (potentially not unique) global minimizer of the POP~\eqref{eq:pop}, then $\mu^\star = \delta_{\xextend^\star}$ is a global minimizer of the moment problem~\eqref{eq:momentProblem}, where $\delta_{\xextend^\star}$ is the Dirac measure at $\xextend^\star$;
\item assuming the POP~\eqref{eq:pop} has a (potentially not unique) global minimizer with global minimum $f^\star$, then for every optimal solution $\mu^\star$ of the moment problem~\eqref{eq:momentProblem}, $f(\xextend) = f^\star$, $\mu^\star$-almost everywhere (\ie~$\mu^\star\parentheses{\cbrace{\xextend: f(\xextend) \neq f^\star}} = 0$ and $\mu^\star$ is supported only on the global minimizers of the POP);
\item if $\xextend^\star$ is the unique global minimizer of the POP~\eqref{eq:pop}, then $\mu^\star = \delta_{\xextend^\star}$ is the unique global minimizer of the moment problem~\eqref{eq:momentProblem}.
\end{enumerate}
\end{theorem}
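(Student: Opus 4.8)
The plan is to establish the equivalence by a two-sided bound on the optimal values and then exploit the pointwise nonnegativity of $f - f^\star$ on $\calxextend$ to localize the support of any optimizer. First I would record that a global minimizer of the POP actually exists: by Proposition~\ref{prop:pop} (property~\ref{prop:pop-archimedean}) the feasible set $\calxextend$ is Archimedean, hence compact, and $f$ is a polynomial and therefore continuous, so $f$ attains its minimum $f^\star$ on $\calxextend$. This justifies writing $\min$ rather than $\inf$ and guarantees that the Dirac tests used below are legitimate.

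For part (i), I would prove $f^\star_\mu \geq f^\star$ and $f^\star_\mu \leq f^\star$ separately. For the lower bound, every $\mu \in \probin{\calxextend}$ is supported on $\calxextend$, where $f(\xextend) \geq f^\star$; integrating this pointwise inequality against $\mu$ gives $\int f \, d\mu \geq f^\star \int d\mu = f^\star$, and taking the infimum over $\mu$ yields $f^\star_\mu \geq f^\star$. For the upper bound, I would pick any global minimizer $\xextend^\star$ and test with the Dirac measure $\mu = \delta_{\xextend^\star} \in \probin{\calxextend}$, for which $\int f \, d\mu = f(\xextend^\star) = f^\star$, so $f^\star_\mu \leq f^\star$. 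Combining the two gives $f^\star_\mu = f^\star$, and the same computation shows $\delta_{\xextend^\star}$ attains the optimum, which is exactly part (ii).

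For part (iii), I would invoke the standard measure-theoretic fact that a nonnegative measurable function with zero integral vanishes almost everywhere. Concretely, let $\mu^\star$ be any optimizer; then $\int (f - f^\star)\, d\mu^\star = f^\star_\mu - f^\star = 0$ by part (i), while $f - f^\star \geq 0$ on $\calxextend \supseteq \mathrm{supp}(\mu^\star)$. Hence $f = f^\star$ holds $\mu^\star$-almost everywhere, i.e. $\mu^\star(\cbrace{\xextend: f(\xextend) \neq f^\star}) = 0$, so $\mu^\star$ concentrates on the set of global minimizers. Part (iv) then follows immediately: if the minimizer is unique, the level set $\cbrace{\xextend: f(\xextend) = f^\star}$ is the singleton $\cbrace{\xextend^\star}$, so part (iii) forces $\mu^\star(\cbrace{\xextend^\star}) = 1$, i.e. $\mu^\star = \delta_{\xextend^\star}$.

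The main obstacle here is bookkeeping rather than depth: one must work with Borel probability measures on the compact set $\calxextend$ so that the integrals of the continuous function $f$ are well defined and the support is a well-behaved closed set, and then apply the vanishing-almost-everywhere lemma carefully in part (iii). A minor subtlety in parts (ii) and (iv) is ruling out optimizers whose mass escapes toward the boundary, which is precisely what compactness of $\calxextend$ and the attainment of $f^\star$ take care of. No convex-analytic or duality machinery is needed; the entire content sits in the pointwise bound $f \geq f^\star$ together with the positivity of $\mu^\star$.
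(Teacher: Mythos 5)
Your proof is correct and is essentially the standard argument behind the cited result: the paper does not reprove this statement but imports it from Lasserre's Proposition~2.1, whose proof proceeds exactly as you do --- the pointwise bound $f \geq f^\star$ on $\calxextend$ integrated against $\mu$, Dirac measures to show attainment, and the fact that a nonnegative integrand with zero integral vanishes $\mu^\star$-almost everywhere to localize the support. Your added observation that Archimedeanness of $\calxextend$ (Proposition~\ref{prop:pop}\ref{prop:pop-archimedean}) gives compactness, so that $f^\star$ is attained and the Dirac tests are legitimate, is precisely the right way to ground the argument in this paper's setting.
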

Although the moment problem~\eqref{eq:momentProblem} is convex~\cite{lasserre10book-momentsOpt}, it is infinite-dimensional and still intractable. Therefore, the crux of making the optimization tractable is to relax the infinite-dimensional problem into a finite-dimensional one. Towards this goal, we introduce the notion of moments, moment matrices and localizing matrices.
\begin{definition}[\bf Moments, Moment Matrices, Localizing Matrices~{\cite[Chapter 3]{lasserre10book-momentsOpt}}] 
\label{def:moments}
Given a probability measure $\mu$ supported on $\calxextend \subset \Real{\dimxextend}$, its moment of order $\valpha \in \nnint^\dimxextend$ is the scalar $\moment_{\valpha} \doteq \int_\calxextend \xextend^\valpha d\mu = \expect{\mu}{\xextend^\valpha} \in \Real{}$,~\ie~the integral (expected value) of the monomial $\xextend^\valpha$ over the set $\calxextend$~\wrt~$\mu$. In particular, if $\valpha = \zero$, then $\xextend^\valpha = p_1^{\alpha_1}\cdots p_{\dimxextend}^{\alpha_\dimxextend} = 1$, and $\moment_\zero = 1$. Now let $\moments = (\moment_\valpha)$ be an infinite sequence of moments (the order $\valpha$ can be unbounded), we define the linear functional $L_\moments: \polyringin{\xextend} \rightarrow \Real{}$:
\bea
f(\xextend) = \sum_{\valpha \in \calF} c(\valpha) \xextend^\valpha \mapstochar\rightarrow L_\moments(f) = \sum_{\valpha \in \calF} c(\valpha) \moment_\valpha,
\eea
that maps a polynomial $f$ to a real number $L_\moments(f)$ by replacing the monomials of $f$ with corresponding moments. With this linear functional, the moment sequence of degree up to $2\rorder$ is simply:
\bea
\moments_{2\rorder} \doteq L_\moments \parentheses{\monoleq{\xextend}{2\rorder}} \in \Real{\dimbasis{\dimxextend}{2\rorder}},\label{eq:defMoment}
\eea
where the linear functional $L_\moments$ applies component-wise to the vector of monomials $\monoleq{\xextend}{2\rorder}$, and the \emph{moment matrix} of degree $\rorder$ is:
\bea
\MM_{\rorder}(\moments_{2\rorder}) \doteq L_\moments \parentheses{\monoleq{\xextend}{\rorder}\monoleq{\xextend}{\rorder}\tran} \in \sym^{\dimbasis{\dimxextend}{\rorder}},
\eea
where $L_\moments$ also applies component-wise to the monomial matrix $\monoleq{\xextend}{\rorder}\monoleq{\xextend}{\rorder}\tran$, and $\MM_{\rorder}(\moments_{2\rorder})$ essentially assembles the vector of moments $\moments_{2\rorder}$ into a symmetric matrix. Finally, given a polynomial $h \in \polyringin{\xextend}$, we definite the \emph{localizing matrix} of order $\rorder$ with respect to $\moments$ and $h$ to be:
\bea
\MM_\rorder\parentheses{h \moments_{2\rorder}} \doteq L_\moments \parentheses{h \cdot \parentheses{\monoleq{\xextend}{\rorder}\monoleq{\xextend}{\rorder}\tran} } \in \sym^{\dimbasis{\dimxextend}{\rorder}},
\eea
where $L_\moments$ applies component-wise, and $h \cdot \parentheses{\monoleq{\xextend}{\rorder}\monoleq{\xextend}{\rorder}\tran}$ means multiplying $h$ with each entry of the monomial matrix $\monoleq{\xextend}{\rorder}\monoleq{\xextend}{\rorder}\tran$.
\end{definition}
With this definition, we can see that the cost function of the moment problem~\eqref{eq:momentProblem} is a linear function of the moments:
\bea
\int f(\xextend) d\mu = \int \sum_{\valpha \in \calF} c(\valpha) \xextend^\valpha d\mu = \sum_{\valpha \in \calF} c(\valpha) \int \xextend^\valpha d\mu = \sum_{\alpha \in \calF} c(\valpha) \moment_\valpha.
\eea
Therefore, instead of finding the probability measure $\mu$ directly in the infinite-dimensional space $\probin{\calxextend}$ as written in eq.~\eqref{eq:momentProblem}, we can equivalently search for the sequence of (possibly finite number of) moments $\moments$ and then recover the measure $\mu$ from the moments $\moments$. However, not every sequence of moments has a representing measure. In fact, in order to have a representing measure, the moment sequence has to satisfy the following conditions.

\begin{theorem}[\bf Necessary and Sufficient Condition for Representing Measure{~\cite[Theorem 3.8(b), p.~63]{lasserre10book-momentsOpt}}] Let $\moments = (\moment_\valpha)$ be a given infinite sequence of moments, and let $\calxextend$ be an Archimedean constraint set defined by the polynomial equality and inequality constraints in the POP~\eqref{eq:pop}. Then, the sequence $\moments$ has a representing measuring on $\calxextend$ if and only if:
\bea
\forall \rorder \in \mathbb{N}: \ \MM_{\rorder}(\moments_{2\rorder}) \succeq 0; \ \MM_{\rorder}(h\moments_{2\rorder}) = \zero, \forall h \in \vh; \ \MM_{\rorder}(g\moments_{2\rorder}) \succeq 0, \forall g \in \vg. \label{eq:representMeasure}
\eea
\end{theorem}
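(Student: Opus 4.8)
The plan is to prove the two directions separately, translating the spectral (matrix) conditions in~\eqref{eq:representMeasure} into a single statement about the \emph{Riesz linear functional} $L_\moments$ and then appealing to the duality between moment sequences and nonnegative polynomials. Concretely, I would first observe that the three families of matrix conditions are jointly equivalent to positivity of $L_\moments$ on the quadratic module generated by the constraints: writing any SOS polynomial as $\sigma = \sum_i q_i^2$ with coefficient vectors $\vc_i$, one has $L_\moments(\sigma) = \sum_i \vc_i\tran \MM_{\rorder}(\moments_{2\rorder}) \vc_i$, so $\MM_{\rorder}(\moments_{2\rorder}) \succeq 0$ for all $\rorder$ is the same as $L_\moments(\sigma) \geq 0$ for all $\sigma \in \sosin{\xextend}$; likewise $\MM_{\rorder}(g \moments_{2\rorder}) \succeq 0$ for all $\rorder$ is equivalent to $L_\moments(\sigma g) \geq 0$ for all $\sigma \in \sosin{\xextend}$, and $\MM_{\rorder}(h \moments_{2\rorder}) = \zero$ for all $\rorder$ is equivalent to $L_\moments(p h) = 0$ for every $p \in \polyringin{\xextend}$. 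Collecting these, condition~\eqref{eq:representMeasure} is exactly
\[
L_\moments(q) \geq 0 \ \text{ for all } q \in M, \qquad M \doteq \Big\{ \sigma_0 + \textstyle\sum_{k} \sigma_k g_k + \sum_j p_j h_j : \sigma_0,\sigma_k \in \sosin{\xextend},\ p_j \in \polyringin{\xextend} \Big\}.
\]

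Necessity is then routine. If $\moments$ is represented by a measure $\mu$ supported on $\calxextend$, then $L_\moments(q) = \int q \, d\mu$, and every generator of $M$ integrates to a nonnegative number: $\int \sigma_k g_k \, d\mu \geq 0$ because $\sigma_k \geq 0$ everywhere and $g_k \geq 0$ on $\mathrm{supp}\,\mu$, while $\int p_j h_j \, d\mu = 0$ because $h_j \equiv 0$ on $\calxextend$. Reading these back through the identities above yields $\MM_{\rorder}(\moments_{2\rorder}) \succeq 0$, $\MM_{\rorder}(g \moments_{2\rorder}) \succeq 0$, and $\MM_{\rorder}(h \moments_{2\rorder}) = \zero$ for every order $\rorder$.

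Sufficiency is the substantive direction and is where the \emph{Archimedean} hypothesis is indispensable. The goal is to produce a representing measure from the sole information that $L_\moments \geq 0$ on $M$. I would invoke Putinar's Positivstellensatz for the Archimedean quadratic module: since $\calxextend$ is Archimedean there exists $M_0 > 0$ with $M_0 - \twonorm{\xextend}^2 \in M$, and this boundedness certificate enables the operator-theoretic (GNS) construction — form the inner-product space $\polyringin{\xextend}/\ker L_\moments$, show the multiplication-by-$x_i$ operators are bounded and commuting (the bound coming precisely from $M_0 - \twonorm{\xextend}^2 \in M$), and apply the spectral theorem to obtain a projection-valued measure whose scalar spectral measure represents $\moments$ and is supported on $\calxextend$. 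Equivalently, the same conclusion follows by combining Putinar's theorem with the Riesz--Haviland theorem: Putinar guarantees that every polynomial strictly positive on $\calxextend$ lies in $M$, hence $L_\moments$ is strictly positive on such polynomials; a standard $p \mapsto p + \varepsilon$ limiting argument (again controlled by the Archimedean bound, which prevents the moments from escaping to infinity) upgrades this to nonnegativity on all polynomials nonnegative on $\calxextend$, and Riesz--Haviland then delivers the representing measure.

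I expect the crux to be this sufficiency step, specifically the passage from ``$L_\moments \geq 0$ on the finitely generated module $M$'' to the existence of a genuine measure. The two delicate points are (i) boundedness of the multiplication operators, which is exactly what Archimedeanness buys and which can fail for merely compact sets lacking such a certificate, and (ii) the limiting argument that upgrades strict positivity to nonnegativity without losing the precise support on $\calxextend$. Both are classical and can be cited from~\cite[Theorem 3.8]{lasserre10book-momentsOpt}; the perception-specific content is simply that the Archimedean certificate established in property~\ref{prop:pop-archimedean} of Proposition~\ref{prop:pop} supplies the hypothesis needed to apply the theorem.
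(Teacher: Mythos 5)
The paper does not prove this statement at all: it is quoted verbatim as Theorem~3.8(b) of Lasserre's book and used as a black box, so there is no in-paper argument to compare against. Your sketch is the canonical proof of that cited result, and it is correct in outline. The translation of the three matrix conditions into nonnegativity of the Riesz functional $L_\moments$ on the quadratic module $M$ is exactly right (the identity $\vc\tran \MM_\rorder(g\moments_{2\rorder})\vc = L_\moments(g\,q^2)$ for $q$ with coefficient vector $\vc$, and the span argument for the equality constraints), necessity is indeed routine, and sufficiency is Putinar's theorem on the moment side, provable either by the GNS construction with bounded commuting multiplication operators or by Putinar's Positivstellensatz plus Riesz--Haviland. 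One small imprecision: in the $p \mapsto p+\varepsilon$ step, Archimedeanness is not what ``prevents the moments from escaping to infinity''---the limit only needs $L_\moments(1) = \moment_\zero < \infty$, which is automatic. Archimedeanness is consumed earlier, in making Putinar's Positivstellensatz (equivalently, the operator bound $\sum_i L_\moments(x_i^2 q^2) \leq M_0 L_\moments(q^2)$ coming from $M_0 - \twonorm{\xextend}^2 \in M$) available at all; without it the result genuinely fails, as you note. With that caveat, the proposal matches the proof in the cited reference, and the only perception-specific content---that property~\ref{prop:pop-archimedean} of Proposition~\ref{prop:pop} supplies the Archimedean hypothesis---is correctly identified.
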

Enforcing the PSD constraints in~\eqref{eq:representMeasure} for every $\rorder \in \natural$ (potentially unbounded) is intractable due to the infinite moment sequence $\moments$. Therefore, a natural strategy is to enforce the constraints for a fix order $\rorder$ (called the relaxation order), which is precisely the optimization~\eqref{eq:denseMoment} in Theorem~\ref{thm:denseMoment}.\footnote{Because the constraint polynomials $\vh$ and $\vg$ have degree 2, the localizing matrices of order $\rorder-1$ is used to make sure every moment appearing in the localizing matrices also appears in the moment matrix $\MM_\rorder(\moments_{2\rorder})$. In a more general setting where the constraint polynomials $h_i$ (or $g_i$) have degree $2v_i$ or $2v_i-1$, then the localizing matrices of degree $\rorder - v_i$ should be used. } Problem~\eqref{eq:denseMoment} is a \emph{relaxation} of the moment problem~\eqref{eq:momentProblem} because the constraints at a fixed $\rorder$ only provide a \emph{necessary} condition for the existence of a representing measure, which in turn implies that the global minimum of the relaxation, $p^\star_\rorder$, is a \emph{lower bound} of the global minimum of the moment problem~\eqref{eq:momentProblem} (and thus the POP~\eqref{eq:pop}): 
\bea
p^\star_\rorder \leq f^\star_\mu = f^\star. \label{eq:lowerBoundfixedK}
\eea 
Lasserre's hierarchy is a hierarchy of moment relaxations with increasing relaxation orders $\rorder_1 < \rorder_2 < \dots$ (and increasing lower bounds $p^\star_{\rorder_1} \leq p^\star_{\rorder_2} < \dots$ ) until the relaxation is tight (\ie~$p^\star_{\rorder} = f^\star_\mu$). In general, Lasserre's hierarchy may achieve tightness only \emph{asymptotically} (\ie~$p^\star_{\rorder} \rightarrow f^\star_\mu$ as $\rorder \rightarrow \infty$). However, when the feasible set $\calxextend$ is Archimedean, Nie~\cite{Nie14mp-finiteConvergenceLassere} showed that the hierarchy terminates at a finite relaxation order, which is the case for our POP~\eqref{eq:pop} arising from a broad class of \perception problems (\cf claim~\ref{prop:pop-archimedean} in Proposition~\ref{prop:pop}).

Now a natural question is, how can one determine when the relaxation is tight (and terminate the hierarchy) without knowing the true global minimum $f^\star$? In other words, how to compute an \emph{optimality certificate}, and possibly recover the global minimizers of the POP~\eqref{eq:pop} from the solution of the moment relaxation~\eqref{eq:denseMoment}? Both questions boil down to checking if the solution of the moment relaxation, $\moments^\star_{2\rorder}$, has a representing measure on $\calxextend$, which is known as the \emph{truncated $\mathbb{K}$-moment problem}~\cite{Curto00TAMS-truncatedKmoment}. The following theorem states a sufficient condition.
\begin{theorem}[\bf Sufficient Condition for Truncated $\mathbb{K}$-Moment Problem~{\cite[Theorem 3.11, p.~66]{lasserre10book-momentsOpt}}] \label{thm:sufficientTruncatedK}
Let $\calxextend$ be the feasible set of the POP~\eqref{eq:pop}, where both $h_j$ and $g_k$ are quadratic polynomials. Let $\moments^\star_{2\rorder}$ be the solution of the moment relaxation~\eqref{eq:denseMoment}. Then $\moments^\star_{2\rorder}$ admits an $r$-atomic representing measure supported on $\calxextend$, with $r = \rank{\MM_{\rorder-1}\parentheses{\moments^\star_{2\rorder-2}}}$, if:
\bea
\rank{\MM_{\rorder-1}\parentheses{\moments^\star_{2\rorder-2}}} = \rank{\MM_{\rorder}\parentheses{\moments^\star_{2\rorder}}}.\label{eq:sufficientKMoment}
\eea
\end{theorem}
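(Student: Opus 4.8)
The plan is to recognize the rank condition~\eqref{eq:sufficientKMoment} as a \emph{flat extension} condition and to invoke the flat extension machinery of Curto and Fialkow~\cite{Curto00TAMS-truncatedKmoment,lasserre10book-momentsOpt}. First I would observe that $\rank{\MM_{\rorder-1}(\moments^\star_{2\rorder-2})} = \rank{\MM_{\rorder}(\moments^\star_{2\rorder})}$ means every column of $\MM_{\rorder}(\moments^\star_{2\rorder})$ indexed by a degree-$\rorder$ monomial lies in the span of the columns indexed by monomials of degree at most $\rorder-1$. Equivalently, the kernel of $\MM_{\rorder}(\moments^\star_{2\rorder})$, read as a set of polynomial relations among the entries of $\monoleq{\xextend}{\rorder}$, already ``closes off'' the top-degree block: each monomial of degree $\rorder$ is congruent, modulo the kernel, to a polynomial of lower degree. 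This is precisely the statement that $\MM_{\rorder}(\moments^\star_{2\rorder})$ is a flat (rank-preserving) extension of $\MM_{\rorder-1}(\moments^\star_{2\rorder-2})$.

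The second step is to propagate this single flat extension to an infinite one. The flat extension theorem guarantees that a positive semidefinite, flat moment matrix $\MM_{\rorder}(\moments^\star_{2\rorder})$ admits a unique flat extension to $\MM_{\rorder+1}$, and iterating yields an infinite positive semidefinite moment matrix of the same rank $r = \rank{\MM_{\rorder-1}(\moments^\star_{2\rorder-2})}$. Rank preservation lets one define multiplication-by-$x_i$ operators on the finite-dimensional column space; these turn out to be pairwise commuting with real spectrum, and the finite-rank moment theorem then yields a unique $r$-atomic representing measure $\mu^\star = \sum_{i=1}^r \lambda_i \delta_{\xextend_i}$, with weights $\lambda_i > 0$ and distinct atoms $\xextend_i \in \Real{\dimxextend}$ whose truncated moment vectors $\monoleq{\xextend_i}{\rorder-1}$ are linearly independent.

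The third step is to show the atoms lie in the feasible set $\calxextend$. Since the support of $\mu^\star$ coincides with the common zeros of the kernel relations of $\MM_{\rorder}(\moments^\star_{2\rorder})$, it suffices to check the localizing constraints carried by the solution. The equality constraints $\MM_{\rorder-1}(h_j \moments^\star_{2\rorder-2}) = \zero$ place each product $h_j \cdot \monoleq{\xextend}{\rorder-1}$ in the kernel, forcing $h_j(\xextend_i) = 0$ for every atom and each $j$. For the inequalities, evaluating $\MM_{\rorder-1}(g_k \moments^\star_{2\rorder-2}) \succeq 0$ against the atomic measure gives $\sum_{i=1}^r \lambda_i\, g_k(\xextend_i)\, \monoleq{\xextend_i}{\rorder-1}\monoleq{\xextend_i}{\rorder-1}\tran \succeq 0$; testing this with a vector orthogonal to all $\monoleq{\xextend_i}{\rorder-1}$ except one (possible by linear independence) isolates $\lambda_{i}\, g_k(\xextend_{i}) \geq 0$, hence $g_k(\xextend_i) \geq 0$ for each atom. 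Therefore every $\xextend_i \in \calxextend$, and $\mu^\star$ is a representing measure supported on $\calxextend$, as claimed.

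I expect the main obstacle to be the second step: establishing that a \emph{single} flat extension propagates to an \emph{infinite} flat extension of the same rank. This is the technical heart of the Curto--Fialkow theorem and requires showing that the lower-degree relations induced by the kernel generate a zero-dimensional ideal, that the induced multiplication operators on the quotient are well defined and commute, and that their spectrum is real and compatible with positive semidefiniteness. Since the statement is cited verbatim from~\cite[Theorem 3.11, p.~66]{lasserre10book-momentsOpt}, I would rely on that reference for the full flat-extension argument and concentrate the verification on the third step, confirming via the localizing matrices that the extracted atoms respect the quadratic equality and inequality constraints defining $\calxextend$.
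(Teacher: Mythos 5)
The paper does not actually prove this statement: it is quoted verbatim as Theorem 3.11 of Lasserre's book, and the only remark the paper adds is that the version here is the special case in which all $h_j$ and $g_k$ are quadratic (so the localizing matrices of order $\rorder-1$ suffice). Your sketch is a correct outline of the standard Curto--Fialkow flat-extension argument that underlies that cited theorem --- the rank condition as flatness, propagation to an infinite rank-$r$ flat extension via commuting multiplication operators, extraction of the $r$-atomic measure, and verification via the localizing matrices that the atoms satisfy the equality and inequality constraints --- and you correctly identify the propagation step as the technical core that must be taken from the reference. So your treatment is consistent with, and somewhat more detailed than, the paper's, which simply defers the entire result to the citation.
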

Theorem~\ref{thm:sufficientTruncatedK} is a special case of Theorem 3.11 in~\cite{lasserre10book-momentsOpt}, where we have used the fact that $\calxextend$ are defined by quadratic polynomials.\footnote{In the general case, suppose $\calxextend$ are defined by polynomials with degree $2v_i$ or $2v_i-1, i=1,\dots,\nrEq+\nrIneq$, then denote $v = \max_{i} v_i$, the sufficient condition becomes $\rank{\MM_{\rorder-v}\parentheses{\moments_{2\rorder-2v}}} = \rank{\MM_{\rorder}\parentheses{\moments_{2\rorder}}}$.} In particular, for the POP arising from \perception problems, at the minimum relaxation order $\rorder =2 $, we usually have $\rank{\MM_\rorder\parentheses{\moments^\star_{2\rorder}}} = 1$, which immediately implies that $r = \rank{\MM_{\rorder-1}\parentheses{\moments^\star_{2\rorder-2}}}=\rank{\MM_\rorder\parentheses{\moments^\star_{2\rorder}}} = 1$ (because $\MM_{\rorder-1}\parentheses{\moments^\star_{2\rorder-2}}$ is a nonzero principal sub-matrix of $\MM_\rorder\parentheses{\moments^\star_{2\rorder}}$), and $\moments^\star_{2\rorder}$ admits a $1$-atomic representing measure $\mu = \delta_{\xextend^\star}$. Therefore, from Theorem~\ref{thm:popMoment}, we have $\xextend^\star$ is the unique global minimizer of the POP~\eqref{eq:pop}.\footnote{The uniqueness of the solution comes from the fact that Interior Point Methods solvers (\eg~SeDuMi) output an optimal solution of \emph{maximum rank} if the SDP has more than one optimal solutions~\cite{Wolkowicz12book-handbookSDP}. Therefore, if $\xextend^\star$ is not unique, then the SDP will have multiple optimal solutions and the rank of the solution will be larger than 1 (\cf Theorem 6.18 in~\cite{Laurent09eaag-SOSMomentOptimization}).} Additionally, for $\mu = \delta_{\xextend^\star}$, it is straightforward to verify that $\moments^\star_{2\rorder} = \monoleq{\xextend^\star}{2\rorder}$ from eq.~\eqref{eq:defMoment}, and $\xextend^\star$ can be directly read off from the moments. 

{\bf Rounding and Relative Duality Gap}. When the sufficient condition eq.~\eqref{eq:sufficientKMoment} does not hold and the moment matrix $\MM_\rorder\parentheses{\moments^\star_{2\rorder}}$ has rank larger than 1, we can first perform spectral decomposition on $\MM_\rorder\parentheses{\moments^\star_{2\rorder}}$, and extract its eigenvector corresponding to the largest eigenvalue, denoted as $\vv_\rorder \in \Real{\dimbasis{\dimxextend}{\rorder}}$. Then we normalize $\vv_\rorder$'s first entry, $\vv_\rorder(1)$, to be 1 by: $\vv_\rorder \leftarrow \frac{\vv_\rorder}{\vv_\rorder(1)}$. If the relaxation were tight, then $\vv_\rorder = \monoleq{\xextend^\star}{\rorder}$ is a vector of moments up to degree $\rorder$ and $\xextend^\star$ can be directly read off from $\vv_\rorder$. However, since the relaxation is not tight, we obtain a feasible point $\hatxextend$ by: 
\bea
\hatxextend = \proj_{\calxextend}\parentheses{\vv_\rorder\parentheses{\xextend}}, \label{eq:projection}
\eea 
where $\vv_\rorder\parentheses{\xextend}$ denotes the entries of $\vv_\rorder$ at indices corresponding to the locations of $\xextend$ in $\monoleq{\xextend}{\rorder}$, and $\proj_{\calxextend}$ performs the projection onto the feasible set $\calxextend$ (see Section~\ref{sec:projectionP} for details). Let $\hatf = f(\hatxextend)$, we have the following inequality:
\bea
p^\star_\rorder \leq f^\star \leq \hatf,
\eea
where the first inequality follows from eq.~\eqref{eq:lowerBoundfixedK}, and the second inequality holds because $f^\star$ is the global minimum of the POP~\eqref{eq:pop}. The relative duality gap can be computed as:
\bea
\eta_\rorder = \frac{\hatf - p^\star_\rorder}{\hatf}. \label{eq:relDualityGapDense}
\eea
A smaller $\eta_\rorder$ implies a tighter relaxation and $\eta_\rorder = 0$ if and only if the relaxation is tight.

\subsection{Example: 2D Single Rotation Averaging} 
\label{sec:2DSRA}
To make our explanation of Lasserre's hierarchy in Section~\ref{sec:explanation} more accessible, we show an application of the hierarchy to 2D single rotation averaging, because the dimension of $\vxx$ is small and the constraint set $\calX$ is simple to characterize. 2D single rotation averaging follows the definition of 3D single rotation averaging in Example~\ref{eg:singleRotationAveraging}, except that the measurements $\MR_i,i=1,\dots,N$ and the unknown geometric model $\MR$ are 2D rotation matrices,~\ie~$\MR \in \SOtwo$. In this case, we describe a 2D rotation matrix using:
\bea
\MR \in \SOtwo \Longleftrightarrow \MR = \bmat{cc} x_1 & -x_2 \\ x_2 &  x_1 \emat, \quad \subject \quad h^x = 1-x_1^2 - x_2^2 = 0.
\eea
We then choose $N=2$, leading to two binary variables $\theta_1$ and $\theta_2$. Denote $\vxx = [x_1,x_2]\tran$, $\vtheta = [\theta_1,\theta_2]\tran$, and $\xextend = [\vxx\tran,\vtheta\tran]\tran$, the POP~\eqref{eq:pop} for 2D single rotation averaging with $N=2$ is:
\bea
\min_{\xextend = [x_1,x_2,\theta_1,\theta_2]\tran \in \Real{4}} & f(\xextend) \\
\subject & h^x = 1-x_1^2 - x_2^2 = 0, \label{eq:2DSRAhx} \\
  & h^{\theta_1} = 1 - \theta_1^2 = 0, \label{eq:2DSRAhtheta1} \\
  & h^{\theta_2} = 1 - \theta_2^2 = 0, \label{eq:2DSRAhtheta2}
\eea
where the objective function can be computed from eq.~\eqref{eq:TLSBinary}.

{\bf Moment matrices}. To describe the dense moment relaxation~\eqref{eq:denseMoment} at $\rorder = 2$, we first form the moment matrices $\MM_1\parentheses{\moments_2}$ and $\MM_2\parentheses{\moments_4}$. Towards this goal, let us write the vector of monomials $\monoleq{\xextend}{1}$ and $\monoleq{\xextend}{2}$:
\bea
& \monoleq{\xextend}{1} = [1,x_1,x_2,\theta_1,\theta_2]\tran \in \Real{5}, \label{eq:vecMono1} \\
& \monoleq{\xextend}{2} = [1,x_1,x_2,\theta_1,\theta_2,x_1^2,x_1 x_2, x_1 \theta_1,x_1 \theta_2,x_2^2,x_2 \theta_1,x_2\theta_2, \theta_1^2, \theta_1 \theta_2, \theta_2^2]\tran \in \Real{15}. \label{eq:vecMono2}
\eea
For notation simplicity, we use $\moment_{\xextend^\valpha}$, instead of $\moment_{\valpha}$, to denote the moment of order $\valpha$. For example, $\moment_{x_1 x_2} \doteq \int_{\calxextend} x_1x_2 d\mu$ for some probability measure $\mu$ supported on $\calxextend$. Then the vector of moments $\moments_1$ and $\moments_2$ directly follow from the vector of monomials in eq.~\eqref{eq:vecMono1} and~\eqref{eq:vecMono2}:
\bea
& \moments_1 = \bracket{\moment_1,\moment_{x_1},\moment_{x_2},\moment_{\theta_1},\moment_{\theta_2}}\tran \in \Real{5}, \\
& \hspace{-10mm} \moments_2 = \bracket{\moment_1,\moment_{x_1},\moment_{x_2},\moment_{\theta_1},\moment_{\theta_2},\moment_{x_1^2},\moment_{x_1 x_2}, \moment_{x_1 \theta_1},\moment_{x_1 \theta_2},\moment_{x_2^2},\moment_{x_2 \theta_1},\moment_{x_2\theta_2}, \moment_{\theta_1^2}, \moment_{\theta_1 \theta_2}, \moment_{\theta_2^2}} \in \Real{15}. \label{eq:momentVec2}
\eea
The vector of moments of degree up to 4, $\moments_4 \in \Real{\dimbasis{4}{4}=70}$ can be written in a similar way. We omit its full expression here, because it will soon appear in the moment matrix $\MM_2\parentheses{\moments_4}$. Then we are ready to form the moment matrix of order 1, with rows and columns indexed by $\monoleq{\xextend}{1}$:
\bea
& \MM_1\parentheses{\moments_2} = L_\moments\parentheses{\monoleq{\xextend}{1} \monoleq{\xextend}{1}\tran} =  
\begin{blockarray}{cccccc}
& \matIdColor{1} & \matIdColor{x_1} & \matIdColor{x_2} & \matIdColor{\theta_1} & \matIdColor{\theta_2} \\
\begin{block}{c[ccccc]}
\matIdColor{1}   	 & \moment_1 & \moment_{x_1} & \moment_{x_2} & \moment_{\theta_1} & \moment_{\theta_2}\\
\matIdColor{x_1} 	 & \star & \moment_{x_1^2} & \moment_{x_1 x_2} & \moment_{x_1 \theta_1} & \moment_{x_1 \theta_2} \\
\matIdColor{x_2} 	 & \star & \star & \moment_{x_2^2} & \moment_{x_2 \theta_1} & \moment_{x_2 \theta_2} \\
\matIdColor{\theta_1} & \star & \star & \star & \moment_{\theta_1^2} & \moment_{\theta_1 \theta_2} \\
\matIdColor{\theta_2} & \star & \star & \star & \star & \moment_{\theta_2^2} \\
\end{block}
\end{blockarray}, \label{eq:momentMat1}
\eea
where we see that the moments appearing in $\MM_1\parentheses{\moments_2}$ are exactly $\moments_2$ (compare upper triangular entries in~\eqref{eq:momentMat1} with~\eqref{eq:momentVec2}, thus the expression $\MM_1\parentheses{\moments_2}$). Similarly, we form the moment matrix of order 2, with rows and columns indexed by $\monoleq{\xextend}{2}$:
\tiny

\bea
 &  \hspace{-40mm} \MM_2\parentheses{\moments_4} = L_\moments\parentheses{\monoleq{\xextend}{2}\monoleq{\xextend}{2}\tran} = \\
&  \hspace{-40mm}\begin{blockarray}{cccccccccccccccc}
 &\scriptstyle \matIdColor{1} &\scriptstyle \matIdColor{x_1} &\scriptstyle \matIdColor{x_2} &\scriptstyle \matIdColor{\theta_1} &\scriptstyle \matIdColor{\theta_2} &\scriptstyle \matIdColor{x_1^2} &\scriptstyle \matIdColor{x_1 x_2} &\scriptstyle  \matIdColor{x_1 \theta_1} &\scriptstyle \matIdColor{x_1 \theta_2} &\scriptstyle \matIdColor{x_2^2} &\scriptstyle \matIdColor{x_2 \theta_1} &\scriptstyle \matIdColor{x_2\theta_2} &\scriptstyle \matIdColor{\theta_1^2} &\scriptstyle \matIdColor{\theta_1 \theta_2} &\scriptstyle \matIdColor{\theta_2^2} \\
\begin{block}{c[ccccccccccccccc]}
\scriptstyle \matIdColor{1} &\scriptstyle \moment_{1} &\scriptstyle \moment_{x_1} &\scriptstyle \moment_{x_2} &\scriptstyle \moment_{\theta_1} &\scriptstyle \moment_{\theta_2} &\scriptstyle \moment_{x_1^2} &\scriptstyle \moment_{x_1x_2} &\scriptstyle \moment_{x_1\theta_1} &\scriptstyle \moment_{x_1\theta_2} &\scriptstyle \moment_{x_2^2} &\scriptstyle \moment_{x_2\theta_1} &\scriptstyle \moment_{x_2\theta_2} &\scriptstyle \moment_{\theta_1^2} &\scriptstyle \moment_{\theta_1\theta_2} &\scriptstyle \moment_{\theta_2^2} \\
\scriptstyle \matIdColor{x_1} &\scriptstyle \star &\scriptstyle \moment_{x_1^2} &\scriptstyle \moment_{x_1x_2} &\scriptstyle \moment_{x_1\theta_1} &\scriptstyle \moment_{x_1\theta_2} &\scriptstyle \moment_{x_1^3} &\scriptstyle \moment_{x_1^2x_2} &\scriptstyle \moment_{x_1^2\theta_1} &\scriptstyle \moment_{x_1^2\theta_2} &\scriptstyle \moment_{x_1x_2^2} &\scriptstyle \moment_{x_1x_2\theta_1} &\scriptstyle \moment_{x_1x_2\theta_2} &\scriptstyle \moment_{x_1\theta_1^2} &\scriptstyle \moment_{x_1\theta_1\theta_2} &\scriptstyle \moment_{x_1\theta_2^2} \\
\scriptstyle \matIdColor{x_2} & \scriptstyle \star & \scriptstyle \star &\scriptstyle \moment_{x_2^2} & \scriptstyle \moment_{x_2\theta_1} & \scriptstyle \moment_{x_2\theta_2} & \scriptstyle \moment_{x_1^2x_2} & \scriptstyle \moment_{x_1x_2^2} & \scriptstyle \moment_{x_1x_2\theta_1} & \scriptstyle \moment_{x_1x_2\theta_2} & \scriptstyle \moment_{x_2^3} & \scriptstyle \moment_{x_2^2\theta_1} & \scriptstyle \moment_{x_2^2\theta_2} & \scriptstyle\moment_{x_2\theta_1^2} &\scriptstyle\moment_{x_2\theta_1\theta_2} &\scriptstyle\moment_{x_2\theta_2^2} \\
\scriptstyle\matIdColor{\theta_1} &\scriptstyle\star &\scriptstyle\star &\scriptstyle\star & \scriptstyle\moment_{\theta_1^2} & \scriptstyle\moment_{\theta_1\theta_2} &\scriptstyle\moment_{x_1^2\theta_1} &\scriptstyle\moment_{x_1x_2\theta_1} &\scriptstyle\moment_{x_1\theta_1^2} &\scriptstyle\moment_{x_1\theta_1\theta_2} &\scriptstyle\moment_{\theta_1x_2^2} &\scriptstyle\moment_{x_2\theta_1^2} &\scriptstyle\moment_{x_2\theta_1\theta_2} &\scriptstyle\moment_{\theta_1^3} &\scriptstyle\moment_{\theta_1^2\theta_2} &\scriptstyle\moment_{\theta_1\theta_2^2} \\
\scriptstyle\matIdColor{\theta_2} &\scriptstyle\star &\scriptstyle\star &\scriptstyle\star &\scriptstyle\star &\scriptstyle\moment_{\theta_2^2} &\scriptstyle\moment_{x_1^2\theta_2} &\scriptstyle\moment_{x_1x_2\theta_2} &\scriptstyle\moment_{x_1\theta_1\theta_2} &\scriptstyle\moment_{x_1\theta_2^2} &\scriptstyle\moment_{x_2^2\theta_2} &\scriptstyle\moment_{x_2\theta_1\theta_2} &\scriptstyle\moment_{x_2\theta_2^2} &\scriptstyle\moment_{\theta_1^2\theta_2} &\scriptstyle\moment_{\theta_1\theta_2^2} &\scriptstyle\moment_{\theta_2^3} \\
\scriptstyle \matIdColor{x_1^2} &\scriptstyle\star &\scriptstyle\star &\scriptstyle\star &\scriptstyle\star &\scriptstyle\star &\scriptstyle\moment_{x_1^4} &\scriptstyle\moment_{x_1^3x_2} &\scriptstyle\moment_{x_1^3\theta_1} &\scriptstyle\moment_{x_1^3\theta_2} &\scriptstyle\moment_{x_1^2x_2^2} &\scriptstyle\moment_{x_1^2x_2\theta_1} &\scriptstyle\moment_{x_1^2x_2\theta_2} &\scriptstyle\moment_{x_1^2\theta_1^2} &\scriptstyle\moment_{x_1^2\theta_1\theta_2} &\scriptstyle\moment_{x_1^2\theta_2^2} \\
\scriptstyle \matIdColor{x_1x_2} &\scriptstyle\star &\scriptstyle\star &\scriptstyle\star &\scriptstyle\star &\scriptstyle\star & \scriptstyle\star &\scriptstyle\moment_{x_1^2x_2^2} &\scriptstyle\moment_{x_1^2x_2\theta_1} &\scriptstyle\moment_{x_1^2x_2\theta_2} &\scriptstyle\moment_{x_1x_2^3} &\scriptstyle\moment_{x_1x_2^2\theta_1} &\scriptstyle\moment_{x_1x_2^2\theta_2} &\scriptstyle\moment_{x_1x_2\theta_1^2} &\scriptstyle\blue{\moment_{x_1x_2\theta_1\theta_2}} &\scriptstyle\moment_{x_1x_2\theta_2^2} \\
\scriptstyle \matIdColor{x_1\theta_1} &\scriptstyle\star &\scriptstyle\star &\scriptstyle\star &\scriptstyle\star &\scriptstyle\star &\scriptstyle\star &\scriptstyle\star &\scriptstyle\moment_{x_1^2\theta_1^2} &\scriptstyle\moment_{x_1^2\theta_1\theta_2} &\scriptstyle\moment_{x_1x_2^2\theta_1} &\scriptstyle\moment_{x_1x_2\theta_1^2} &\scriptstyle\blue{\moment_{x_1x_2\theta_1\theta_2}} &\scriptstyle\moment_{x_1\theta_1^3} &\scriptstyle\moment_{x_1\theta_1^2\theta_2} &\scriptstyle\moment_{x_1\theta_1\theta_2^2}\\
\scriptstyle \matIdColor{x_1\theta_2} &\scriptstyle\star &\scriptstyle\star &\scriptstyle\star &\scriptstyle\star &\scriptstyle\star &\scriptstyle\star &\scriptstyle\star &\scriptstyle\star &\scriptstyle\moment_{x_1^2\theta_2^2} &\scriptstyle\moment_{x_1x_2^2\theta_2} &\scriptstyle\blue{\moment_{x_1x_2\theta_1\theta_2}} &\scriptstyle\moment_{x_1x_2\theta_2^2} &\scriptstyle\moment_{x_1\theta_1^2\theta_2} &\scriptstyle\moment_{x_1\theta_1\theta_2^2} & \scriptstyle\moment_{x_1\theta_2^3}\\
\scriptstyle \matIdColor{x_2^2} &\scriptstyle\star &\scriptstyle\star &\scriptstyle\star &\scriptstyle\star &\scriptstyle\star &\scriptstyle\star &\scriptstyle\star &\scriptstyle\star &\scriptstyle\star &\scriptstyle\moment_{x_2^4} &\scriptstyle\moment_{x_2^3\theta_1} &\scriptstyle\moment_{x_2^3\theta_2} &\scriptstyle\moment_{x_2^2\theta_1^2} &\scriptstyle\moment_{x_2^2\theta_1\theta_2} &\scriptstyle\moment_{x_2^2\theta_2^2}\\
\scriptstyle \matIdColor{x_2\theta_1}  &\scriptstyle\star &\scriptstyle\star &\scriptstyle\star &\scriptstyle\star &\scriptstyle\star &\scriptstyle\star &\scriptstyle\star &\scriptstyle\star &\scriptstyle\star &\scriptstyle\star &\scriptstyle\moment_{x_2^2\theta_1^2} &\scriptstyle\moment_{x_2^2\theta_1\theta_2} &\scriptstyle\moment_{x_2\theta_1^3} &\scriptstyle\moment_{x_2\theta_1^2\theta_2} &\scriptstyle\moment_{x_2\theta_1\theta_2^2}\\
\scriptstyle \matIdColor{x_2\theta_2} &\scriptstyle\star &\scriptstyle\star &\scriptstyle\star &\scriptstyle\star &\scriptstyle\star &\scriptstyle\star &\scriptstyle\star &\scriptstyle\star &\scriptstyle\star &\scriptstyle\star &\scriptstyle\star &\scriptstyle\moment_{x_2^2\theta_2^2} &\scriptstyle\moment_{x_2\theta_1^2\theta_2} &\scriptstyle\moment_{x_2\theta_1\theta_2^2} &\scriptstyle\moment_{x_2\theta_2^3}\\
\scriptstyle \matIdColor{\theta_1^2} &\scriptstyle\star &\scriptstyle\star &\scriptstyle\star &\scriptstyle\star &\scriptstyle\star &\scriptstyle\star &\scriptstyle\star &\scriptstyle\star &\scriptstyle\star &\scriptstyle\star &\scriptstyle\star &\scriptstyle\star &\scriptstyle\moment_{\theta_1^4} &\scriptstyle\moment_{\theta_1^3\theta_2} &\scriptstyle\moment_{\theta_1^2\theta_2^2}\\
\scriptstyle \matIdColor{\theta_1\theta_2} &\scriptstyle\star &\scriptstyle\star &\scriptstyle\star &\scriptstyle\star &\scriptstyle\star &\scriptstyle\star &\scriptstyle\star &\scriptstyle\star &\scriptstyle\star &\scriptstyle\star &\scriptstyle\star &\scriptstyle\star &\scriptstyle\star &\scriptstyle\moment_{\theta_1^2\theta_2^2} &\scriptstyle\moment_{\theta_1\theta_2^3}\\
\scriptstyle \matIdColor{\theta_2^2} &\scriptstyle\star &\scriptstyle\star &\scriptstyle\star &\scriptstyle\star &\scriptstyle\star &\scriptstyle\star &\scriptstyle\star &\scriptstyle\star &\scriptstyle\star &\scriptstyle\star &\scriptstyle\star &\scriptstyle\star &\scriptstyle\star &\scriptstyle\star &\scriptstyle\moment_{\theta_2^4}\\
\end{block}
\end{blockarray}\!\!,\ \ \ \label{eq:momentMat_4}
\eea
\normalsize
where the upper triangular entries are exactly $\moments_4$, the vector of moments up to degree $4$ (thus the expression $\MM_2\parentheses{\moments_4}$). Moreover, the moment matrix is called a \emph{generalized Hankel matrix} because a moment of order $\valpha$ can appear multiple times in the matrix. For example, the moment $\moment_{x_1x_2\theta_1\theta_2}$ (highlighted in blue) appears three times in the upper triangular part of $\MM_2\parentheses{\moments_4}$. 

{\bf Localizing matrices.} Using the moment matrix of order 1, $\MM_1\parentheses{\moments_2}$, the localizing matrix for $h^x = 1-x_1^2-x_2^2 = 0$ (eq.~\eqref{eq:2DSRAhx}) is:
\bea
& \MM_1\parentheses{h^x \moments_2} = L_\moments\parentheses{h^x \monoleq{\xextend}{1}\monoleq{\xextend}{1}\tran} = \\
&\scriptstyle \hspace{-25mm} \begin{blockarray}{cccccc}
&\scriptstyle \matIdColor{1-x_1^2-x_2^2} &\scriptstyle \matIdColor{x_1-x_1^3-x_1x_2^2} &\scriptstyle \matIdColor{x_2 - x_1^2x_2 - x_2^3} &\scriptstyle \matIdColor{\theta_1 - x_1^2\theta_1 - x_2^2\theta_1} &\scriptstyle \matIdColor{\theta_2 - x_1^2\theta_2 - x_2^2\theta_2} \\
\begin{block}{c[ccccc]}
\scriptstyle \matIdColor{1}  	 &\scriptstyle \moment_1 - \moment_{x_1^2} - \moment_{x_2^2} &\scriptstyle \moment_{x_1} - \moment_{x_1^3} - \moment_{x_1x_2^2} &\scriptstyle \moment_{x_2} - \moment_{x_1^2x_2} - \moment_{x_2^3} &\scriptstyle \moment_{\theta_1} - \moment_{x_1^2\theta_1} - \moment_{x_2^2\theta_1} &\scriptstyle \moment_{\theta_2} - \moment_{x_1^2\theta_2} - \moment_{x_2^2\theta_2} \\
\scriptstyle \matIdColor{x_1} 	 &\scriptstyle \star &\scriptstyle \moment_{x_1^2} - \moment_{x_1^4} - \moment_{x_1^2x_2^2} &\scriptstyle \moment_{x_1 x_2} -\moment_{x_1^3x_2}-\moment_{x_1x_2^3} &\scriptstyle \moment_{x_1 \theta_1} - \moment_{x_1^3\theta_1} - \moment_{x_1x_2^2\theta_1} &\scriptstyle \moment_{x_1 \theta_2} - \moment_{x_1^3\theta_2} - \moment_{x_1x_2^2\theta_2} \\
\scriptstyle \matIdColor{x_2} 	 &\scriptstyle \star &\scriptstyle \star &\scriptstyle \moment_{x_2^2} - \moment_{x_1^2x_2^2} - \moment_{x_2^4} &\scriptstyle \moment_{x_2 \theta_1}-\moment_{x_1^2x_2\theta_1}-\moment_{x_2^3\theta_1}&\scriptstyle \moment_{x_2 \theta_2}-\moment_{x_1^2x_2\theta_2}-\moment_{x_2^3\theta_2} \\
\scriptstyle \matIdColor{\theta_1} &\scriptstyle \star &\scriptstyle \star &\scriptstyle \star &\scriptstyle \moment_{\theta_1^2}-\moment_{x_1^2\theta_1^2}-\moment_{x_2^2\theta_1^2} &\scriptstyle \moment_{\theta_1 \theta_2}-\moment_{x_1^2\theta_1\theta_2}-\moment_{x_2^2\theta_1\theta_2} \\
\scriptstyle \matIdColor{\theta_2} &\scriptstyle \star &\scriptstyle \star &\scriptstyle \star &\scriptstyle \star &\scriptstyle \moment_{\theta_2^2}-\moment_{x_1^2\theta_2^2}-\moment_{x_2^2\theta_2^2} \\
\end{block}
\end{blockarray}, \label{eq:localizeMoment_x}
\eea
where the columns are indexed by $\monoleq{\xextend}{1}$, and the rows are indexed by $h^x\cdot \monoleq{\xextend}{1}$. The localizing matrix for $h^{\theta_1} = 1-\theta_1^2=0$ (eq.~\eqref{eq:2DSRAhtheta1}) is:
\bea
& \MM_1\parentheses{h^{\theta_1} \moments_2} = L_\moments\parentheses{h^{\theta_1} \monoleq{\xextend}{1}\monoleq{\xextend}{1}\tran} = \\
&\scriptstyle  \begin{blockarray}{cccccc}
&\scriptstyle \matIdColor{1-\theta_1^2} &\scriptstyle \matIdColor{x_1-x_1\theta_1^2} &\scriptstyle \matIdColor{x_2 - x_2\theta_1^2} &\scriptstyle \matIdColor{\theta_1 - \theta_1^3} &\scriptstyle \matIdColor{\theta_2 - \theta_1^2\theta_2} \\
\begin{block}{c[ccccc]}
\scriptstyle \matIdColor{1 } 	 &\scriptstyle \moment_1 - \moment_{\theta_1^2} &\scriptstyle \moment_{x_1} - \moment_{x_1\theta_1^2}  &\scriptstyle \moment_{x_2} - \moment_{x_2\theta_1^2} &\scriptstyle \moment_{\theta_1} - \moment_{\theta_1^3} &\scriptstyle \moment_{\theta_2} - \moment_{\theta_1^2\theta_2} \\
\scriptstyle \matIdColor{x_1 }	 &\scriptstyle \star &\scriptstyle \moment_{x_1^2} - \moment_{x_1^2\theta_1^2} &\scriptstyle \moment_{x_1 x_2} -\moment_{x_1x_2\theta_1^2} &\scriptstyle \moment_{x_1 \theta_1} - \moment_{x_1\theta_1^3} &\scriptstyle \moment_{x_1 \theta_2} - \moment_{x_1\theta_1^2\theta_2} \\
\scriptstyle \matIdColor{x_2} 	 &\scriptstyle \star &\scriptstyle \star &\scriptstyle \moment_{x_2^2} - \moment_{x_2^2\theta_1^2} &\scriptstyle \moment_{x_2 \theta_1}-\moment_{x_2\theta_1^3} &\scriptstyle \moment_{x_2 \theta_2}-\moment_{x_2\theta_1^2\theta_2} \\
\scriptstyle \matIdColor{\theta_1} &\scriptstyle \star &\scriptstyle \star &\scriptstyle \star &\scriptstyle \moment_{\theta_1^2}-\moment_{\theta_1^4} &\scriptstyle \moment_{\theta_1 \theta_2}-\moment_{\theta_1^3\theta_2} \\
\scriptstyle \matIdColor{\theta_2} &\scriptstyle \star &\scriptstyle \star &\scriptstyle \star &\scriptstyle \star &\scriptstyle \moment_{\theta_2^2}-\moment_{\theta_1^2\theta_2^2}\\
\end{block}
\end{blockarray}, \label{eq:localizeMoment_theta_1}
\eea
where the columns are indexed by $\monoleq{\xextend}{1}$, and the rows are indexed by $h^{\theta_1}\cdot \monoleq{\xextend}{1}$. The localizing matrix for $h^{\theta_2} = 1-\theta_2^2=0$ (eq.~\eqref{eq:2DSRAhtheta2}) is:
\bea
& \MM_1\parentheses{h^{\theta_2} \moments_2} = L_\moments\parentheses{h^{\theta_2} \monoleq{\xextend}{1}\monoleq{\xextend}{1}\tran} = \\
&\scriptstyle  \begin{blockarray}{cccccc}
&\scriptstyle \matIdColor{1-\theta_2^2} &\scriptstyle \matIdColor{x_1-x_1\theta_2^2} &\scriptstyle \matIdColor{x_2 - x_2\theta_2^2} &\scriptstyle \matIdColor{\theta_1 - \theta_1\theta_2^2} &\scriptstyle \matIdColor{\theta_2 - \theta_2^3} \\
\begin{block}{c[ccccc]}
\scriptstyle \matIdColor{1} 	 &\scriptstyle \moment_1 - \moment_{\theta_2^2} &\scriptstyle \moment_{x_1} - \moment_{x_1\theta_2^2}  &\scriptstyle \moment_{x_2} - \moment_{x_2\theta_2^2} &\scriptstyle \moment_{\theta_1} - \moment_{\theta_1\theta_2^2} &\scriptstyle \moment_{\theta_2} - \moment_{\theta_2^3} \\
\scriptstyle \matIdColor{x_1}	 &\scriptstyle \star &\scriptstyle \moment_{x_1^2} - \moment_{x_1^2\theta_2^2} &\scriptstyle \moment_{x_1 x_2} -\moment_{x_1x_2\theta_2^2} &\scriptstyle \moment_{x_1 \theta_1} - \moment_{x_1\theta_1\theta_2^2} &\scriptstyle \moment_{x_1 \theta_2} - \moment_{x_1\theta_2^3} \\
\scriptstyle \matIdColor{x_2} 	 &\scriptstyle \star &\scriptstyle \star &\scriptstyle \moment_{x_2^2} - \moment_{x_2^2\theta_2^2} &\scriptstyle \moment_{x_2\theta_1}-\moment_{x_2\theta_1\theta_2^2} &\scriptstyle \moment_{x_2\theta_2}-\moment_{x_2\theta_2^3} \\
\scriptstyle \matIdColor{\theta_1} &\scriptstyle \star &\scriptstyle \star &\scriptstyle \star &\scriptstyle \moment_{\theta_1^2}-\moment_{\theta_1^2\theta_2^2} &\scriptstyle \moment_{\theta_1 \theta_2}-\moment_{\theta_1\theta_2^3} \\
\scriptstyle \matIdColor{\theta_2} &\scriptstyle \star &\scriptstyle \star &\scriptstyle \star &\scriptstyle \star &\scriptstyle \moment_{\theta_2^2}-\moment_{\theta_2^4}\\
\end{block}
\end{blockarray}, \label{eq:localizeMoment_theta_2}
\eea
where the columns are indexed by $\monoleq{\xextend}{1}$, and the rows are indexed by $h^{\theta_2}\cdot \monoleq{\xextend}{1}$. 

{\bf Dense Moment Relaxation.} With the expressions of the moment matrices and localizing matrices, the dense moment relaxation at $\rorder=2$ for 2D single rotation averaging is:
\bea \label{eq:SDP2DSRA}
p^\star_{2} = \min_{\moments_4 \in \Real{70}} & \sum_{\valpha \in \calF} c(\valpha) \moment_{\xextend^\valpha} \\
\subject & \MM_2\parentheses{\moments_4} \succeq 0 \nonumber \quad \text{(\cf~eq.~\eqref{eq:momentMat_4})} ,\\
		& \MM_1\parentheses{h^x \moments_2} = \zero \nonumber  \quad \text{(\cf~eq.~\eqref{eq:localizeMoment_x})} ,\\
		& \MM_1\parentheses{h^{\theta_1} \moments_2} = \zero \quad \text{(\cf~eq.~\eqref{eq:localizeMoment_theta_1})} ,\nonumber \\
		& \MM_2\parentheses{h^{\theta_2} \moments_2} = \zero \quad \text{(\cf~eq.~\eqref{eq:localizeMoment_theta_2})} .\nonumber
\eea
Now it is clearly that problem~\eqref{eq:SDP2DSRA} is an SDP because the entries of the moment matrix $\MM_2\parentheses{\moments_4}$, and the localizing matrices $\MM_1\parentheses{h^x \moments_2},\MM_1\parentheses{h^{\theta_1} \moments_2},\MM_1\parentheses{h^{\theta_2} \moments_2}$ depend \emph{linearly} on the optimization variables $\moments_4$, and the objective function is also a linear function of $\moments_4$.
\begin{remark}[\bf Moment Relaxation for POP \myversus SDP Relaxation for QCQP]
The expert reader may now see connections between the moment relaxation for POPs and Shor's SDP relaxation for quadratically constrained quadratic programs (QCQP): the moment relaxation can be seen as first performing a change of variables so that the POP becomes a QCQP (\ie~using $\monoleq{\xextend}{2}$ as the new set of variables, the POP can be seen as a QCQP because $\monoleq{\xextend}{2}$ contains monomials of degree higher than 1), and then apply standard SDP relaxations with \emph{redundant constraints}. The redundant constraints come from (i) the new set of variables $\monoleq{\xextend}{2}$ are not mutually independent,~\eg~$x_1x_2 = x_1 \cdot x_2$, and hence the moment matrix $\MM_2\parentheses{\moments_4}$ possess \emph{linear equality} constraints,~\eg~the term $\moment_{x_1x_2\theta_1\theta_2}$ appears multiple times; (ii) combinations of equality constraints,~\eg~$h^x = 0$ and $h^{\theta_1} = 0$ implies $h^x\cdot h^{\theta_1} =0$. Therefore, converting a POP to a QCQP and then applying SDP relaxation (see~\cite{Briales18cvpr-global2view,Yang19iccv-QUASAR} for two examples) has two drawbacks: first, carefully listing the complete set of redundant constraints can be time-consuming; second, it is challenging to handle \emph{inequality} constraints. On the contrary, the localizing matrices in moment relaxation provide a systematic way to include all redundant equality and inequality constraints.  
\end{remark}
\subsection{Projection onto $\calxextend$}
\label{sec:projectionP}
Here we discuss how to project an estimate to the feasible set of the (POP): this is required to implement the rounding procedure
described in eq.~\eqref{eq:projection}.
Denote $\xextend_v = [\vxx_v\tran,\vtheta_v\tran]\tran = \vv_\rorder\parentheses{\xextend}$ as the entries of $\vv_\rorder$ at indices corresponding to the locations of $\xextend$ in $\monoleq{\xextend}{\rorder}$ (recall that $\vv_\rorder$ is obtained from the spectral decomposition of a moment matrix $\MM_\rorder\parentheses{\moments_4}$ with rank larger than 1, and in general $\xextend_v \not\in \calxextend$). To project $\xextend_v$ onto $\calxextend$ (eq.~\eqref{eq:projection}), we need to project $\vxx_v$ onto $\calX$ and project $\vtheta_v$ onto $\cbrace{\pm 1}_{i=1}^N$. 

{\bf Project $\vtheta_v$ onto $\cbrace{\pm 1}_{i=1}^N$}. The projection of $\vtheta_v$ onto the set of binary variables $\cbrace{\pm 1}_{i=1}^N$, denoted $\hatvtheta$, is straightforward:
\bea
\bracket{\hatvtheta}_i = \proj_{\cbrace{\pm 1}} \parentheses{\bracket{\vtheta_v}_i} = \sgn{\bracket{\vtheta_v}_i}, i = 1,\dots,N,
\eea
where $\bracket{\cdot}_i$ denotes the $i$-th entry of a vector and $\sgn{\cdot}$ denotes the sign function. 

{\bf Project $\vxx_v$ onto $\calX$}. Because Examples~\ref{eg:singleRotationAveraging}-\ref{eg:meshRegistration} have different feasible sets $\calX$ for the geometric models, the projections are different.

{\bf Example~\ref{eg:singleRotationAveraging} (Single Rotation Averaging)}. $\vxx = \MR \in \SOthree$, so the projection is:
\bea
\hatMR = \proj_{\SOthree} \parentheses{\vxx_v} = \MU \diag{1,1,\det\parentheses{\MU}\cdot\det\parentheses{\MV}} \MV\tran,
\eea
where $\vxx_v = \MU \MS \MV\tran, \MU,\MV \in \Othree$ is the singular value decomposition for $\vxx_v$ (first reshape $\vxx_v \in \Real{9}$ into a $3\times 3$ matrix)~\cite{Hartley13ijcv}.  

{\bf Example~\ref{eg:shapeAlignment} (Shape Alignment)}. $\vxx = s\Pi\MR, s \in [0,\sub], \MR \in \SOthree$, so the projection is:
\bea
\hats = \min\cbrace{\sub, \frac{\sigma_1 + \sigma_2}{2} },
\hatMR = \bmat{c} \vr_1\tran \\ \vr_2\tran \\ \parentheses{\vr_1 \times \vr_2}\tran \emat,
\eea
where $\sigma_1,\sigma_2,\vr_1,\vr_2$ come from the singular value decomposition of $\vxx_v$ (first reshape $\vxx_v$ into a $2\times 3$ matrix):
\bea
\vxx_v = \MU \bmat{ccc} \sigma_1 & 0 & 0 \\ 0 & \sigma_2 & 0 \emat \MV\tran, \MU \in \Otwo, \MV \in \Othree,\quad  \bmat{c} \vr_1\tran \\ \vr_2\tran \emat = \MU \bmat{ccc} 1 & 0 & 0 \\ 0 & 1 & 0 \emat \MV\tran.
\eea

{\bf Example~\ref{eg:pointCloudRegistration}-\ref{eg:meshRegistration} (Point Cloud Registration and Mesh Registration)}. $\vxx = (\MR,\vt), \MR \in \SOthree, \| \vt \| \leq T$, so the projection is:
\bea
\hatMR = \proj_{\SOthree}\parentheses{\vxx_v^r}, \hatvt = \min\cbrace{\twonorm{\vxx_v^t},T} \frac{\vxx_v^t}{\twonorm{\vxx_v^t}},
\eea
where $\vxx_v^r$ denotes the entries of $\vxx_v$ corresponding to $\MR$, and $\vxx_v^t$ denotes the entries of $\vxx_v$ corresponding to $\vt$.

\section{Proof of Theorem~\ref{thm:sparseMoment} (Sparse Moment Relaxation)}
\label{sec:supp-proof-sparseRelaxation}
\begin{proof}
Let us first show that the sparse moment relaxation~\eqref{eq:sparseMoment} is indeed a valid relaxation,~\ie~(a) the sparse set of monomials $\monoleq{\xextend}{2\rbasisset}$ contains all the monomials in the objective function $f(\xextend)$ of the POP~\eqref{eq:pop} (otherwise, the objective function of the relaxation~\eqref{eq:sparseMoment} is not equivalent to the objective function of the POP~\eqref{eq:pop}); and (b) the sparse set of moments $\moments_{2\rbasisset}$ contains all the moments appearing in the localizing matrices of~\eqref{eq:sparseMoment} (otherwise, the optimization contains undefined variables). To see (a), from the sparsity of the objective function $f$ (\cf~property~\ref{prop:pop-objective} of Proposition~\ref{prop:pop}), we know that $f = \sumallpoints f_i$ and each $f_i$ at most contains monomials of type $\monoleq{\vxx}{2}$, $\theta_i$ and $\theta_i \cdot \monoleq{\vxx}{2}$ (\cf~the expression of $f_i$ in eq.~\eqref{eq:TLSBinary}), all of which are included in the sparse set of monomials $\monoleq{\xextend}{2\rbasisset}$ (recall $\monoleq{\xextend}{\rbasisset} = [1,\vxx\tran,\vtheta\tran,\monoindeg{\vxx}{2}\tran,\vtheta\tran \kron \vxx\tran]\tran$). To see (b), we observe that $h^x$ and $g$ only contain monomials $\monoleq{\vxx}{2}$, and $\moments_2$ only contain monomials $\monoleq{\xextend}{2}$, so the product $\monoleq{\vxx}{2} \kron \monoleq{\xextend}{2}$ is included in $\monoleq{\xextend}{2\rbasisset}$. Hence, the moments in the localizing matrices $\MM_1\parentheses{h^x\moments_2}$ and $\MM_1\parentheses{g\moments_2}$ are included in the moment vector $\moments_{2\rbasisset}$. Similarly, $h^\theta$ only contains monomials $\monoleq{\vtheta}{2}$, and $\moments_{2\rbasisset_x}$ only contains monomials $\monoleq{\vxx}{2}$, so the product $\monoleq{\vtheta}{2} \kron \monoleq{\vxx}{2}$ is included in $\monoleq{\xextend}{2\rbasisset}$. Hence, the moments in the localizing matrices $\MM_{\rbasisset_x}\parentheses{h^\theta \moments_{2\rbasisset_x}}$ are also included in the moment vector $\moments_{2\rbasisset}$.

{\bf Lower Bound}. Then we prove that $p^\star_{\rbasisset} \leq p^\star_2$,~\ie~the optimal value of the sparse relaxation~\eqref{eq:sparseMoment} is a lower bound of the optimal value of the dense relaxation~\eqref{eq:denseMoment} at order $\rorder=2$. We prove this by showing that the feasible set of the dense relaxation is contained in the feasible set of the sparse relaxation. To see this, consider $\moments_4$ as an arbitrary point in the feasible set of the dense relaxation~\eqref{eq:denseMoment},~\ie~$\moments_4$ satisfies $\moment_\zero = 1$, $\MM_2\parentheses{\moments_4} \succeq 0$, $\MM_1\parentheses{h \moments_2} = \zero,\forall h \in \vh$, and $\MM_1\parentheses{g \moments_2} \succeq 0,\forall g \in \vg$. Then $\moments_{2\rbasisset} \subset \moments_4$, the sub-vector of $\moments_4$ corresponding to the sparse set of monomials $\monoleq{\xextend}{2\rbasisset}$, must be feasible for the sparse relaxation~\eqref{eq:sparseMoment}. This is because $\MM_\rbasisset\parentheses{\moments_{2\rbasisset}} \succeq 0$ must hold as $\MM_\rbasisset\parentheses{\moments_{2\rbasisset}}$ is a principal sub-matrix of the full moment matrix $\MM_2\parentheses{\moments_4}$; $\MM_{\rbasisset_x}\parentheses{h \moments_{2\rbasisset_x}} = \zero$ must hold as $\MM_{\rbasisset_x}\parentheses{h \moments_{2\rbasisset_x}}$ is a principal sub-matrix of the full localizing matrix $\MM_1\parentheses{h \moments_2}, \forall h \in \vh^\theta$; and $\MM_1\parentheses{h\moments_2}$ and $\MM_1\parentheses{g\moments_2}$ are the same as the localizing matrices in the dense relaxation.

{\bf Rounding and Relative Duality Gap}. Property~\ref{prop:dense-gap} of the dense relaxation still holds for the sparse relaxation. Because $p^\star_{\rbasisset} \leq p^\star_2$ and $p^\star_2 \leq f^\star$, we have $p^\star_{\rbasisset} \leq f^\star$ is also a valid lower bound for $f^\star$, the true optimal objective value of the POP. Additionally, since the sparse set of monomials $\monoleq{\xextend}{\rbasisset}$ still contains $\monoleq{\xextend}{1}$, the degree-1 monomials of $\vxx$ and $\vtheta$, one can use the same rounding method (\ie~spectral decomposition and the projection onto the feasible set $\calxextend$ in~\eqref{eq:projection}) to obtain a feasible solution $\hatxextend$, which gives a value $\hatf = f(\hatxextend)$ that satisfies $p^\star_{\rbasisset} \leq f^\star \leq \hatf$. The relative duality gap can then be calculated similar to eq.~\eqref{eq:relDualityGapDense} as:
\bea
\eta_\rbasisset = \frac{\hatf - p^\star_\rbasisset}{\hatf},
\eea
where a smaller $\eta_\rbasisset$ implies a tighter relaxation and $\eta_\rbasisset = 0$ if and only if the relaxation is tight.

{\bf Optimality Certificate}. Showing property~\ref{prop:dense-certificate} of the dense relaxation also holds for the sparse relaxation is non-trivial because Theorem~\ref{thm:sufficientTruncatedK} does not hold for an arbitrary sparse moment matrix (\ie~a moment matrix with rows and columns indexed by a sparse set of monomials $\monoleq{\xextend}{\rbasisset} \subset \monoleq{\xextend}{2}$). Therefore, we will show that the sparse moment matrix $\MM_\rbasisset\parentheses{\moments_{2\rbasisset}}$ can be \emph{extended} to a full moment matrix $\MM_2\parentheses{\moments_4}$ when $\rank{\MM_\rbasisset\parentheses{\moments_{2\rbasisset}}} = 1$. Let us first introduce the notion of a flat extension.

\begin{definition}[\bf Flat Extension]
\label{def:flatExtension}
Given two moment matrices $\MM_{\rbasisset}\parentheses{\moments_{2\rbasisset}}$ and $\MM_{\calA}\parentheses{\moments_{2\calA}}$, with $\rbasisset \subset \calA$ (recall that the rows and columns of $\MM_{\rbasisset}\parentheses{\moments_{2\rbasisset}}$ (resp. $\MM_{\calA}\parentheses{\moments_{2\calA}}$) are indexed by monomials $\monoleq{\xextend}{\rbasisset}$ (resp. $\monoleq{\xextend}{\calA}$)), then $\MM_{\calA}\parentheses{\moments_{2\calA}}$ is said to be a flat extension of $\MM_{\rbasisset}\parentheses{\moments_{2\rbasisset}}$ if $\MM_{\rbasisset}\parentheses{\moments_{2\rbasisset}}$ coincides with the sub-matrix of $\MM_{\calA}\parentheses{\moments_{2\calA}}$ indexed by $\monoleq{\xextend}{\rbasisset}$ and $\rank{\MM_{\rbasisset}\parentheses{\moments_{2\rbasisset}}} = \rank{\MM_{\calA}\parentheses{\moments_{2\calA}}}$.
\end{definition}

Our goal is to show that $\MM_{\rbasisset}\parentheses{\moments_{2\rbasisset}}$ admits a flat extension $\MM_2\parentheses{\moments_4}$ when $\rank{\MM_{\rbasisset}\parentheses{\moments_{2\rbasisset}}} = 1$, so that $\rank{\MM_2\parentheses{\moments_4}}=1$ is also true, in which case we recover the dense moment relaxation and obtain an optimality certificate. To do so, we will show that the sparse moment matrix $\MM_{\rbasisset}\parentheses{\moments_{2\rbasisset}}$ satisfies the \emph{generalized flat extension theorem} in~\cite{Laurent09AdM-generalFlatExtension}, stated below.

\begin{theorem}[\bf Generalized Flat Extension~{\cite[Theorem 1.4]{Laurent09AdM-generalFlatExtension}}]
\label{thm:generalFlatExtension}
Given a monomial basis $\monoleq{\xextend}{\calC}$, define its \emph{closure} to be the set:
\bea
\monoleq{\xextend}{\calC^+} \doteq \monoleq{\xextend}{\calC} \cup \left(  \cup_{i=1}^\dimxextend p_i \monoleq{\xextend}{\calC} \right)  \doteq \left\{ \xextend^\valpha,p_1\xextend^\valpha,\dots,p_{\dimxextend}\xextend^\valpha | \valpha \in \calC \right\}.
\eea
For example, let $\dimxextend = 3$, and $\monoleq{\xextend}{\calC} = [p_1]$, then $\monoleq{\xextend}{\calC^+} = [p_1,p_1^2,p_1p_2,p_1p_3]$. In addition, the monomial set $\monoleq{\xextend}{\calC}$ is said to be \emph{connected to 1} if $1 \in \monoleq{\xextend}{\calC}$ and every monomial $\xextend^\valpha$ can be written as $\xextend^\valpha = p_{i_1}p_{i_2}\cdots p_{i_k}$ with $p_{i_1},p_{i_1}p_{i_2},...,p_{i_1}p_{i_2}\cdots p_{i_k} \in \monoleq{\xextend}{\calC}$. For example $[1,p_1,p_1p_2]$ is connected to 1, but $[1,p_1p_2]$ is not. Then the generalized flat extension theorem states:

If $\monoleq{\xextend}{\calC}$ is connected to 1, and $\MM_{\calC^+}\parentheses{\moments_{2\calC^+}}$ is a flat extension of $\MM_{\calC}\parentheses{\moments_{2\calC}}$, then $\MM_{\calC^+}\parentheses{\moments_{2\calC^+}}$ admits a unique flat extension $\MM_\rorder\parentheses{\moments_{2\rorder}}$ for any $\rorder \geq \alpha_\max$, where $\alpha_\max \doteq \max\{|\valpha|: \valpha \in \calC^+\}$ denotes the maximum degree of the monomials in $\monoleq{\xextend}{\calC^+}$. 
\end{theorem}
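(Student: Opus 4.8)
The plan is to extract from the flatness data a commuting family of multiplication operators and then let these operators generate every higher-order moment, following the strategy of~\cite{Laurent09AdM-generalFlatExtension}. Let $r \doteq \rank{\MM_{\calC}(\moments_{2\calC})}$. Because $\MM_{\calC^+}(\moments_{2\calC^+})$ is a flat extension, it also has rank $r$ and restricts to $\MM_{\calC}(\moments_{2\calC})$ on the block indexed by $\monoleq{\xextend}{\calC}$; I therefore fix a set $\calB \subseteq \calC$ of $r$ monomials whose columns form a basis of the entire column space. The first observation is that \emph{every} column of $\MM_{\calC^+}(\moments_{2\calC^+})$ --- in particular each column indexed by a product $p_i \xextend^\valpha$ with $\valpha \in \calC$ --- is a unique linear combination of the columns indexed by $\calB$.

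Next I would define, for each variable $p_i$, a multiplication operator $X_i \in \Real{r \times r}$: since $p_i \xextend^b$ is indexed by $\calC^+$ for every $b \in \calB$, the previous step expands its column as $\sum_{b' \in \calB} (X_i)_{b'b}$ times the column of $b'$, encoding the rewriting rule $p_i \xextend^b \equiv \sum_{b'} (X_i)_{b'b}\, \xextend^{b'}$ modulo the kernel of the moment functional. An essential bookkeeping fact is that each $X_i$ is \emph{self-adjoint} with respect to the bilinear form $\langle \xextend^\valpha, \xextend^\vgamma\rangle \doteq \moment_{\valpha+\vgamma}$ carried by the matrix, because multiplication by $p_i$ merely shifts the moment index by $e_i$; equivalently $X_i\tran \MM_{\calB} = \MM_{\calB} X_i$, where $\MM_{\calB}$ is the invertible principal block indexed by $\calB$.

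The crux --- and the step I expect to be the main obstacle --- is to prove that the operators pairwise commute, $X_i X_j = X_j X_i$. This is exactly where the \emph{connected to 1} hypothesis is indispensable: it guarantees that every $b \in \calB$ is reachable by a chain $p_{i_1}, p_{i_1}p_{i_2}, \dots, p_{i_1}\cdots p_{i_k} = \xextend^b$ of prefixes all lying in $\calC$, so that the two ways of assembling $p_i p_j \xextend^b$ can be compared entrywise inside the \emph{given} matrix $\MM_{\calC^+}(\moments_{2\calC^+})$ rather than in a not-yet-constructed extension. Concretely, I would show that $\langle \xextend^\vgamma, p_i p_j \xextend^b\rangle$ is symmetric in $(i,j)$ by propagating the symmetry of the Hankel entries along such a chain, and then conclude $X_i X_j = X_j X_i$ from nondegeneracy of the form on $\calB$. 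Verifying this combinatorial consistency is the genuine technical heart of~\cite{Laurent09AdM-generalFlatExtension}.

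Granting commutativity, the extension and its uniqueness follow mechanically. For each $\valpha \in \nnint^{\dimxextend}$ set $\vv_\valpha \doteq X_1^{\alpha_1}\cdots X_{\dimxextend}^{\alpha_{\dimxextend}} \ve_1$, which is well-defined (independent of the order of the factors) precisely because the $X_i$ commute, where $\ve_1$ is the coordinate vector of $1 \in \calB$. I would then define the full matrix by $[\MM_{\rorder}(\moments_{2\rorder})]_{\valpha,\vgamma} \doteq \vv_\valpha\tran \MM_{\calB} \vv_\vgamma$ for $\rorder \ge \alpha_\max$. Using self-adjointness to move each $X_i$ across $\MM_{\calB}$ and then commutativity to merge the exponents gives $\vv_\valpha\tran \MM_{\calB}\vv_\vgamma = \ve_1\tran \MM_{\calB}\, X_1^{\alpha_1+\gamma_1}\cdots \ve_1$, so the entry depends only on $\valpha + \vgamma$; this is exactly the generalized Hankel (moment) structure, the construction agrees with $\MM_{\calC^+}(\moments_{2\calC^+})$ on the old indices, and the rank stays $r$, so $\MM_{\rorder}(\moments_{2\rorder})$ is a flat extension. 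Uniqueness holds because any flat extension must route the columns indexed by $p_i\calB$ through $\calB$ and hence reproduce the very same operators $X_i$, after which all higher moments are forced by iteration from $1$. Finally, since $\monoleq{\xextend}{\calC^+}$ is itself connected to 1, the same argument propagates one closure at a time and yields the unique $\MM_{\rorder}(\moments_{2\rorder})$ for every $\rorder \ge \alpha_\max$.
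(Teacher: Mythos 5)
You should first note a structural point: the paper contains no proof of this theorem at all. It is imported verbatim, with citation, as Theorem~1.4 of Laurent and Mourrain~\cite{Laurent09AdM-generalFlatExtension}, and is used as a black box inside the proof of Theorem~\ref{thm:sparseMoment} (where, moreover, only the rank-one case is ever invoked). So your proposal has to be measured against the original Laurent--Mourrain argument, and in architecture it does match it: fix a column basis $\calB \subseteq \calC$ of $\MM_{\calC}\parentheses{\moments_{2\calC}}$, read off multiplication operators $X_i$ from the flat extension, establish self-adjointness with respect to the invertible block $\MM_{\calB}$ and pairwise commutativity, and generate all higher moments via $X_1^{\alpha_1}\cdots X_{\dimxextend}^{\alpha_{\dimxextend}}$ applied to the coordinate vector of the monomial $1$.

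There is, however, a genuine misplacement that leaves a hole. Commutativity is \emph{not} where connectedness to $1$ is needed. Flatness implies that the $\calB$-columns of $\MM_{\calC^+}\parentheses{\moments_{2\calC^+}}$ span its entire column space, so each column relation holds on every row indexed by $\calC^+$; consequently, for $b=\xextend^{\vbeta}, b'=\xextend^{\vbeta'} \in \calB$, the entry $\bracket{X_i\tran \MM_{\calB} X_j}_{b',b}$ equals the matrix entry at row $p_j\xextend^{\vbeta}$ and column $p_i\xextend^{\vbeta'}$ (both indices lie in $\calC^+$ because $\calB \subseteq \calC$), which by the generalized Hankel structure is the moment of the single monomial $p_ip_j\xextend^{\vbeta+\vbeta'}$ --- manifestly symmetric in $(i,j)$. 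Invertibility of $\MM_{\calB}$ then gives $X_iX_j = X_jX_i$ with no chain propagation whatsoever. What connectedness actually buys is precisely the step you dismiss as following ``mechanically'': that the operator-generated vectors reproduce the given data, i.e.\ that for every $\xextend^{\valpha} \in \monoleq{\xextend}{\calC}$ the vector $X_1^{\alpha_1}\cdots X_{\dimxextend}^{\alpha_{\dimxextend}}$ applied to the coordinate vector of $1$ coincides with the $\calB$-coordinate vector of the column of $\xextend^{\valpha}$, so that the constructed $\MM_{\rorder}\parentheses{\moments_{2\rorder}}$ restricts to $\MM_{\calC^+}\parentheses{\moments_{2\calC^+}}$ on the old indices and uniqueness is forced. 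This is proved by induction along a chain $1, p_{i_1}, p_{i_1}p_{i_2},\dots,\xextend^{\valpha}$ with all prefixes in $\calC$: each one-variable step is legitimate only because the prefix lies in $\calC$ (so its multiple lies in $\calC^+$) and because, again by flatness, the restriction of the column space of $\MM_{\calC^+}\parentheses{\moments_{2\calC^+}}$ to the $\calC$-rows is injective. Without connectedness this consistency step genuinely fails --- the counterexample in Laurent--Mourrain is exactly of the non-connected form $[1,p_1p_2]$ quoted in the statement --- so as written your proof spends the hypothesis where it is unnecessary and omits the argument where it is indispensable. The fix is to relocate the chain induction from the commutativity step to the agreement-with-$\moments_{2\calC^+}$ step; the remainder of your outline, including iterating closures (valid since $\calC^+$ inherits connectedness to $1$ from $\calC$), then goes through.
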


Using Theorem~\ref{thm:generalFlatExtension}, let $\monoleq{\xextend}{\calC} \doteq \monoleq{\xextend}{\rbasisset_x} = [1,\vxx\tran]\tran$, then obviously $\monoleq{\xextend}{\calC}$ is connected to 1. The closure of $\monoleq{\xextend}{\calC}$ is $\monoleq{\xextend}{\calC^+} = \monoleq{\xextend}{\rbasisset} = [1,\vxx\tran,\vtheta\tran,\monoindeg{\vxx}{2},\vtheta\tran \kron \vxx\tran]
\tran$. If $\rank{\MM_\rbasisset\parentheses{\moments_{2\rbasisset}}} = 1$, then $\rank{\MM_\rbasisset\parentheses{\moments_{2\rbasisset}}} = \rank{\MM_{\rbasisset_x}\parentheses{\moments_{2\rbasisset_x}}} = 1$ and $\MM_\rbasisset\parentheses{\moments_{2\rbasisset}}$ is a flat extension of $\MM_{\rbasisset_x}\parentheses{\moments_{2\rbasisset_x}}$. Therefore, $\MM_\rbasisset\parentheses{\moments_{2\rbasisset}}$ admits a flat extension $\MM_\rorder\parentheses{\moments_{2\rorder}}$ for any $\rorder \geq 2$. In particular, setting $\rorder = 2$, we recover a dense moment matrix $\MM_2\parentheses{\moments_4}$ with $\rank{\MM_2\parentheses{\moments_4}}=1$. It remains to show that the moments $\moments_4$ (obtained from the flat extension) satisfy the constraints on the localizing matrices in the dense relaxation~\eqref{eq:denseMoment}. The only different constraint between the dense relaxation~\eqref{eq:denseMoment} at $\rorder=2$ and the sparse relaxation~\eqref{eq:sparseMoment} is that, the constraint $\MM_1\parentheses{h \moments_2} = \zero$ has been relaxed to $\MM_1\parentheses{h \moments_{2\rbasisset_x}} = \zero$ for $h \in \vh^\theta$. However, we observe that the top-left entry of $\MM_1\parentheses{h \moments_{2\rbasisset_x}}$ is $\moment_1 - \moment_{\theta_i^2}$ when $h = h^{\theta_i} = 1-\theta_i^2$, and $\moment_1 - \moment_{\theta_i^2} = 0$ implies $\moment_{\theta_i^2} = \moment_{\theta_i}^2 = 1$ (due to $\rank{\MM_2\parentheses{\moments_4}} =1 $), which implies $\moment_{\theta_i} = \pm 1$ and the solution is indeed binary and supported on $\calxextend$ and must satisfy $\MM_1\parentheses{h \moments_2} = \zero$.
\end{proof}
\section{Proof of Theorem~\ref{thm:sufficientOptimalityCondition} (Sufficient Condition for Global Optimality)}
\label{sec:supp-proof-sufficientOptimality}
\begin{proof}
If problem~\eqref{eq:sosfeasiblity} is feasible, then for any $\xextend \in \calxextend$, we have:
\bea
\monoleq{\xextend}{\rbasisset}\tran \MS_0 \monoleq{\xextend}{\rbasisset} \geq 0, \quad \monoleq{\xextend}{1}\tran \MS_k \monoleq{\xextend}{1} \geq 0, \forall k = 1,\dots,\nrIneq,
\eea
because $\MS_0$ and $\MS_k,k=1,\dots,\nrIneq,$ are PSD matrices (\ie~$\monoleq{\xextend}{\rbasisset}\tran \MS_0 \monoleq{\xextend}{\rbasisset}$ and $ \monoleq{\xextend}{1}\tran \MS_k \monoleq{\xextend}{1}$ are SOS polynomials). In addition, $g_k(\xextend) \geq 0, \forall \xextend \in \calxextend$ by construction of the inequality constraints of the POP~\eqref{eq:pop}. Therefore, the right-hand side of eq.~\eqref{eq:matchingcoefficients} is nonnegative. On the other hand, the left-hand side of eq.~\eqref{eq:matchingcoefficients} reduces to $f(\xextend) - \hatf$ for any $\xextend \in \calxextend$ due to the equality constraints $h_j(\xextend) = 0,\forall h_j \in \vh^x$ and $h_j(\xextend) = 0,\forall h_j \in \vh^\theta$ of the POP~\eqref{eq:pop}. Combining these two observations, we have $f(\xextend) - \hatf \geq 0$ for any $\xextend \in \calxextend$, which implies that $\hatf$ is the global minimum of the POP and $\hatxextend$ is the corresponding global minimizer.

Next we show how to convert problem~\eqref{eq:sosfeasiblity} into the compact SDP formulation in~\eqref{eq:sdpfeasibility}, by writing every polynomial in~\eqref{eq:matchingcoefficients} as a sum of products between coefficients (parametrized by the unknowns $\vlambda_j^x,\vlambda_j^\theta,\MS_0$ and $\MS_k$) and the monomial basis $\monoleq{\xextend}{2\rbasisset}$. 

{\bf Right-hand Side of~\eqref{eq:matchingcoefficients}}. We start from the right-hand side of~\eqref{eq:matchingcoefficients}. To do so, we first write the monomial outer product $\monoleq{\xextend}{\rbasisset}\monoleq{\xextend}{\rbasisset}\tran$ as:
\bea
\monoleq{\xextend}{\rbasisset}\monoleq{\xextend}{\rbasisset}\tran = \sum_{\valpha \in 2\rbasisset} \monoindicator^0 \xextend^{\valpha}, \label{eq:monoouterproduct}
\eea
where $\monoindicator^0 \in \sym^{\dimbasis{}{\rbasisset}}$ is the ``$0/1$'' \emph{monomial indicator matrix} with rows and columns indexed by $\monoleq{\xextend}{\rbasisset}$, whose entries are defined as:
\bea
\bracket{\monoindicator^0}_{\xextend^{\valpha_1},\xextend^{\valpha_2}} = \begin{cases}
1 & \text{if } \valpha_1 + \valpha_2 = \valpha \\
0 & \text{otherwise}
\end{cases}. \label{eq:monoindicators0}
\eea
Using the expression in~\eqref{eq:monoouterproduct}, we can write the SOS polynomial $s_0 \doteq \monoleq{\xextend}{\rbasisset}\tran \MS_0 \monoleq{\xextend}{\rbasisset}$ as:
\bea
s_0 \doteq \monoleq{\xextend}{\rbasisset}\tran \MS_0 \monoleq{\xextend}{\rbasisset} = \inner{\MS_0}{\monoleq{\xextend}{\rbasisset}\monoleq{\xextend}{\rbasisset}\tran} = \inner{\MS_0}{\sum_{\valpha \in 2\rbasisset} \monoindicator^0 \xextend^{\valpha}} = \sum_{\valpha \in 2\rbasisset} \inner{\MS_0}{\monoindicator^0} \xextend^\valpha, \label{eq:coeffBasiss0}
\eea
where $\inner{\MA}{\MB} \doteq \trace{\MA \MB}$ denotes the inner product between two symmetric matrices $\MA,\MB \in \sym^n$. Similarly, for each outer product $g_k \monoleq{\xextend}{1}\monoleq{\xextend}{1}\tran$, we write them as:
\bea
g_k \monoleq{\xextend}{1}\monoleq{\xextend}{1}\tran = \sum_{\valpha \in 2\rbasisset} \monoindicator^k \xextend^\valpha, k=1,\dots,\nrIneq, \label{eq:monogkouterproduct}
\eea
where $\monoindicator^k \in \sym^{\dimbasis{\dimxextend}{1}}$ is the \emph{monomial coefficient matrix} with rows and columns indexed by $\monoleq{\xextend}{1}$, whose entries are defined as:
\bea
\bracket{\monoindicator^k}_{\xextend^{\valpha_1},\xextend^{\valpha_2}} = c_k\parentheses{\valpha - \valpha_1 - \valpha_2},
\eea
where $c_k\parentheses{\valpha - \valpha_1 - \valpha_2}$ denotes the coefficient of $g_k$ corresponding to the monomial $\xextend^{\valpha - \valpha_1 - \valpha_2}$. Note that $\monoindicator^k$ is not an ``$0/1$'' matrix due to the multiplication of $g_k$ with the monomial outer product $\monoleq{\xextend}{1}\monoleq{\xextend}{1}\tran$. Using the expression in~\eqref{eq:monogkouterproduct}, we can write the nonnegative polynomials $s_k \doteq g_k \monoleq{\xextend}{1}\tran \MS_k \monoleq{\xextend}{1},k=1,\dots,\nrIneq$, as:
\bea
s_k \doteq g_k \monoleq{\xextend}{1}\tran \MS_k \monoleq{\xextend}{1} = \inner{\MS_k}{g_k \monoleq{\xextend}{1}\monoleq{\xextend}{1}\tran} = \inner{\MS_k}{\sum_{\valpha \in 2\rbasisset} \monoindicator^k \xextend^\valpha} = \sum_{\valpha \in 2\rbasisset}\inner{\MS_k}{\monoindicator^k}\xextend^\valpha. \label{eq:coeffBasissk}
\eea
Eq.~\eqref{eq:coeffBasiss0} and~\eqref{eq:coeffBasissk} have written the right-hand side of~\eqref{eq:matchingcoefficients} as a sum of products, where each product is between a monomial $\xextend^\valpha$ and a coefficient, $\inner{\MS_0}{\monoindicator^0}$ or $\inner{\MS_k}{\monoindicator^k}$, parametrized by the unknown PSD matrices $\MS_0$ and $\MS_k,k=1,\dots,\nrIneq$.

{\bf Left-hand Side of~\eqref{eq:matchingcoefficients}}. We now perform similar algebra for the left-hand side of~\eqref{eq:matchingcoefficients}. We write $f(\xextend) - \hatf$ as:
\bea
f(\xextend) - \hatf = \sum_{\valpha \in 2\rbasisset} c_f(\valpha) \xextend^\valpha, \label{eq:LHSf}
\eea 
where $c_f(\valpha)$ denotes the coefficient of $f(\xextend) - \hatf$ corresponding to the monomial $\xextend^\valpha$. We write $h_j \monoleq{\xextend}{2}, h_j \in \vh^x$ as:
\bea
h_j \monoleq{\xextend}{2} = \sum_{\valpha \in 2\rbasisset} \monoindicatorVec^{x_j} \xextend^\valpha, \label{eq:monoseparationhx}
\eea
where $\monoindicatorVec^{x_j} \in \Real{\dimbasis{\dimxextend}{2}}$ is a vector of coefficients indexed by $\monoleq{\xextend}{2}$, whose entries are defined as:
\bea
\bracket{\monoindicatorVec^{x_j}}_{\xextend^{\valpha_1}} = c_{h_j^x}\parentheses{\valpha - \valpha_1},
\eea
where $c_{h_j^x}\parentheses{\valpha - \valpha_1}$ is the coefficient of $h_j \in \vh^x$ corresponding to the monomial $\xextend^{\valpha - \valpha_1}$. Using the expression in~\eqref{eq:monoseparationhx}, we can write $q_j^x \doteq h_j \monoleq{\xextend}{2}\tran \vlambda_j^x, \forall h_j \in \vh^x$, as:
\bea
q_j^x \doteq h_j \monoleq{\xextend}{2}\tran \vlambda_j^x = \inner{\vlambda_j^x}{h_j \monoleq{\xextend}{2}} = \inner{\vlambda_j^x}{\sum_{\valpha \in 2\rbasisset} \monoindicatorVec^{x_j} \xextend^\valpha} = \sum_{\valpha \in 2\rbasisset} \inner{\vlambda_j^x}{\monoindicatorVec^{x_j}} \xextend^\valpha, \label{eq:LHSqjx}
\eea
where $\inner{\va}{\vb} \doteq \va\tran \vb$ denotes the inner product between two vectors $\va,\vb \in \Real{n}$. Similarly, we write $h_j \monoleq{\vxx}{2}, h_j \in \vh^\theta$ as:
\bea
h_j \monoleq{\vxx}{2} = \sum_{\valpha \in 2\rbasisset} \monoindicatorVec^{\theta_j} \xextend^\valpha, \label{eq:monoseparationhtheta}
\eea
where $\monoindicatorVec^{\theta_j} \in \Real{\dimbasis{n}{2}}$ is a vector of coefficients indexed by $\monoleq{\vxx}{2}$, whose entries are defined as:
\bea
\bracket{\monoindicatorVec^{\theta_j}}_{\xextend^{\valpha_1}} = c_{h_j^\theta}\parentheses{\valpha - \valpha_1},
\eea
where $c_{h_j^\theta}\parentheses{\valpha - \valpha_1}$ is the coefficient of $h_j \in \vh^\theta$ corresponding to the monomial $\xextend^{\valpha - \valpha_1}$. Using the expression in~\eqref{eq:monoseparationhtheta}, we can write $q_j^\theta \doteq h_j \monoleq{\vxx}{2}\tran \vlambda_j^\theta$ as:
\bea
q_j^\theta \doteq h_j \monoleq{\vxx}{2}\tran \vlambda_j^\theta = \inner{\vlambda_j^\theta}{h_j \monoleq{\vxx}{2}} = \inner{\vlambda_j^\theta}{\sum_{\valpha \in 2\rbasisset} \monoindicatorVec^{\theta_j} \xextend^\valpha} = \sum_{\valpha \in 2\rbasisset} \inner{\vlambda_j^\theta}{\monoindicatorVec^{\theta_j}} \xextend^\valpha. \label{eq:LHSqjtheta}
\eea
Eq.~\eqref{eq:LHSqjx} and~\eqref{eq:LHSqjtheta} have written the left-hand side of~\eqref{eq:matchingcoefficients} as a sum of products, where each product is between a monomial $\xextend^\valpha$ and a coefficient, $\inner{\vlambda_j^x}{\monoindicatorVec^{x_j}}$ or $\inner{\vlambda_j^\theta}{\monoindicatorVec^{\theta_j}}$, parametrized by the unknown vectors $\vlambda_j^x, j = 1,\dots,|\vh^x|$, and $\vlambda_j^\theta, j=1,\dots,|\vh^\theta|$, where $|\vh^x|$ and $|\vh^\theta|$ denotes the cardinality of the sets $\vh^x$ and $\vh^\theta$, respectively.

{\bf Obtaining the Compact SDP~\eqref{eq:sdpfeasibility}}. Combining the left-hand side (eq.~\eqref{eq:LHSf},~\eqref{eq:LHSqjx} and~\eqref{eq:LHSqjtheta}) and the right-hand side (eq.~\eqref{eq:coeffBasiss0} and~\eqref{eq:coeffBasissk}) of eq.~\eqref{eq:matchingcoefficients}, we are ready to write down the final expression for the compact SDP~\eqref{eq:sdpfeasibility}. To do so, we first concatenate all the independent unknown variables into a single vector, called the dual variable: 
\bea
\hspace{-12mm} \dualVar = [(\vlambda_1^x)\tran,\dots,(\vlambda^x_{|\vh^x|})\tran,(\vlambda_1^\theta)\tran,\dots,(\vlambda_{|\vh^\theta|}^\theta)\tran,\svec{\MS_1}\tran,\dots,\svec{\MS_\nrIneq}\tran,\svec{\MS_0}\tran]\tran \in \Real{m_d}, \label{eq:dualVar} 
\eea
whose dimension is:
\bea
m_d = \underbrace{|\vh^x| \cdot \dimbasis{\dimxextend}{2} + |\vh^\theta| \cdot \dimbasis{n}{2}}_{m_{d_1}} + \underbrace{\nrIneq \cdot \frac{\dimbasis{\dimxextend}{1}\parentheses{\dimbasis{\dimxextend}{1}+1}}{2}}_{m_{d_2}} + \underbrace{\frac{\dimbasis{}{\rbasisset} \parentheses{\dimbasis{}{\rbasisset}+1}}{2}}_{m_{d_3}}, \label{eq:dualVarDim}
\eea
where $m_{d_1}$ is the dimension of the free variables $\vlambda^x$ and $\vlambda^\theta$, $m_{d_2}$ is the dimension of the PSD variables $\MS_k,k=1,\dots,\nrIneq$, $m_{d_3}$ is the dimension of the PSD variable $\MS_0$, and we use symmetric vectorization to save storage. Then it is obvious that the dual variable $\dualVar$ lives in a convex cone $\calK$ defined by:
\bea
\calK \doteq \Real{m_{d_1}} \times \underbrace{\psd^{\dimbasis{\dimxextend}{1}} \times \dots \times \psd^{\dimbasis{\dimxextend}{1}}}_{\nrIneq} \times \psd^{\dimbasis{}{\rbasisset}}. \label{eq:dualCone}
\eea
Additionally, the dual variable $\dualVar$ must satisfy the equality constraint in~\eqref{eq:matchingcoefficients}:
\bea
f(\xextend) - \hatf - \sum_{j = 1}^{|\vh^x|} q_j^x -  \sum_{j = 1}^{|\vh^\theta|} q_j^\theta = s_0 + \sum_{k=1}^\nrIneq s_k, \forall \xextend.
\eea 
Now using the expressions in eq.~\eqref{eq:LHSf},~\eqref{eq:LHSqjx},~\eqref{eq:LHSqjtheta},~\eqref{eq:coeffBasiss0}, and~\eqref{eq:coeffBasissk}, we obtain the following linear constraints for each monomial $\xextend^\valpha$:
\small
\bea
\hspace{-6mm} c_f(\valpha)  = \sum_{j=1}^{|\vh^x|} \inner{\vlambda_j^x}{\monoindicatorVec^{x_j}} + \sum_{j=1}^{|\vh^\theta|} \inner{\vlambda_j^\theta}{\monoindicatorVec^{\theta_j}} + \sum_{k=1}^\nrIneq \inner{\svec{\MS_k}}{\svec{\monoindicator^k}} + \inner{\svec{\MS_0}}{\svec{\monoindicator^0}}, \label{eq:affineSpaceeach}
\eea
\normalsize
where we have used the fact that $\inner{\MA}{\MB} = \inner{\svec{\MA}}{\svec{\MB}}$ for any two symmetric matrices $\MA,\MB \in \sym^n$. The linear constraint~\eqref{eq:affineSpaceeach} can be written compactly as:
\bea
{\va_{\valpha}\tran} \dualVar = c_f(\valpha),
\eea
where $\va_{\valpha} \in \Real{m_d}$ is a vector of constants that is only related to the equality and inequality constraints $h_j$ and $g_k$ of the POP~\eqref{eq:pop}:
\smaller
\bea
\hspace{-6mm} \va_{\valpha} = \bracket{\parentheses{\monoindicatorVec^{x_1}}\tran,\dots,\parentheses{\monoindicatorVec^{x_{|\vh^x|}}}\tran,\parentheses{\monoindicatorVec^{\theta_1}}\tran,\dots,\parentheses{\monoindicatorVec^{\theta_{|\vh^\theta|}}}\tran,\svec{\monoindicator^1}\tran,\dots,\svec{\monoindicator^\nrIneq}\tran,\svec{\monoindicator^0}\tran }\tran. \label{eq:rowofA}
\eea
\normalsize
All the linear constraints, one for each $\xextend^\valpha, \valpha \in 2\rbasisset$, assembled together, define an affine subspace:
\bea
\calA \doteq \cbrace{\dualVar: \underbrace{\bmat{c} \vdots \\ \va_{\valpha}\tran \\ \vdots \emat}_{\MA \in \Real{\dimbasis{}{2\rbasisset}\times m_d} } \dualVar = \underbrace{ \bmat{c} \vdots \\ c_f(\valpha) \\ \vdots \emat}_{\vb \in \Real{\dimbasis{}{2\rbasisset}}}  }. \label{eq:dualAffine}
\eea
Therefore, problem~\eqref{eq:sosfeasiblity} is equivalent to:
\bea
\text{find } \dualVar \in \Real{m_d},\quad \subject \quad \dualVar \in \calK \cap \calA,
\eea
with the convex cone $\calK$ defined in~\eqref{eq:dualCone} and the affine subspace defined in~\eqref{eq:dualAffine}.

{\bf Partial Orthogonality}. Finally, we state a property, namely \emph{partial orthogonality}~\cite{Zheng18TAC-partialOrthogonality}, of the matrix $\MA \in \Real{\dimbasis{}{2\rbasisset}\times m_d}$ that defines the affine subspace $\calA$ in~\eqref{eq:dualAffine}.
\begin{theorem}[\bf Partial Orthogonality of $\MA$]
\label{thm:partialorthogonality}
Let $\MA = [\MA_1,\MA_2,\MA_3]$ be the column-wise partition of $\MA$ according to the dimension defined in~\eqref{eq:dualVarDim},~\ie~$\MA_1 \in \Real{\dimbasis{}{2\rbasisset}\times m_{d_1}}$, $\MA_2 \in \Real{\dimbasis{}{2\rbasisset}\times m_{d_2}}$ and $\MA_3 \in \Real{\dimbasis{}{2\rbasisset}\times m_{d_3}}$, then $\MA_3 \MA_3\tran$ is an invertible diagonal matrix.
\end{theorem}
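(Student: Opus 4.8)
The plan is to read the rows of $\MA_3$ directly off the coefficient-matching computation and then exploit the disjoint supports of the monomial indicator matrices attached to distinct exponents. First I would identify the rows of $\MA_3$: from eq.~\eqref{eq:coeffBasiss0} the contribution of $\svec{\MS_0}$ to the coefficient of $\xextend^\valpha$ is $\inner{\MS_0}{\MW_\valpha^0}$, so, by the assembly of $\va_\valpha$ in eq.~\eqref{eq:rowofA}, the row of $\MA_3$ indexed by $\valpha \in 2\rbasisset$ is exactly $\svec{\MW_\valpha^0}\tran$. Because $\svec$ preserves the trace inner product, this yields the clean formula
\bea
[\MA_3 \MA_3\tran]_{\valpha,\valpha'} = \inner{\svec{\MW_\valpha^0}}{\svec{\MW_{\valpha'}^0}} = \inner{\MW_\valpha^0}{\MW_{\valpha'}^0} = \trace{\MW_\valpha^0 \MW_{\valpha'}^0}, \nonumber
\eea
which is the starting point for both the diagonal and the invertibility claim.

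Next I would prove diagonality. By the definition in eq.~\eqref{eq:monoindicators0}, $\MW_\valpha^0$ is the $0/1$ matrix whose support is the set of ordered pairs $(\valpha_1,\valpha_2)$ of $\rbasisset$-exponents with $\valpha_1 + \valpha_2 = \valpha$. For $\valpha \neq \valpha'$ the two supports are disjoint (a common support position would force $\valpha = \valpha_1+\valpha_2 = \valpha'$), so the entrywise inner product $\inner{\MW_\valpha^0}{\MW_{\valpha'}^0}$ vanishes and $\MA_3\MA_3\tran$ is diagonal. This ``disjoint support'' observation is really the heart of the statement and the only genuinely conceptual step.

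Finally I would establish invertibility by showing each diagonal entry is strictly positive. Here
\bea
[\MA_3 \MA_3\tran]_{\valpha,\valpha} = \| \MW_\valpha^0 \|_F^2 = \# \{ (\valpha_1,\valpha_2): \valpha_1,\valpha_2 \in \rbasisset,\, \valpha_1 + \valpha_2 = \valpha \}, \nonumber
\eea
i.e.,~the number of ordered ways to write $\valpha$ as a sum of two exponents in $\rbasisset$. Since the row index $\valpha$ ranges over $2\rbasisset$, which by Theorem~\ref{thm:sparseMoment} is precisely the Minkowski sum $\rbasisset + \rbasisset$, every such $\valpha$ admits at least one representation, so this count is at least $1$ and all diagonal entries are positive; hence $\MA_3\MA_3\tran$ is an invertible diagonal matrix.

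I do not expect a real obstacle, as the argument is short and combinatorial. The only points requiring care are bookkeeping ones: confirming that the $\svec$ convention for $\MS_0$ matches the one used to form the rows of $\MA_3$ (so that the identity $\inner{\svec{\MA}}{\svec{\MB}} = \inner{\MA}{\MB}$ applies verbatim), and noting that positivity of every diagonal entry hinges on exactly the fact that the row set of $\MA$ was taken to be $2\rbasisset$, which guarantees a nonempty representation set for each $\valpha$.
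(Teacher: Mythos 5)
Your proposal is correct and follows essentially the same route as the paper's proof: identify the rows of $\MA_3$ as $\svec{\monoindicator^0}\tran$, observe that the supports of $\monoindicator^0$ for distinct exponents $\valpha\neq\valpha'$ are disjoint (so off-diagonal entries of $\MA_3\MA_3\tran$ vanish), and note that each diagonal entry counts the representations of $\valpha\in 2\rbasisset$ as a sum of two exponents in $\rbasisset$, hence is at least $1$. Your explicit remark that positivity of the diagonal hinges on the row set being exactly the Minkowski sum $2\rbasisset$ is a slightly more careful statement of the same fact the paper uses implicitly.
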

\begin{proof}
From the partition, we know that $\MA_3$ corresponds to the columns of $\MA$ indexed by $\svec{\MS_0}$. Therefore, according to~\eqref{eq:rowofA}, which shows the row of $\MA$ corresponding to a monomial $\xextend^\valpha$, we can write $\MA_3$ as:
\bea
\MA_3 = \bmat{c}
\vdots \\
\svec{\monoindicator^0}\tran \\
\vdots
\emat.
\eea 
Now we can compute the $(i,j)$-th entry of $\MA_3\MA_3\tran$ for $i \neq j$:
\bea
\bracket{\MA_3 \MA_3\tran}_{i,j} = \svec{\monoindicatorr_{\valpha_i}^0}\tran \svec{\monoindicatorr_{\valpha_j}^0} = \inner{\monoindicatorr^0_{\valpha_i}}{\monoindicatorr^0_{\valpha_j}} = 0,
\eea
where $\inner{\monoindicatorr^0_{\valpha_i}}{\monoindicatorr^0_{\valpha_j}} = 0$ holds due to the definition of the indicator matrix in~\eqref{eq:monoindicators0} (if $\valpha_1 + \valpha_2 = \valpha_i$, then $\valpha_1 + \valpha_2 \neq \valpha_j$ when $\valpha_i \neq \valpha_j$). The diagonal entries of $\MA_3 \MA_3\tran$ are nonzero because:
\bea
\bracket{\MA_3 \MA_3\tran}_{i,i} = \svec{\monoindicatorr_{\valpha_i}^0}\tran \svec{\monoindicatorr_{\valpha_i}^0} = \inner{\monoindicatorr^0_{\valpha_i}}{\monoindicatorr^0_{\valpha_i}} \geq 1.
\eea 
Since $\bracket{\MA_3 \MA_3\tran}_{i,j} = 0$ for any $i \neq j$, and $\bracket{\MA_3 \MA_3\tran}_{i,i} \geq 1$, $\MA_3\MA_3\tran$ is diagonal and invertible.
\end{proof}
In Section~\ref{sec:supp-proof-DRS}, we will see the partial orthogonality property of $\MA$ allows efficient computation of the projection map onto the affine subspace $\calA$.
\end{proof}
\section{Proof of Theorem~\ref{thm:DRSOptimalityCertification} (\DRS for Optimality Certification)}
\label{sec:supp-proof-DRS}
\begin{proof}
We will first prove that \DRS iterates converge to a solution of the feasibility SDP~\eqref{eq:sdpfeasibility} if it is feasible. We will then show how to compute a suboptimality bound from each iteration of the \DRS update. Finally, we will discuss how to implement the projection maps in the \DRS iterates. 

{\bf Convergence}. We first prove (i),~\ie~the \DRS iterates~\eqref{eq:DRSIterates}, with $0 < \gamma_\tau < 2$, converge to a solution of the SDP~\eqref{eq:sdpfeasibility} if it is feasible. To do so, we write the SDP~\eqref{eq:sdpfeasibility} equivalently as:
\bea \label{eq:sdpIndicator}
\min_{\dualVar} \quad \indicator_{\calK} \parentheses{\dualVar} + \indicator_{\calA} \parentheses{\dualVar},
\eea 
where $\indicator_{\calK} \parentheses{\dualVar}$ (resp. $\indicator_{\calA} \parentheses{\dualVar}$) is the indicator function of the set $\calK$ (resp. the set $\calA$),~\ie~$\indicator_\calK\parentheses{\dualVar} = 0$ if $\dualVar \in \calK$ and $\indicator_\calK\parentheses{\dualVar} = \infty$ if $\dualVar \not\in \calK$. It is clear that if the SDP~\eqref{eq:sdpfeasibility} is feasible, then the optimal cost of problem~\eqref{eq:sdpIndicator} is $0$; while if the SDP~\eqref{eq:sdpfeasibility} is infeasible, then the optimal cost of problem~\eqref{eq:sdpIndicator} is $\infty$. Douglas-Rachford Splitting is designed to solve problems of the following type:
\bea
\min_{\dualVar} \quad f(\dualVar) + g(\dualVar),
\eea 
where $f$ and $g$ are convex functions of $\dualVar$. Now let $f = \indicator_{\calK} \parentheses{\dualVar}$, $g = \indicator_{\calA} \parentheses{\dualVar}$, and observe that the proximal operator for an indicator function $\indicator_\calK$ is exactly the projection onto the set $\calK$,\footnote{The proximal operator of a function $f$ is defined as: $\prox_f\parentheses{\vxx_0} \doteq \argmin_{\vxx} \frac{1}{2} \twonorm{\vxx - \vxx_0}^2 + f(\vxx)$. When $f = \indicator_{\calK}$ is an indicator function, then $\prox_f\parentheses{\vxx_0} = \argmin_{\vxx} \frac{1}{2} \twonorm{\vxx - \vxx_0}^2 + \indicator_\calK(\vxx) := \proj_\calK \parentheses{\vxx_0}$. } we obtain the \DRS iterates of~\eqref{eq:DRSIterates} from~\cite[Algorithm 4.2]{Combettes11book-proximalSplitting}. In addition,~\cite[Proposition 4.3]{Combettes11book-proximalSplitting} tells us the \DRS iterates converge to a solution of~\eqref{eq:sdpIndicator}. This implies the sequence $\cbrace{\dualVar_\tau}_{\tau \geq 0}$ converges to a point inside $\calK \cap \calA$ when the intersection is nonempty.\footnote{In fact, more generally, even if the intersection is empty, it is known that, if $\gamma_\tau = 1$, then the sequences $\cbrace{\dualVar_\tau^\calK}_{\tau \geq 0}$ and $\cbrace{\dualVar_\tau^\calA}_{\tau \geq 0}$ converge to a solution of the optimization: $\min_{\dualVar_1 \in \calK, \dualVar_2 \in \calA} \twonorm{\dualVar_1 - \dualVar_2}$,~\ie~a pair of points $\dualVar_1 \in \calK$ and $\dualVar_2 \in \calA$ that achieves the minimum distance between set $\calK$ and set $\calA$~\cite{Jegelka13NIPS-DRSreflection,Bauschke04JAT-averagedAlternatingReflections}.} 

{\bf Suboptimality Bound}. We then prove (ii),~\ie~the \DRS iterates provide valid suboptimality bounds $\suboptbound_\tau$ at each iteration. In particular, this suboptimality bound can be computed from $\dualVar_\tau^\calA$. To show this, we note that any $\dualVar_\tau^\calA$ satisfies the equality constraint in~\eqref{eq:matchingcoefficients} because $\dualVar_\tau^\calA \in \calA$. Therefore, let $\svec{\MS_0^\tau}$ and $\svec{\MS_k^\tau},k=1,\dots,\nrIneq$ be the sub-vectors in $\dualVar_\tau^\calA$, then for any $\xextend \in \calxextend$, eq.~\eqref{eq:matchingcoefficients} tells us:
\bea
\hspace{-4mm} f\parentheses{\xextend} - \hatf = \monoleq{\xextend}{\rbasisset}\tran \MS_0^\tau \monoleq{\xextend}{\rbasisset}\tran + \sum_{k=1}^\nrIneq g_k \monoleq{\xextend}{1}\tran \MS_k^\tau \monoleq{\xextend}{1} \geq \mineig{\MS_0^\tau} M_0^2 + \sum_{k=1}^\nrIneq \min\cbrace{0,\mineig{\MS_k^\tau}} M_1^2, \label{eq:absSuboptBound}
\eea
where $M_0$ and $M_1$ are upper bounds on the $\ell_2$-norm of the monomial bases $\monoleq{\xextend}{\rbasisset}$ and $\monoleq{\xextend}{1}$:
\bea
\twonorm{\monoleq{\xextend}{\rbasisset}} \leq M_0,\quad \twonorm{\monoleq{\xextend}{1}} \leq M_1, \quad \forall \xextend \in \calxextend.
\eea
In~\eqref{eq:absSuboptBound}, we have used the fact that $g_k(\xextend) \leq 1$ for any $\xextend \in \calxextend$ from the POP~\eqref{eq:pop}. Now to obtain the suboptimality bound $\suboptbound_\tau$, let $\xextend = \xextend^\star$ be the global minimizer in~\eqref{eq:absSuboptBound}, we have $f(\xextend^\star) = f^\star$ and:
\bea
f^\star - \hatf \geq \mineig{\MS_0^\tau} M_0^2 + \sum_{k=1}^\nrIneq \min\cbrace{0,\mineig{\MS_k^\tau}} M_1^2 \\
\Longrightarrow \frac{\hatf - f^\star}{\hatf} \leq \frac{-\mineig{\MS_0^\tau} M_0^2 - \sum_{k=1}^\nrIneq \min\cbrace{0,\mineig{\MS_k^\tau}} M_1^2}{\hatf} := \suboptbound_\tau.
\eea
We now give the expressions for the upper bounds $M_0$ and $M_1$ for Examples~\ref{eg:singleRotationAveraging}-\ref{eg:meshRegistration}.

{\bf Example~\ref{eg:singleRotationAveraging} (Single Rotation Averaging)}. Recall $\vxx = \vr = \vectorize{\MR}$ with $\twonorm{\vr}^2 = 3$, so:
\bea
\twonorm{\monoleq{\xextend}{1}}^2 = 1 + \twonorm{\vr}^2 + \twonorm{\vtheta}^2 = 4+N := M_1^2, \\
\twonorm{\monoleq{\xextend}{\rbasisset}}^2 = 1+ \twonorm{\vr}^2 + \twonorm{\vtheta}^2 + \twonorm{\monoindeg{\vr}{2}}^2 + \twonorm{\vtheta \kron \vr}^2  = 4N+13 := M_0^2.
\eea

{\bf Example~\ref{eg:shapeAlignment} (Shape Alignment)}. Recall $\vxx = \SAr = \vectorize{s\Pi\MR}$ with $\twonorm{\SAr}^2 \leq 2\sub^2$, so:
\bea
\twonorm{\monoleq{\xextend}{1}}^2 = 1 + \twonorm{\SAr}^2 + \twonorm{\vtheta}^2 \leq 1+2\sub^2 +N := M_1^2, \\
\hspace{-5mm} \twonorm{\monoleq{\xextend}{\rbasisset}}^2 = 1+ \twonorm{\SAr}^2 + \twonorm{\vtheta}^2 + \twonorm{\monoindeg{\SAr}{2}}^2 + \twonorm{\vtheta \kron \vr}^2  \leq (1+2\sub^2)(1+N) + 4\sub^4 := M_0^2.
\eea

{\bf Example~\ref{eg:pointCloudRegistration} (Point Cloud Registration)}. Recall $\vxx = [\vr\tran,\vt\tran]\tran = [\vectorize{\MR}\tran,\vt\tran]\tran$ with $\twonorm{\vr}^2 = 3,\twonorm{\vt}^2 \leq T^2$, so:
\bea
\twonorm{\monoleq{\xextend}{1}}^2 = 1+\twonorm{\vr}^2 + \twonorm{\vt}^2 + \twonorm{\vtheta}^2 \leq 4+T^2+N := M_1^2, \\
\hspace{-8mm} \twonorm{\monoleq{\xextend}{\rbasisset}}^2 =
1+\twonorm{\vr}^2 + \twonorm{\vt}^2 + \twonorm{\vtheta}^2 + \twonorm{\monoindeg{\vr}{2}}^2 + \twonorm{\monoindeg{\vt}{2}}^2 + \twonorm{\vr \kron \vt}^2 + \twonorm{\vtheta \kron \vr}^2 + \twonorm{\vtheta \kron \vt}^2 \\
\hspace{-5mm} \leq 13 + 4N + 4T^2 + T^4 + NT^2 := M_0.
\eea 

{\bf Example~\ref{eg:meshRegistration} (Mesh Registration)}. Same as point cloud registration.

{\bf Projection Maps}. To carry out the \DRS iterates~\eqref{eq:DRSIterates}, we need to implement the projection onto the convex cone $\calK$, $\proj_{\calK}$, and the projection onto the affine subspace $\calA$, $\proj_{\calA}$. The projection onto the PSD cone has a closed-form solution, due to Higham~\cite{Higham88LA-nearestSPD}.
\begin{lemma}[\bf Projection onto $\psd^n$~\cite{Higham88LA-nearestSPD}]
Given any matrix $\MS \in \sym^n$, let $\MS = \MU \diag{\lambda_1,\dots,\lambda_n}\MU\tran$ be its spectral decomposition, then the projection of $\MS$ onto the PSD cone $\psd^n$ is:
\bea
\proj_{\psd^n}\parentheses{\MS} = \MU \diag{\max\parentheses{0,\lambda_1},\dots,\max\parentheses{0,\lambda_n}}\MU\tran.
\eea
\end{lemma}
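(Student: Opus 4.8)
The plan is to realize $\proj_{\psd^n}(\MS)$ as the unique solution of the strictly convex program $\min_{\MX \in \psd^n} \twonorm{\MX - \MS}_F^2$, the projection being taken with respect to the Frobenius inner product $\inner{\MA}{\MB} = \trace{\MA\MB}$ on $\sym^n$. Since $\psd^n$ is a nonempty closed convex subset of this Hilbert space, the projection exists and is unique; hence it suffices to exhibit a single feasible matrix whose objective value is no larger than that of any other feasible matrix.

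First I would reduce to the diagonal case by exploiting orthogonal invariance. Writing $\MLambda \doteq \diag{\lambda_1,\dots,\lambda_n}$ so that $\MS = \MU \MLambda \MU\tran$ with $\MU$ orthogonal, I perform the change of variables $\MY \doteq \MU\tran \MX \MU$. Two facts make this exact: the Frobenius norm is invariant under orthogonal congruence, so $\twonorm{\MX - \MS}_F = \twonorm{\MU\tran(\MX-\MS)\MU}_F = \twonorm{\MY - \MLambda}_F$; and the PSD cone is likewise invariant, so $\MX \succeq 0 \iff \MY \succeq 0$ and $\MY$ ranges over all of $\psd^n$ as $\MX$ does. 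The problem therefore becomes $\min_{\MY \succeq 0} \twonorm{\MY - \MLambda}_F^2$, with minimizers related by $\MX^\star = \MU \MY^\star \MU\tran$.

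Next I would decouple the diagonal from the off-diagonal entries. Because $\MLambda$ is diagonal,
\[
\twonorm{\MY - \MLambda}_F^2 = \sum_{i=1}^n \parentheses{Y_{ii} - \lambda_i}^2 + \sum_{i \neq j} Y_{ij}^2 \geq \sum_{i=1}^n \parentheses{Y_{ii} - \lambda_i}^2,
\]
where the off-diagonal sum vanishes exactly when $\MY$ is diagonal. The essential structural fact is that every PSD matrix has nonnegative diagonal, i.e. $\MY \succeq 0 \Rightarrow Y_{ii} = \ve_i\tran \MY \ve_i \geq 0$. Minimizing each scalar term $(Y_{ii}-\lambda_i)^2$ over $Y_{ii} \geq 0$ independently gives $Y_{ii} = \max\parentheses{0,\lambda_i}$ and the lower bound $\sum_{i:\lambda_i<0}\lambda_i^2$. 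This bound is attained by $\MY^\star = \diag{\max\parentheses{0,\lambda_1},\dots,\max\parentheses{0,\lambda_n}}$, which is diagonal with nonnegative entries, hence PSD and feasible. By uniqueness $\MY^\star$ is the projection of $\MLambda$, and transforming back yields $\proj_{\psd^n}(\MS) = \MU \MY^\star \MU\tran$, as claimed.

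The hard part is conceptual rather than computational: justifying that the projection separates into independent scalar thresholdings. Everything hinges on the two invariance properties (of $\twonorm{\cdot}_F$ and of $\psd^n$ under $\MU\tran(\cdot)\MU$) together with the elementary but decisive observation that the constraint $\MY \succeq 0$ bounds only the diagonal entries from below, so that driving all off-diagonal entries to zero never conflicts with optimally thresholding the diagonal. Once this separation is in place the conclusion is immediate; as a consistency check, one can verify that $\MY^\star$ satisfies the variational inequality $\inner{\MLambda - \MY^\star}{\MY - \MY^\star} \leq 0$ for all $\MY \succeq 0$, which is precisely the optimality condition for projection onto a closed convex set.
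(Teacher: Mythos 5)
Your proof is correct. The paper does not actually prove this lemma---it states it and cites Higham~\cite{Higham88LA-nearestSPD}---so there is no in-paper argument to compare against; your route (orthogonal invariance of both the Frobenius norm and the cone $\psd^n$ to reduce to the diagonal case, then the observation that $\MY \succeq 0$ constrains only the diagonal entries from below, so the objective splits into independent scalar thresholdings whose lower bound is attained by a feasible diagonal matrix) is the standard argument, essentially Higham's own, and every step, including the uniqueness of the projection onto a nonempty closed convex set, is sound.
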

Using this Lemma and the expression of the convex cone $\calK$ in eq.~\eqref{eq:dualCone}, the projection of $\dualVar$ onto $\calK$ can be performed component-wise:
\bea
\hspace{-7mm} \proj_\calK\parentheses{\dualVar} = \bracket{\vlambda\tran,\svec{\proj_{\psd^{\dimbasis{\dimxextend}{1}}}\parentheses{\MS_1}},\dots, \svec{\proj_{\psd^{\dimbasis{\dimxextend}{1}}}\parentheses{\MS_\nrIneq}}, \svec{\proj_{\psd^{\dimbasis{}{\rbasisset}}}\parentheses{\MS_0}}}\tran,
\eea
where $\vlambda \in \Real{m_{d_1}}$ are the unconstrained variables in $\dualVar$ (\cf~eq.~\eqref{eq:dualVar}). 

For the affine subspace $\calA = \cbrace{\dualVar: \MA \dualVar = \vb}$, the projection onto $\calA$ is~\cite{Henrion12handbook-conicProjection}:
\bea
\proj_\calA\parentheses{\dualVar} = \dualVar - \MA\tran\parentheses{\MA\MA\tran}\inv \parentheses{\MA \dualVar - \vb}. \label{eq:projAffine}
\eea
The next theorem states that the inverse $\parentheses{\MA\MA\tran}\inv$ can be computed efficiently using the Matrix Inversion Lemma~\cite{Hager89SIAM-matrixInverse}.
\begin{theorem}[\bf Efficient Matrix Inversion]
Let $\MA = [\MA_1,\MA_2,\MA_3]$ be the partition of $\MA$ as in Theorem~\ref{thm:partialorthogonality}. Denote $\MA_{12} = [\MA_1,\MA_2]$, and $\MD = \MA_3\MA_3\tran$ as the invertible diagonal matrix. Then the inverse of $\MA\MA\tran$ is:
\bea
\parentheses{\MA\MA\tran}\inv = \MD\inv - \MD\inv \MA_{12} \parentheses{\eye_{m_{d_1} + m_{d_2}} + \MA_{12}\tran \MD\inv \MA_{12}}\inv \MA_{12}\tran \MD\inv. \label{eq:inverseAAtran}
\eea
\end{theorem}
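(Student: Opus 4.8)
The plan is to recognize $\MA\MA\tran$ as a diagonal-plus-low-rank matrix and then invoke the Matrix Inversion (Woodbury) Lemma~\cite{Hager89SIAM-matrixInverse}. First I would exploit the column partition $\MA = [\MA_{12},\MA_3]$, with $\MA_{12} = [\MA_1,\MA_2]$, to split the Gram matrix into two outer products coming from the two blocks of columns:
\beq
\MA\MA\tran = \MA_{12}\MA_{12}\tran + \MA_3\MA_3\tran = \MA_{12}\MA_{12}\tran + \MD,
\eeq
where the last equality is just the definition $\MD \doteq \MA_3\MA_3\tran$. The crucial structural fact I would borrow here is Theorem~\ref{thm:partialorthogonality}: by partial orthogonality, $\MD$ is an \emph{invertible diagonal} matrix, so $\MD\inv$ is available in closed form at essentially no cost.

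The second step is a direct application of the Woodbury identity $\parentheses{\MD + \MU\MC\MV}\inv = \MD\inv - \MD\inv \MU \parentheses{\MC\inv + \MV \MD\inv \MU}\inv \MV \MD\inv$ with the substitution $\MU = \MA_{12}$, $\MC = \eye$, and $\MV = \MA_{12}\tran$. Since $\eye\inv = \eye$, this produces exactly eq.~\eqref{eq:inverseAAtran}. The identity is legitimately applicable because $\MD$ is invertible and because the inner matrix $\eye + \MA_{12}\tran \MD\inv \MA_{12}$ is invertible: it is the sum of the identity and a positive semidefinite matrix (as $\MD\inv \succeq 0$), hence positive definite. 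I would verify this invertibility explicitly so that both inverses appearing on the right-hand side are well defined.

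I do not anticipate a genuine mathematical obstacle; the identity itself is routine. The real content of the theorem — and the point worth emphasizing rather than the algebra — is the \emph{computational} gain, so the last part of the argument would quantify dimensions. The naive inverse $\parentheses{\MA\MA\tran}\inv$ acts on a matrix of size $\dimbasis{}{2\rbasisset}$, whereas the Woodbury form relocates all the work into $\MD\inv$ (trivial, diagonal) plus a single inversion of a matrix of size $m_{d_1} + m_{d_2}$. This is a substantial reduction precisely because the dominant dimension $m_{d_3}$, associated with the large PSD block $\MS_0$, is absorbed into the cheap diagonal $\MD$ and never enters a dense inverse. The only care needed in writing this up is to confirm the block-column multiplication identity $\MA\MA\tran = \MA_{12}\MA_{12}\tran + \MA_3\MA_3\tran$ and to cite the invertibility of $\MD$ from Theorem~\ref{thm:partialorthogonality}.
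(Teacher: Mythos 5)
Your proof is correct and follows essentially the same route as the paper's: write $\MA\MA\tran = \MD + \MA_{12}\MA_{12}\tran$ and invoke the Matrix Inversion (Woodbury) Lemma, with $\MD = \MA_3\MA_3\tran$ invertible and diagonal by Theorem~\ref{thm:partialorthogonality}. Your added check that $\eye + \MA_{12}\tran\MD\inv\MA_{12}$ is positive definite is a small but welcome explicit detail the paper leaves implicit.
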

\begin{proof}
Write $\MA\MA\tran =  \MD + \MA_{12}\MA_{12}\tran$, and invoke the Matrix Inversion Lemma. 
\end{proof}
The computational benefit brought by the partial orthogonality property of $\MA$ is that, in eq.~\eqref{eq:inverseAAtran}, only a matrix of size $m_{d_1} + m_{d_2}$ needs to be inverted (the inversion of the diagonal matrix $\MD$ is cheap), although the matrix $\MA\MA\tran$ has size $\dimbasis{}{2\rbasisset}$, which is typically much larger. Partial orthogonality has been exploited in several works to design scalable first-order solvers for solving SOS programs~\cite{Zheng18TAC-partialOrthogonality,Bertsimas13OMS-acceleratedSOSRelaxation}.

Another computational advantage is, we can rewrite $\proj_\calA\parentheses{\dualVar}$ in eq.~\eqref{eq:projAffine} as:
\bea
\proj_\calA \parentheses{\dualVar} = \parentheses{\eye_{m_d} - \MA\tran\parentheses{\MA\MA\tran}\inv \MA} \dualVar + \MA\tran\parentheses{\MA\MA\tran}\inv \vb.
\eea
Because the matrix $\MA$ only depends on the constraints of the POP~\eqref{eq:pop} and is unrelated to the visual measurements $\measured_i$, both $\eye_{m_d} - \MA\tran\parentheses{\MA\MA\tran}\inv \MA$ and $\MA\tran\parentheses{\MA\MA\tran}\inv$ can be computed offline. Hence, during online optimality certification, only matrix-vector multiplications are required to perform the projection onto the affine subspace.
\end{proof}
\section{Chordal Sparse Initialization}
\label{sec:supp-chordalSparse}
In theory, one can start \DRS~\eqref{eq:DRSIterates} at any initial condition $\dualVar_0$. However, to speed up \DRS, we compute the initial point $\vd_0$ by solving a cheap SOS program with \emph{chordal sparsity}~\cite{Waki06jopt-SOSSparsity,lasserre10book-momentsOpt}.
\begin{proposition}[\bf Chordal Sparse Initialization] \label{prop:chordalSparseInitialization}
Define $\monoleq{\xextend}{\rbasisset_i} \doteq [1,\vxx\tran,\theta_i, \theta_i \vxx\tran]\tran \in \Real{2n+2}$, $\monoleq{\xextend}{1i} \doteq [1,\vxx\tran,\theta_i]\tran \in \Real{n+2}$, as the sparse monomial bases only in $\vxx$ and $\theta_i,i=1,\dots,N$. 
Let the solution of the following SOS program (SDP):
\bea
\max & \zeta \in \Real{}   \label{eq:soschordalsparse}\\
 \subject &  \hspace{-3mm} \displaystyle f(\xextend)\! - \! \zeta \! - \!\!\!\! \sum_{h_j \in \vh^x} \!\! h_j  \cdot \! \left(\monoleq{\xextend}{2}\tran\vlambda_j^x \right) \! - \!\!\!\! \sum_{h_j \in \vh^{\theta} } \!\! h_j \cdot \! \left(\monoleq{\vxx}{2}\tran \vlambda_j^\theta \right) \! = \nonumber \\
  & \displaystyle \! \sumallpoints \monoleq{\xextend}{\rbasisset_i}\tran \MS_{0i} \monoleq{\xextend}{\rbasisset_i} \! + \!\! \sum_{k=1}^{\nrIneq} g_k \cdot \parentheses{ \sumallpoints \!\monoleq{\xextend}{1i}\tran \MS_{ki} \monoleq{\xextend}{1i} } \! ,\edit{ \forall \xextend}, \label{eq:matchingcoefficientschordalsparse} \\
  & \vlambda_j^x \in \Real{\dimbasis{\dimxextend}{2}}, \vlambda_j^\theta \in \Real{\dimbasis{n}{2}}, \MS_{0i} \in \psd^{2n+2}, \MS_{ki} \in \psd^{n+2},
\eea
be $\vlambda_j^{x\star}$, $\vlambda_j^{\theta\star}$, $\MS_{0i}^\star$ and $\MS_{ki}^\star$, then $\dualVar_0$ can be constructed as:
\bea
\hspace{-8mm} \dualVar_0 = [(\vlambda_1^{x\star})\tran,\dots,(\vlambda^{x\star}_{|\vh^x|})\tran,(\vlambda_1^{\theta\star})\tran,\dots,(\vlambda_{|\vh^\theta|}^{\theta\star})\tran,\svec{\barMS_1^\star}\tran,\dots,\svec{\barMS_\nrIneq^\star}\tran,\svec{\barMS^\star_0}\tran]\tran, \label{eq:dualVarzero}
\eea
where $\barMS^\star_k \in \sym^{\Real{\dimbasis{\dimxextend}{1}}},k=1,\dots,\nrIneq$, and $\barMS^\star_0 \in \sym^{\dimbasis{}{\rbasisset}}$ satisfy:
\bea
& \monoleq{\xextend}{1}\tran \barMS^\star_k  \monoleq{\xextend}{1} = \sumallpoints \monoleq{\xextend}{1i}\tran \MS^\star_{ki}  \monoleq{\xextend}{1i},\ \forall \xextend, \label{eq:chordalSparse1}\\
& \monoleq{\xextend}{\rbasisset}\tran \barMS^\star_0  \monoleq{\xextend}{\rbasisset} = \sumallpoints \monoleq{\xextend}{\rbasisset_i}\tran \MS^\star_{0i} \monoleq{\xextend}{\rbasisset_i},\ \forall \xextend. \label{eq:chordalSparse2}
\eea
\end{proposition}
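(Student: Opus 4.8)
The plan is to prove the proposition in two parts: first, that the assembled matrices $\barMS_0^\star,\barMS_k^\star$ defined through~\eqref{eq:chordalSparse1}--\eqref{eq:chordalSparse2} exist and are PSD (so that $\dualVar_0 \in \calK$), and second, that the chordal-sparse program~\eqref{eq:soschordalsparse} is a genuine restriction of the full SOS program~\eqref{eq:sosfeasiblity}, whence $\dualVar_0$ is a valid dual-feasible warm start. The single observation driving both parts is that each per-measurement sparse basis is a coordinate sub-vector of the full reduced basis: since $\rbasis{\xextend} = [1,\vxx\tran,\vtheta\tran,\monoindeg{\vxx}{2}\tran,\vtheta\tran\kron\vxx\tran]\tran$ from Theorem~\ref{thm:sparseMoment} already contains every monomial $1$, $x_j$, $\theta_i$, and $\theta_i x_j$, there is a $0/1$ selection matrix $\MP_i$ with $\monoleq{\xextend}{\rbasisset_i} = \MP_i \rbasis{\xextend}$, and likewise a selection matrix $\MQ_i$ with $\monoleq{\xextend}{1i} = \MQ_i \monoleq{\xextend}{1}$. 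I would begin by exhibiting these two selection matrices explicitly, which is the only place the specific structure of $\rbasisset$ is invoked.

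Given the selection matrices, the lifted certificates are obtained by zero-padding (congruence with a rectangular selection matrix):
\begin{align*}
\barMS_0^\star \doteq \sumallpoints \MP_i\tran \MS_{0i}^\star \MP_i, \qquad \barMS_k^\star \doteq \sumallpoints \MQ_i\tran \MS_{ki}^\star \MQ_i, \quad k=1,\dots,\nrIneq.
\end{align*}
Each summand is symmetric and PSD because $\MS_{0i}^\star,\MS_{ki}^\star \succeq 0$ and $\vu\tran \MP_i\tran \MS_{0i}^\star \MP_i \vu = (\MP_i\vu)\tran \MS_{0i}^\star (\MP_i\vu) \geq 0$; hence $\barMS_0^\star \succeq 0$ and $\barMS_k^\star \succeq 0$ as sums of PSD matrices, exactly what is needed for $\dualVar_0 \in \calK$. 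The polynomial identities~\eqref{eq:chordalSparse1}--\eqref{eq:chordalSparse2} then follow by substituting $\monoleq{\xextend}{\rbasisset_i} = \MP_i \rbasis{\xextend}$ and using linearity, e.g.\ $\sum_i \monoleq{\xextend}{\rbasisset_i}\tran \MS_{0i}^\star \monoleq{\xextend}{\rbasisset_i} = \rbasis{\xextend}\tran (\sum_i \MP_i\tran \MS_{0i}^\star \MP_i) \rbasis{\xextend} = \rbasis{\xextend}\tran \barMS_0^\star \rbasis{\xextend}$, and identically for the $\MQ_i$ construction. Concatenating $\vlambda_j^{x\star},\vlambda_j^{\theta\star}$ with $\svec{\barMS_k^\star}$ and $\svec{\barMS_0^\star}$ as in~\eqref{eq:dualVarzero} yields the claimed $\dualVar_0$.

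To establish validity of the warm start, I would observe that this same construction turns any feasible point of~\eqref{eq:soschordalsparse} into a feasible point of the full SOS program~\eqref{eq:sosfeasiblity} with $\hatf$ replaced by the free variable $\zeta$: applying~\eqref{eq:chordalSparse1}--\eqref{eq:chordalSparse2}, the equality~\eqref{eq:matchingcoefficientschordalsparse} becomes exactly~\eqref{eq:matchingcoefficients} with $\hatf \mapsto \zeta$. Since the full-basis max-$\zeta$ SOS program is dual to the sparse moment relaxation~\eqref{eq:sparseMoment} and thus attains $p_\rbasisset^\star$, we get $\zeta^\star \leq p_\rbasisset^\star$, and combining with $p_\rbasisset^\star \leq f^\star \leq \hatf$ from Theorem~\ref{thm:sparseMoment} gives the chain $\zeta^\star \leq p_\rbasisset^\star \leq f^\star \leq \hatf$; hence $\dualVar_0$ is a genuine dual-feasible lower-bound certificate and a sound initialization for \DRS.

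The main obstacle is structural rather than computational: I must verify that the sparse bases embed into the full reduced basis, which hinges on the objective and constraint sparsity of Proposition~\ref{prop:pop}. It is precisely because $f$, $\vh$, and $\vg$ contain no monomial mixing two distinct indicators $\theta_i,\theta_j$ that the per-measurement cliques $\{1,\vxx,\theta_i,\theta_i\vxx\}$ suffice and each $\MP_i$ exists; the lifted $\barMS_0^\star$ therefore inherits a block pattern that is zero on the $\theta_i$--$\theta_j$ blocks ($i\neq j$) and accumulates only on the shared $[1,\vxx]$ clique, without breaking PSD-ness. A secondary point I would address briefly is that~\eqref{eq:chordalSparse1}--\eqref{eq:chordalSparse2} do not determine $\barMS_0^\star,\barMS_k^\star$ uniquely, since the monomials in $\rbasis{\xextend}$ are algebraically dependent (e.g.\ $x_i\cdot x_j$ repeats); the zero-padding formula simply selects one PSD representative, which is all the proposition requires, so no uniqueness argument is needed.
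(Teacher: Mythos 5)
Your proof is correct and follows essentially the same route as the paper, which supports this proposition only with an informal discussion: the chordal-sparse program is a restriction of the full SOS program, its solution lifts to the full monomial basis by zero-padding, and hence $\zeta^\star \le p^\star_{\rbasisset} \le f^\star \le \hatf$. Your explicit selection-matrix construction $\barMS_0^\star = \sum_i \MP_i\tran \MS_{0i}^\star \MP_i \succeq 0$ (and likewise for $\barMS_k^\star$) makes rigorous the one-directional containment that the paper only sketches, and in doing so it correctly pins down the direction of the restriction that the paper's accompanying footnote states rather loosely.
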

The chordal sparse SDP~\eqref{eq:soschordalsparse} is different from the SDP~\eqref{eq:sosfeasiblity} in two aspects. First, we have relaxed the large PSD constraints into multiple much smaller PSD constraints with fixed sizes (independent of the number of measurements $N$). For example, $\MS_0 \in \psd^{\dimbasis{}{\rbasisset}}$ has been divided into $\MS_{0i} \in \psd^{2n+2},i=1,\dots,N$, where each $\MS_{0i}$ is associated with a sparse monomial basis $\monoleq{\xextend}{\rbasisset_i}$ of fixed size. Similarly, each $\MS_{k} \in \psd^{\dimbasis{\dimxextend}{1}}$ has been divided into $N$ smaller PSD constraints $\MS_{ki} \in \psd^{n+2},i=1,\dots,N$. Second, instead of trying to certify $\hatf$ is the global minimum of $f(\xextend)$, we turn to maximize a lower bound $\zeta$ of $f(\xextend)$. The reason is, by relaxing the large PSD constraints into multiple smaller constraints (\ie~by requiring the SOS polynomials in the feasibility SDP~\eqref{eq:sosfeasiblity} to admit chordal sparse decompositions as in~\eqref{eq:chordalSparse1} and~\eqref{eq:chordalSparse2}), problem~\eqref{eq:soschordalsparse} is more restrictive than problem~\eqref{eq:sosfeasiblity} and in general its optimum $\zeta^\star$ cannot certify the global optimality of $\hatf$ (\ie~$\zeta^\star < p^\star_{\rbasisset} \leq f^\star \leq \hatf$).\footnote{From a different perspective, $\MS_k \succeq 0$ and $\MS_0\succeq 0$ imply that there must exist smaller PSD decompositions $\MS_{ki} \succeq 0$ and $\MS_{0i} \succeq 0$. However, $\MS_{ki} \succeq 0$ and $\MS_{0i} \succeq 0$ do not necessarily mean $\MS_k \succeq 0$ and $\MS_0\succeq 0$.} However, the chordal sparse SOS program~\eqref{eq:soschordalsparse} scales to large $N$. Therefore, we compute $\dualVar_0$ by solving this cheap SOS program~\eqref{eq:soschordalsparse} using an IPM-based SDP solver and then refine $\dualVar_0$ by running \DRS for the more powerful (but more expensive) SOS program~\eqref{eq:sosfeasiblity}.
\section{Details of Experiments}
\label{sec:supp-experiments}

\subsection{Details of Experimental Setup}
We test primal relaxation and dual certification on \emph{random} problem instances of Examples~\ref{eg:singleRotationAveraging}-\ref{eg:meshRegistration}. At each Monte Carlo run, we generate inliers and outliers as follows. In single rotation averaging (\singlerotation), we first randomly generate a ground-truth 3D rotation $\Rgt$, then inliers are generated by $\MR\inlier = \Rgt \MR_{\epsilon}$, where $\MR_{\epsilon}$ is generated by randomly sampling a unit-norm rotation axis $\vPsi \in \Real{3}$ and a rotation angle $\phi\sim \calN(0,\sigma^2)$ with $\sigma = 3^\circ$; outliers are arbitrary random rotations. In shape alignment (\shapealign), we first randomly generate a 3D shape $\{\MB_i\}_{i=1}^N$, where each $\MB_i \sim \calN(\zero,\eye_3)$, and then scale the shape such that its diameter (\ie~maximum distance between two points) is $4$. We then generate a random ground-truth scale $\sgt \in [0.5,2]$ and a random ground-truth rotation $\Rgt$. Inlier 2D measurements are generated by $\vb\inlier = \sgt \Pi \Rgt \MB + \vepsilon$, where $\vepsilon \sim \calN(\zero,\sigma^2 \eye_2)$ with $\sigma = 0.01$, and outliers are arbitrary 2D vectors $\vb\outlier \sim \calN(\zero,\eye_2)$. In point cloud registration (\pointcloud), we first generate $\{\va_i\}_{i=1}^N$ in the same way as generating $\{\MB_i\}_{i=1}^N$ in \shapealign. Then we sample a random rotation $\Rgt$ and a random translation $\tgt$ with $\| \tgt \|\leq T = 1$. Inlier 3D points are generated by $\vb\inlier = \Rgt \va + \tgt + \vepsilon$, where $\vepsilon \sim \calN(\zero,\sigma^2 \eye_3)$ with $\sigma = 0.01$; and outliers are arbitrary random vectors $\vb\outlier \in \calN(\zero,\eye_3)$. In mesh registration (\mesh), we first generate a random mesh by sampling unit normals $\{\vu_i \}_{i=1}^N$ and points $\{\va_i\}_{i=1}^N$ the same way as in \shapealign. Then we generate a random rotation $\Rgt$ and translation $\tgt,\| \tgt \|\leq T = 1$. Inlier normals are generated by $\vv\inlier = (\Rgt \vu + \vepsilon) / \| \Rgt \vu + \vepsilon \|$, where $\vepsilon \sim \calN(\zero,\sigma^2 \eye_3)$ with $\sigma = 0.01$. Inlier points are generated by $\vb\inlier = \Rgt(\va + \vu \times \vPhi) + \tgt + \vepsilon$, where $\vPhi \sim \calN(\zero,\eye_3)$ and $\vepsilon \sim \calN(\zero,\sigma^2 \eye_3)$ with $\sigma=0.01$ (note that $\va + \vu \times \vPhi$ generates a random point on the face defined by $(\va,\vu)$). Outlier normals are randomly generated unit-norm 3D vectors $\vv\outlier$ and outlier points are randomly generated $\vb\outlier \sim \calN(\zero,\eye_3)$. The relative weight between point-to-plane distance and normal-to-normal distance is set to be $w_i = 1,i=1,\dots,N$. In problem~\eqref{eq:generalTLS}, $\barc=1$ for all applications, and $\beta_i^2 = \beta^2,i=1,\dots,N$, is set as follows. In \singlerotation, $\beta^2 = (2\sqrt{2}\sin (3 \sigma/2))^2$. In \shapealign, $\beta^2 = \sigma^2 \cdot \chiinv(2,0.99)$. In \pointcloud, $\beta^2 = \sigma^2 \cdot \chiinv(3,0.99)$. In \mesh, $\beta^2 = 2\sigma^2 \cdot \chiinv(3,0.99)$, where $\chiinv(d,p)$ computes the quantile of the $\chi^2$ distribution with $d$ degrees of freedom and lower tail probability equal to $p$ (see~\cite{Yang19iccv-QUASAR} for a probabilistic interpretation).

\subsection{Dense vs. Sparse Moment Relaxation}
We compare the performance of the dense moment relaxation~\eqref{eq:denseMoment} and the sparse moment relaxation~\eqref{eq:sparseMoment} with $N=10$ measurements, because the dense relaxation is too large to be solved by IPM solvers at $N=20$. Fig.~\ref{fig:supp_dense_sparse} boxplots the rotation estimation error (\blue{left axis}) and the relative duality gap (\red{right axis}) averaged over 30 Monte Carlo runs for the four Examples~\ref{eg:singleRotationAveraging}-\ref{eg:meshRegistration}. For single rotation averaging (Fig.~\ref{fig:supp_dense_sparse}(a)), both the dense and spare relaxations are tight up to $80\%$ outlier measurements (relative duality gap always below $10^{-5}$), and both of them return accurate rotation estimations (rotation error always below $5$ degrees). For shape alignment and point cloud registration (Fig.~\ref{fig:supp_dense_sparse}(b)(c)) , both the dense and sparse relaxations produce occasional non-tight solutions (especially at high-outlier regime). However, we see that the rotation estimations are still quite accurate. We observed that the relaxation becomes tighter for increasing $N$. Indeed, the results in the paper shows improved performance for $N=20$. Hence, we conjecture that, when $N$ is small, the estimation problem is more ``ambiguous'' for the relaxations, in the sense that inliers do not form a dominating consensus set as strong as when $N$ is large. This is similar to human perception: we recognize the patterns more easily when we see dense visual measurements (\eg~a dense point cloud vs. a sparse point cloud of only a few points). For mesh registration (Fig.~\ref{fig:supp_dense_sparse}(d)), the relaxations are always tight, and significantly better than the case of point cloud registration. This echoes our previous conjecture: adding surface normals to the visual measurements provides more cues and makes the estimation less ``ambiguous''. Finally, it is also interesting to see that at $80\%$ outlier rate (there are only 2 inliers), there are two runs where the relaxations produce the globally optimal solutions (because the relative duality gap is below $10^{-5}$), but the globally optimal solutions are far away from the ground-truth solutions (the rotation errors are $90$ and $180$ degrees). We suspect the reason is the possible symmetry in the randomly generated problems, as also observed in~\cite{Yang20arxiv-teaser}.

\newcommand{\mpwtwo}{7cm}
\renewcommand{\myhspace}{\hspace{-6mm}}
\newcommand{\myvspace}{\vspace{-1mm}}
\begin{figure}[h!]
	\begin{center}
	\begin{minipage}{\textwidth}
	\begin{tabular}{cc}%
		\myhspace
			\begin{minipage}{\mpwtwo}%
			\centering%
			\includegraphics[width=\columnwidth]{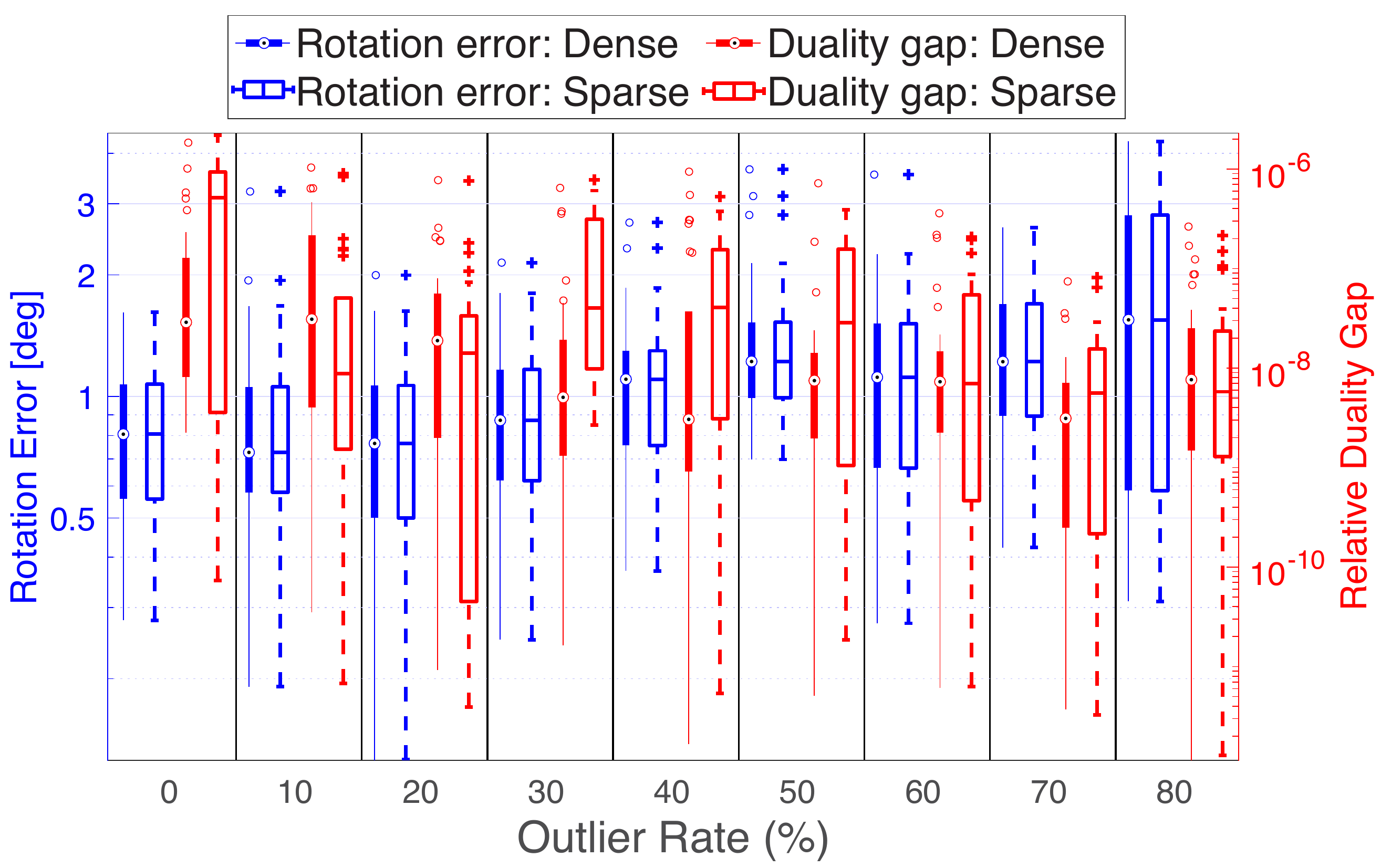} \\
			\myvspace
			{\smaller (a) Single Rotation Averaging}
			\end{minipage}
		&  
			\begin{minipage}{\mpwtwo}%
			\centering%
			\includegraphics[width=\columnwidth]{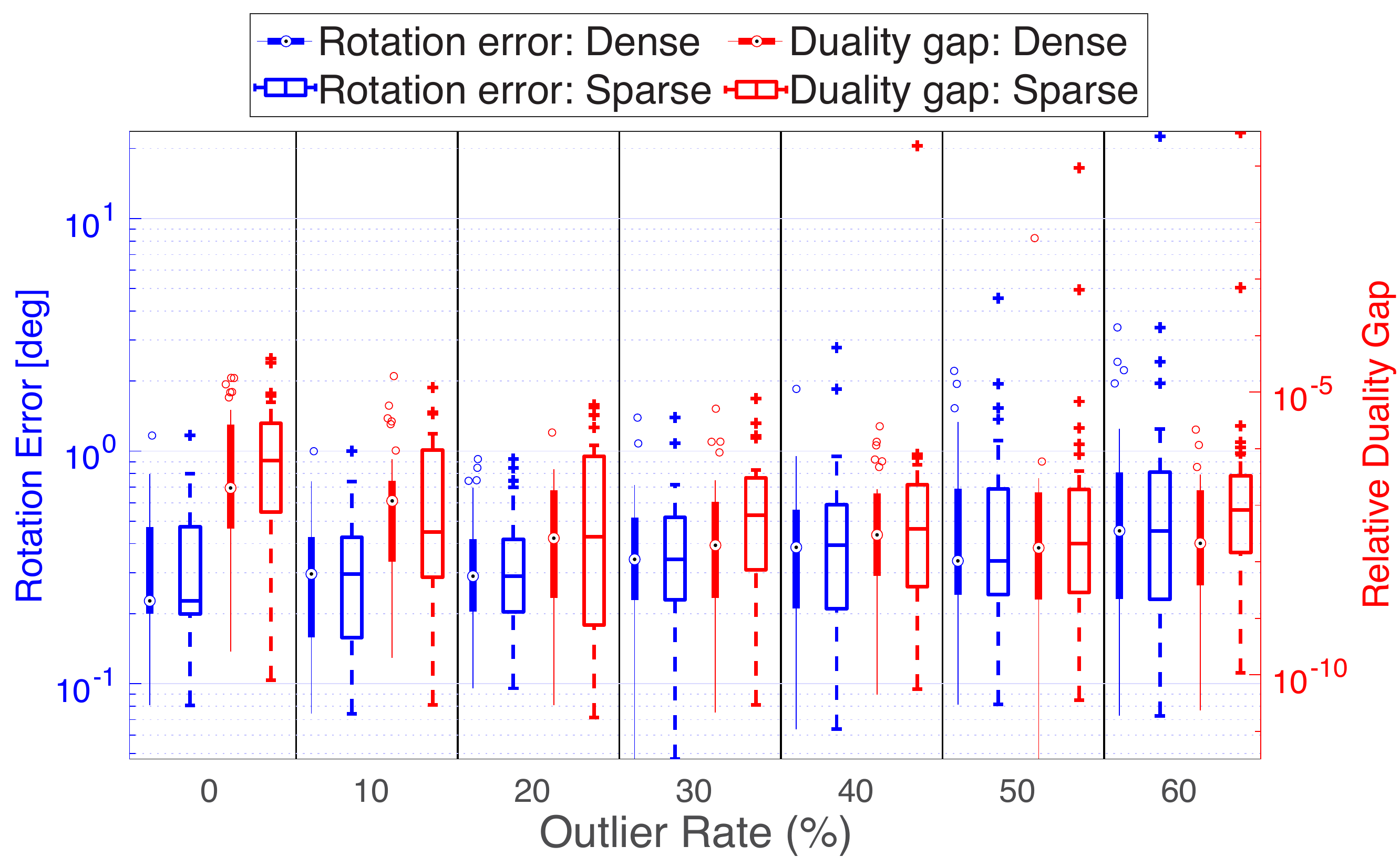} \\
			\myvspace
			{\smaller (b) Shape Alignment}
			\end{minipage} 
		\\
		\myhspace
			\begin{minipage}{\mpwtwo}%
			\centering%
			\includegraphics[width=\columnwidth]{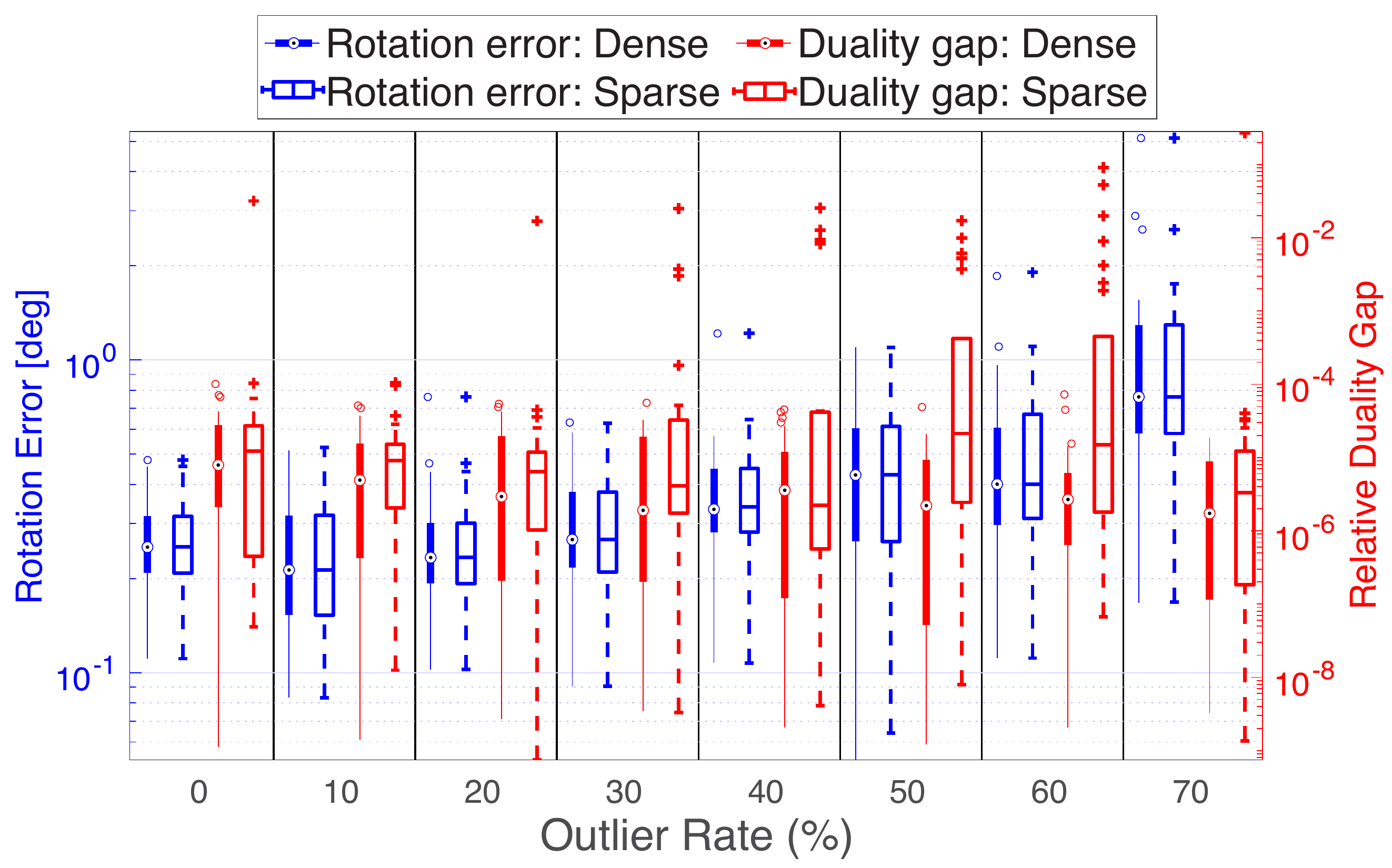} \\
			\myvspace
			{\smaller (c) Point Cloud Registration}
			\end{minipage}
		&  
			\begin{minipage}{\mpwtwo}%
			\centering%
			\includegraphics[width=\columnwidth]{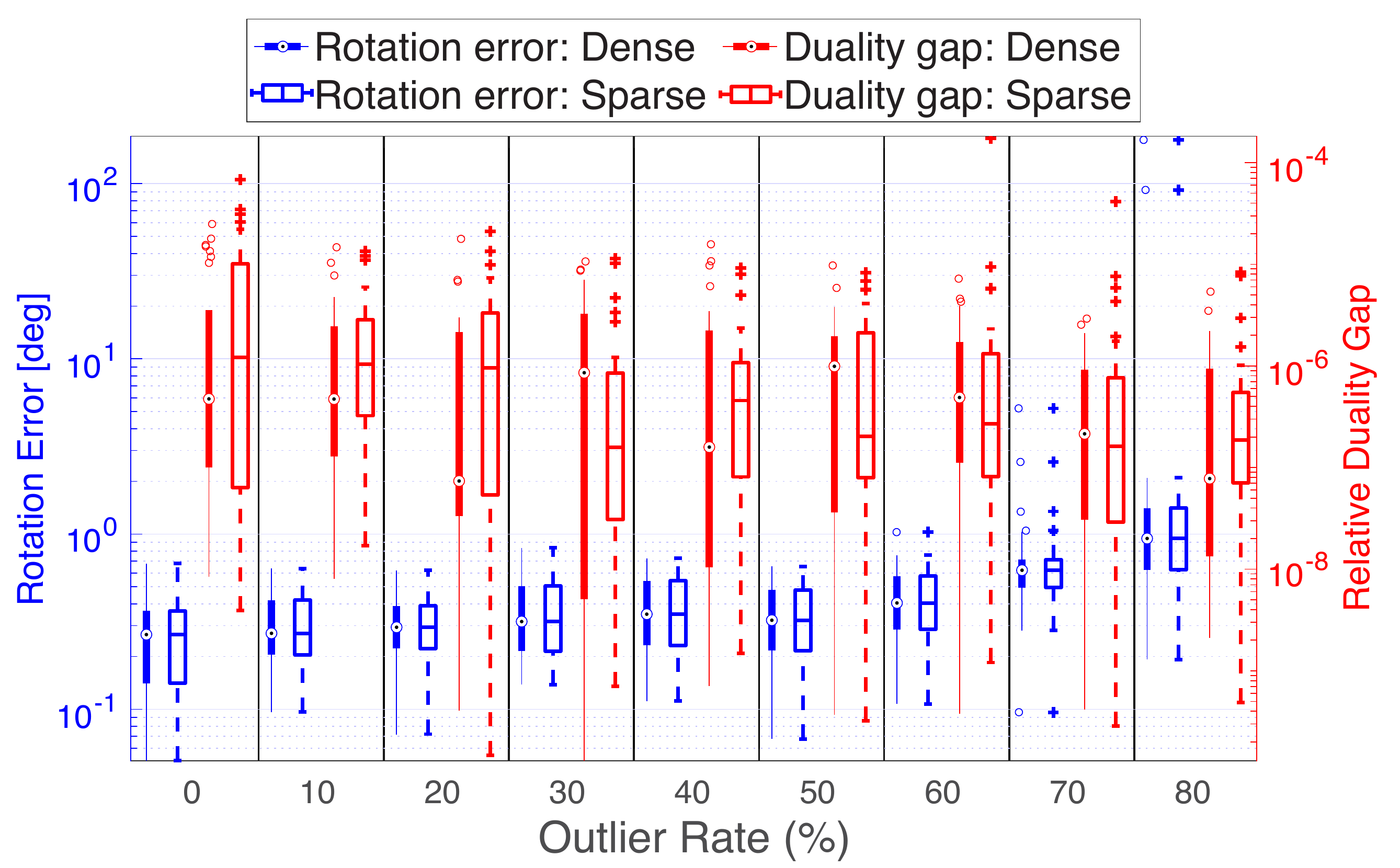} \\
			\myvspace
			{\smaller (d) Mesh Registration}
			\end{minipage}
	\end{tabular}
	\end{minipage} 
	\caption{\footnotesize Dense momemt relaxation vs. sparse moment relaxation on (a) Single Rotation Averaging, (b) Shape Alignment, (c) Point Cloud Registration, and (d) Mesh Registration. \blue{Left axis}: rotation estimation error; \red{right axis}: relative duality gap. $N=10$ and statistics are plotted over 30 Monte Carlo runs.
	\label{fig:supp_dense_sparse}} 
	\vspace{-6mm}
	\end{center}
\end{figure}

\subsection{Results for Point Cloud Registration}
Fig.~\ref{fig:supp_pcr} shows the performance of primal relaxation and dual certification on point cloud registration, and the results look qualitatively the same as the results for mesh registration in the main text.

\newcommand{\mpwthree}{4.5cm}
\begin{figure}[h]
	\begin{center}
	\begin{minipage}{\textwidth}
	\begin{tabular}{ccc}%
		\myhspace \hspace{3mm}
			\begin{minipage}{\mpwthree}%
			\vspace{4mm}
			\centering%
			\includegraphics[width=\columnwidth]{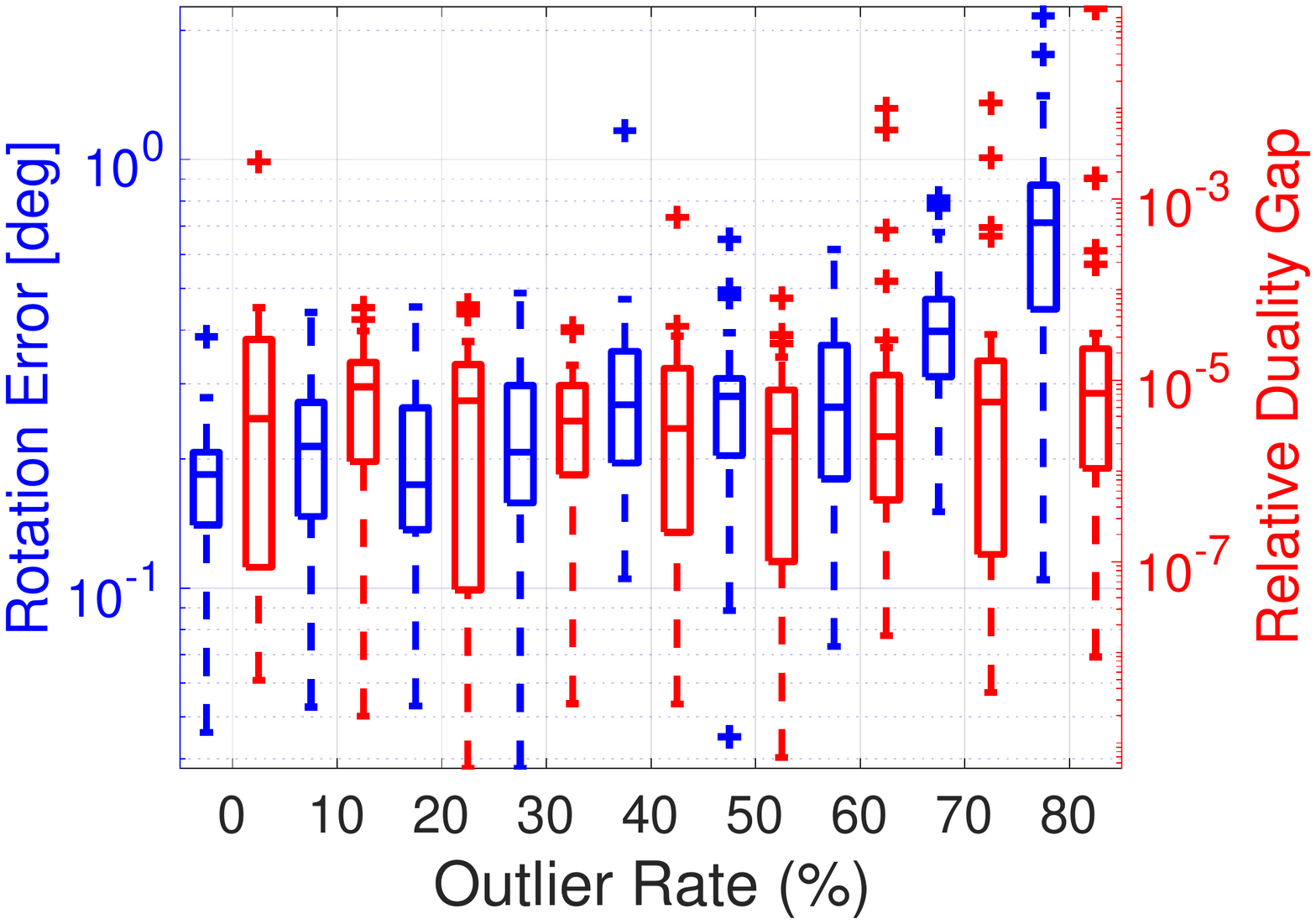} \\
			{\smaller (a) Sparse moment Relaxation}
			\end{minipage}
		&  \hspace{-3mm}
			\begin{minipage}{\mpwthree}%
			\centering%
			\includegraphics[width=\columnwidth]{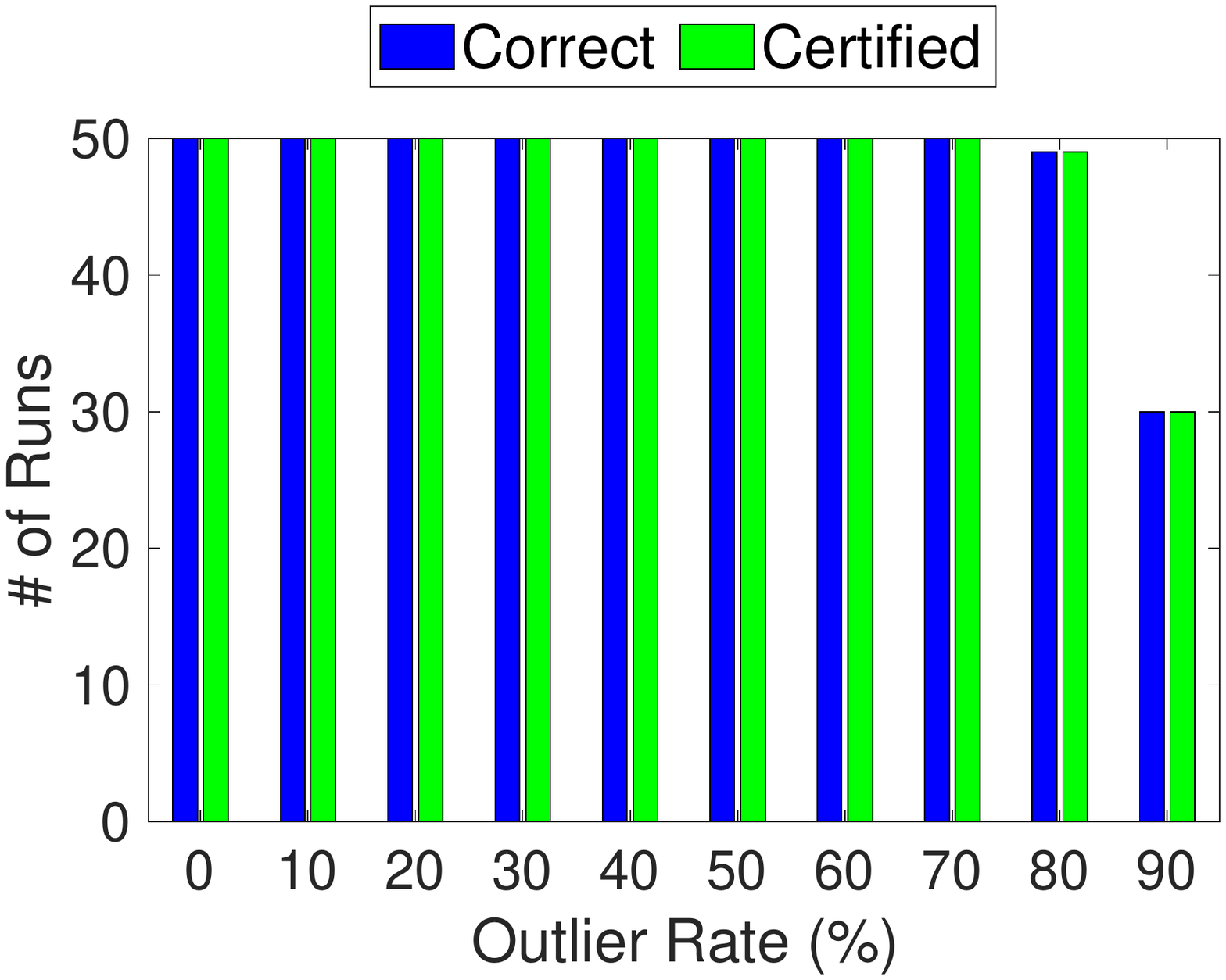} \\
			\myvspace
			{\smaller (b) Dual optimality certification}
			\end{minipage} 
		& \hspace{-3mm}
			\begin{minipage}{\mpwthree}%
			\centering%
			\includegraphics[width=\columnwidth]{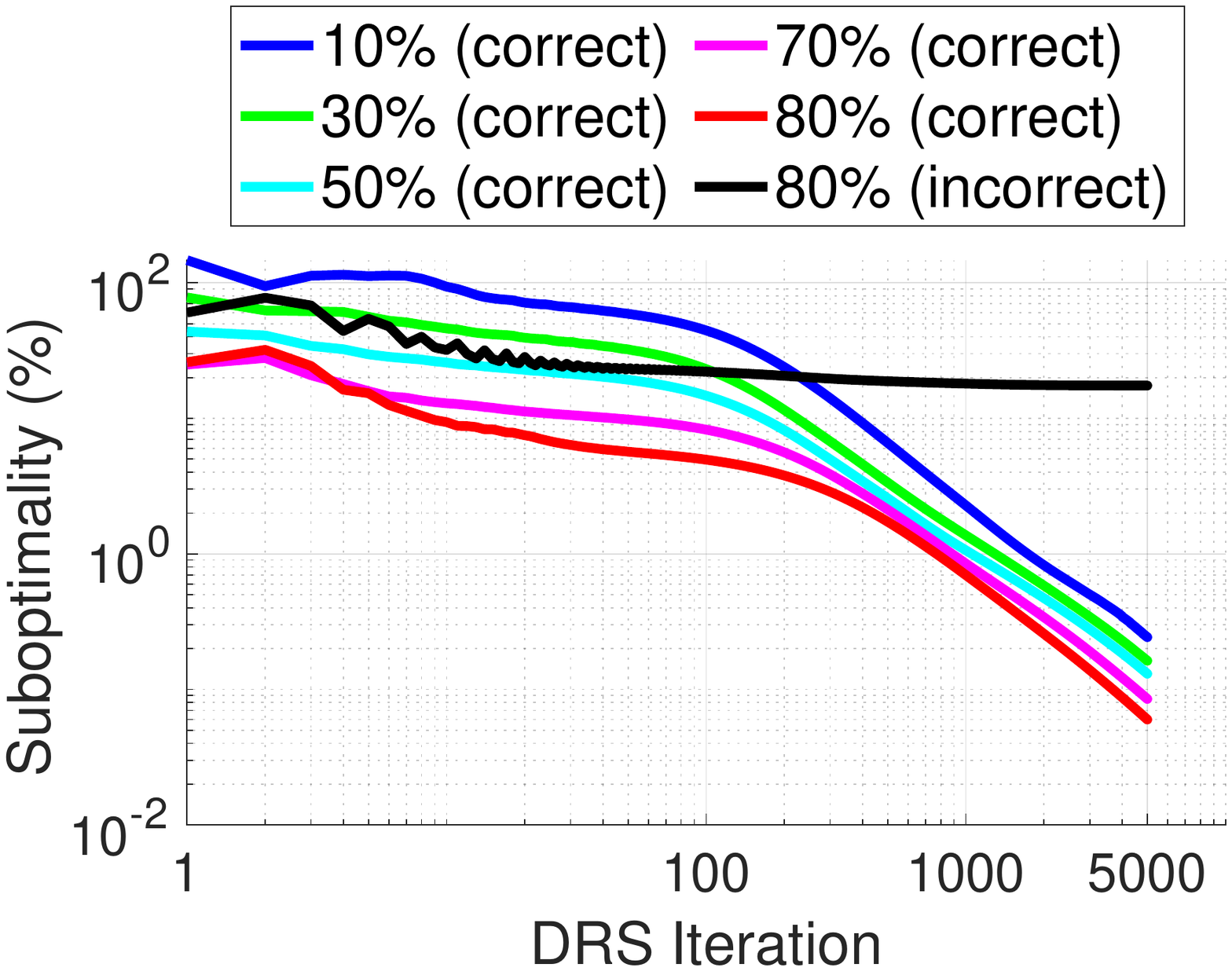} \\
			\myvspace
			{\smaller (c) Convergence of suboptimality}
			\end{minipage}
	\end{tabular}
	\end{minipage} 
	\caption{\footnotesize Certifiable point cloud registration. (a) Sparse moment Relaxation, (b) Dual optimality certification and (c) Convergence of suboptimality.
	\label{fig:supp_pcr}} 
	\vspace{-6mm}
	\end{center}
\end{figure}

\subsection{Details of Satellite Pose Estimation}
The neural network in~\cite{Chen19ICCVW-satellitePoseEstimation} learns a 3D model of the Tango satellite consisting of 11 keypoints $\cbrace{\MB_i}_{i=1}^{11}$, shown in Fig.~\ref{fig:supp_satellite}(a). It can also output $11$ 2D landmark detections for a given 2D image, $\cbrace{\vb_i}_{i=1}^{11}$, shown in Fig.~\ref{fig:supp_satellite}(b). We assume a weak perspective camera model\footnote{Weak perspective camera model is a good approximation of the full perspective camera model when the object is far away from the camera center~\cite{Zhou17pami-shapeEstimationConvex,Yang20cvpr-shapeStar}.} and the inlier 3D keypoints and 2D landmarks satisfy the following generative model:
\bea
\vb_i = s\Pi\MR \MB_i + \vt + \vepsilon_i,
\eea
where $\vt \in \Real{2}$ is a 2D translation and $\vepsilon_i$ models an unknown but bounded additive noise that satisfies $\twonorm{\vepsilon_i} \leq \delta_i$. Then the \emph{pairwise relative} 3D keypoints and 2D landmarks will satisfy the shape alignment model used in Example~\ref{eg:shapeAlignment}:
\bea
\underbrace{\vb_i - \vb_j}_{\TIMb_{ij}} = s\Pi\MR \underbrace{\parentheses{\MB_i - \MB_j}}_{\TIMB_{ij}} + \underbrace{\parentheses{\vepsilon_i - \vepsilon_j}}_{\TIMeps_{ij}},
\eea
because the translation $\vt$ cancels out due to the subtraction, and $\twonorm{\TIMeps_{ij}} \leq \delta_i + \delta_j$ models the updated noise. Because there are 11 keypoints and landmarks, we have $K = \nchoosek{11}{2} = 55$ pairwise measurements $\cbrace{\TIMB_k}_{k=1}^K$ and $\cbrace{\TIMb_k}_{k=1}^K$. Using the $K$ pairwise measurements, we can first estimate $s$ and $\MR$ using the certifiable algorithms discussed in the main text, and then estimate the translation using the adaptive voting method in~\cite{Yang19rss-teaser}. The full 6D camera pose can be recovered as:
\bea
\MR^{3D} = \MR, \quad \vt^{3D} = \frac{\bracket{\vt\tran,1}\tran}{s}.
\eea

When spoiling outliers, we replace $l$ landmarks $\vb_i$'s with random 2D pixels, which implies that the outlier rate should be computed as:
\bea
1 - \frac{\nchoosek{11-l}{2}}{55}, \label{eq:satelliteOutlierRate}
\eea
where $\nchoosek{11-l}{2}$ is the number of inlier pairwise relative measurements (a pairwise measurement $\TIMb_{ij}$ is an inlier if and only if both $\vb_i$ and $\vb_j$ are inliers). Using the formula in eq.~\eqref{eq:satelliteOutlierRate}, the outlier rates are $0\%$, $18\%$, $35\%$, $49\%$, $62\%$ and $73\%$ for $l=0,1,2,3,4,5$. 

Extra results and visualizations are provided in Fig.~\ref{fig:supp_satellite}. These results were certified as correct by the dual optimality certifiers presented in the main text.


\begin{figure}[h]
	\begin{center}
	\begin{minipage}{\textwidth}
	\begin{tabular}{cc}%
		\myhspace
			\begin{minipage}{\mpwtwo}%
			\centering%
			\includegraphics[width=\columnwidth]{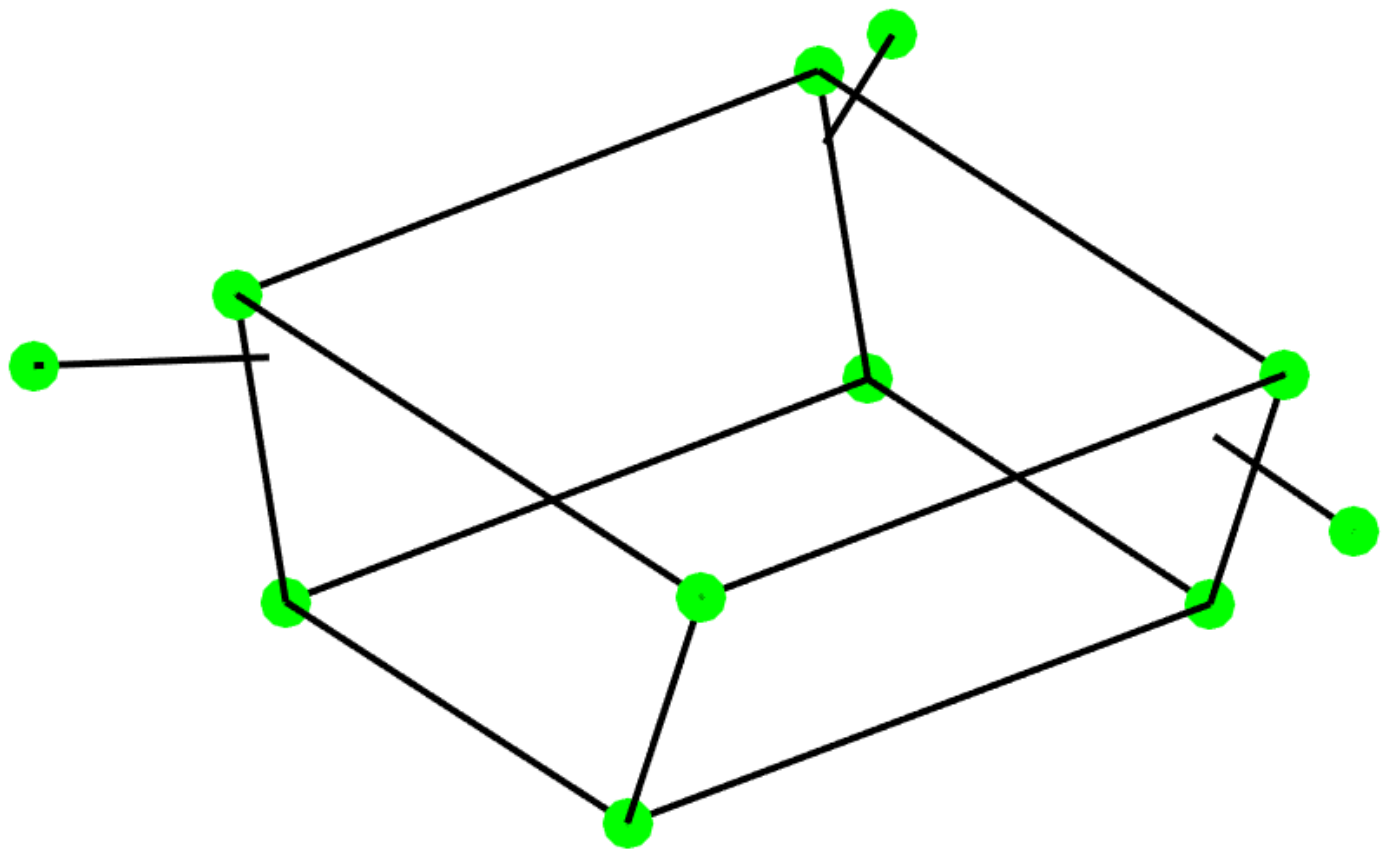} \\
			{\smaller (a) 3D wireframe model of Tango}
			\end{minipage}
		&  
			\begin{minipage}{\mpwtwo}%
			\centering%
			\includegraphics[width=\columnwidth]{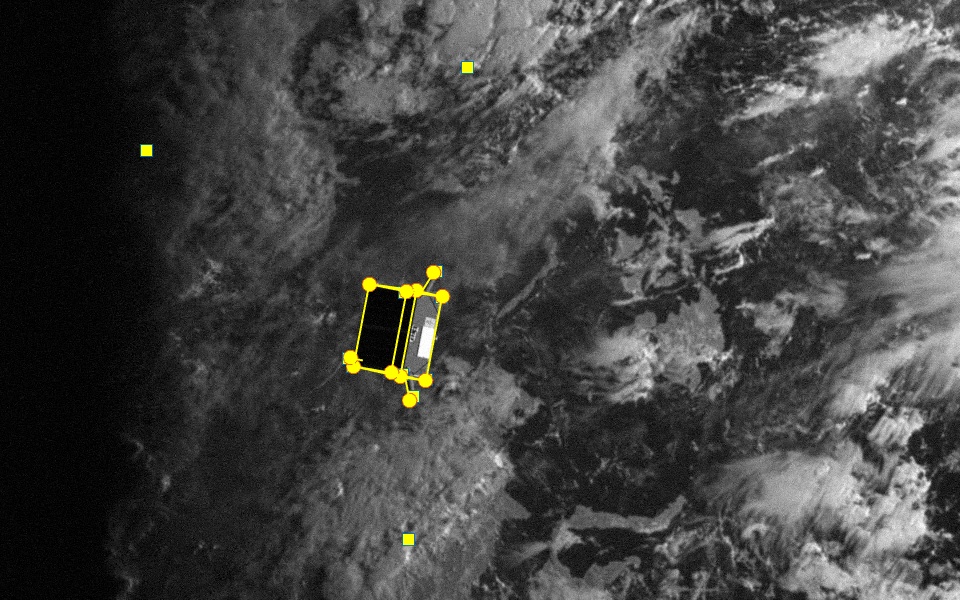} \\
			{\smaller (b) $l=3$ (49\% outlier rate)}
			\end{minipage} 
		\\
		\myhspace
			\begin{minipage}{\mpwtwo}%
			\centering%
			\includegraphics[width=\columnwidth]{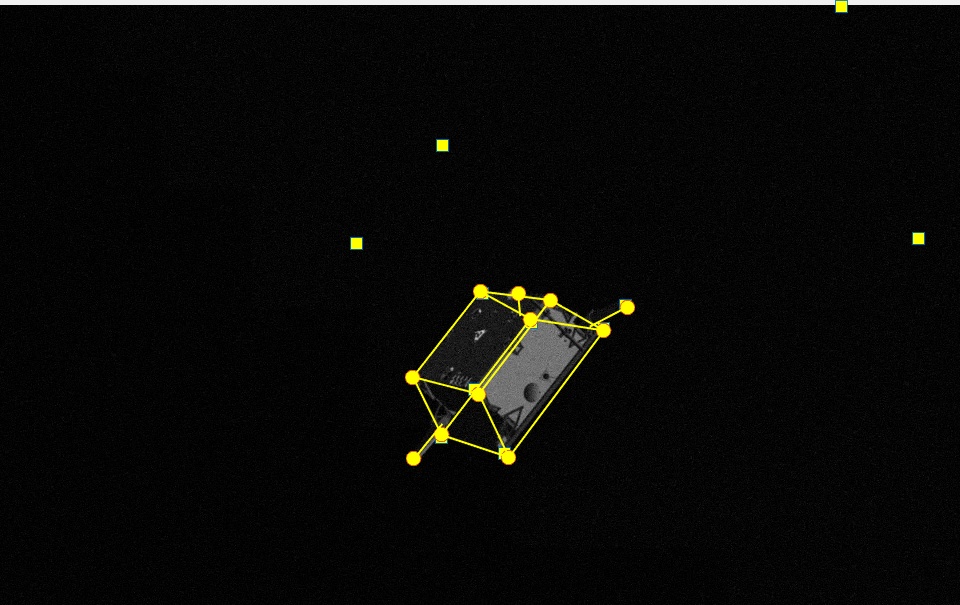} \\
			{\smaller (c) $l=4$ (62\% outlier rate)}
			\end{minipage}
		&  
			\begin{minipage}{\mpwtwo}%
			\centering%
			\includegraphics[width=\columnwidth]{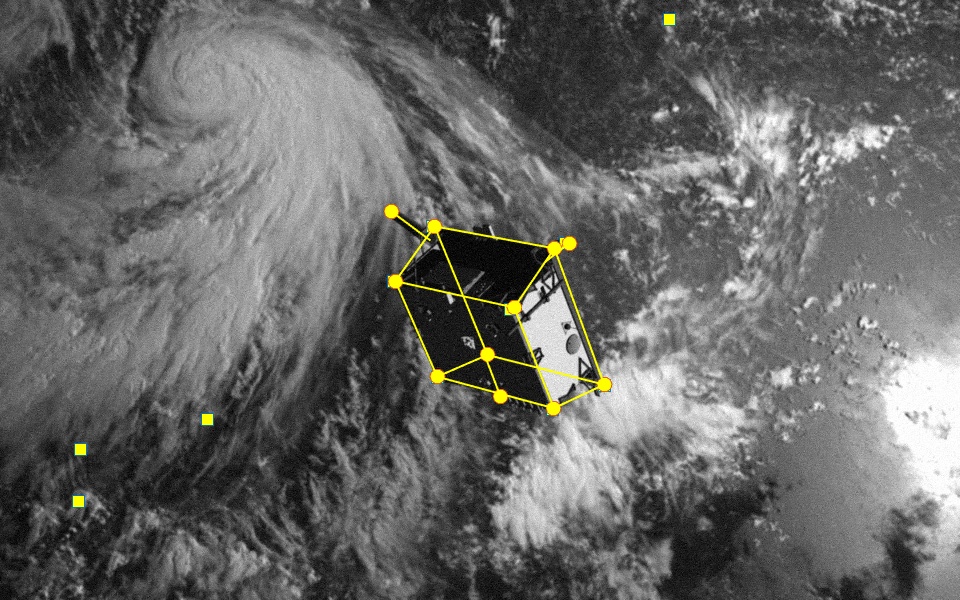} \\
			{\smaller (d) $l=4$ (62\% outlier rate)}
			\end{minipage}
		\\
		\myhspace
			\begin{minipage}{\mpwtwo}%
			\centering%
			\includegraphics[width=\columnwidth]{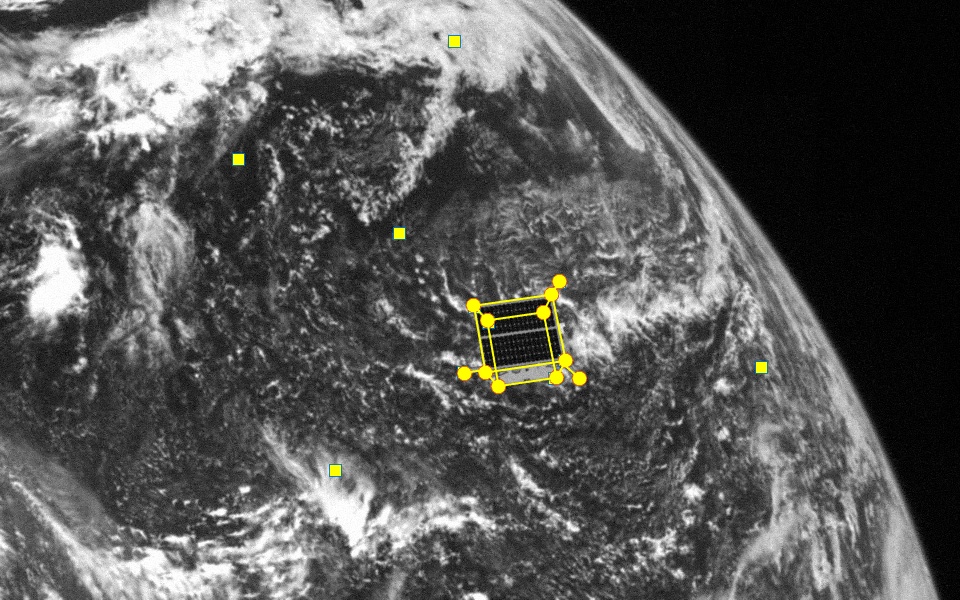} \\
			{\smaller (e) $l=5$ (73\% outlier rate)}
			\end{minipage}
		&  
			\begin{minipage}{\mpwtwo}%
			\centering%
			\includegraphics[width=\columnwidth]{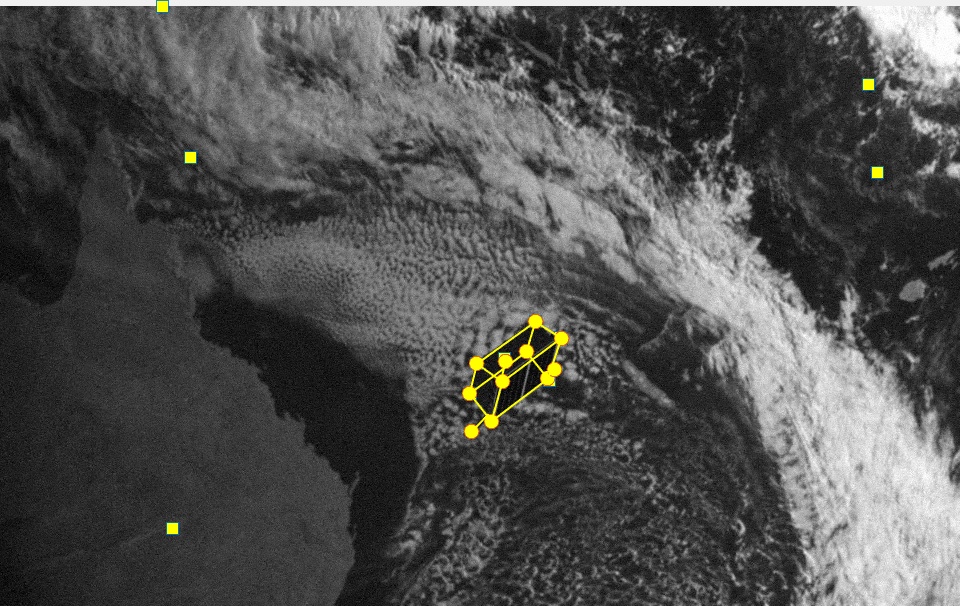} \\
			{\smaller (f) $l=5$ (73\% outlier rate)}
			\end{minipage}
	\end{tabular}
	\end{minipage} 
	\caption{\footnotesize Satellite pose estimation on the \SPEED dataset~\cite{Sharma19arXiv-SPEED}.
	\label{fig:supp_satellite}} 
	\vspace{-6mm}
	\end{center}
\end{figure}

\subsection{Comparison to Primal Baselines}

\final{Fig.~\ref{fig:baselines}(a) compares the performance of our primal solver (SDP: Basis Reduction) versus two state-of-the-art baselines: (1) \gnc (best heuristics, no optimality guarantees)~\cite{Yang20ral-GNC} and (2) SDP: Chordal Sparse (an efficient SDP relaxation that exploits correlative sparsity)~\cite{Waki06jopt-SOSSparsity} on single rotation averaging. Our primal relaxation is significantly tighter than chordal sparse relaxation, and the accuracy and robustness of our
estimates \finalLC{dominate} both baselines. }

\begin{figure}[t]
\begin{center}
	\hspace{-14mm}
	\begin{minipage}{\columnwidth}
	\begin{center}
			\includegraphics[width=1.1\columnwidth]{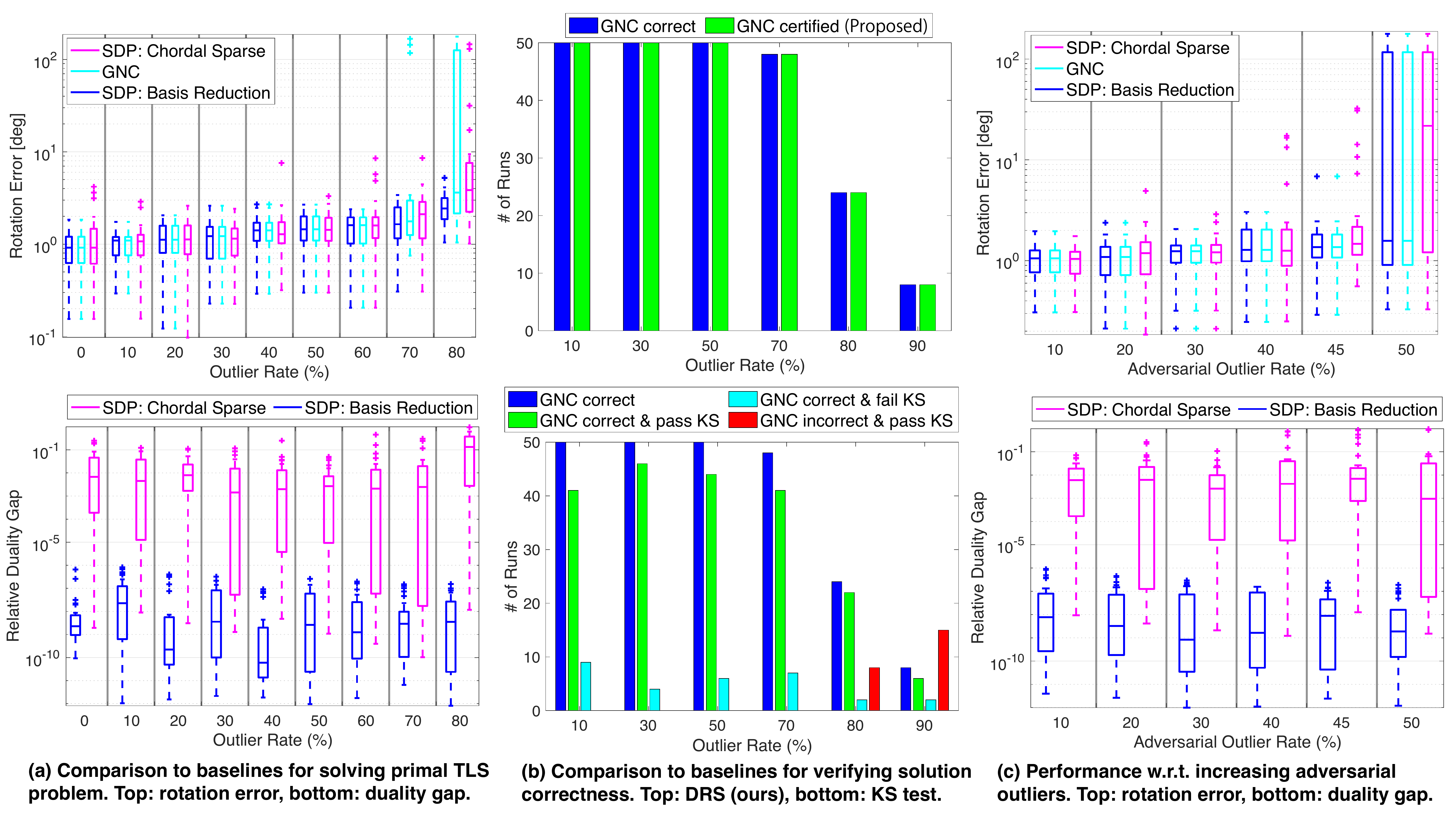}
			\vspace{-8mm}
	\end{center}
	\end{minipage} 
	\end{center}
	\caption{(a) Comparison to primal baselines. (b) Comparison to certification baselines. (c) Adversarial outliers. }
	\label{fig:baselines}
\end{figure}

\subsection{Comparison to Certification Baselines}

\final{Our \DRS approach is the first mathematically rigorous approach for verifying solution correctness. We compare it with a heuristic method that performs Kolmogorov–Smirnov (KS) test~\cite{Massey51JASA-KStest} on the
squared residuals with a $\chi^2$ distribution (i.e., tests normality of the residuals classified as inliers). Fig.~\ref{fig:baselines}(b) shows that
KS test has many false positives/negatives, while ours has zero, for single rotation averaging.}

\subsection{Adversarial Outliers}
\final{ We performed tests with an adversarial outlier model (where outliers follow a different model and are consistent with each other) and test our algorithm (SDP: Basis Reduction) against two state-of-the-art baselines. Fig.~\ref{fig:baselines}(c) shows our method dominates both baselines, is insensitive to adversarial outliers until the maximum breakdown point 50\%. Note that our relaxation is still tight at 50\% outlier rate, certifying that the globally optimal solution is obtained. However, due to the presence of adversarial outliers, the globally optimal solution may not be the ground-truth solution (if we assume the ground-truth solution has a larger set of consistent measurements).}

\clearpage
{\small
\bibliographystyle{plain}
\bibliography{../../../references/refs.bib,myRefs.bib}

\begin{thebibliography}{100}

\bibitem{Agarwal13icra}
P.~Agarwal, G.~D. Tipaldi, L.~Spinello, C.~Stachniss, and W.~Burgard.
\newblock Robust map optimization using dynamic covariance scaling.
\newblock In {\em IEEE Intl. Conf. on Robotics and Automation (ICRA)}, 2013.

\bibitem{Agarwal10eccv}
S.~Agarwal, N.~Snavely, I.~Simon, S.~M. Seitz, and R.~Szeliski.
\newblock Bundle adjustment in the large.
\newblock In {\em European Conf. on Computer Vision (ECCV)}, pages 29--42,
  2010.

\bibitem{Agostinho2019arXiv-cvxpnpl}
S{\'e}rgio Agostinho, Jo{\~a}o Gomes, and Alessio Del~Bue.
\newblock {CvxPnPL}: A unified convex solution to the absolute pose estimation
  problem from point and line correspondences.
\newblock {\em arXiv preprint arXiv:1907.10545}, 2019.

\bibitem{Aholt12ECCV-qcqpTriangulation}
Chris Aholt, Sameer Agarwal, and Rekha Thomas.
\newblock {A QCQP approach to triangulation}.
\newblock In {\em European Conf. on Computer Vision (ECCV)}, pages 654--667.
  Springer, 2012.

\bibitem{Antonante20arxiv-outlierRobustEstimation}
Pasquale Antonante, Vasileios Tzoumas, Heng Yang, and Luca Carlone.
\newblock Outlier-robust estimation: Hardness, minimally-tuned algorithms, and
  applications.
\newblock {\em arXiv preprint arXiv:2007.15109}, 2020.

\bibitem{mosek}
MOSEK ApS.
\newblock {\em The MOSEK optimization toolbox for MATLAB manual. Version 8.1.},
  2017.

\bibitem{Arun87pami}
K.S. Arun, T.S. Huang, and S.D. Blostein.
\newblock Least-squares fitting of two 3-{D} point sets.
\newblock {\em {IEEE} Trans. Pattern Anal. Machine Intell.}, 9(5):698--700,
  sept. 1987.

\bibitem{Audette00mia-surveyMedical}
M.~A. Audette, F.~P. Ferrie, and T.~M. Peters.
\newblock An algorithmic overview of surface registration techniques for
  medical imaging.
\newblock {\em Med. Image Anal.}, 4(3):201--217, 2000.

\bibitem{Bandeira16crm}
A.S. Bandeira.
\newblock A note on probably certifiably correct algorithms.
\newblock {\em Comptes Rendus Mathematique}, 354(3):329--333, 2016.

\bibitem{Bauschke96SIAM-projectionFeasibility}
Heinz~H Bauschke and Jonathan~M Borwein.
\newblock On projection algorithms for solving convex feasibility problems.
\newblock {\em SIAM review}, 38(3):367--426, 1996.

\bibitem{Bauschke04JAT-averagedAlternatingReflections}
Heinz~H Bauschke, Patrick~L Combettes, and D~Russell Luke.
\newblock Finding best approximation pairs relative to two closed convex sets
  in hilbert spaces.
\newblock {\em Journal of Approximation Theory}, 127(2):178--192, 2004.

\bibitem{Bazin12accv-globalRotSearch}
J.~C. Bazin, Y.~Seo, and M.~Pollefeys.
\newblock Globally optimal consensus set maximization through rotation search.
\newblock In {\em Asian Conference on Computer Vision}, pages 539--551.
  Springer, 2012.

\bibitem{Bertsimas13OMS-acceleratedSOSRelaxation}
Dimitris Bertsimas, Robert~M Freund, and Xu~Andy Sun.
\newblock An accelerated first-order method for solving sos relaxations of
  unconstrained polynomial optimization problems.
\newblock {\em Optimization Methods and Software}, 28(3):424--441, 2013.

\bibitem{Black96ijcv-unification}
Michael~J. Black and Anand Rangarajan.
\newblock On the unification of line processes, outlier rejection, and robust
  statistics with applications in early vision.
\newblock {\em Intl. J. of Computer Vision}, 19(1):57--91, 1996.

\bibitem{Blekherman12Book-sdpandConvexAlgebraicGeometry}
Grigoriy Blekherman, Pablo~A Parrilo, and Rekha~R Thomas.
\newblock {\em Semidefinite optimization and convex algebraic geometry}.
\newblock SIAM, 2012.

\bibitem{Bohorquez20arXiv-exactRelaxationRobustRegistration}
Cindy~Orozco Bohorquez, Yuehaw Khoo, and Lexing Ying.
\newblock Maximizing robustness of point-set registration by leveraging
  non-convexity.
\newblock {\em arXiv preprint arXiv:2004.08772}, 2020.

\bibitem{Bosse17fnt}
M.~Bosse, G.~Agamennoni, and I.~Gilitschenski.
\newblock Robust estimation and applications in robotics.
\newblock {\em Foundations and Trends in Robotics}, 4(4):225--269, 2016.

\bibitem{Bouaziz13acmsig-sparseICP}
S.~Bouaziz, A.~Tagliasacchi, and M.~Pauly.
\newblock Sparse iterative closest point.
\newblock In {\em ACM Symp. Geom. Process.}, pages 113--123. Eurographics
  Association, 2013.

\bibitem{Boumal16nips}
N.~Boumal, V.~Voroninski, and A.~Bandeira.
\newblock The non-convex {B}urer--{M}onteiro approach works on smooth
  semidefinite programs.
\newblock In {\em Advances in Neural Information Processing Systems (NIPS)},
  pages 2757--2765. 2016.

\bibitem{Briales17cvpr-registration}
Jesus Briales and Javier Gonzalez-Jimenez.
\newblock {Convex Global 3D Registration with Lagrangian Duality}.
\newblock In {\em IEEE Conf. on Computer Vision and Pattern Recognition
  (CVPR)}, 2017.

\bibitem{Briales18cvpr-global2view}
Jesus Briales, Laurent Kneip, and Javier Gonzalez-Jimenez.
\newblock A certifiably globally optimal solution to the non-minimal relative
  pose problem.
\newblock In {\em IEEE Conf. on Computer Vision and Pattern Recognition
  (CVPR)}, 2018.

\bibitem{Burer03mp}
{Burer, Samuel} and {Monteiro, Renato D C}.
\newblock {A nonlinear programming algorithm for solving semidefinite programs
  via low-rank factorization}.
\newblock {\em Mathematical Programming}, 95(2):329--357, 2003.

\bibitem{Cadena16tro-SLAMsurvey}
C.~Cadena, L.~Carlone, H.~Carrillo, Y.~Latif, D.~Scaramuzza, J.~Neira, I.~Reid,
  and J.J. Leonard.
\newblock Past, present, and future of simultaneous localization and mapping:
  Toward the robust-perception age.
\newblock {\em {IEEE} Trans. Robotics (T-RO)}, 32(6):1309--1332, 2016.

\bibitem{Carlone18ral-robustPGO2D}
L.~Carlone and G.~Calafiore.
\newblock Convex relaxations for pose graph optimization with outliers.
\newblock {\em {IEEE} Robotics and Automation Letters ({RA-L})},
  3(2):1160--1167, 2018.

\bibitem{Carlone16tro-duality2D}
L.~Carlone, G.~Calafiore, C.~Tommolillo, and F.~Dellaert.
\newblock Planar pose graph optimization: Duality, optimal solutions, and
  verification.
\newblock {\em {IEEE} Trans. Robotics (T-RO)}, 32(3):545--565, 2016.

\bibitem{Chatterjee13iccv}
A.~Chatterjee and V.~M. Govindu.
\newblock Efficient and robust large-scale rotation averaging.
\newblock In {\em Intl. Conf. on Computer Vision (ICCV)}, pages 521--528, 2013.

\bibitem{Chaudhury15Jopt-multiplePointCloudRegistration}
Kunal~N Chaudhury, Yuehaw Khoo, and Amit Singer.
\newblock Global registration of multiple point clouds using semidefinite
  programming.
\newblock {\em SIAM Journal on Optimization}, 25(1):468--501, 2015.

\bibitem{Chen19ICCVW-satellitePoseEstimation}
Bo~Chen, Jiewei Cao, Alvaro Parra, and Tat-Jun Chin.
\newblock Satellite pose estimation with deep landmark regression and nonlinear
  pose refinement.
\newblock In {\em Proceedings of the IEEE International Conference on Computer
  Vision Workshops}, 2019.

\bibitem{Chin17slcv-maximumConsensusAdvances}
T.~J. Chin and D.~Suter.
\newblock The maximum consensus problem: recent algorithmic advances.
\newblock {\em Synthesis Lectures on Computer Vision}, 7(2):1--194, 2017.

\bibitem{Choi15cvpr-robustReconstruction}
S.~Choi, Q.~Y. Zhou, and V.~Koltun.
\newblock Robust reconstruction of indoor scenes.
\newblock In {\em IEEE Conf. on Computer Vision and Pattern Recognition
  (CVPR)}, pages 5556--5565, 2015.

\bibitem{Cifuentes17arxiv}
D.~Cifuentes, S.~Agarwal, P.~Parrilo, and R.~Thomas.
\newblock On the local stability of semidefinite relaxations.
\newblock {\em ArXiv preprint: 1710.04287v1}, 2017.

\bibitem{Cifuentes19arXiv-BMguarantees}
Diego Cifuentes and Ankur Moitra.
\newblock Polynomial time guarantees for the burer-monteiro method.
\newblock {\em arXiv preprint arXiv:1912.01745}, 2019.

\bibitem{Combettes11book-proximalSplitting}
Patrick~L Combettes and Jean-Christophe Pesquet.
\newblock Proximal splitting methods in signal processing.
\newblock In {\em Fixed-point algorithms for inverse problems in science and
  engineering}, pages 185--212. Springer, 2011.

\bibitem{Crandall11cvpr}
D.~Crandall, A.~Owens, , N.~Snavely, and D.~Huttenlocher.
\newblock Discrete-continuous optimization for large-scale structure from
  motion.
\newblock In {\em IEEE Conf. on Computer Vision and Pattern Recognition
  (CVPR)}, pages 3001--3008, 2011.

\bibitem{Curto00TAMS-truncatedKmoment}
Ra{\'u}l Curto and Lawrence Fialkow.
\newblock {The truncated complex K-moment problem}.
\newblock {\em Transactions of the American mathematical society},
  352(6):2825--2855, 2000.

\bibitem{Dym17Jopt-exactPMSDP}
Nadav Dym and Yaron Lipman.
\newblock Exact recovery with symmetries for procrustes matching.
\newblock {\em SIAM Journal on Optimization}, 27(3):1513--1530, 2017.

\bibitem{Eriksson18cvpr-strongDuality}
A.~Eriksson, C.~Olsson, F.~Kahl, and T.-J. Chin.
\newblock Rotation averaging and strong duality.
\newblock {\em IEEE Conf. on Computer Vision and Pattern Recognition (CVPR)},
  2018.

\bibitem{Fischler81}
M.~Fischler and R.~Bolles.
\newblock Random sample consensus: a paradigm for model fitting with
  application to image analysis and automated cartography.
\newblock {\em Commun. ACM}, 24:381--395, 1981.

\bibitem{Fredriksson12accv}
J.~Fredriksson and C.~Olsson.
\newblock Simultaneous multiple rotation averaging using lagrangian duality.
\newblock In {\em Asian Conf. on Computer Vision (ACCV)}, 2012.

\bibitem{Fredriksson12accv-rotationaveragingLagrangian}
Johan Fredriksson and Carl Olsson.
\newblock Simultaneous multiple rotation averaging using lagrangian duality.
\newblock In {\em Asian Conf. on Computer Vision (ACCV)}, pages 245--258.
  Springer, 2012.

\bibitem{Gao03PAMI-P3P}
Xiao-Shan Gao, Xiao-Rong Hou, Jianliang Tang, and Hang-Fei Cheng.
\newblock Complete solution classification for the perspective-three-point
  problem.
\newblock {\em {IEEE} Trans. Pattern Anal. Machine Intell.}, 25(8):930--943,
  2003.

\bibitem{Garcia20arXiv-certifiableRelativePose}
Mercedes Garcia-Salguero, Jesus Briales, and Javier Gonzalez-Jimenez.
\newblock {Certifiable Relative Pose Estimation}.
\newblock {\em arXiv preprint arXiv:2003.13732}, 2020.

\bibitem{Giamou19ral-SDPExtrinsicCalibration}
Matthew Giamou, Ziye Ma, Valentin Peretroukhin, and Jonathan Kelly.
\newblock Certifiably globally optimal extrinsic calibration from per-sensor
  egomotion.
\newblock {\em {IEEE} Robotics and Automation Letters}, 4(2):367--374, 2019.

\bibitem{Goemans95JACM-maxCutBound}
Michel~X Goemans and David~P Williamson.
\newblock Improved approximation algorithms for maximum cut and satisfiability
  problems using semidefinite programming.
\newblock {\em Journal of the ACM (JACM)}, 42(6):1115--1145, 1995.

\bibitem{Hager89SIAM-matrixInverse}
William~W Hager.
\newblock Updating the inverse of a matrix.
\newblock {\em SIAM review}, 31(2):221--239, 1989.

\bibitem{Hartley13ijcv}
R.~Hartley, J.~Trumpf, Y.~Dai, and H.~Li.
\newblock Rotation averaging.
\newblock {\em IJCV}, 103(3):267--305, 2013.

\bibitem{Hartley04}
R.~I. Hartley and A.~Zisserman.
\newblock {\em Multiple View Geometry in Computer Vision}.
\newblock Cambridge University Press, second edition, 2004.

\bibitem{Hartley09ijcv-globalRotationRegistration}
R.I. Hartley and F.~Kahl.
\newblock Global optimization through rotation space search.
\newblock {\em Intl. J. of Computer Vision}, 82(1):64--79, 2009.

\bibitem{Hartley11cvpr-l1rotationaveraging}
Richard Hartley, Khurrum Aftab, and Jochen Trumpf.
\newblock {L1 rotation averaging using the Weiszfeld algorithm}.
\newblock In {\em IEEE Conf. on Computer Vision and Pattern Recognition
  (CVPR)}, pages 3041--3048. IEEE, 2011.

\bibitem{Heller14icra-handeyePOP}
Jan Heller, Didier Henrion, and Tomas Pajdla.
\newblock Hand-eye and robot-world calibration by global polynomial
  optimization.
\newblock In {\em IEEE Intl. Conf. on Robotics and Automation (ICRA)}, pages
  3157--3164. IEEE, 2014.

\bibitem{Henrion12handbook-conicProjection}
Didier Henrion and J{\'e}r{\^o}me Malick.
\newblock Projection methods in conic optimization.
\newblock In {\em Handbook on Semidefinite, Conic and Polynomial Optimization},
  pages 565--600. Springer, 2012.

\bibitem{Higham88LA-nearestSPD}
Nicholas~J Higham.
\newblock Computing a nearest symmetric positive semidefinite matrix.
\newblock {\em Linear algebra and its applications}, 103:103--118, 1988.

\bibitem{Horn87josa}
Berthold K.~P. Horn.
\newblock Closed-form solution of absolute orientation using unit quaternions.
\newblock {\em J. Opt. Soc. Amer.}, 4(4):629--642, Apr 1987.

\bibitem{Huber81}
P.~Huber.
\newblock {\em Robust Statistics}.
\newblock John Wiley \& Sons, New York, NY, 1981.

\bibitem{Iglesias20cvpr-PSRGlobalOptimality}
Jos{\'e}~Pedro Iglesias, Carl Olsson, and Fredrik Kahl.
\newblock Global optimality for point set registration using semidefinite
  programming.
\newblock In {\em IEEE Conf. on Computer Vision and Pattern Recognition
  (CVPR)}, 2020.

\bibitem{Izatt17isrr-MIPregistration}
G.~Izatt, H.~Dai, and R.~Tedrake.
\newblock Globally optimal object pose estimation in point clouds with
  mixed-integer programming.
\newblock In {\em Proc. of the Intl. Symp. of Robotics Research (ISRR)}, 2017.

\bibitem{Jegelka13NIPS-DRSreflection}
Stefanie Jegelka, Francis Bach, and Suvrit Sra.
\newblock Reflection methods for user-friendly submodular optimization.
\newblock In {\em Advances in Neural Information Processing Systems (NIPS)},
  pages 1313--1321, 2013.

\bibitem{Jiao20arXiv-VIOpointline}
Yanmei Jiao, Yue Wang, Bo~Fu, Qimeng Tan, Lei Chen, Shoudong Huang, and Rong
  Xiong.
\newblock Globally optimal consensus maximization for robust visual inertial
  localization in point and line map.
\newblock {\em arXiv preprint arXiv:2002.11905}, 2020.

\bibitem{Kahl07IJCV-GlobalOptGeometricReconstruction}
Fredrik Kahl and Didier Henrion.
\newblock Globally optimal estimates for geometric reconstruction problems.
\newblock {\em Intl. J. of Computer Vision}, 74(1):3--15, 2007.

\bibitem{Klein07ismar-PTAM}
Georg Klein and David Murray.
\newblock Parallel tracking and mapping for small ar workspaces.
\newblock In {\em 2007 6th IEEE and ACM international symposium on mixed and
  augmented reality}, pages 225--234. IEEE, 2007.

\bibitem{Kneip2014ECCV-UPnP}
Laurent Kneip, Hongdong Li, and Yongduek Seo.
\newblock {UPnP}: An optimal {o}(n) solution to the absolute pose problem with
  universal applicability.
\newblock In {\em European Conf. on Computer Vision (ECCV)}, pages 127--142.
  Springer, 2014.

\bibitem{Kukelova2008ECCV-automaticGeneratorofMinimalProblemSolvers}
Zuzana Kukelova, Martin Bujnak, and Tomas Pajdla.
\newblock Automatic generator of minimal problem solvers.
\newblock In {\em European Conf. on Computer Vision (ECCV)}, pages 302--315.
  Springer, 2008.

\bibitem{Kuemmerle11icra}
R.~K{\"u}mmerle, G.~Grisetti, H.~Strasdat, K.~Konolige, and W.~Burgard.
\newblock g2o: A general framework for graph optimization.
\newblock In {\em Proc.~of the IEEE Int.~Conf.~on Robotics and Automation
  (ICRA)}, May 2011.

\bibitem{Lajoie19ral-DCGM}
P.~Lajoie, S.~Hu, G.~Beltrame, and L.~Carlone.
\newblock Modeling perceptual aliasing in {SLAM} via discrete-continuous
  graphical models.
\newblock {\em {IEEE} Robotics and Automation Letters ({RA-L})}, 2019.

\bibitem{lasserre01ipco-lasserrehierarchybinary}
Jean~B Lasserre.
\newblock {An explicit exact SDP relaxation for nonlinear 0-1 programs}.
\newblock In {\em International Conference on Integer Programming and
  Combinatorial Optimization}, pages 293--303. Springer, 2001.

\bibitem{Lasserre01siopt-LasserreHierarchy}
Jean~B. Lasserre.
\newblock Global optimization with polynomials and the problem of moments.
\newblock {\em SIAM J. Optim.}, 11(3):796--817, 2001.

\bibitem{lasserre10book-momentsOpt}
Jean-Bernard Lasserre.
\newblock {\em Moments, positive polynomials and their applications}, volume~1.
\newblock World Scientific, 2010.

\bibitem{Laurent09eaag-SOSMomentOptimization}
Monique Laurent.
\newblock Sums of squares, moment matrices and optimization over polynomials.
\newblock In {\em Emerging applications of algebraic geometry}, pages 157--270.
  Springer, 2009.

\bibitem{Laurent09AdM-generalFlatExtension}
Monique Laurent and Bernard Mourrain.
\newblock A generalized flat extension theorem for moment matrices.
\newblock {\em Archiv der Mathematik}, 93(1):87--98, 2009.

\bibitem{Lee20arXiv-robustSRA}
Seong~Hun Lee and Javier Civera.
\newblock Robust single rotation averaging.
\newblock {\em arXiv preprint arXiv:2004.00732}, 2020.

\bibitem{Lofberg04cacsd-yalmip}
Johan L{\"o}fberg.
\newblock {YALMIP: A toolbox for modeling and optimization in MATLAB}.
\newblock In {\em Proceedings of the CACSD Conference}, volume~3. Taipei,
  Taiwan, 2004.

\bibitem{Luo2010SP-sdpRelaxationQuadratic}
Zhi-Quan Luo, Wing-Kin~Ken Ma, Anthony Man-Cho So, Yinyu Ye, and Shuzhong
  Zhang.
\newblock Semidefinite relaxation of quadratic optimization problems.
\newblock {\em IEEE Signal Processing Magazine}, 1053(5888/10), 2010.

\bibitem{markley1988jas-svdAttitudeDeter}
F.~L. Markley.
\newblock Attitude determination using vector observations and the singular
  value decomposition.
\newblock {\em The Journal of the Astronautical Sciences}, 36(3):245--258,
  1988.

\bibitem{Markley14book-fundamentalsAttitudeDetermine}
F.~L. Markley and J.~L. Crassidis.
\newblock {\em Fundamentals of spacecraft attitude determination and control},
  volume~33.
\newblock Springer, 2014.

\bibitem{Maron16tog-PMSDP}
Haggai Maron, Nadav Dym, Itay Kezurer, Shahar Kovalsky, and Yaron Lipman.
\newblock Point registration via efficient convex relaxation.
\newblock {\em ACM Transactions on Graphics (TOG)}, 35(4):1--12, 2016.

\bibitem{Maronna19book-robustStats}
Ricardo~A Maronna, R~Douglas Martin, Victor~J Yohai, and Mat{\'\i}as
  Salibi{\'a}n-Barrera.
\newblock {\em Robust statistics: theory and methods (with R)}.
\newblock John Wiley \& Sons, 2019.

\bibitem{Massey51JASA-KStest}
Frank~J Massey~Jr.
\newblock The kolmogorov-smirnov test for goodness of fit.
\newblock {\em Journal of the American statistical Association},
  46(253):68--78, 1951.

\bibitem{Nesterov18book-convexOptimization}
Yurii Nesterov.
\newblock {\em Lectures on convex optimization}, volume 137.
\newblock Springer, 2018.

\bibitem{Nie14mp-finiteConvergenceLassere}
Jiawang Nie.
\newblock Optimality conditions and finite convergence of lasserre’s
  hierarchy.
\newblock {\em Mathematical programming}, 146(1-2):97--121, 2014.

\bibitem{Nister04pami}
D.~Nist\'er.
\newblock An efficient solution to the five-point relative pose problem.
\newblock {\em {IEEE} Trans. Pattern Anal. Machine Intell.}, 26(6):756--770,
  2004.

\bibitem{Olsson09pami-bnbRegistration}
Carl Olsson, Fredrik Kahl, and Magnus Oskarsson.
\newblock Branch-and-bound methods for euclidean registration problems.
\newblock {\em {IEEE} Trans. Pattern Anal. Machine Intell.}, 31(5):783--794,
  2009.

\bibitem{Ovsjanikov12TOG-functionalMaps}
Maks Ovsjanikov, Mirela Ben-Chen, Justin Solomon, Adrian Butscher, and Leonidas
  Guibas.
\newblock Functional maps: a flexible representation of maps between shapes.
\newblock {\em ACM Transactions on Graphics (TOG)}, 31(4):1--11, 2012.

\bibitem{PajdlaXXwebsite-minimalProblemsInVision}
Tomas Pajdla and Zuzana Kukelova.
\newblock Minimal problems in computer vision.
\newblock \url{http://cmp.felk.cvut.cz/old_pages/mini/}, 2019.

\bibitem{Bustos18pami-GORE}
{\'A}.~{Parra Bustos} and T.~J. Chin.
\newblock Guaranteed outlier removal for point cloud registration with
  correspondences.
\newblock {\em {IEEE} Trans. Pattern Anal. Machine Intell.}, 40(12):2868--2882,
  2018.

\bibitem{Poggio85nature-computationalVision}
Tomaso Poggio, Vincent Torre, and Christof Koch.
\newblock Computational vision and regularization theory.
\newblock {\em nature}, 317(6035):314--319, 1985.

\bibitem{Probst19ICCV-convexRelaxationNonminimal}
Thomas Probst, Danda~Pani Paudel, Ajad Chhatkuli, and Luc Van~Gool.
\newblock Convex relaxations for consensus and non-minimal problems in {3D}
  vision.
\newblock In {\em Intl. Conf. on Computer Vision (ICCV)}, 2019.

\bibitem{Rosen20wafr-scalableLowRankSDP}
David~M. Rosen.
\newblock Scalable low-rank semidefinite programming for certifiably correct
  machine perception.
\newblock In {\em Intl. Workshop on the Algorithmic Foundations of Robotics
  (WAFR)}, 2020.

\bibitem{Rosen18ijrr-sesync}
D.M. Rosen, L.~Carlone, A.S. Bandeira, and J.J. Leonard.
\newblock {SE-Sync}: a certifiably correct algorithm for synchronization over
  the {Special Euclidean} group.
\newblock {\em Intl. J. of Robotics Research}, 2018.

\bibitem{Schonberger16cvpr-SfMRevisited}
Johannes~L Schonberger and Jan-Michael Frahm.
\newblock Structure-from-motion revisited.
\newblock In {\em IEEE Conf. on Computer Vision and Pattern Recognition
  (CVPR)}, pages 4104--4113, 2016.

\bibitem{Sharma19arXiv-SPEED}
Sumant Sharma and Simone D'Amico.
\newblock Pose estimation for non-cooperative rendezvous using neural networks.
\newblock {\em arXiv preprint arXiv:1906.09868}, 2019.

\bibitem{Toh12handbook-SDPT3Implementation}
Kim-Chuan Toh, Michael~J Todd, and Reha~H T{\"u}t{\"u}nc{\"u}.
\newblock {On the implementation and usage of SDPT3--a Matlab software package
  for semidefinite-quadratic-linear programming, version 4.0}.
\newblock In {\em Handbook on semidefinite, conic and polynomial optimization},
  pages 715--754. Springer, 2012.

\bibitem{Tron15rssws3D-dualityPGO3D}
R.~Tron, D.~Rosen, and L.~Carlone.
\newblock On the inclusion of determinant constraints in lagrangian duality for
  {3D SLAM}.
\newblock In {\em Robotics: Science and Systems (RSS), Workshop ``The problem
  of mobile sensors: Setting future goals and indicators of progress for
  {SLAM}''}, 2015.
\newblock
  \linkToPdf{https://www.dropbox.com/s/859umrdf7ldd2kv/2015ws-rss-duality3Ddet.pdf?dl=0}.

\bibitem{Tzoumas19iros-outliers}
V.~Tzoumas, P.~Antonante, and L.~Carlone.
\newblock Outlier-robust spatial perception: Hardness, general-purpose
  algorithms, and guarantees.
\newblock In {\em IEEE/RSJ Intl. Conf. on Intelligent Robots and Systems
  (IROS)}, 2019.

\bibitem{Waki06jopt-SOSSparsity}
Hayato Waki, Sunyoung Kim, Masakazu Kojima, and Masakazu Muramatsu.
\newblock Sums of squares and semidefinite program relaxations for polynomial
  optimization problems with structured sparsity.
\newblock {\em SIAM J. Optim.}, 17(1):218--242, 2006.

\bibitem{Wang20arXiv-cs-tssos}
Jie Wang, Victor Magron, Jean~B Lasserre, and Ngoc Hoang~Anh Mai.
\newblock {CS-TSSOS: Correlative and term sparsity for large-scale polynomial
  optimization}.
\newblock {\em arXiv preprint arXiv:2005.02828}, 2020.

\bibitem{Wang13ima}
L.~Wang and A.~Singer.
\newblock Exact and stable recovery of rotations for robust synchronization.
\newblock {\em Information and Inference: A Journal of the IMA}, 30, 2013.

\bibitem{Weisser18mpc-SBSOS}
Tillmann Weisser, Jean~B Lasserre, and Kim-Chuan Toh.
\newblock Sparse-{BSOS}: a bounded degree {SOS} hierarchy for large scale
  polynomial optimization with sparsity.
\newblock {\em Math. Program. Comput.}, 10(1):1--32, 2018.

\bibitem{Wientapper18cviu-absolutePose}
Folker Wientapper, Michael Schmitt, Matthieu Fraissinet-Tachet, and Arjan
  Kuijper.
\newblock A universal, closed-form approach for absolute pose problems.
\newblock {\em Comput. Vis. Image Underst.}, 173:57--75, 2018.

\bibitem{Wise20arXiv-certifiablyHandeye}
Emmett Wise, Matthew Giamou, Soroush Khoubyarian, Abhinav Grover, and Jonathan
  Kelly.
\newblock Certifiably optimal monocular hand-eye calibration.
\newblock {\em arXiv preprint arXiv:2005.08298}, 2020.

\bibitem{Wolkowicz12book-handbookSDP}
Henry Wolkowicz, Romesh Saigal, and Lieven Vandenberghe.
\newblock {\em Handbook of semidefinite programming: theory, algorithms, and
  applications}, volume~27.
\newblock Springer Science \& Business Media, 2012.

\bibitem{Yang19rss-teaser}
H.~Yang and L.~Carlone.
\newblock A polynomial-time solution for robust registration with extreme
  outlier rates.
\newblock In {\em Robotics: Science and Systems (RSS)}, 2019.

\bibitem{Yang19iccv-QUASAR}
H.~Yang and L.~Carlone.
\newblock A quaternion-based certifiably optimal solution to the {Wahba}
  problem with outliers.
\newblock In {\em Intl. Conf. on Computer Vision (ICCV)}, 2019.

\bibitem{Yang20cvpr-shapeStar}
H.~Yang and L.~Carlone.
\newblock In perfect shape: Certifiably optimal {3D} shape reconstruction from
  {2D} landmarks.
\newblock In {\em IEEE Conf. on Computer Vision and Pattern Recognition
  (CVPR)}, 2020.

\bibitem{Yang20arxiv-teaser}
H.~Yang, J.~Shi, and L.~Carlone.
\newblock {TEASER: Fast and Certifiable Point Cloud Registration}.
\newblock {\em {IEEE} Trans. Robotics (T-RO)}, 2020.

\bibitem{Yang20ral-GNC}
Heng Yang, Pasquale Antonante, Vasileios Tzoumas, and Luca Carlone.
\newblock Graduated non-convexity for robust spatial perception: From
  non-minimal solvers to global outlier rejection.
\newblock {\em {IEEE} Robotics and Automation Letters ({RA-L})},
  5(2):1127--1134, 2020.

\bibitem{Yang16pami-goicp}
J.~Yang, H.~Li, D.~Campbell, and Y.~Jia.
\newblock {Go-ICP}: A globally optimal solution to {3D ICP} point-set
  registration.
\newblock {\em {IEEE} Trans. Pattern Anal. Machine Intell.}, 38(11):2241--2254,
  November 2016.

\bibitem{Yang2014ECCV-optimalEssentialEstimationBnBConsensusMax}
Jiaolong Yang, Hongdong Li, and Yunde Jia.
\newblock Optimal essential matrix estimation via inlier-set maximization.
\newblock In {\em European Conf. on Computer Vision (ECCV)}, pages 111--126.
  Springer, 2014.

\bibitem{Yang10NIPS-relaxedClipping}
Min Yang, Linli Xu, Martha White, Dale Schuurmans, and Yao-liang Yu.
\newblock Relaxed clipping: A global training method for robust regression and
  classification.
\newblock In {\em Advances in neural information processing systems}, pages
  2532--2540, 2010.

\bibitem{Yu12NIPS-robustRegression}
Yao-liang Yu, {\"O}zlem Aslan, and Dale Schuurmans.
\newblock A polynomial-time form of robust regression.
\newblock In {\em Advances in Neural Information Processing Systems (NIPS)},
  pages 2483--2491, 2012.

\bibitem{Zhang15icra-vloam}
J.~Zhang and S.~Singh.
\newblock Visual-lidar odometry and mapping: Low-drift, robust, and fast.
\newblock In {\em IEEE Intl. Conf. on Robotics and Automation (ICRA)}, pages
  2174--2181. IEEE, 2015.

\bibitem{Zhao19arxiv-efficientTwoView}
Ji~Zhao.
\newblock An efficient solution to non-minimal case essential matrix
  estimation.
\newblock 2019.
\newblock arXiv preprint arXiv:1903.09067.

\bibitem{Zhao20cvpr-certifiablyEssential}
Ji~Zhao, Wanting Xu, and Laurent Kneip.
\newblock A certifiably globally optimal solution to generalized essential
  matrix estimation.
\newblock In {\em IEEE Conf. on Computer Vision and Pattern Recognition
  (CVPR)}, 2020.

\bibitem{Zheng18TAC-partialOrthogonality}
Yang Zheng, Giovanni Fantuzzi, and Antonis Papachristodoulou.
\newblock {Fast ADMM for sum-of-squares programs using partial orthogonality}.
\newblock {\em IEEE Transactions on Automatic Control}, 64(9):3869--3876, 2018.

\bibitem{Zhou20ICRA-GRegAlgebraicSolver}
Lipu Zhou, Shengze Wang, and Michael Kaess.
\newblock A fast and accurate solution for pose estimation from 3d
  correspondences.
\newblock In {\em IEEE Intl. Conf. on Robotics and Automation (ICRA)}, 2020.

\bibitem{Zhou17pami-shapeEstimationConvex}
Xiaowei Zhou, Menglong Zhu, Spyridon Leonardos, and Kostas Daniilidis.
\newblock Sparse representation for {3D} shape estimation: A convex relaxation
  approach.
\newblock {\em {IEEE} Trans. Pattern Anal. Machine Intell.}, 39(8):1648--1661,
  2017.

\end{thebibliography}
}

\end{document}